\documentclass[11pt, twoside]{article}

\usepackage[english]{babel}

\usepackage{amssymb}
\usepackage{mathrsfs}
\usepackage{amsmath}
\usepackage{amsthm}
\usepackage{amsfonts}
\usepackage{latexsym}
\usepackage{indentfirst}
\usepackage{color}
\usepackage{graphicx}
\usepackage{txfonts}
\usepackage{enumerate}

\usepackage[colorlinks=true,
linkcolor=blue,
citecolor=red,
urlcolor=magenta,
]{hyperref}

\usepackage{cleveref}
\usepackage{anysize}

\allowdisplaybreaks

\newtheorem{theorem}{Theorem}[section]
\newtheorem{lemma}[theorem]{Lemma}

\newtheorem{proposition}[theorem]{Proposition}

\theoremstyle{definition}
\newtheorem{remark}[theorem]{Remark}

\newtheorem{definition}[theorem]{Definition}
\newcounter{assum}

\renewcommand{\appendix}{\par
\setcounter{section}{0}%
\setcounter{subsection}{0}%
\setcounter{subsubsection}{0}%
\gdef\thesection{\@Alph\c@section}%
\gdef\thesubsection{\@Alph\c@section.\@arabic\c@subsection}%
\gdef\theHsection{\@Alph\c@section.}%
\gdef\theHsubsection{\@Alph\c@section.\@arabic\c@subsection}%
\csname appendixmore\endcsname
}

\numberwithin{equation}{section}

\begin{document}

\title{\bf\Large
Full-Range $\Gamma$-Convergence of Weak-Type Nonlocal Functionals\footnotetext{\hspace{-0.35cm} 2020 {\it
Mathematics Subject Classification}.
Primary 49J45; Secondary 46E35, 26B30, 26D10.
\endgraf {\it Key words and phrases.}
Gamma-convergence, nonlocal functional, homogeneous Sobolev space, bounded variation.
\endgraf This project is partially supported by the National
Natural Science Foundation of China (Grant Nos. 12431006, 12371093,  12501118 and 12601167),
the Natural Science Foundation of Fujian Province (Grant No. 2026J008197),
the Beijing Natural Science Foundation (Grant No. 1262011),
the Fundamental Research Funds for the Central Universities
(Grant No. 2253200028), and Longyuan Young Talents of Gansu Province.}}
\author{Xiaosheng Lin, Dachun Yang, Sibei Yang, Wen Yuan,
and Yangyang Zhang}
\date{\today}
\maketitle

\vspace{-0.8cm}

\begin{center}
\begin{minipage}{13cm}
{\small {\bf Abstract.}\quad
Let $N\ge1$, $p\in[1,\infty)$, and $\Omega\subset\mathbb R^N$ be
a bounded open interval when $N=1$ or a bounded Lipschitz domain when $N\ge2$. For any given $\gamma\in\mathbb R\setminus\{0\}$ and $\lambda\in(0,\infty)$,
consider the weak-type nonlocal functional
\begin{align*}
F_{\lambda,p,\gamma}(u;\Omega):=\lambda^p
\underset{|u(x)-u(y)|>\lambda|x-y|^{1+\frac{\gamma}{p}}}
{\int_\Omega\int_\Omega}|x-y|^{\gamma-N}\,dy\,dx.
\end{align*}
In this article, we prove that, for $\gamma\in(0,\infty)$ as $\lambda\to\infty$
or for $\gamma\in(-\infty,-1]$ as $\lambda\to0^+$,  the family
$F_{\lambda,p,\gamma}(\cdot;\Omega)$ converges, in the sense of
$\Gamma$-convergence in $L^p(\Omega)$, to the functional
\begin{align*}
\Psi_{p,\gamma}^{\Omega}(u)
:=\begin{cases}
C_{N,p,\gamma}^{\rm cell}\displaystyle\int_\Omega|\nabla u|^p\,dx,
&p\in(1,\infty)\ \hbox{and}\ u\in W^{1,p}(\Omega),\\[2mm]
C_{N,1,\gamma}^{\rm cell}|Du|(\Omega),
&p=1\ \hbox{and}\ u\in BV(\Omega),\\[1mm]
\infty,&\hbox{otherwise}.
\end{cases}
\end{align*}
where the positive constants $C_{N,p,\gamma}^{\mathrm{cell}}$
are independent of $\Omega$.
As applications, we further show that, for $\gamma\in(0,\infty)$ as $\lambda\to\infty$ or for $\gamma\in(-\infty,-1]$ as $\lambda\to0^+$, the weak-type nonlocal functional $F_{\lambda,p,\gamma}(\cdot;\mathbb R^N)$ converges, in the sense of $\Gamma$-convergence in $L^1_{\rm loc}(\mathbb R^N)$, to a similar functional; 
this gives an complete answer to the problem posed by Brezis et al. in [Section~7C, Anal. PDE 2024].}
\end{minipage}
\end{center}

\vspace{0.1cm}

\tableofcontents

\section{Introduction and main results}
\label{sec-intro}
Let $N\ge1$, $p\in [1,\infty)$, and $\Omega\subset \mathbb{R}^N$ be an open set.
Recall that the \emph{homogeneous Sobolev space} $\dot{W}^{1,p}(\Omega)$ is defined by setting
\begin{align*}
\dot{W}^{1,p}(\Omega):=\left\{f\in L^p_{\mathrm{loc}}(\Omega):\ |\nabla f|\in L^p(\Omega)\right\},
\end{align*}
where the \emph{notation $L^p_{\mathrm{loc}}(\Omega)$} denotes the set
of all locally $p$-integrable functions on
$\Omega$ and the \emph{notation $\nabla f$} denotes the \emph{distributional gradient} of $f$.
Moreover, the \emph{nonhomogeneous Sobolev space} $W^{1,p}(\Omega)$ is defined as
$$W^{1,p}(\Omega):= L^p(\Omega)\cap \dot{W}^{1,p}(\Omega).$$
Meanwhile, denote by \emph{notation $\mathcal M(\Omega;\mathbb R^N)$}
the space of all finite $\mathbb R^N$-valued Radon
measures on $\Omega$. Then the homogeneous and the nonhomogeneous \emph{bounded-variation spaces} $\dot{BV}(\Omega)$
and $BV(\Omega)$ are respectively defined by setting
$$
\dot{BV}(\Omega):=\left\{u\in L^1_{\mathrm{loc}}(\Omega):Du\in\mathcal M(\Omega;\mathbb R^N)\right\}
$$
and
$$ BV(\Omega):=\dot{BV}(\Omega)\cap L^1(\Omega)$$
(see, for example, \cite{afp00}). Here, the \emph{notation $Du$}
denotes the distributional derivative of $u$,
and the \emph{notation $|Du|$} denotes its total variation measure.

In the celebrated paper \cite{bvy21}, Brezis et al. established the following weak-type formula:
for any $f\in C_{\rm c}^\infty(\mathbb R^N)$ (the space of all infinitely
differentiable functions on $\mathbb R^N$ with compact support),
\begin{align*}
\lim_{\lambda\to\infty}\lambda^p\mathcal L^{2N}
\left(\left\{(x,y)\in\mathbb R^N\times\mathbb R^N: x\neq y,
\frac{|f(x)-f(y)|}{|x-y|^{1+N/p}}\ge\lambda\right\}\right)
=\frac{K_{N,p}}{N}\|\,|\nabla f|\,\|_{L^p(\mathbb R^N)}^p,
\end{align*}
where 
\begin{align*}
K_{N,p}:=\int_{\mathbb S^{N-1}}|\omega\cdot e_N|^p\,d\mathcal H^{N-1}(\omega),
\quad e_N:=(0,\ldots,0,1)\in\mathbb S^{N-1},
\end{align*}
and $\mathbb S^{N-1}$ and $\mathcal H^{N-1}$ denote the unit sphere in $\mathbb R^N$ and the surface measure on it, respectively.
This weak-type formula naturally leads to the study of the family of nonlocal functionals
\begin{align}\label{F-def}
F_{\lambda,p,\gamma}(u;U)
:=\lambda^p\underset{|u(x)-u(y)|>\lambda |x-y|^{1+\frac{\gamma}{p}}}
{\int_{\Omega}\int_{\Omega}}
|x-y|^{\gamma-N}\,dy\,dx,
\end{align}
where $\Omega\subset\mathbb R^N$ is an open set, $\lambda\in(0,\infty)$,
$\gamma\in\mathbb R\setminus\{0\}$, and $u:\Omega\to\mathbb R$ is measurable.
Here, and thereafter, to simplify the presentation,
we always use the following \emph{abbreviation}
$$\underset{g(x,y)>h(x,y)}
{\int_{X}\int_{Y}}\cdots\,dy\,dx:=
\int_{X}\left[\int_{\{y\in Y:g(x,y)>h(x,y)\}}\cdots\,dy\right]\,dx.$$
When $\Omega:=\mathbb R^N$, Brezis et al. \cite{bsvy24}
studied the family $F_{\lambda,p,\gamma}(\cdot;\mathbb R^N)$ in a more general framework. Precisely, they obtained
sharp asymptotic estimates as $\lambda\to\infty$ when $\gamma\in(0,\infty)$ and as
$\lambda\downarrow0$ when $\gamma\in(-\infty,0)$ and, among other consequences, derived
characterizations of homogeneous Sobolev spaces. These results led to a natural variational question.  In
\cite[Section~7C]{bsvy24}, Brezis et al. asked whether, for any given
$p\in[1,\infty)$ and $\gamma\in\mathbb{R}\backslash\{0\}$, there exists a positive constant
$c:=c(p,\gamma,N)$, depending only on $p$, $\gamma$, and $N$, such that the family
$F_{\lambda,p,\gamma}(\cdot;\mathbb{R}^N)$ converges, in the sense of $\Gamma$-convergence in $L^1_{\mathrm{loc}}(\mathbb R^N)$,
to the functional
\begin{align*}
\Psi_{\ast,c}(u):=\begin{cases}
 c\displaystyle\int_{\mathbb R^N}|\nabla u|^p\,dx,
&p\in(1,\infty)\ \hbox{and}\ u\in\dot{W}^{1,p}(\mathbb R^N),\\[2mm]
c|Du|(\mathbb R^N),
&p=1\ \hbox{and}\ u\in\dot{BV}(\mathbb R^N),\\[1mm]
\infty,&\hbox{otherwise},
\end{cases}
\end{align*}
as $\lambda\to\infty$ when $\gamma\in(0,\infty)$ and as $\lambda\downarrow0$ when
$\gamma\in(-\infty,0)$. The case $p=1$ is especially delicate because the pointwise
limits on $\dot{W}^{1,1}(\mathbb R^N)$ and on $\dot{BV}(\mathbb R^N)$ may
differ. Moreover, Brezis subsequently posed in \cite[Open Problem~9.3]{b23} a bounded-domain analogue of the above convergence problem in the case $\gamma=N$ for the family $F_{\lambda,p,\gamma}(\cdot;\Omega)$ defined in \eqref{F-def}.
For further studies on 
\eqref{F-def}
and related
characterizations and approximations of function spaces,
we refer the reader to
\cite{bbm01,b02,dlyyz23,dlyyz22,dm22,dm23,dssvy23,kmx05};
for related studies on $\Gamma$-convergence of nonlocal functionals,
we refer to \cite{agmp20,bn20ccm,gp25,ll26,n07,n11,p04}.

For bounded Lipschitz domains $\Omega$ and $\gamma\in(0,\infty)$, 
the authors proved in \cite{lyyyz26} that, as $\lambda\to\infty$, the family $F_{\lambda,p,\gamma}(\cdot;\Omega)$ $\Gamma$-converges in $L^p(\Omega)$ to the functional
\begin{align*}
 \Psi_{p,\gamma}^{\mathrm{cell}}(u;\Omega):=
\begin{cases}
 C_{N,p,\gamma}^{\mathrm{cell}}\displaystyle\int_\Omega|\nabla u|^p\,dx,
&p\in(1,\infty)\ \hbox{and}\ u\in W^{1,p}(\Omega),\\[2mm]
 C_{N,1,\gamma}^{\mathrm{cell}}|Du|(\Omega),
&p=1\ \hbox{and}\ u\in BV(\Omega),\\[1mm]
\infty,&\hbox{otherwise},
\end{cases}
\end{align*}
where  $C_{N,p,\gamma}^{\mathrm{cell}}$ is as in \eqref{baochang} below. This gives an affirmative answer to Brezis's Open Problem~9.3 in \cite{b23} and extends it to all $\gamma\in(0,\infty)$. In particular, when $p=1$, although the pointwise limit may fail to exist for some $BV$ functions, but 
the $\Gamma$-limit still exists and equals $C_{N,1,\gamma}^{\mathrm{cell}}|Du|(\Omega)$.

In this article, we first establish the $\Gamma$-convergence of
$F_{\lambda,p,\gamma}(\cdot;\Omega)$ on bounded domains for
$\gamma\in(-\infty,-1]\setminus\{-p\}$.
Together with known results for the remaining parameter ranges,
this yields a complete description of the bounded-domain
$\Gamma$-limits for all $\gamma\in\mathbb R\setminus\{0\}$.
We then establish the corresponding whole-space $\Gamma$-limits
with respect to $L^1_{\mathrm{loc}}(\mathbb R^N)$ convergence,
providing a complete answer to the question posed by
Brezis et al. in \cite[Section~7C]{bsvy24}.
Throughout this article, we always let
\begin{align*}
I_0:=\left(-\frac{1}{2},\frac{1}{2}\right)\ \ \text{and}\ \ Q_0:=I_0^N=\left(-\frac{1}{2},\frac{1}{2}\right)^N.
\end{align*}
Moreover, for any $x:=(x_1,\ldots,x_N)\in\mathbb{R}^N,$ define
$\ell(x):=x_N.$ In what follows, we use $\varliminf$ and $\varlimsup$ to denote the \emph{limit inferior} and the \emph{limit superior}, respectively.

\begin{definition}
Let $p\in [1,\infty)$ and $\gamma\in \mathbb{R}\setminus\{0\}$.
For any $u\in L^p(Q_0)$, define
\begin{align*}
m_{p,\gamma}(u):=\inf\varliminf_{j\to\infty}F_{\lambda_j,p,\gamma}(u_j;Q_0),
\end{align*}
where the infimum is taken over all sequences
$\{\lambda_j\}_{j\in\mathbb N}\subset(0,\infty)$ and
$\{u_j\}_{j\in\mathbb N}\subset L^p(Q_0)$ satisfying
$u_j\to u$ in $L^p(Q_0)$, with
$\lambda_j\to\infty$ if $\gamma\in(0,\infty)$ and $\lambda_j\to0$ if $\gamma\in(-\infty,0)$.
Moreover, let
\begin{align}\label{baochang}
C_{N,p,\gamma}^{\mathrm{cell}}:=m_{p,\gamma}(\ell)
\end{align}
\end{definition}

Let $p\in [1,\infty)$ and $\gamma\in\mathbb{R}\setminus \{0\}$.
Define the functional
$\Psi_{p,\gamma}^{\Omega}:L^p(\Omega)\to[0,\infty]$ by setting, for any
$u\in L^p(\Omega)$,
\begin{align*}
\Psi_{p,\gamma}^{\Omega}(u)
:=
\begin{cases}
C_{N,p,\gamma}^{\rm cell}\displaystyle\int_\Omega|\nabla u|^p\,dx,
&p\in(1,\infty)\ \hbox{and}\ u\in W^{1,p}(\Omega),\\[2mm]
C_{N,1,\gamma}^{\rm cell}|Du|(\Omega),
&p=1\ \hbox{and}\ u\in BV(\Omega),\\[1mm]
\infty,&\hbox{otherwise}.
\end{cases}
\end{align*}

In what follows, the notation $r\to 0^{+}$ means that $r\in (0,\infty)$ and $r\to 0$. Now we state the main results of this article.

\begin{theorem}\label{thm-negdom}
Let $N\ge1$, $p\in[1,\infty)$, $\gamma\in(-\infty,-1]\setminus\{-p\}$, and
$\Omega\subset\mathbb R^N$ be a bounded open interval when $N=1$ or a
bounded Lipschitz domain when $N\geq2$.
Then the family $F_{\lambda,p,\gamma}(\cdot;\Omega)$ converges, in the sense
of $\Gamma$-convergence in $L^p(\Omega)$, to
$\Psi_{p,\gamma}^{\Omega}$ as $\lambda\to0^+$.
More precisely, let $\{\lambda_j\}_{j\in\mathbb N}\subset(0,\infty)$
satisfy $\lambda_j\to0$. Then the following statements hold.
\begin{enumerate}[{\rm(i)}]
\item The $\Gamma$-liminf inequality: for any $u\in L^p(\Omega)$ and any
sequence $\{u_j\}_{j\in\mathbb N}\subset L^p(\Omega)$ satisfying
$u_j\to u$ in $L^p(\Omega)$,
\begin{align*}
\varliminf_{j\to\infty}F_{\lambda_j,p,\gamma}(u_j;\Omega)
\geq\Psi_{p,\gamma}^{\Omega}(u).
\end{align*}
\item The $\Gamma$-limsup inequality: for any $u\in L^p(\Omega)$, there
exists a sequence $\{u_j\}_{j\in\mathbb N}\subset L^p(\Omega)$ satisfying
$u_j\to u$ in $L^p(\Omega)$ and
\begin{align*}
\varlimsup_{j\to\infty}F_{\lambda_j,p,\gamma}(u_j;\Omega)
\leq\Psi_{p,\gamma}^{\Omega}(u).
\end{align*}
Such a sequence is called a \emph{recovery sequence} for $u$.
\end{enumerate}
\end{theorem}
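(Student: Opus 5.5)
Throughout, write $\varepsilon:=\lambda^{\frac{p}{\gamma}}$, so that the threshold $\lambda|x-y|^{1+\gamma/p}$ becomes $|x-y|(|x-y|/\varepsilon)^{\gamma/p}$. Since $\gamma<0$, we have $\varepsilon\to0^+$ as $\lambda\to0^+$; this is the exact analogue of the regime $\gamma>0$, $\lambda\to\infty$ treated in \cite{lyyyz26}. The plan is to follow the blow-up/cell-formula strategy used there, with the roles of short and long range reversed.

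The first step is a change of variables. Setting $h:=x-y$ and $r:=|h|$, the functional concentrates where $|u(x)-u(y)|/|x-y|\gtrsim (r/\varepsilon)^{\gamma/p}$. For $\gamma<0$ this ratio is huge when $r\ll\varepsilon$ and small when $r\gg\varepsilon$. The kernel $\lambda^p r^{\gamma-N}=\varepsilon^{\gamma}r^{\gamma-N}$ in turn shows that, for a $W^{1,p}$ function, the contribution of the region $r\sim\varepsilon t$ is an integral in $t$ of $t^{\gamma-1}$ against $\mathcal L^1(\{|\partial_\omega u|>t^{\gamma/p}\})$-type quantities. This integral converges at $t\to\infty$ because $\gamma<0$. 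Near $t\to0$ it converges only because the level condition cuts it off, and this is where the restriction $\gamma\le-1$ enters: it guarantees that large-scale oscillation (at distances $r\gg\varepsilon$) costs at most $o(1)$ for bounded $u$ on the bounded set $\Omega$. Concretely, I would first prove the two-sided \emph{a priori} estimate
\[
F_{\lambda,p,\gamma}(u;\Omega)\lesssim \int_\Omega|\nabla u|^p\,dx+o(1)\quad\text{for }u\in W^{1,p}\cap L^\infty,
\]
together with a reverse compactness estimate. The latter states that $\sup_j F_{\lambda_j,p,\gamma}(u_j;\Omega)<\infty$ and $u_j\to u$ in $L^p$ force $u\in W^{1,p}(\Omega)$ (respectively $BV(\Omega)$ when $p=1$). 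I would obtain it via a Bourgain--Brezis--Mironescu-type characterization adapted from the results of \cite{bsvy24} for $\gamma<0$, combined with localization. The exclusion $\gamma\neq-p$ should appear here: at $\gamma=-p$ the threshold exponent $1+\gamma/p$ vanishes, the condition degenerates to $|u(x)-u(y)|>\lambda$, and the scaling argument breaks.

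The second step is the $\Gamma$-liminf, via the blow-up method of Fonseca--Müller. For a sequence with bounded energy, define the measures
\[
\mu_j(A):=\lambda_j^p\int_A\int_\Omega \mathbf 1_{\{|u_j(x)-u_j(y)|>\lambda_j|x-y|^{1+\gamma/p}\}}|x-y|^{\gamma-N}\,dy\,dx .
\]
Extract a weak-$*$ limit $\mu$ and show that $\frac{d\mu}{d\mathcal L^N}(x_0)\ge C^{\rm cell}_{N,p,\gamma}|\nabla u(x_0)|^p$ at Lebesgue points (and, for $p=1$, the analogous bound with respect to $|D^su|$ at jump and Cantor points). Rescaling $Q_\rho(x_0)$ to $Q_0$ turns the functional into one of the same form with a new parameter $\lambda'\to0$. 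This uses the exact scaling invariance $F_{\lambda,p,\gamma}(\rho^{-1}u(x_0+\rho\,\cdot);Q_0)=\rho^{-N}F_{\lambda,p,\gamma}(u;Q_\rho(x_0))$ after adjusting $\lambda$. Rotation invariance then identifies the cell constant for $a\cdot x$ as $|a|^pm_{p,\gamma}(\ell)$. The removal of the interaction between $Q_\rho$ and its complement is harmless for a lower bound, since that interaction is nonnegative.

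The third step is the $\Gamma$-limsup. By the lower semicontinuity of the $\Gamma$-limsup and density (smooth approximation up to the boundary, using the Lipschitz property of $\Omega$, and strict convergence for $BV$), it suffices to treat piecewise affine $u$ on a fine triangulation. On each simplex I would paste optimal cell sequences for $\ell$, using a Vitali covering by small cubes; on the boundaries of the cubes the sequences are matched to the affine function via a cutoff. This requires a joining lemma: modifying a near-optimal cell sequence so that it equals the affine function near $\partial Q_0$ costs $o(1)$. I expect this joining lemma, together with controlling the long-range interactions between different cubes and near $\partial\Omega$, to be the main obstacle. For $\gamma<0$ the long-range interactions are no longer negligible automatically, since large $|x-y|$ is not suppressed by the threshold in the same way as for $\gamma>0$. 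I would try to handle them with the $\gamma\le-1$ estimate from the first step, which bounds the interaction at distance $r$ by $\varepsilon^{\gamma}\int_{r}^{\operatorname{diam}\Omega}t^{\gamma-1}\,\mathcal L^{2N}(\{|u(x)-u(y)|>\lambda t^{1+\gamma/p}\})\,dt$. I would then use the $L^\infty$ bound on the pasted sequence together with Chebyshev's inequality to show that this quantity is $o(1)$ uniformly.
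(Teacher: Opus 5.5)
Your overall architecture is reasonable. It consists of a compactness step, a blow-up against $m_{p,\gamma}(\ell)$ (your measure-theoretic blow-up is an acceptable substitute for the paper's Morse covering), and a limsup by pasting boundary-corrected cell sequences into small cubes on piecewise affine functions, as in Propositions~\ref{prop-repair} and~\ref{prop-pa}. However, the step that carries the whole lower bound is missing.

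\textbf{The compactness step.} You need that $\varliminf_j F_{\lambda_j,p,\gamma}(u_j;\Omega)<\infty$ with $u_j\to u$ in $L^p$ forces $u\in W^{1,p}(\Omega)$ (resp.\ $BV(\Omega)$); this is Proposition~\ref{prop-coerc}. A ``BBM-type characterization adapted from the $\gamma<0$ results of Brezis--Seeger--Van Schaftingen--Yung'' does not give it. Those results concern $\varliminf_{\lambda\to0^+}F_{\lambda,p,\gamma}(u)$ for one fixed $u$, whereas here the function changes with $\lambda_j$. Moreover, $F_{\lambda,p,\gamma}$ is not monotone in $\lambda$, and its lower semicontinuity at fixed $\lambda$ degenerates as $\lambda\to0$. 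What you need is an estimate that is uniform along the sequence and uses $\gamma\le-1$ essentially. Indeed, for $\gamma\in(-1,0)$ the $\Gamma$-limit is zero (Lemma~\ref{lem-zeroreg}), so no such coercivity holds there.

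Your explanation of the role of $\gamma\le-1$ is also off. Pairs at distance $\ge r$ contribute at most $C\lambda^pr^\gamma$ for every $\gamma<0$, so long-range control does not need $\gamma\le-1$. Likewise, $\gamma=-p$ is excluded because it is the critical Brezis--Nguyen functional, treated separately; scaling does not break there, since Lemma~\ref{lem-scaling} holds for all $\gamma\neq0$.

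The paper proves compactness in one dimension and then slices (Lemma~\ref{lem-slicing} plus Fatou), using two different mechanisms:
\begin{itemize}
\item For $-p<\gamma\le-1$ it restricts to $|x-y|\le r_j$. It uses $|x-y|^{\gamma-1}\ge r_j^{\gamma+1}|x-y|^{-2}$, which is exactly where $\gamma\le-1$ enters, together with $\lambda_j|x-y|^{1+\gamma/p}\le\delta_j:=\lambda_jr_j^{1+\gamma/p}$. This bounds $F_{\lambda_j,p,\gamma}(f_j;I)$ from below by a multiple of the critical functional $F_{\delta_j,1,-1}(f_j;I)$ minus a controlled tail. It then applies the Brezis--Nguyen $\Gamma$-liminf oscillation bound (Lemma~\ref{lem-critosc}) on the intervals $(x-h,x+2h)$ to control difference quotients.
\item For $\gamma<-p$ it uses the dyadic splitting inequality of Lemma~\ref{lem-diffbd}, applied to the truncations $T_Mf_j$. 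For every bounded $g$ this inequality bounds $\int_r^{2r}\int_a^{b-h}|g(x+h)-g(x)|^ph^{-p}\,dx\,\frac{dh}{h}$ by $\frac{2^{-\gamma}-1}{2}F_{\lambda,p,\gamma}(g;I)+C\lambda^pr^\gamma$.
\end{itemize}
Neither mechanism, nor any substitute, appears in your plan.

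\textbf{The singular part for $p=1$.} You simply assert ``the analogous bound'' at jump and Cantor points. At $|Du|$-a.e.\ point the blow-up is a monotone one-dimensional profile $W_q$ (Lemma~\ref{lem-tangent}), not $\ell$, so the definition $C^{\rm cell}_{N,1,\gamma}=m_{1,\gamma}(\ell)$ says nothing there. You need $m_{1,\gamma}(W_q)\ge C^{\rm cell}_{N,1,\gamma}|Dq|(I_0)$, which is Proposition~\ref{prop-profend}. The paper proves it by a staircase construction:
\begin{itemize}
\item A near-optimal sequence for the normalized profile $W_{\widetilde q}$ is first made to equal $W_{\widetilde q}$ near $\partial Q_0$ (Lemma~\ref{lem-nearmin}, via the collar estimate of Lemma~\ref{lem-collar}).
\item Rescaled copies, shifted vertically by $a_k$, are placed in the $m^N$ subcubes. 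This yields competitors for $W_{q_m}$, and $W_{q_m}\to\ell$.
\item The cross-cube interactions vanish by Lemma~\ref{lem-skeleton}, which uses $\gamma<-1$ and $|D\widetilde U|(S_{\mathscr Q})=0$.
\item Lower semicontinuity of $m_{1,\gamma}$ then gives $C^{\rm cell}_{N,1,\gamma}\le m_{1,\gamma}(W_{\widetilde q})$.
\end{itemize}

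\textbf{A remark on the limsup.} Here the interactions are not controlled through $\gamma\le-1$, $L^\infty$ bounds or Chebyshev; the upper bound holds for all $\gamma<0$. The mechanism is scale separation:
\begin{itemize}
\item The collar on which the cell sequence equals $\ell$ has width $\rho_j$ with $\Lambda_j^{-p/\gamma}/\rho_j\to0$ (Proposition~\ref{prop-repair}(iii)).
\item One cuts at $d_j$ with $\lambda_j^{-p/\gamma}\ll d_j<\widehat{\rho}_j h_{j,i}$.
\item Cross-cube pairs closer than $d_j$ only see the $L$-Lipschitz function $w$. They therefore must satisfy $|x-y|>(\lambda_j/L)^{-p/\gamma}$ and contribute $\lesssim L^p|E_j|\to0$.
\item Pairs farther apart than $d_j$ contribute $\lesssim\lambda_j^pd_j^\gamma\to0$.
\end{itemize}
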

Theorem~\ref{thm-negdom} follows directly from the results proved below:
the $\Gamma$-liminf inequality for $p\in(1,\infty)$ in Theorem~\ref{thm-pliminf},
the $\Gamma$-liminf inequality for $p=1$ in Theorem~\ref{thm-onelinf},
and the $\Gamma$-limsup inequality in Theorem~\ref{thm-limsup}.
Combining Theorem~\ref{thm-negdom} with
other known results, we obtain the following
Theorem \ref{thm-domfull}, which establishes the $\Gamma$-convergence of the BSVY functionals
on bounded domains for the full range $\gamma\in\mathbb R\setminus\{0\}$.

\begin{theorem}\label{thm-domfull}
Let  $p\in[1,\infty)$, $\gamma\in\mathbb R\setminus\{0\}$, and
$\Omega\subset\mathbb R^N$ be a bounded open interval when $N=1$ or a
bounded Lipschitz domain when $N\geq2$.
Then the following statements hold.
\begin{enumerate}[{\rm(i)}]
\item If $\gamma\in(0,\infty)$, then the family
$F_{\lambda,p,\gamma}(\cdot;\Omega)$ converges, in the sense of
$\Gamma$-convergence in $L^p(\Omega)$, to $\Psi_{p,\gamma}^{\Omega}$ as
$\lambda\to\infty$.
\item If $\gamma\in(-\infty,-1]$, then the family
$F_{\lambda,p,\gamma}(\cdot;\Omega)$ converges, in the sense of
$\Gamma$-convergence in $L^p(\Omega)$, to $\Psi_{p,\gamma}^{\Omega}$ as
$\lambda\to0^+$.
\item If $\gamma\in(-1,0)$, then then the family
$F_{\lambda,p,\gamma}(\cdot;\Omega)$ converges, in the sense of
$\Gamma$-convergence in $L^p(\Omega)$, to the zero functional as
$\lambda\to0^+$.
\end{enumerate}
\end{theorem}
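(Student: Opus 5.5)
Theorem~\ref{thm-domfull} is assembled from three ingredients, one per item.

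Item (i) is exactly the result of \cite{lyyyz26} recalled in the introduction: for bounded Lipschitz domains (or bounded intervals) and $\gamma\in(0,\infty)$, $F_{\lambda,p,\gamma}(\cdot;\Omega)$ $\Gamma$-converges in $L^p(\Omega)$ to $\Psi^{\rm cell}_{p,\gamma}(\cdot;\Omega)$. Since $C^{\rm cell}_{N,p,\gamma}$ there and in \eqref{baochang} is the same constant, this functional coincides with $\Psi^\Omega_{p,\gamma}$, and nothing more is needed.

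For item (ii), Theorem~\ref{thm-negdom} covers every $\gamma\in(-\infty,-1]$ except $\gamma=-p$. This exceptional value lies in $(-\infty,-1]$ for every $p\in[1,\infty)$. I expect the case $\gamma=-p$ to be covered by a separate known result (presumably cited later in the paper), so the plan is to quote it. The point is that for $\gamma=-p$ the exponent $1+\gamma/p$ vanishes, so
\begin{align*}
F_{\lambda,p,-p}(u;\Omega)=\lambda^p\underset{|u(x)-u(y)|>\lambda}{\int_\Omega\int_\Omega}|x-y|^{-p-N}\,dy\,dx.
\end{align*}
This is the Nguyen-type functional, whose $\Gamma$-convergence to a multiple of $\int_\Omega|\nabla u|^p$ (respectively $|Du|(\Omega)$) as $\lambda\to0^+$ is known from \cite{n07,n11,bn20ccm} and related works. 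One must still check that the constant in those works equals $C^{\rm cell}_{N,p,-p}=m_{p,-p}(\ell)$. I would do this by a general principle: if a family $\Gamma$-converges on $Q_0$ to $c\int_{Q_0}|\nabla u|^p$, then evaluating the $\Gamma$-limit at $u=\ell$ gives $m_{p,\gamma}(\ell)=c$. This holds because the $\Gamma$-liminf along arbitrary sequences $\lambda_j$, $u_j\to\ell$ equals the infimum in the definition of $m_{p,\gamma}$, and $|\nabla\ell|=1$ with $|Q_0|=1$. Hence the constants agree, and item (ii) follows.

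For item (iii), $\gamma\in(-1,0)$, the claim is $\Gamma$-convergence to zero. The liminf inequality is trivial because $F\ge0$. For the limsup inequality, I would fix $u\in L^p(\Omega)$, approximate it in $L^p$ by Lipschitz functions $v$, and show $F_{\lambda,p,\gamma}(v;\Omega)\to0$ as $\lambda\to0^+$; a diagonal argument then gives the recovery sequence. Suppose $v$ has Lipschitz constant $L$. The condition $|v(x)-v(y)|>\lambda|x-y|^{1+\gamma/p}$ forces $L|x-y|>\lambda|x-y|^{1+\gamma/p}$, that is, $|x-y|^{-\gamma/p}>\lambda/L$. This imposes no restriction on $|x-y|$, so instead I would bound directly:
\begin{align*}
F_{\lambda,p,\gamma}(v;\Omega)\le\lambda^p\int_\Omega\int_\Omega|x-y|^{\gamma-N}\,dy\,dx\le C\lambda^p\,|\Omega|\,({\rm diam}\,\Omega)^{\gamma}.
\end{align*}
The double integral is finite because $\gamma-N>-N$, so the right-hand side tends to $0$ as $\lambda\to0$. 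In fact this bound holds for every measurable $u$, so one can take the constant sequence $u_j=u$, and no approximation is needed.

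The only real obstacle is therefore the endpoint $\gamma=-p$ in (ii): locating the precise known $\Gamma$-convergence result on bounded Lipschitz domains for the Nguyen functional, and identifying its constant with $C^{\rm cell}_{N,p,-p}$ via the argument at $u=\ell$.
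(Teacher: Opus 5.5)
\paragraph{Item (iii) is wrong.} For $\gamma<0$ you have $\gamma-N<-N$, not $\gamma-N>-N$. So $\int_\Omega\int_\Omega|x-y|^{\gamma-N}\,dy\,dx=\infty$. Your factor $(\operatorname{diam}\Omega)^\gamma$ comes from $\int_0^{D}t^{\gamma-1}\,dt$, which is finite only when $\gamma>0$.

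The conclusion is also false: constant sequences are not recovery sequences. By Lemma~\ref{lem-cellbds}(i), $F_{\lambda,p,\gamma}(\ell;Q_0)\to K_{N,p}/|\gamma|>0$ as $\lambda\to0^+$ for every $\gamma<0$, including $\gamma\in(-1,0)$.

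Your Lipschitz computation has the same sign slip. The inequality $|x-y|^{-\gamma/p}>\lambda/L$ does restrict $|x-y|$, namely to $|x-y|>(\lambda/L)^{p/|\gamma|}$. Integrating over that region gives only the bound $\mathcal H^{N-1}(\mathbb S^{N-1})L^p|\Omega|/|\gamma|$ (compare Lemma~\ref{lem-cutoff}(i)), which does not vanish.

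So the zero $\Gamma$-limit is a relaxation effect that needs oscillating recovery sequences. The paper simply invokes \cite[Theorem~29]{n25} (Lemma~\ref{lem-zeroreg}). A direct argument also works:
\begin{itemize}
\item Take $u_d$ constant on each cell $Q\cap\Omega$ of a cubic grid of mesh $d$, for instance the cell averages. Then $u_d\to u$ in $L^p(\Omega)$ as $d\to0$.
\item Pairs in the same cell contribute nothing.
\item For $|h|<d$, the set of $x\in\Omega$ with $x$ and $x+h$ in different cells has measure $\lesssim|h|/d$.
\end{itemize}
Hence
\begin{align*}
F_{\lambda,p,\gamma}(u_d;\Omega)\lesssim\frac{\lambda^p}{d}\int_{|h|<d}|h|^{\gamma+1-N}\,dh+\lambda^p|\Omega|\int_{|h|\geq d}|h|^{\gamma-N}\,dh\lesssim\lambda^p d^{\gamma}.
\end{align*}
The hypothesis $\gamma>-1$ is exactly what makes the first integral finite. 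Choosing $d_j:=\lambda_j^{p/(2|\gamma|)}$ gives $u_{d_j}\to u$ and $F_{\lambda_j,p,\gamma}(u_{d_j};\Omega)\lesssim\lambda_j^{p/2}\to0$.

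\paragraph{Item (ii) at $\gamma=-p$ is left open.} Your principle for identifying the constant is correct, but it presupposes $\Gamma$-convergence on $Q_0$. The available results (\cite{bn20na,bn18}, recorded as Lemma~\ref{lem-critsmth}) are for bounded smooth domains, so they cover neither $Q_0$ nor a general Lipschitz $\Omega$. The paper bridges this in Proposition~\ref{prop-critdom}:
\begin{itemize}
\item It gets $C^{\rm cell}_{N,p,-p}\geq\kappa_{N,p}$ from the smooth-domain $\Gamma$-liminf on smooth $U_m\uparrow Q_0$, using $F(\cdot;Q_0)\geq F(\cdot;U_m)$.
\item It gets $C^{\rm cell}_{N,p,-p}\leq\kappa_{N,p}$ by restricting to $Q_0$ a recovery sequence for $\ell$ on smooth $V_m\supset Q_0$ with $|V_m\setminus Q_0|\to0$.
\item The $\Gamma$-liminf on $\Omega$ follows in the same way from a smooth interior exhaustion $\Omega_m\uparrow\Omega$.
\item The $\Gamma$-limsup needs no outside input, because Theorem~\ref{thm-limsup} is proved for every $\gamma\in(-\infty,0)$, including $\gamma=-p$.
\end{itemize}
Your items (i) and (ii) for $\gamma\neq-p$ match the paper.
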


Let $u\in\dot{BV}(\mathbb R)$.
Recall that,  by
\cite[Theorems~3.27 and~3.28]{afp00}, we may choose a  good
representative $\widetilde u$, which satisfies, for any $x,y,R\in (0,\infty)$ with $R<x<y$,
\begin{align*}
|\widetilde u(y)-\widetilde u(x)|
\leq |Du|([x,y])\leq |Du|((R,\infty)).
\end{align*}
This,  together with the Cauchy criterion, imply that the limits
\begin{align*}
u_-:=\lim_{x\to-\infty}\widetilde u(x)
\quad\text{and}\quad
u_+:=\lim_{x\to+\infty}\widetilde u(x)
\end{align*}
exist and are finite. It is easy to see that $u_-$ and $u_+$ are independent
of the choice of the good representative. By Theorem \ref{thm-domfull}, we further obtain the following theorem, Theorem \ref{thm-whlfull}, which affirmatively answers the problem posed
by Brezis et al. in \cite[Section~7C]{bsvy24}.

\begin{theorem}\label{thm-whlfull}
Let  $p\in[1,\infty)$ and $\gamma\in\mathbb R\setminus\{0\}$.
Define the functional
$\Psi_{p,\gamma}^{\rm hom}:L^1_{\rm loc}(\mathbb R^N)\to[0,\infty]$ by
setting, for any $u\in L^1_{\rm loc}(\mathbb R^N)$,
\begin{align*}
\Psi_{p,\gamma}^{\rm hom}(u)
:=
\begin{cases}
C_{N,p,\gamma}^{\rm cell}\displaystyle\int_{\mathbb R^N}|\nabla u|^p\,dx,
&p\in(1,\infty)\ \hbox{and}\ u\in\dot W^{1,p}(\mathbb R^N),\\[2mm]
C_{N,1,\gamma}^{\rm cell}|Du|(\mathbb R^N),
&p=1,\ (N,\gamma)\neq(1,-1),\ \hbox{and}\ u\in\dot{BV}(\mathbb R^N),\\[2mm]
C_{1,1,-1}^{\rm cell}\left(|Du|(\mathbb R)+|u_+-u_-|\right),
&(N,p,\gamma)=(1,1,-1)\ \hbox{and}\ u\in\dot{BV}(\mathbb R),\\[1mm]
\infty,&\hbox{otherwise}.
\end{cases}
\end{align*}
Then the following statements hold.
\begin{enumerate}[{\rm(i)}]
\item If $\gamma\in(0,\infty)$, then the family
$F_{\lambda,p,\gamma}(\cdot;\mathbb R^N)$ converges, in the sense of
$\Gamma$-convergence in $L^1_{\rm loc}(\mathbb R^N)$, to
$\Psi_{p,\gamma}^{\rm hom}$ as $\lambda\to\infty$.
\item If $\gamma\in(-\infty,-1]$, then the family
$F_{\lambda,p,\gamma}(\cdot;\mathbb R^N)$ converges, in the sense of
$\Gamma$-convergence in $L^1_{\rm loc}(\mathbb R^N)$, to
$\Psi_{p,\gamma}^{\rm hom}$ as $\lambda\to0^+$.
\item If $\gamma\in(-1,0)$, then the family
$F_{\lambda,p,\gamma}(\cdot;\mathbb R^N)$ converges, in the sense of
$\Gamma$-convergence in $L^1_{\rm loc}(\mathbb R^N)$, to
the zero functional as $\lambda\to0^+$.
\end{enumerate}
\end{theorem}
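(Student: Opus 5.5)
We deduce Theorem~\ref{thm-whlfull} from Theorem~\ref{thm-domfull}, applied on balls or cubes exhausting $\mathbb R^N$. The key structural facts are monotonicity in the domain, $F_{\lambda,p,\gamma}(u;U)\le F_{\lambda,p,\gamma}(u;V)$ for $U\subset V$, and the locality of the limit functional.

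\emph{Liminf inequality.} Let $u_j\to u$ in $L^1_{\rm loc}(\mathbb R^N)$ with $\varliminf_j F_{\lambda_j,p,\gamma}(u_j;\mathbb R^N)=:L<\infty$, and pass to a subsequence realizing the liminf. On each ball $B_R$ we only have $L^1$ convergence, whereas Theorem~\ref{thm-domfull} is stated in $L^p(B_R)$. So we first truncate: $T_M u_j:=\max\{-M,\min\{M,u_j\}\}$ satisfies $|T_Mu_j(x)-T_Mu_j(y)|\le|u_j(x)-u_j(y)|$, hence $F$ does not increase. Moreover $T_Mu_j\to T_Mu$ in $L^p(B_R)$ by dominated convergence.

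Theorem~\ref{thm-domfull}(i)/(ii) then gives $\Psi^{B_R}_{p,\gamma}(T_Mu)\le L$. Letting $M\to\infty$ (lower semicontinuity of the total variation / Dirichlet energy together with $|\nabla T_Mu|\le|\nabla u|$) and then $R\to\infty$ yields $u\in\dot W^{1,p}$ or $\dot{BV}$ with the required bound. Case (iii) is trivial.

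\emph{The exceptional case $(N,p,\gamma)=(1,1,-1)$.} Here the claimed liminf is larger, so we must gain the extra term $C(|u_+-u_-|)$. The kernel is $|x-y|^{-2}$ with threshold $\lambda|x-y|^{0}=\lambda$. For $|x|,|y|$ large on opposite sides, $|u(x)-u(y)|\approx|u_+-u_-|>\lambda$, and
\[
\lambda\iint_{|x-y|>R}|x-y|^{-2}\,dy\,dx
\]
over a window of size $\sim$ the scale where this holds diverges logarithmically unless it is compensated. The correct picture is that at scale $R\to\infty$ the function looks like a jump of size $|u_+-u_-|$ at a "point at infinity", which costs $C^{\rm cell}_{1,1,-1}|u_+-u_-|$.

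To capture this we use the inversion $x\mapsto -1/x$. It maps $\mathbb R\setminus\{0\}$ to itself, and for $\gamma=-1$, $N=1$ the measure $|x-y|^{-2}dx\,dy$ and the condition $|u(x)-u(y)|>\lambda$ are invariant under this Möbius map, since $|x'-y'|=|x-y|/(|x||y|)$ and $dx'=dx/x^2$. Thus $F_{\lambda,1,-1}(u;\mathbb R)=F_{\lambda,1,-1}(u\circ\phi;\mathbb R)$, and the jump at infinity becomes a jump at $0$. We then split the real line into a bounded part and a neighbourhood of infinity, apply the bounded-interval liminf to each (the latter after inversion), and add up, using disjointness of the corresponding regions of $\mathbb R\times\mathbb R$.

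\emph{Limsup inequality.} By a standard density argument in energy (lower semicontinuity of the $\Gamma$-limsup), it suffices to treat $u$ smooth with $\nabla u$ compactly supported, i.e.\ $u$ constant outside a large ball. For $N\ge2$ (or $p>1$) this constant can be taken to be a single constant; in the $(1,1,-1)$ case we instead take $u=u_\pm$ near $\pm\infty$.

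Take the recovery sequence $u_j$ on a large ball $B_R$ from Theorem~\ref{thm-domfull} and glue it to $u$ outside. The main obstacle is controlling the interaction terms $x\in B_R$, $y\notin B_R$ and far-field pairs. For $\gamma>0$ as $\lambda\to\infty$, pairs at distance $\ge1$ with bounded $u$ give at most a $\lambda^p$-measure of the set where $|u(x)-u(y)|>\lambda|x-y|^{1+\gamma/p}$, which is empty for large $\lambda$; nearby pairs are handled by a slightly larger ball and Lipschitz bounds. For $\gamma\le-1$ as $\lambda\to0$, far pairs have $u(x)=u(y)$ and so contribute nothing, except in the $(1,1,-1)$ case, where the far pairs produce exactly $C^{\rm cell}|u_+-u_-|$. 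We expect this far-field computation, matching the constant via inversion, to be the hardest step.
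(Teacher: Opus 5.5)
\textbf{Verdict: the $\Gamma$-liminf part is fine, but the $\Gamma$-limsup at the endpoint $(N,p,\gamma)=(1,1,-1)$ has a genuine gap: your construction has infinite energy there.}

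\emph{What matches the paper.} Your $\Gamma$-liminf plan is essentially the paper's argument. It truncates with $T_M$ and applies Theorem~\ref{thm-domfull} on bounded sets. At $(1,1,-1)$ it splits off $(-R,R)^2\cup E_R^2$ with $E_R:=\mathbb R\setminus[-R,R]$ and uses the inversion invariance of $F_{\lambda,1,-1}$. One detail: you must truncate before inverting, since $L^1_{\rm loc}$ convergence gives no control near $s=0$.

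\emph{The gap.} Your construction keeps $u_j=u_-$ on $(-\infty,-R')$ and $u_j=u_+$ on $(R',\infty)$. Then, as soon as $\lambda_j<|u_+-u_-|$,
\[
F_{\lambda_j,1,-1}(u_j;\mathbb R)\ge 2\lambda_j\int_{-\infty}^{-R'}\int_{R'}^{\infty}\frac{dy\,dx}{(y-x)^2}=2\lambda_j\int_{-\infty}^{-R'}\frac{dx}{R'-x}=\infty .
\]
This is exactly the logarithmic divergence from your own heuristic. After inversion it is an unsmeared jump at $0$, which has infinite energy. So the far pairs do not ``produce exactly $C^{\rm cell}_{1,1,-1}|u_+-u_-|$'', and you give no mechanism that smears the jump at infinity. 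This is also why the paper's two-constant gluing, Lemma~\ref{lem-twoconst}, excludes $\gamma=-1$: its far-field bound $2(2R)^{\gamma+1}\lambda_j^p/[\gamma(\gamma+1)]$ degenerates there.

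\emph{The missing idea.} Move the jump at infinity to a finite point, where the bounded-domain theory prices it at $C^{\rm cell}_{1,1,-1}$. Let $w_m:=u$ on $(-m,m)$ and $w_m:=u(-m+)$ on $\mathbb R\setminus(-m,m)$. Then:
\begin{itemize}
\item $w_m\to u$ in $L^1_{\rm loc}(\mathbb R)$;
\item $Dw_m$ is the restriction of $Du$ to $(-m,m)$ plus $[u(-m+)-u(m-)]\delta_m$, so $|Dw_m|(\mathbb R)\to|Du|(\mathbb R)+|u_+-u_-|$;
\item $w_m$ takes a single constant value outside $(-m,m)$.
\end{itemize}
The one-constant gluing, Lemma~\ref{lem-oneconst}, does hold at $\gamma=-1$: the only surviving far pairs have one point in the bounded cube $Q$ and cost $\lesssim\lambda_j|Q|/a\to0$. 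It gives recovery sequences with energy at most $C^{\rm cell}_{1,1,-1}|Dw_m|(\mathbb R)$, and a diagonal argument concludes. No far-field computation is needed. Your inversion could also be made constructive: invert about a point where the approximant is constant, so the two constants at $\pm\infty$ become a jump at $0$ plus a single constant near infinity, then build the recovery sequence there and pull back. That is a different argument from the one you sketch.

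\emph{Smaller omissions.}
\begin{itemize}
\item Your limsup never treats $\gamma\in(-1,0)$, where $u\in L^1_{\rm loc}$ is arbitrary. The paper handles this with $\mathbf 1_{Q_M}T_Mu$ and Lemma~\ref{lem-zeroreg} on $\mathbb R^N$.
\item The interface with the bounded-domain recovery sequence cannot be controlled by ``Lipschitz bounds'', because that sequence is not Lipschitz. You need a cutoff interpolation $\eta T_Mv_j+(1-\eta)w$, with $w$ constant on a collar and the threshold lowered to $\alpha\lambda_j$. Its error term is $\lesssim L_\eta^p\|T_Mv_j-w\|_{L^p(Q)}^p/[|\gamma|(1-\alpha)^p]\to0$.
\end{itemize}
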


This article has two main contributions.
First, we establish the $\Gamma$-limit on bounded domains for
$\gamma\in(-\infty,-1]\setminus\{-p\}$.
Our proof builds on the framework developed in our previous work
\cite{lyyyz26}. 
To treat negative parameters, we first establish
lower bounds that yield Sobolev or BV regularity for limits of
sequences with bounded nonlocal energies. These bounds also
ensure that the affine cell constant is strictly positive.
We further estimate the additional energy arising from boundary
modifications and interactions between different cubes.
These estimates allow us to obtain matching lower and upper
bounds with the same affine cell constant.
Second, we establish the whole-space $\Gamma$-limits with respect
to $L^1_{\mathrm{loc}}(\mathbb R^N)$ convergence, without assuming
global integrability of the limit function.
This requires proving the lower bound under local $L^1$ convergence
and controlling the nonlocal interactions involving the exterior
of bounded domains when constructing recovery sequences.
At the exceptional endpoint $(N,p,\gamma)=(1,1,-1)$,
we identify an additional contribution
$C_{1,1,-1}^{\mathrm{cell}}|u_+-u_-|$ to the $\Gamma$-limit.
Thus, at this endpoint, the limiting energy depends on both
the total variation and the difference between the limiting
values of the function at $-\infty$ and $+\infty$.

The remainder of this article is organized as follows.
In Section \ref{sec-liminf}, we prove the $\Gamma$-liminf inequalities
on bounded domains for negative parameters. Specifically, in Subsection
\ref{sec-coerc}, we establish an auxiliary lower bound; in Subsections
\ref{sec-pinf} and \ref{sec-oneinf}, we treat the cases $p\in(1,\infty)$
and $p=1$, respectively.
In Section \ref{sec-limsup}, we construct recovery sequences for continuous
piecewise affine functions and then obtain the $\Gamma$-limsup inequality
for general Sobolev and BV functions by density and diagonal arguments.
In Section \ref{sec-domain}, we complete the proof of Theorem
\ref{thm-domfull} by treating the critical case $\gamma=-p$ and combining
the results for the remaining parameter ranges.
Finally, in Section \ref{sec-whole}, we pass from bounded domains to the
whole space and prove Theorem \ref{thm-whlfull}.
We first treat the range $\gamma\in(-1,0)$, in which the $\Gamma$-limit
is the zero functional. We then deal with the nondegenerate ranges in
Subsection \ref{sec-nondeg} and the exceptional endpoint
$(N,p,\gamma)=(1,1,-1)$ in Subsection \ref{sec-endpoint}.

At the end of this section, we make some notational conventions. Let
${\mathbb N}:=\{1,2,\ldots\}$ and ${\mathbb Z}_+:={\mathbb N}\cup\{0\}$.
For any $\beta:=(\beta_1,\ldots,\beta_N)\in\mathbb{Z}_+^N:=(\mathbb{Z}_+)^N$,
let $|\beta|:=\beta_1+\cdots+\beta_N$
and, for any $x:=(x_1,\ldots,x_N)$, let
$x^\beta:=x_1^{\beta_1}\cdots x_N^{\beta_N}$ and
$\partial^\beta:=\left(\frac{\partial}{\partial x_1}\right)^{\beta_1}\cdots
\left(\frac{\partial}{\partial x_N}\right)^{\beta_N}$.
We always denote by $C$ a positive constant
which is independent of the main parameters involved,
but it may vary from line to line.
We also use $C_{\alpha,\beta,\ldots}$ to denote a
positive constant depending on the indicated parameters
$\alpha$, $\beta,\ldots$.
The notation $f\lesssim g$ means that $f\leq Cg$.
If $f\lesssim g$ and $g\lesssim f$,
we then write $f\sim g$. If $f\leq Cg$ and $g=h$
or $g\leq h$, we then write $f\lesssim g=h$ or $f\lesssim g\leq h$.
For any $x\in{\mathbb R}^N$ and $r\in(0,\infty)$,
we denote by
$B(x,r):=\{y\in{\mathbb R}^N:\ |y-x|<r\}$
the ball with center $x$ and radius $r$.
We use $\mathbf{1}_E$ to denote the
characteristic function of a measurable set $E\subset{\mathbb R}^N$
and $\mathbf{0}$ to denote the \emph{origin} of ${\mathbb R}^N$.
We denote by $\mathcal L^N$ the $N$-dimensional Lebesgue measure
and by $\mathcal H^k$ the $k$-dimensional Hausdorff measure.
For any $x\in\mathbb{R}^N$ and any nonempty set $E\subset\mathbb{R}^N$, let
$\mathrm{dist}(x,E):=\inf\{|x-y|:y\in E\}$. If
$\mu$ is a Radon measure and $A$ is a Borel set, then
$\mu\llcorner A$ denotes the restriction of $\mu$ to $A$, namely
\begin{align*}
 (\mu\mathopen{\scalebox{1.6}{$\llcorner$}}A)(E):=\mu(E\cap A)
\end{align*}
for every Borel set $E$. If $\mu$ is a positive Radon measure and
$f\in L^1(\mu;\mathbb R^m)$, then $f\mu$ denotes the
$\mathbb R^m$-valued Radon measure defined by
\begin{align*}
 (f\mu)(E):=\int_E f\,d\mu.
\end{align*}
For a finite signed or vector-valued Radon measure $\sigma$, we
denote its total variation measure by $|\sigma|$.
Let
$$\mathcal O(N):=\left\{R\ \text{is an}\ N\times N \ \text{matrix}: R^TR=\mathrm{I}_N\right\},
$$
where $R^T$ denotes the transpose matrix of $R$
and $\mathrm{I}_N$ denotes the $N\times N$ identity matrix.
For any measurable set $E\subset\mathbb{R}^N$
and any $f\in L^1(E)$, we denote the integral $\int_{E}f(x)\,dx$ simply by
$\int_{E}f\,dx$ and, when $|E|\in(0,\infty)$, we always use the following notation
$$
\fint_Ef\,dx:=\frac{1}{|E|}\int_{E}f(x)\,dx.
$$
For any bounded nonempty open interval $I\subset\mathbb R$ and any
measurable function $f:I\to\mathbb R$, define its essential supremum
and essential infimum by
\begin{align*}
\operatorname*{ess\,sup}_I f
:=\inf\left\{M\in\mathbb R:f(x)\leq M\text{ for almost every }x\in I\right\}
\end{align*}
and
\begin{align*}
\operatorname*{ess\,inf}_I f
:=\sup\left\{m\in\mathbb R:f(x)\geq m\text{ for almost every }x\in I\right\}.
\end{align*}
Here $\operatorname*{ess\,inf}\varnothing:=\infty$ and $\operatorname*{ess\,sup}\varnothing:=-\infty$.
Finally, in all proofs we consistently retain the notation
introduced in the original theorem (or related statement).

\section{The  $\Gamma$-liminf inequality on bounded domains}
\label{sec-liminf}
The main result of this section  is the following two theorems.
\begin{theorem}\label{thm-pliminf}
Let $p\in(1,\infty)$,
$\gamma\in(-\infty,-1]\setminus\{-p\}$, and
$\Omega\subset\mathbb R^N$ be a bounded open interval when $N=1$ or a
bounded Lipschitz domain when $N\geq2$.
Let
$\{\lambda_j\}_{j\in\mathbb N}\subset(0,\infty)$ satisfy
$\lambda_j\to0$, and let $\{u_j\}_{j\in\mathbb N}\subset L^p(\Omega)$
satisfy $u_j\to u$ in $L^p(\Omega)$ for some $u\in L^p(\Omega)$. Then
\begin{align}\label{xiaji}
\varliminf_{j\to\infty}F_{\lambda_j,p,\gamma}\left(u_j;\Omega\right)
\geq C_{N,p,\gamma}^{\rm cell}\int_\Omega|\nabla u|^p\,dx,
\end{align}
where the right-hand side is understood to be $\infty$ if
$u\notin W^{1,p}(\Omega)$.
\end{theorem}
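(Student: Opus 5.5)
Without loss of generality the left-hand side of \eqref{xiaji} is finite; passing to a subsequence, we assume the $\liminf$ is a limit and $\sup_j F_{\lambda_j,p,\gamma}(u_j;\Omega)<\infty$. The proof has two stages: compactness/regularity, then a blow-up (Fonseca--M\"uller) argument that identifies the density with the cell constant $C_{N,p,\gamma}^{\rm cell}=m_{p,\gamma}(\ell)$.

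\textbf{Stage 1: coercivity.} We first prove the auxiliary lower bound announced for Subsection~\ref{sec-coerc}. For $\gamma\le-1$, $\gamma\neq-p$, and $\lambda$ small, the set where $|u(x)-u(y)|>\lambda|x-y|^{1+\gamma/p}$ forces a large jump at small distances. The natural tool is a one-dimensional slicing estimate: for directions $\omega\in\mathbb S^{N-1}$ and $h$ in a dyadic range, the energy controls a quantity of the form $\int|u(x+h\omega)-u(x)|^p\,dx\lesssim |h|^p\cdot(\text{energy})$. This is obtained by comparing the level set $\{|\delta_h u|>\lambda |h|^{1+\gamma/p}\}$ with a layer-cake/chaining argument over scales $|h|\sim 2^{-k}$. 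The restriction $\gamma\neq -p$ is needed so that the weight $|h|^{\gamma-N}$ summed against $\lambda^p$ produces a geometric, rather than logarithmic, series.

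Applied to $u_j$ and passing to the limit via the $L^p$ convergence, this gives a uniform bound on difference quotients of $u$. By the Bourgain--Brezis--Mironescu-type characterisation \cite{bbm01}, or directly via difference quotients for $p>1$, we conclude $u\in W^{1,p}(\Omega)$. This also shows that $C^{\rm cell}_{N,p,\gamma}>0$.

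\textbf{Stage 2: blow-up.} Define the measures
\[
\mu_j(A):=\lambda_j^p\iint_{A\times\Omega,\ |u_j(x)-u_j(y)|>\lambda_j|x-y|^{1+\gamma/p}}|x-y|^{\gamma-N}\,dy\,dx .
\]
Up to a subsequence, $\mu_j\rightharpoonup\mu$ weakly-$*$ in $\Omega$. It then suffices to show that
\[
\frac{d\mu}{d\mathcal L^N}(x_0)\ge C^{\rm cell}_{N,p,\gamma}|\nabla u(x_0)|^p
\]
for a.e.\ $x_0$, namely at Lebesgue points of $\nabla u$ that are also points of $L^p$-differentiability.

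At such a point, rotate so that $\nabla u(x_0)=|\nabla u(x_0)|e_N$ and consider the cubes $x_0+rQ_0$. Rescale by $v_{j,r}(z):=[u_j(x_0+rz)-u(x_0)]/(r|\nabla u(x_0)|)$. A direct change of variables gives
\[
\lambda^p\text{-energy on }x_0+rQ_0=r^N|\nabla u(x_0)|^p\,F_{\lambda',p,\gamma}(v_{j,r};Q_0)\quad\text{with}\quad \lambda'=\lambda_j r^{\gamma/p}/|\nabla u(x_0)|.
\]
Here one checks that both the threshold and the weight scale correctly, using $|rz|^{1+\gamma/p}$ and $(r|z|)^{\gamma-N}r^{2N}$. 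Since $\gamma<0$, for fixed $r$ we still have $\lambda'\to0$.

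Choose a diagonal sequence $r_k\to0$, $j_k\to\infty$ such that $v_{j_k,r_k}\to\ell$ in $L^p(Q_0)$, which follows from differentiability, and such that the rescaled energies approximate $\mu(x_0+r_kQ_0)/r_k^N$. Also restrict the $y$-integration to $x_0+r_kQ_0$, which only decreases the energy. The definition of $m_{p,\gamma}(\ell)$ then yields the density bound. Integrating over $\Omega$ gives \eqref{xiaji}, since $\mu(\Omega)\le\varliminf_j F_{\lambda_j,p,\gamma}(u_j;\Omega)$.

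The main obstacle is Stage 1. For negative $\gamma$ the threshold $\lambda|x-y|^{1+\gamma/p}$ blows up at small distances when $\gamma<-p$, and degenerates differently when $-p<\gamma\le-1$. The two regimes therefore need separate chaining estimates. The restriction $\gamma\le-1$ presumably enters through integrability of $|h|^{\gamma-N}$ near infinity versus $0$ in the one-dimensional reduction. I would first attempt the one-dimensional case on an interval and then lift it to $N$ dimensions by slicing, using the Lipschitz boundary through an extension or cone condition.
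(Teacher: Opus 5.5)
Your Stage~2 is sound. The Fonseca--M\"uller blow-up with a weak-$*$ limit $\mu$ of the localized energy measures is a legitimate substitute for the paper's route (Lemmas~\ref{lem-scaling} and~\ref{lem-mplsc} plus a Morse-type covering). Your diagonal sequence is directly admissible in the definition of $m_{p,\gamma}(\ell)$, so you do not even need lower semicontinuity of $m_{p,\gamma}$; just use rotated cubes $x_0+rRQ_0$ and $\varlimsup_j\mu_j(K)\le\mu(K)$ for compact $K$.

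The genuine gap is Stage~1. You only sketch it, and it cannot be bypassed, since the right-hand side of \eqref{xiaji} is $\infty$ off $W^{1,p}(\Omega)$. A dyadic chaining of the kind you describe does work for $\gamma<-p$. After truncating to $T_Mf_j$ (which does not increase $F$), the inequality
\[
\left(|A+B|^p-\lambda^ph^{p+\gamma}\right)_+\le2^{p-1}\sum_{C\in\{A,B\}}\left(|C|^p-\lambda^p(h/2)^{p+\gamma}\right)_++\left(2^{-\gamma}-1\right)\lambda^ph^{p+\gamma}\mathbf{1}_{\{|A+B|>\lambda h^{1+\gamma/p}\}}
\]
pushes the scale $h$ down dyadically. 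The chain closes because $\lambda h^{1+\gamma/p}\to\infty$ as $h\to0$, so the threshold eventually exceeds $2\|g\|_{L^\infty}$; this is Lemma~\ref{lem-diffbd}. For $-p<\gamma\le-1$, however, $1+\gamma/p>0$, the threshold tends to $0$ at small scales, and the chain never closes. More fundamentally, no argument insensitive to $\gamma\le-1$ can work in this regime: for $\gamma\in(-1,0)\subset(-p,0)$ the $\Gamma$-limit is the zero functional (Lemma~\ref{lem-zeroreg}), so there is no coercivity there. Your guesses about the hypotheses are also off. In the paper's argument, $\gamma\ne-p$ only reflects the split by the sign of $1+\gamma/p$ between two different mechanisms, and $\gamma\le-1$ is not an integrability condition.

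The missing idea for $-p<\gamma\le-1$ (hence $p>1$) is a reduction to the critical functional $F_{\delta,1,-1}$, combined with a deep external input. On an interval $I$, restrict to $|x-y|\le r_j$ and use two comparisons:
\begin{itemize}
\item $|x-y|^{\gamma-1}\ge r_j^{\gamma+1}|x-y|^{-2}$, which is exactly where $\gamma\le-1$ enters;
\item $\lambda_j|x-y|^{1+\gamma/p}\le\delta_j:=\lambda_jr_j^{1+\gamma/p}$, which is where $\gamma>-p$ enters.
\end{itemize}
Bounding the long-range part of $F_{\delta_j,1,-1}$ by $2|I|\delta_j/r_j$ then gives
\[
F_{\lambda_j,p,\gamma}(f_j;I)\ge\left(\lambda_jr_j^{\gamma/p}\right)^{p-1}\left[F_{\delta_j,1,-1}(f_j;I)-2|I|\lambda_jr_j^{\gamma/p}\right].
\]
Now choose $r_j\to0$ with $\lambda_jr_j^{\gamma/p}$ equal to a fixed multiple of a number $A$ below the essential oscillation. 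Invoking the Brezis--Nguyen estimate $\varliminf_jF_{\delta_j,1,-1}(f_j;I)\ge c(\operatorname*{ess\,sup}_If-\operatorname*{ess\,inf}_If)$ (Lemma~\ref{lem-critosc}) yields
\[
\varliminf_jF_{\lambda_j,p,\gamma}(f_j;I)\ge \frac{c_p\left(\operatorname*{ess\,sup}_If-\operatorname*{ess\,inf}_If\right)^p}{|I|^{p-1}},
\]
which is Lemma~\ref{lem-seqosc}. Apply this on the intervals $(x-h,x+2h)$ and integrate in $x$ using Tonelli: $\int_{I'}F(f_j;(x-h,x+2h))\,dx\le3h\,F(f_j;I)$. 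This gives the uniform difference-quotient bound in one dimension. Slicing (Lemma~\ref{lem-slicing}, valid on any bounded open set, so no extension or cone condition is needed) lifts it to $\Omega$.

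The oscillation bound for $F_{\delta,1,-1}$ is a genuinely nontrivial Nguyen-type BV phenomenon and is not reachable by a layer-cake or chaining argument. As written, your Stage~1 therefore does not establish $u\in W^{1,p}(\Omega)$ in the range $-p<\gamma\le-1$.
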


\begin{theorem}\label{thm-onelinf}
Let $\gamma\in(-\infty,-1)$ and
$\Omega\subset\mathbb R^N$ be a bounded open interval when $N=1$ or a
bounded open set when $N\geq2$. Let
$\{\lambda_j\}_{j\in\mathbb N}\subset(0,\infty)$ satisfy
$\lambda_j\to0$, and let $\{u_j\}_{j\in\mathbb N}\subset L^1(\Omega)$
satisfy $u_j\to u$ in $L^1(\Omega)$ for some $u\in L^1(\Omega)$. Then
\begin{align}\label{duanxia}
\varliminf_{j\to\infty}F_{\lambda_j,1,\gamma}\left(u_j;\Omega\right)
\geq C_{N,1,\gamma}^{\rm cell}|Du|(\Omega),
\end{align}
where the right-hand side is understood to be $\infty$ when $u\notin BV(\Omega)$.
\end{theorem}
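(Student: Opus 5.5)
Theorem~\ref{thm-onelinf} will be proved in three steps: compactness, a blow-up at $|Du|$-almost every point, and a comparison with the cell constant $C_{N,1,\gamma}^{\rm cell}$. No boundary regularity of $\Omega$ is needed, since the liminf inequality is local and $F_{\lambda,1,\gamma}(\cdot;U)\le F_{\lambda,1,\gamma}(\cdot;\Omega)$ for every open $U\subset\Omega$.

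\textbf{Step 1 (compactness).} We may assume the liminf is finite, pass to a subsequence realizing it, and suppose $\sup_j F_{\lambda_j,1,\gamma}(u_j;\Omega)<\infty$. From the auxiliary lower bound of Subsection~\ref{sec-coerc} we will deduce that $u\in BV(\Omega)$, together with the local estimate
\begin{align*}
|Du|(U)\lesssim \varliminf_{j\to\infty}F_{\lambda_j,1,\gamma}(u_j;U)
\end{align*}
for every open $U\Subset\Omega$. For $\gamma<-1$ this bound should come from a one-dimensional slicing argument. On each segment, the set where $|u_j(x)-u_j(y)|>\lambda_j|x-y|^{1+\gamma}$ has weight $|x-y|^{\gamma-N}$. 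Since $1+\gamma<0$, the threshold $\lambda_j|x-y|^{1+\gamma}$ is small at large distances, which forces the energy to control the oscillation of $u_j$ at scale $r_j:=\lambda_j^{1/|1+\gamma|}\to0$. The mollified or discretized functions at scale $r_j$ then have uniformly bounded total variation and converge to $u$ in $L^1$. Lower semicontinuity of the total variation gives $u\in BV(\Omega)$. If that subsection is stated with $|\nabla u|^p$ in mind, I will use its $p=1$ version.

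\textbf{Step 2 (blow-up).} Next I will define the measures $\mu_j(A):=F_{\lambda_j,1,\gamma}(u_j;A)$, restricted to the diagonal interaction $A\times A$, so that they are superadditive on disjoint open sets. Along a subsequence $\mu_j\rightharpoonup\mu$ weakly-* as measures on $\Omega$. It then suffices to show that
\begin{align*}
\frac{d\mu}{d|Du|}(x_0)\ge C_{N,1,\gamma}^{\rm cell}\quad\text{for }|Du|\text{-a.e. }x_0.
\end{align*}
I will fix such an $x_0$, either a Lebesgue point of $\nabla u$ or a jump point, with polar direction $\nu=e_N$ after a rotation. On the cube $x_0+\rho R Q_0$, I rescale by setting $v_{j,\rho}(z):=[u_j(x_0+\rho z)-m]/(\rho\,|Du|(Q_\rho)/\rho^N)$.

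The scaling law of $F$ drives the argument. Under $x=x_0+\rho z$ and division of $u$ by $t$, one has
\begin{align*}
F_{\lambda,1,\gamma}(u;Q_\rho)=\rho^{N}\cdot\frac{t}{\rho}\,F_{\lambda',1,\gamma}(v;Q_0)\quad\text{with}\quad \lambda'=\lambda\rho^{1+\gamma}/t .
\end{align*}
Since $\lambda'\to0$ as $j\to\infty$ for fixed $\rho$, the rescaled problem stays in the regime $\lambda\to0$. The blow-ups converge in $L^1(Q_0)$ to a monotone one-dimensional profile: affine at Lebesgue points, and $\mathbf 1_{\{z_N>0\}}$ type with Cantor-like monotone behavior at other points.

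\textbf{Step 3 (cell comparison, the main obstacle).} The remaining step is to show that the cell value $m_{1,\gamma}$ of every monotone one-dimensional profile $g(z_N)$ with total increase $1$ is at least $m_{1,\gamma}(\ell)$. This is essential for $p=1$, because the jump and Cantor parts must produce the same constant as the affine part. My first attempt is a rearrangement or coarea argument in the $z_N$ direction. For any $g$, I decompose the cube into thin slabs in which $g$ is approximately affine, paste recovery sequences, and use the convexity that $p=1$ provides: the cell energy is subadditive under concatenation of monotone pieces.

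For the reverse inequality at jump points, I will use a diagonal argument. A sequence $w_j\to\mathbf 1_{\{z_N>0\}}$ can be reinterpreted, after changing $\lambda$ by a factor of $\varepsilon$, as converging to the affine profile $\ell/\varepsilon$ on a slab of width $\varepsilon$. Translation invariance and $\varepsilon\to0$ together with the estimate of cross-cube interactions then yield $m_{1,\gamma}(\text{jump})\ge C_{N,1,\gamma}^{\rm cell}$.

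The restriction $\gamma<-1$ should enter exactly here, guaranteeing that the long-range interactions between slabs are negligible. Its failure at $\gamma=-1$ is consistent with the anomaly in Theorem~\ref{thm-whlfull}. Combining Steps 2 and 3 and integrating against $|Du|$ gives \eqref{duanxia}.
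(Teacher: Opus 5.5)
Your Steps~1 and~2 follow the paper's route: Proposition~\ref{prop-coerc} gives $u\in BV(\Omega)$, and then you blow up to a monotone profile as in Lemma~\ref{lem-tangent}, using Lemmas~\ref{lem-scaling} and~\ref{lem-mplsc}. There is one fixable point. The set function $A\mapsto F_{\lambda_j,1,\gamma}(u_j;A)$ is only superadditive, not a measure, so you cannot take weak-$*$ limits of it. You would need genuine measures, for example $\nu_j(E):=\lambda_j\int_E\int_\Omega\mathbf 1_{\{|u_j(x)-u_j(y)|>\lambda_j|x-y|^{\gamma+1}\}}|x-y|^{\gamma-N}\,dy\,dx$, or the covering argument the paper uses.

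The genuine gap is Step~3, which is exactly Proposition~\ref{prop-profend}. None of the mechanisms you propose proves it, because they all point in the wrong direction.
\begin{itemize}
\item Pasting recovery sequences for pieces and using subadditivity under concatenation bounds the cell value of the \emph{pasted} function from above. Decomposing $W_g$ into slabs therefore gives inequalities $m_{1,\gamma}(W_g)\le\cdots$, which is the $\Gamma$-limsup direction.
\item For the step profile arising at jump points, there is in any case no slab on which $g$ is approximately affine with nonzero slope.
\item Your jump-point argument has the same defect. A sequence converging in $L^1$ to $\mathbf 1_{\{z_N>0\}}$ does not converge to a ramp of width $\varepsilon$. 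It is the ramps that converge to the step as $\varepsilon\to0$, so Lemma~\ref{lem-mplsc} only gives $m_{1,\gamma}(\mathbf 1_{\{z_N>0\}})\le\varliminf_{\varepsilon\to0}m_{1,\gamma}(\text{ramp}_\varepsilon)$. That is again an upper bound.
\end{itemize}

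The missing idea is to run the subadditivity from $W_g$ \emph{towards} $\ell$.
\begin{enumerate}
\item Normalize $g$ to $\widetilde q$, increasing from $0$ to $1$. Form the staircase $q_m$ made of $m$ copies of $\widetilde q$, each rescaled by $1/m$ and shifted by $a_k=-\frac12+\frac km$. Then $\sup_{I_0}|q_m(t)-t|\le\frac1m$ and $W_{q_m}\to\ell$.
\item Take a near-optimal sequence $(\widehat\lambda_j,\widehat v_j)$ for $W_{\widetilde q}$, modified to coincide with $W_{\widetilde q}$ in a collar of $\partial Q_0$. This is Lemma~\ref{lem-nearmin}, which relies on Lemma~\ref{lem-collar} and the cutoff Lemma~\ref{lem-cutoff}.
\item Place rescaled copies in the $m^N$ tiles $Q_\alpha$ with parameter $m^\gamma\widehat\lambda_j$. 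By Lemma~\ref{lem-scaling}, the intra-tile energies sum exactly to $F_{\widehat\lambda_j,1,\gamma}(\widehat v_j;Q_0)$.
\item Because of the collar matching, short-range cross-tile interactions are dominated by those of $W_{q_m}$ itself, and these vanish by Lemma~\ref{lem-skeleton}. Long-range interactions are of order $\widehat\lambda_j\rho^{\gamma}$ and vanish for every $\gamma<0$.
\item Hence $m_{1,\gamma}(W_{q_m})\le m_{1,\gamma}(W_{\widetilde q})$. Lower semicontinuity then gives $C_{N,1,\gamma}^{\rm cell}=m_{1,\gamma}(\ell)\le\varliminf_{m\to\infty}m_{1,\gamma}(W_{q_m})\le m_{1,\gamma}(W_{\widetilde q})$.
\end{enumerate}

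In particular, $\gamma<-1$ is used through Lemma~\ref{lem-locbvneg} to kill the \emph{short-range} interactions across the skeleton and the collar. It is not needed for the long-range ones, as you suggest.

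Also, nothing fails for the bounded-domain statement at $\gamma=-1$; see Proposition~\ref{prop-critdom}. The endpoint anomaly in Theorem~\ref{thm-whlfull} is a whole-line effect, coming from the invariance of $F_{\lambda,1,-1}(\cdot;\mathbb R)$ under $x\mapsto1/x$.
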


The proofs of Theorems \ref{thm-pliminf} and
\ref{thm-onelinf} are respectively given in Subsections
\ref{sec-pinf} and \ref{sec-oneinf}, while in Subsection
\ref{sec-coerc} we establish
an auxiliary estimate.

\subsection{An auxiliary estimate for the $\Gamma$-liminf inequality}
\label{sec-coerc}

The following proposition gives an auxiliary lower bound with an unspecified positive constant,
whereas Theorems~\ref{thm-pliminf} and~\ref{thm-onelinf} give the lower bound
with the cell constant $C_{N,p,\gamma}^{\mathrm{cell}}$.

\begin{proposition}\label{prop-coerc}
Let  $p\in[1,\infty)$ and
$\gamma\in(-\infty,-1]\setminus\{-p\}$. Let
$\Omega\subset\mathbb R^N$ be a bounded open interval when $N=1$ or a
bounded open set when $N\geq2$. Let
$\{\lambda_j\}_{j\in\mathbb N}\subset(0,\infty)$ satisfy
$\lambda_j\to0$, and let $\{u_j\}_{j\in\mathbb N}\subset L^p(\Omega)$
satisfy $u_j\to u$ in $L^p(\Omega)$ for some $u\in L^p(\Omega)$. Then the
following statements hold.
\begin{itemize}
\item[$\mathrm{(i)}$] If $p\in(1,\infty)$, then
\begin{align*}
\varliminf_{j\to\infty}F_{\lambda_j,p,\gamma}\left(u_j;\Omega\right)
\geq c_{N,p,\gamma}\int_\Omega|\nabla u|^p\,dx,
\end{align*}
where the right-hand side is understood to be $\infty$ if
$u\notin W^{1,p}(\Omega)$ and $c_{N,p,\gamma}$ is a positive constant depending only on $N,p,$ and $\gamma.$
\item[$\mathrm{(ii)}$] If $p=1$, then
\begin{align*}
\varliminf_{j\to\infty}F_{\lambda_j,1,\gamma}\left(u_j;\Omega\right)
\geq c_{N,\gamma}|Du|(\Omega),
\end{align*}
where the right-hand side is understood to be $\infty$ if
$u\notin BV(\Omega)$ and $c_{N,\gamma}$ is a positive constant depending only on $N$ and $\gamma.$
\end{itemize}
\end{proposition}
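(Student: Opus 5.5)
The plan is to derive the coercivity bound from a one-dimensional estimate applied on lines. Fubini reduces it to directions $\omega\in\mathbb S^{N-1}$ and lines parallel to $\omega$. The limit of difference quotients is then identified by duality with test functions. Write $a:=1+\gamma/p$, so $a\le 1-1/p<1$; the assumption $\gamma\neq-p$ means $a\neq0$. In polar coordinates $y=x+r\omega$,
\begin{align*}
F_{\lambda,p,\gamma}(v;\Omega)=\lambda^p\int_{\mathbb S^{N-1}}\int_\Omega\int_{\{r>0:\,x+r\omega\in\Omega,\ |v(x+r\omega)-v(x)|>\lambda r^{a}\}} r^{\gamma-1}\,dr\,dx\,d\mathcal H^{N-1}(\omega).
\end{align*}
Since $\gamma\le-1$, the weight $r^{\gamma-1}$ is nonintegrable at $0$. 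Finite energy with $\lambda\to0$ should therefore force the set of small $r$ with a \emph{large} jump $|\Delta_{r\omega}v(x)|>\lambda r^a$ to be thin, at a quantitative rate.

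\textbf{Step 1 (scale selection).} Fix $\omega$ and a scale $\delta\in(0,1)$. Restrict the $r$-integral to the annulus $r\in(\delta,2\delta)$, where $r^{\gamma-1}\sim\delta^{\gamma-1}$. This gives
\begin{align*}
\mathcal L^{N}\otimes\mathcal L^1\bigl(\{(x,r):r\in(\delta,2\delta),\ |\Delta_{r\omega}u_j(x)|>C\lambda_j\delta^{a}\}\bigr)\lesssim \lambda_j^{-p}\delta^{1-\gamma}F_j^\omega,
\end{align*}
where $F_j^\omega$ is the directional energy. I will choose $\delta=\delta_j$ by matching the two competing quantities. The threshold $\lambda_j\delta_j^{a}$ divided by $\delta_j$ should tend to $0$, so that the "small jump" part of a difference quotient disappears. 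Simultaneously, the bad-set density should behave like $\delta_j^{p}$ times the energy. Setting $\lambda_j\delta_j^{a-1}=\varepsilon_j^{}\to0$ suitably, and using $p(a-1)=\gamma-p$, i.e.\ $\lambda_j^{-p}\delta_j^{1-\gamma}=\varepsilon_j^{-p}\delta_j^{1-p}$, shows this is consistent. The identity $p(a-1)=\gamma-p$ is exactly the relation making the functional scale like $|\nabla u|^p$.

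\textbf{Step 2 (one-dimensional lemma).} This is the core of the argument and the step I expect to be the main obstacle. On a segment $I$, suppose the pairs $(s,s+r)$ with $r\in(\delta,2\delta)$ and jump $>\varepsilon\delta$ have small measure, controlled by $E$. I then want to bound the averaged difference quotients at a fixed macroscopic scale $H\gg\delta$:
\begin{align*}
\int_I\Bigl|\frac{v(s+H)-v(s)}{H}\Bigr|^p\wedge M\,ds\lesssim E+o(1).
\end{align*}
The mechanism is a chaining (telescoping) argument, as in Nguyen's treatment of weak-type functionals. Write $H$ as a sum of about $H/\delta$ steps with random lengths in $(\delta,2\delta)$. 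Average over the random chains, so that by Fubini most chains avoid the bad set except on a fraction controlled by $E$. Small steps then contribute at most $(H/\delta)\varepsilon\delta=\varepsilon H\to0$. Large steps are counted with multiplicity, and the count is turned into a $p$-th power bound via Hölder or a maximal-type counting argument.

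For $p>1$ this counting must produce $\int|\nabla u|^p$ rather than merely a bound on $|Du|$. I would try a discrete weak-to-strong step, and I am least sure how to make this step work. Here the level sets are summed over dyadic thresholds $t=2^k\varepsilon\delta$; at each threshold the measure bound $\lambda^{-p}\delta^{1-\gamma}\cdot(\text{energy at level }t)$ is used, and for $\gamma\ne-p$ the layer-cake sum $\sum_k 2^{kp}\cdot 2^{-k\beta}$ converges for the exponent $\beta$ coming from $r>(\lambda/t)^{1/|a|}$ or $r<(t/\lambda)^{1/a}$.

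\textbf{Step 3 (passage to the limit).} Integrating the 1D bound over lines and directions gives, for $\Omega'\Subset\Omega$ and fixed $H$, a uniform bound on $\int_{\Omega'}|(u_j(x+H\omega)-u_j(x))/H|^p\wedge M\,dx$. Since $u_j\to u$ in $L^p$, let $j\to\infty$, then $M\to\infty$. This yields $\sup_H\int_{\Omega'}|\Delta_{H\omega}u/H|^p\,dx\le c^{-1}\varliminf_j F_j$. The standard characterization of $W^{1,p}$ for $p>1$, or of $BV$ for $p=1$, by difference quotients then gives $u\in W^{1,p}(\Omega')$ (respectively $BV(\Omega')$), with norm bound $c_{N,p,\gamma}\int_{\Omega'}|\nabla u|^p$ after integrating $|\nabla u\cdot\omega|^p$ over $\omega$. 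For $p=1$ one takes $\int|\nabla u\cdot\omega|$ replaced by $|D_\omega u|(\Omega')$, via duality with $\varphi\in C_c^\infty$. Finally, exhausting $\Omega$ by $\Omega'$ gives the statement on $\Omega$, with constants independent of $\Omega$. Note that no boundary regularity is needed, consistent with the hypothesis that $\Omega$ is merely a bounded open set.
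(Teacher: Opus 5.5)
\textbf{Verdict: there is a genuine gap.} Your Step 2 carries the entire content of the proposition, and the single-scale mechanism you set up for it cannot work.

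\textbf{Where Step 1 goes wrong.} The exponent bookkeeping is off. With $a=1+\gamma/p$ one has $p(a-1)=\gamma$, not $\gamma-p$. Setting $\varepsilon=\lambda\delta^{a-1}=\lambda\delta^{\gamma/p}$ therefore gives $\lambda^{-p}\delta^{1-\gamma}=\varepsilon^{-p}\delta$. So the fraction of bad pairs at scale $\delta$ is only $\lesssim\varepsilon^{-p}F$. That is a Chebyshev bound at the single level $\varepsilon$, and it blows up rather than vanishes as $\varepsilon\to0$.

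\textbf{Why single-scale chaining fails.} A chain of $\sim H/\delta$ steps then meets $\sim(H/\delta)\varepsilon^{-p}F=H\lambda^{-p}\delta^{-\gamma-1}F$ bad steps on average. After truncation, each bad step can be worth up to $2M$, so the chain bound gives nothing. If you instead shrink $\delta$ until bad steps become rare (possible only for $\gamma<-1$), the good-step bound $\varepsilon H=\lambda\delta^{\gamma/p}H$ becomes unbounded as $\lambda\to0$. For $\gamma=-1$ the bad-step count $\sim H\lambda^{-p}F$ does not even improve with $\delta$. The dyadic layer-cake you sketch would have to compare level sets of difference quotients at different scales, which is the same chaining problem again. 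The energy controls one level per scale, with $\varepsilon(r)=\lambda r^{\gamma/p}$, and converting that into strong $L^p$ control at a fixed scale $H$ needs a genuinely multi-scale argument.

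\textbf{What the paper uses instead.} It has two different ingredients, split by the sign of $a$.
\begin{itemize}
\item[(i)] For $\gamma<-p$ (so $a<0$; this covers all of $p=1$), Lemma~\ref{lem-diffbd} proves a doubling inequality:
$(|A+B|^p-\lambda^ph^{p+\gamma})_+\le 2^{p-1}(|A|^p-\lambda^p(h/2)^{p+\gamma})_+ +2^{p-1}(|B|^p-\lambda^p(h/2)^{p+\gamma})_+ +(2^{-\gamma}-1)\lambda^ph^{p+\gamma}\mathbf 1_{\{|A+B|>\lambda h^{a}\}}$.
It iterates this down dyadic scales. The error terms sum to a multiple of $F$, since $h^{p+\gamma}/h^{p+1}=h^{\gamma-1}$. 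The recursion terminates because, when $a<0$, the threshold $\lambda h^{a}$ exceeds $2\|T_Mf\|_{L^\infty}$ once $h$ is small.
\item[(ii)] For $-p<\gamma\le-1$ (so $0<a<1$ and $p>1$), no elementary chaining is used. The paper restricts to $|x-y|\le r_j$ and bounds $F_{\lambda_j,p,\gamma}(f_j;I)$ below by $\lambda_j^{p-1}r_j^{\gamma-\gamma/p}$ times the critical Bourgain--Nguyen functional $F_{\delta_j,1,-1}(f_j;I)$, up to a far-range error. Here $\delta_j=\lambda_jr_j^{a}\to0$, and $r_j$ is tuned so that the far-range error $2|I|\lambda_jr_j^{\gamma/p}$ is a fixed fraction of the oscillation. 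It then invokes their oscillation estimate (Lemma~\ref{lem-critosc}) to get $\varliminf_{j\to\infty} F_{\lambda_j,p,\gamma}(f_j;I)\gtrsim |I|^{1-p}(\operatorname*{ess\,sup}_I f-\operatorname*{ess\,inf}_I f)^p$. Finally it localizes this on intervals of length $3h$ and integrates in $x$.
\end{itemize}

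Your Fubini reduction and Step~3 match the paper's slicing and duality argument. Without one of these two ingredients, or a substitute of comparable strength for the range $a>0$, the proposal does not prove the statement.
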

\begin{remark}
Let $p\in [1,\infty)$, $\gamma\in(-\infty,-1]\setminus\{-p\}$,
$\Omega:=Q_0$ and $u:=\ell$ in Proposition \ref{prop-coerc}. By the definition of $C^{\rm cell}_{N,p,\gamma}$ and Proposition \ref{prop-coerc}, we  find that
\begin{align*}
C^{\rm cell}_{N,p,\gamma}
=m_{p,\gamma}(\ell)
\geq
c_{N,p,\gamma}>0.
\end{align*}
\end{remark}
In the remainder of this subsection, we prove Proposition~\ref{prop-coerc}
by first establishing the one-dimensional case and then extending the result
to higher dimensions through a slicing argument.
To end of this,
we need
the  following lemma, which follows from   \cite[Corollary~1]{bn18}.
\begin{lemma}\label{lem-critosc}
Let $I\subset\mathbb R$ be a bounded open interval. Let
$\{\delta_j\}_{j\in\mathbb N}\subset(0,\infty)$ satisfy
$\delta_j\to0$, and let $\{f_j\}_{j\in\mathbb N}\subset L^1(I)$
satisfy $f_j\to g$ in $L^1(I)$
for some $f\in L^1(I).$
Then there exists a positive
constant $c$ such that
\begin{align*}
\varliminf_{j\to\infty}F_{\delta_j,1,-1}\left(f_j;I\right)
\geq c\left(\operatorname*{ess\,sup}_I f-
\operatorname*{ess\,inf}_I f\right),
\end{align*}
where the right-hand side is understood to be $\infty$ when the essential
oscillation is infinite.
\end{lemma}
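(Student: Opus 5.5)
The plan is to reduce the lemma to \cite[Corollary~1]{bn18} by identifying $F_{\delta,1,-1}(\cdot;I)$ with the Bourgain--Nguyen functional. For $N=1$, $p=1$ and $\gamma=-1$ the exponent $1+\gamma/p$ equals $0$ and $|x-y|^{\gamma-N}=|x-y|^{-2}$. Hence, for every measurable $f$ on $I$,
\begin{align*}
F_{\delta,1,-1}(f;I)=\underset{|f(x)-f(y)|>\delta}{\int_I\int_I}\frac{\delta}{|x-y|^{2}}\,dy\,dx=:I_\delta(f),
\end{align*}
which is exactly the functional $I_\delta$ in \cite{bn18}. (In the statement the limit of $f_j$ is denoted by $g$, and the assumption ``$f\in L^1(I)$'' should read $g\in L^1(I)$; the essential oscillation on the right-hand side is that of $g$.)

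First I will check that the hypotheses of \cite[Corollary~1]{bn18} are met in our setting. These are a bounded interval $I$, a sequence $\delta_j\to0$, and $f_j\to g$ in $L^1(I)$, with the conclusion
\begin{align*}
\varliminf_{j\to\infty} I_{\delta_j}(f_j)\ge c\,\bigl(\operatorname*{ess\,sup}_I g-\operatorname*{ess\,inf}_I g\bigr),
\end{align*}
where $c$ is a universal constant. If the corollary is instead phrased through arbitrary levels, namely as
\begin{align*}
\varliminf_{j\to\infty} I_{\delta_j}(f_j)\ge c\,(b-a)\ \text{ whenever } a<b,\ |\{g<a\}|>0,\ |\{g>b\}|>0,
\end{align*}
then I take the supremum over all admissible pairs $(a,b)$. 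This supremum equals the essential oscillation as defined in the paper, and it is $\infty$ exactly when that oscillation is infinite, which matches the convention in the lemma.

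To make the argument independent of the precise wording of \cite{bn18}, I would also sketch the mechanism behind the level-form bound.

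1. Fix $a<b$ as above. Choose Lebesgue-density points of $\{g<a\}$ and $\{g>b\}$, say $x_0<y_0$, and short intervals $J_0\ni x_0$ and $J_1\ni y_0$ on which these sets have density close to $1$.
2. Because $f_j\to g$ in measure, for large $j$ the function $f_j$ is below $a+\varepsilon$ on most of $J_0$ and above $b-\varepsilon$ on most of $J_1$.
3. Slice the range into $M\approx (b-a)/(2\delta_j)$ disjoint bands of width $2\delta_j$. Any pair of points $x,y$ with values in bands that are not adjacent satisfies $|f_j(x)-f_j(y)|>\delta_j$.
4. A one-dimensional covering/counting argument then shows that each intermediate band forces a contribution of order $\delta_j$. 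This works because $\iint|x-y|^{-2}$ between two sets of comparable size meeting near a common point is scale invariant, so it is bounded below by a constant independent of scale.
5. Summing over the $M$ bands gives $I_{\delta_j}(f_j)\gtrsim M\delta_j\gtrsim b-a$.

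The main obstacle is step 4. It needs a uniform lower bound on the $|x-y|^{-2}$-interaction across each level transition, regardless of how badly $f_j$ oscillates, and this is precisely the content of the Bourgain--Nguyen analysis. For that reason I would quote \cite[Corollary~1]{bn18} for this step rather than reprove it.
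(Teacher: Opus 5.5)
Your proposal is correct and matches the paper, which gives no argument beyond invoking \cite[Corollary~1]{bn18}. For $(N,p,\gamma)=(1,1,-1)$ one has $1+\gamma/p=0$ and $|x-y|^{\gamma-N}=|x-y|^{-2}$, so $F_{\delta,1,-1}(\cdot;I)$ is, up to at most a harmless normalization constant, exactly the functional treated there; reading the right-hand side as the essential oscillation of the limit $g$ correctly fixes the $f$/$g$ typo in the statement, and your band-counting sketch is optional, since its step 4 is precisely what the cited corollary supplies.
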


By Lemma \ref{lem-critosc}, we have the following result.
\begin{lemma}\label{lem-seqosc}
Let $p\in(1,\infty)$, $\gamma\in(-p,-1]$, and
$I\subset\mathbb R$ be a bounded  open interval. Let
$\{\lambda_j\}_{j\in\mathbb N}\subset(0,\infty)$ satisfy
$\lambda_j\to0$, and let $\{f_j\}_{j\in\mathbb N}\subset L^1(I)$
satisfy $f_j\to f$ in $L^1(I)$ for some $f\in L^1(I)$.
Then there exists a positive constant $c_p$, depending only on $p$, such that
\begin{align}\label{xuzhen}
\varliminf_{j\to\infty}F_{\lambda_j,p,\gamma}\left(f_j;I\right)
\geq\frac{c_p}{|I|^{p-1}}
\left(\operatorname*{ess\,sup}_If-
\operatorname*{ess\,inf}_If\right)^p,
\end{align}
where the right-hand side is understood to be $\infty$ when the essential
oscillation is infinite.
\end{lemma}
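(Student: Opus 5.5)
The plan is to reduce \eqref{xuzhen} to Lemma~\ref{lem-critosc}, using two facts: the critical functional $F_{\delta,1,-1}$ is dilation invariant, and $F_{\lambda,p,\gamma}$ dominates it on short scales. Throughout, write $a:=1+\gamma/p$; since $\gamma\in(-p,-1]$ we have $a\in(0,1)$ and $\gamma+1-a=\gamma(1-\frac1p)<0$.

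\emph{Step 1 (short-scale comparison).} Let $J\subset I$ be a subinterval of length $h$. For $x,y\in J$ we have $|x-y|\le h$, so $\lambda|x-y|^{a}\le\lambda h^{a}$. Moreover $|x-y|^{\gamma-1}=|x-y|^{-2}|x-y|^{\gamma+1}\ge h^{\gamma+1}|x-y|^{-2}$, because $\gamma+1\le0$. Hence
\begin{align*}
F_{\lambda,p,\gamma}(f;J)\ge \lambda^{p}h^{\gamma+1}\underset{|f(x)-f(y)|>\lambda h^{a}}{\int_J\int_J}|x-y|^{-2}\,dy\,dx
=\lambda^{p-1}h^{\gamma(1-\frac1p)}F_{\lambda h^{a},1,-1}(f;J).
\end{align*}
The prefactor $\lambda^{p-1}\to0$ is harmless here, because the negative power of $h$ compensates for it. Choosing $h=h(\lambda):=\lambda^{p/|\gamma|}$ makes the prefactor equal to $1$, and then also $\lambda h^{a}=h$. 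Thus $F_{\lambda,p,\gamma}(f;J)\ge F_{h,1,-1}(f;J)$ on intervals of length $h(\lambda)$. For intervals of length $h\le h(\lambda)$ the prefactor is $\ge1$ and the threshold satisfies $\lambda h^a\le h(\lambda)$.

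\emph{Step 2 (homogeneity and subdivision).} The map $f\mapsto f/t$, $\lambda\mapsto\lambda/t$ leaves $F_{\lambda,p,\gamma}$ invariant up to the factor $t^{p}$, that is, $F_{\lambda,p,\gamma}(f;I)=t^pF_{\lambda/t,p,\gamma}(f/t;I)$. Also $F_{\delta,1,-1}(f;J)$ is invariant under affine rescaling of $J$ to a fixed interval, since $dx\,dy/|x-y|^2$ is dilation invariant. I will partition $I$ into $n$ equal subintervals $J_1,\dots,J_n$. The squares $J_k\times J_k$ are disjoint, so
$$F_{\lambda,p,\gamma}(f;I)\ge\sum_{k=1}^nF_{\lambda,p,\gamma}(f;J_k).$$
I will then apply Step 1 on each $J_k$, after a rescaling $t$ chosen so that the effective threshold tends to $0$. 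Rescaling each $J_k$ to $I_0$ turns the right-hand side into critical functionals of convergent rescaled sequences, to which Lemma~\ref{lem-critosc} applies. This gives a lower bound of the form
$$\varliminf_j F_{\lambda_j,p,\gamma}(f_j;I)\ge c\,t^{p-1}\sum_k\bigl(\operatorname*{ess\,sup}_{J_k}f-\operatorname*{ess\,inf}_{J_k}f\bigr)$$
with a bookkeeping factor in $|J_k|$. Because $n$ and $t$ are fixed while $j\to\infty$, the convergence hypothesis $f_j\to f$ in $L^1$ survives all the rescalings.

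\emph{Step 3 (optimization).} Assume $M:=\operatorname*{ess\,sup}_If-\operatorname*{ess\,inf}_If<\infty$; the infinite case follows by truncation and monotonicity. I will choose the parameters $t\sim M/n$ and $n$ proportional to a power fixed by dimensional analysis. The target $c_pM^p/|I|^{p-1}$ is exactly the quantity invariant under both scalings, so after optimizing one should obtain it. For the sum of local oscillations I will use the elementary inequality $\sum_k\operatorname{osc}_{J_k}f\ge M$, which holds because the $J_k$ form a chain covering $I$.

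The main obstacle is Step 2. Lemma~\ref{lem-critosc} is stated for a fixed interval and a single sequence, while the natural scale $h(\lambda_j)$ of Step 1 shrinks with $j$. I therefore have to arrange the rescaling so that the pieces to which Lemma~\ref{lem-critosc} is applied stay fixed and the threshold goes to $0$. If this balancing fails, my fallback is to extract from \cite[Corollary~1]{bn18} a quantitative version of Lemma~\ref{lem-critosc} that is uniform in the interval, after normalizing it to $I_0$. I would then apply it directly on intervals of length $h(\lambda_j)$ and sum over them, using convexity to pass from $\sum_k\operatorname{osc}_{J_k}$ to $M^p/|I|^{p-1}$.
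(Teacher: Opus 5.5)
There is a genuine gap, and it is exactly the obstacle you flagged in Step~2; your primary plan has no mechanism to get around it. With $n$ and $t$ fixed, each $J_k$ has fixed length $h=|I|/n$, so the Step~1 prefactor $\lambda_j^{p-1}h^{\gamma(1-\frac1p)}$ tends to $0$. Meanwhile Lemma~\ref{lem-critosc} only gives a finite lower bound for $\varliminf_jF_{\lambda_jh^{a},1,-1}(f_j;J_k)$, so the product yields only the trivial bound $0$. For $f_j=\ell$ the product really does tend to $0$, because $F_{\delta,1,-1}(\ell;J)\to2|J|$ as $\delta\to0^+$.

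The homogeneity does not help. Applying Step~1 to $t^pF_{\lambda/t,p,\gamma}(f/t;J)$ and using $F_{\delta/t,1,-1}(f/t;J)=t^{-1}F_{\delta,1,-1}(f;J)$ returns exactly $\lambda^{p-1}h^{\gamma(1-\frac1p)}F_{\lambda h^{a},1,-1}(f;J)$. So no factor $t^{p-1}$ can appear. Indeed, a bound growing like $t^{p-1}$ for arbitrary fixed $t$ would contradict $\lim_{\lambda\to0^+}F_{\lambda,p,\gamma}(\ell;I_0)=K_{1,p}/|\gamma|$ from Lemma~\ref{lem-cellbds}.

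More basically, a fixed subdivision is circular. By H\"older the target inequality is self-improving under subdivision, so you would need it on each piece first. A nonvanishing prefactor forces a comparison scale $r_j$ with $\lambda_jr_j^{\gamma/p}=\kappa$ fixed, hence $r_j\to0$. Squares $J\times J$ of side $r_j$ then make Lemma~\ref{lem-critosc} act on $j$-dependent blow-ups, which it cannot handle. Separately, $\sum_k\operatorname{osc}_{J_k}f\ge M$ fails for disjoint open $J_k$: take a jump at a partition point.

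The missing idea is to keep the interval fixed and shrink a band instead of the pieces; this is what the paper does. Restrict $F_{\lambda_j,p,\gamma}(f_j;I)$ to pairs with $0<|x-y|\le r_j$ and argue exactly as in your Step~1. This gives $F_{\lambda_j,p,\gamma}(f_j;I)\ge(\lambda_jr_j^{\gamma/p})^{p-1}E_j$, where $E_j$ is the part of $F_{\delta_j,1,-1}(f_j;I)$, with $\delta_j:=\lambda_jr_j^{1+\gamma/p}$, coming from $|x-y|\le r_j$. The remaining part is at most $\delta_j\int_I\int_{\{|x-y|>r_j\}}|x-y|^{-2}\,dy\,dx=2|I|\lambda_jr_j^{\gamma/p}$.

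Now take $\lambda_jr_j^{\gamma/p}=\frac{cA}{4|I|}$ with $0<A<\operatorname*{ess\,sup}_If-\operatorname*{ess\,inf}_If$. Then $r_j\to0$, $\delta_j\to0$, and the error equals $\frac{cA}{2}$. Lemma~\ref{lem-critosc}, applied on the fixed interval $I$ to the single sequence $\{f_j\}$, yields $\varliminf_jF_{\lambda_j,p,\gamma}(f_j;I)\ge(\frac{cA}{4|I|})^{p-1}\frac{cA}{2}$.

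Your normalization $\kappa=1$ would fail here, since the error $2|I|$ need not be small compared with $cA$. The choice $\kappa\sim A/|I|$ both absorbs the error and produces the factor $(A/|I|)^{p-1}$.

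Your fallback is essentially this argument moved onto small intervals, but it has two problems. First, it requires a uniform estimate of the form $F_{\delta,1,-1}(g;J)\ge c\operatorname{osc}_Jg-C\delta$. Lemma~\ref{lem-critosc} does not supply this, since it is a $\varliminf$ statement for one convergent sequence on one interval. Second, the additive errors over $|I|/r_j$ pieces add up to a fixed $C\kappa|I|$. That error has to be balanced by taking $\kappa\sim M/|I|$, not by convexity.
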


\begin{proof}
Fix  $j\in\mathbb N$ and  let $r_j\in (0,\infty)$ be chosen later.
Since $1+\gamma/p>0$,
it follows that,
for any $x,y\in I$ with  $0<|x-y|\leq r_j$,
\begin{align*}
\lambda_j|x-y|^{1+\frac\gamma p}
\leq\lambda_jr_j^{1+\frac\gamma p}.
\end{align*}
Consequently,
\begin{align*}
&\left\{(x,y)\in I\times I:
0<|x-y|\leq r_j,|f_j(x)-f_j(y)|>
\lambda_jr_j^{1+\frac\gamma p}\right\}\\
&\qquad\subset
\left\{(x,y)\in I\times I: x\not=y,|f_j(x)-f_j(y)|>
\lambda_j|x-y|^{1+\frac\gamma p}\right\}.
\end{align*}
Moreover, from $\gamma+1\leq0$, we deduce that,
for any $x,y\in I$ with  $0<|x-y|\leq r_j$,
\begin{align*}
|x-y|^{\gamma-1}
=|x-y|^{\gamma+1}|x-y|^{-2}
\geq r_j^{\gamma+1}|x-y|^{-2}.
\end{align*}
Using these comparisons and the identity
\begin{align*}
\lambda_j^pr_j^{\gamma+1}
=\left(\lambda_jr_j^{\frac\gamma p}\right)^{p-1}
\lambda_jr_j^{1+\frac\gamma p},
\end{align*}
we obtain
\begin{align}\label{oscnear}
F_{\lambda_j,p,\gamma}\left(f_j;I\right)
&=\lambda_j^p\underset{|f_j(x)-f_j(y)|>\lambda_j|x-y|^{1+\gamma/p}
}{\int_I\int_I}|x-y|^{\gamma-1}\,dy\,dx\notag\\
&\geq\lambda_j^pr_j^{\gamma+1}
\underset{\substack{0<|x-y|\leq r_j\\
|f_j(x)-f_j(y)|>\lambda_jr_j^{1+\gamma/p}}}{\int_I\int_I}
\frac{dy\,dx}{|x-y|^2}.
\end{align}
Let $\delta_j:=\lambda_jr_j^{1+\gamma/p}.$
Then
\begin{align}\label{huiwu}
F_{\delta_j,1,-1}\left(f_j;I\right)=\delta_j
\underset{\substack{0<|x-y|\leq r_j\\
|f_j(x)-f_j(y)|>\delta_j}}{\int_I\int_I}
\frac{dy\,dx}{|x-y|^2}+\delta_j
\underset{\substack{|x-y|>r_j\\
|f_j(x)-f_j(y)|>\delta_j}}{\int_I\int_I}
\frac{dy\,dx}{|x-y|^2}
=:E_j+F_j.
\end{align}
Note that
\begin{align*}
F_j&\leq\delta_j\int_I\int_{\left\{y\in\mathbb R:|x-y|>r_j\right\}}
\frac{dy\,dx}{|x-y|^2}=2|I|\lambda_jr_j^{\frac \gamma p}.
\end{align*}
By this, \eqref{huiwu}, and  \eqref{oscnear}, we conclude that
\begin{align}\label{osccomp}
F_{\lambda_j,p,\gamma}\left(f_j;I\right)\geq
\lambda_j^{p-1}r_{j}^{\gamma-\frac\gamma p}E_j\geq\lambda_j^{p-1}r_{j}^{\gamma-\frac \gamma p}
\left[F_{\delta_j,1,-1}\left(f_j;I\right)-2|I|\lambda_jr_j^{\frac\gamma p}\right].
\end{align}
If $\operatorname*{ess\,sup}_If-\operatorname*{ess\,inf}_If=0$,
then \eqref{xuzhen} holds automatically. Let
\begin{align*}
0<A<\operatorname*{ess\,sup}_If-
\operatorname*{ess\,inf}_If\leq\infty.
\end{align*}
Now, we choose $r_j$ such that $2|I|\lambda_jr_j^{\gamma/p}=\frac{cA}{2}.$ Then
\begin{align*}
r_j=\left(\frac{4|I|\lambda_j}{cA}\right)^{-\frac p \gamma}\to 0
\ \ \text{and}\ \ \delta_j= \lambda_jr_j^{1+\frac\gamma p}
=\frac{cA}{4|I|}r_j\to0
\end{align*}
as $j\to\infty$. From this and  \eqref{osccomp}, we infer that
\begin{align*}
F_{\lambda_j,p,\gamma}\left(f_j;I\right)\geq\left(\frac{cA}{4|I|}\right)^{p-1}
\left(F_{\delta_j,1,-1}(f_j;I)-\frac{cA}{2}\right).
\end{align*}
Letting $j\to\infty$ in the above inequality and using  Lemma \ref{lem-critosc},  we find that
\begin{align*}
\varliminf_{j\to\infty}F_{\lambda_j,p,\gamma}\left(f_j;I\right)
\geq \left(\frac{cA}{4|I|}\right)^{p-1}
\left(cA-\frac{cA}{2}\right)=\frac{c^p}{2\cdot4^{p-1}}\frac{A^p}{|I|^{p-1}}.
\end{align*}
Letting $A\to \operatorname*{ess\,sup}_If-
\operatorname*{ess\,inf}_If$ in the estimate as above, we then conclude that \eqref{xuzhen} holds. This finishes
the proof of Lemma \ref{lem-seqosc}.
\end{proof}

For the remaining range $\gamma<-p$, we use the following estimate
for averaged difference quotients.

\begin{lemma}\label{lem-diffbd}
Let $p\in[1,\infty)$, $\gamma\in(-\infty,-p)$, and $I:=(a,b)$
be a bounded  open interval. Then, for any $g\in L^\infty(I)$,
 $\lambda\in(0,\infty)$, and  $r\in(0,\frac{|I|}{2})$,
\begin{align}\label{chashang}
&\int_r^{2r}\int_a^{b-h}
\left|\frac{g(x+h)-g(x)}h\right|^p\,dx\,\frac{dh}h
\leq\frac{2^{-\gamma}-1}{2}F_{\lambda,p,\gamma}(g;I)
+\frac{2^\gamma-1}{\gamma}\lambda^p|I|r^\gamma.
\end{align}
\end{lemma}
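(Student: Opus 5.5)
The plan is to split, for each fixed $h\in(r,2r)$, the set of $x\in(a,b-h)$ according to whether the pair $(x,x+h)$ is \emph{good}, meaning $|g(x+h)-g(x)|\le\lambda h^{1+\gamma/p}$, or \emph{bad}, meaning the reverse strict inequality holds. These are exactly the pairs that are excluded from, respectively counted in, the integral defining $F_{\lambda,p,\gamma}(g;I)$. The second term on the right of \eqref{chashang} should come from the good pairs, and the first term from the bad ones.

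On the good set we have $|g(x+h)-g(x)|^p h^{-p}\le\lambda^p h^{\gamma}$. Integrating over $x$ in a set of measure at most $|I|$ and then against $dh/h$ on $(r,2r)$ gives at most
\begin{align*}
\lambda^p|I|\int_r^{2r}h^{\gamma-1}\,dh=\frac{2^\gamma-1}{\gamma}\lambda^p|I|r^\gamma,
\end{align*}
which is precisely the second term in \eqref{chashang}. This step is routine.

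For the bad set, the goal is to dominate the contribution by the part of $F_{\lambda,p,\gamma}(g;I)$ coming from pairs with $r<|x-y|<2r$. The factor $\frac12$ should come from writing the double integral over $I\times I$ as twice the integral over ordered pairs $y=x+h$ with $h>0$. The factor $2^{-\gamma}-1=-\gamma\int_1^2 s^{-\gamma-1}\,ds$ suggests trading the weight $|g(x+h)-g(x)|^p h^{-p-1}$ for a weight comparable to $h^{\gamma-1}$ times a scale ratio $(s/r)^{-\gamma}$ with $s\in(1,2)$. To carry this out I would use a layer-cake representation. Set $t:=|g(x+h)-g(x)|$ and let $s_*$ be defined by $t=\lambda s_*^{1+\gamma/p}$. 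Since $1+\gamma/p<0$, the threshold $s\mapsto\lambda s^{1+\gamma/p}$ is decreasing. Hence the pair is bad at every scale $s>s_*$, and
\begin{align*}
\frac{t^p}{h^p}=\lambda^p h^{-p}s_*^{p+\gamma}=\lambda^p h^{-p}(-(p+\gamma))\int_{s_*}^\infty s^{p+\gamma-1}\,ds.
\end{align*}
I would then restrict attention to $s\in(r,2r)$ and apply Fubini in $(h,s)$. The hope is that monotonicity of the bad indicator lets one replace scale $h$ by scale $s$, so that the inner integral becomes exactly the $F$-weight $|x-y|^{\gamma-1}$ over bad pairs.

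The main obstacle is this transfer step. The quantity $g(x+h)-g(x)$ involves only distance $h$, while the layer-cake integral runs over auxiliary scales $s$. Moreover, since $\gamma<-p$, the pieces of the $s$-integral with $s$ far from $(r,2r)$ do not obviously produce the right power. The hypothesis $g\in L^\infty(I)$ is presumably used here: it makes the left-hand side finite and, since $t\le 2\|g\|_{L^\infty}$, bounds $s_*$ from below, so that $s_*$ cannot be arbitrarily small. If the direct Fubini argument fails, I would fall back on a cruder estimate of the bad part. On $(r,2r)$ the ratio $h^{-p-1}/h^{\gamma-1}=h^{-p-\gamma}$ is at most $(2r)^{-p-\gamma}$, and one can try to absorb the resulting factor $t^p r^{-p-\gamma}$ into the threshold relation $t>\lambda h^{1+\gamma/p}$ by an explicit integration in $h$. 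I would check whether this reproduces the constant $\frac{2^{-\gamma}-1}{2}$.
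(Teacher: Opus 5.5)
Your bound for the good pairs is fine. The bad pairs, however, carry the whole content of the lemma, and neither mechanism you propose controls them.

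In your layer-cake identity, $t=|g(x+h)-g(x)|$ is a single number attached to the pair $(x,x+h)$. Saying that this pair is ``bad at every scale $s>s_*$'' only compares that number with $\lambda s^{1+\gamma/p}$. But $F_{\lambda,p,\gamma}(g;I)$ at distance $s$ sees $|g(x+s)-g(x)|$, and nothing links the two, since $s\mapsto|g(x+s)-g(x)|$ has no monotonicity. Your fallback runs the wrong way: on a bad pair the threshold gives the \emph{lower} bound $t^p>\lambda^ph^{p+\gamma}$, whereas you need an upper bound.

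In fact no argument restricted to distances comparable to $r$ can succeed. Take $g:=M\mathbf 1_{(c,b)}$ with $c$ the midpoint of $I$. The left-hand side of \eqref{chashang} equals $c_0M^p$ with $c_0>0$ independent of $M$. On the other hand, the part of $F_{\lambda,p,\gamma}(g;I)$ coming from $r<|x-y|<2r$, and the term $\lambda^p|I|r^\gamma$, both stay bounded as $M\to\infty$. The full $F_{\lambda,p,\gamma}(g;I)$ does dominate $M^p$, but only through pairs at distances down to $(M/\lambda)^{p/(p+\gamma)}$, which tends to $0$. So a genuinely multiscale argument is needed.

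The missing idea is a dyadic recursion on the excess. First subtract the threshold on all pairs, using $|t|^p\le\lambda^ph^{p+\gamma}+(|t|^p-\lambda^ph^{p+\gamma})_+$. This also matters for the constant. If you bound the good part using only $|(a,b-h)|\le|I|$ and keep the full $|t|^p$ on bad pairs, then $\lambda^p\int_r^{2r}h^{\gamma-1}|\{\text{bad pairs at }h\}|\,dh$ is paid for twice, and the recursion below leaves no room for the second copy.

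Now write $g(x+h)-g(x)=A+B$ with $A:=g(x+h/2)-g(x)$ and $B:=g(x+h)-g(x+h/2)$. From $|A+B|^p\le2^{p-1}(|A|^p+|B|^p)$ and $2^{p-1}\cdot2\cdot2^{-(p+\gamma)}=2^{-\gamma}$ you get
\begin{align*}
(|A+B|^p-\lambda^ph^{p+\gamma})_+&\le2^{p-1}\bigl(|A|^p-\lambda^p(h/2)^{p+\gamma}\bigr)_++2^{p-1}\bigl(|B|^p-\lambda^p(h/2)^{p+\gamma}\bigr)_+\\
&\quad+(2^{-\gamma}-1)\lambda^ph^{p+\gamma}\mathbf 1_{\{|A+B|>\lambda h^{1+\gamma/p}\}}.
\end{align*}
Integrate against $h^{-p-1}\,dx\,dh$ over $h\in(r,2r)$ and substitute $s=h/2$. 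This bounds the excess at scales $(r,2r)$ by the excess at scales $(r/2,r)$ plus $(2^{-\gamma}-1)\lambda^p\int_r^{2r}h^{\gamma-1}\int_a^{b-h}\mathbf 1_{\{|g(x+h)-g(x)|>\lambda h^{1+\gamma/p}\}}\,dx\,dh$.

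Iterate this $K$ times. Since $1+\gamma/p<0$ and $g\in L^\infty(I)$, you can choose $K$ with $\lambda(2^{1-K}r)^{1+\gamma/p}\ge2\|g\|_{L^\infty(I)}$. Then the excess at scales $(2^{-K}r,2^{1-K}r)$ vanishes. The accumulated indicator terms are at most $(2^{-\gamma}-1)\lambda^p\int_0^{|I|}h^{\gamma-1}\int_a^{b-h}\mathbf 1_{\{\cdots\}}\,dx\,dh=\frac{2^{-\gamma}-1}{2}F_{\lambda,p,\gamma}(g;I)$. This is where $\gamma<-p$ and the boundedness of $g$ genuinely enter, not merely through a lower bound on $s_*$.
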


\begin{proof}
Let $g\in L^\infty(I)$, $\lambda\in(0,\infty)$, and  $r\in(0,\frac{|I|}{2})$. Then
\begin{align*}
F_{\lambda,p,\gamma}(g;I)
=2\lambda^p\int_0^{|I|}h^{\gamma-1}\int_a^{b-h}
\mathbf1_{\{|g(x+h)-g(x)|>\lambda h^{1+\gamma/p}\}}\,dx\,dh.
\end{align*}
Let
$s_+:=\max\{s,0\}$ for any $s\in\mathbb R$.
Then, for any $t\in\mathbb{R}$ and $h\in (0,\infty)$,
\begin{align*}
|t|^p
\leq\lambda^ph^{p+\gamma}
+\left(|t|^p-\lambda^ph^{p+\gamma}\right)_+.
\end{align*}
Consequently,
\begin{align*}
&\int_r^{2r}\int_a^{b-h}
\left|\frac{g(x+h)-g(x)}h\right|^p\,dx\,\frac{dh}h\\
&\quad\leq\int_r^{2r}\int_a^{b-h}
\frac{(|g(x+h)-g(x)|^p-\lambda^ph^{p+\gamma})_+}{h^{p+1}}
\,dx\,dh
+\frac{2^\gamma-1}{\gamma}\lambda^p|I|r^\gamma.
\end{align*}
Thus, to prove \eqref{chashang}, it suffices  to show that
\begin{align}\label{wangji}
\int_r^{2r}\int_a^{b-h}
\frac{(|g(x+h)-g(x)|^p-\lambda^ph^{p+\gamma})_+}{h^{p+1}}
\,dx\,dh
\leq\frac{2^{-\gamma}-1}{2}F_{\lambda,p,\gamma}(g;I).
\end{align}
Observe that, for any $A,B\in\mathbb{R}$
with $|A+B|^p>\lambda^ph^{p+\gamma}$
and $h\in (0,\infty)$,
\begin{align*}
 &|A+B|^p-\lambda^ph^{p+\gamma}\\
&\quad\leq2^{p-1}(|A|^p+|B|^p)-\lambda^ph^{p+\gamma}\\
&\quad=
2^{p-1}(|A|^p-\lambda^p(h/2)^{p+\gamma})
+2^{p-1}(|B|^p-\lambda^p(h/2)^{p+\gamma})+(2^{-\gamma}-1)\lambda^ph^{p+\gamma}.
\end{align*}
Thus,
for any $A,B\in\mathbb R$ and
$h\in(0,\infty)$,
\begin{align*}
\left(|A+B|^p-\lambda^ph^{p+\gamma}\right)_+
&\leq
2^{p-1}(|A|^p-\lambda^p(h/2)^{p+\gamma})_+
+2^{p-1}(|B|^p-\lambda^p(h/2)^{p+\gamma})_+\\
&\quad+(2^{-\gamma}-1)\lambda^ph^{p+\gamma}
\mathbf1_{\{|A+B|>\lambda h^{1+\gamma/p}\}}.
\end{align*}
Applying this inequality with $A:=g(x+h/2)-g(x)$ and $B:=g(x+h)-g(x+h/2)$,
we conclude  that, for any $h\in (0,|I|)$,
\begin{align*}
&\int_a^{b-h}
(|g(x+h)-g(x)|^p-\lambda^ph^{p+\gamma})_+\,dx\\
&\quad\leq2^{p-1}\int_a^{b-h}(|g(x+h/2)-g(x)|^p
-\lambda^p(h/2)^{p+\gamma})_+\,dx\\
&\qquad\quad+2^{p-1}\int_{a+h/2}^{b-h/2}
(|g(y+h/2)-g(y)|^p-\lambda^p(h/2)^{p+\gamma})_+\,dy\\
&\qquad\quad+(2^{-\gamma}-1)\lambda^ph^{p+\gamma}
\int_a^{b-h}\mathbf1_{\{|g(x+h)-g(x)|>\lambda h^{1+\gamma/p}\}}\,dx\\
&\quad\leq2^p\int_a^{b-h/2}(|g(x+h/2)-g(x)|^p
-\lambda^p(h/2)^{p+\gamma})_+\,dx\\
&\qquad\quad+(2^{-\gamma}-1)\lambda^ph^{p+\gamma}
\int_a^{b-h}\mathbf1_{\{|g(x+h)-g(x)|>\lambda h^{1+\gamma/p}\}}\,dx,
\end{align*}
which further implies that
\begin{align}\label{yishan}
&\int_r^{2r}\int_a^{b-h}\frac{(|g(x+h)-g(x)|^p-\lambda^ph^{p+\gamma})_+}{h^{p+1}}\,dx\,dh\notag\\
&\quad\leq\int_{r/2}^r\int_a^{b-s}\frac{(|g(x+s)-g(x)|^p-\lambda^ps^{p+\gamma})_+}{s^{p+1}}\,dx\,ds\notag\\
&\quad\quad+(2^{-\gamma}-1)\lambda^p\int_r^{2r}h^{\gamma-1}\int_a^{b-h}
\mathbf1_{\{|g(x+h)-g(x)|>\lambda h^{1+\gamma/p}\}}\,dx\,dh.
\end{align}
Replacing $r$ with $2^{-k}r$ in \eqref{yishan}, we find that, for any $K\in\mathbb{N}$ and $k\in\{0,\ldots,K-1\}$,
\begin{align*}
&\int_{2^{-k}r}^{2^{1-k}r}\int_a^{b-h}
\frac{(|g(x+h)-g(x)|^p-\lambda^ph^{p+\gamma})_+}{h^{p+1}}
\,dx\,dh\\
&\quad\le
\int_{2^{-k-1}r}^{2^{-k}r}\int_a^{b-h}
\frac{(|g(x+h)-g(x)|^p-\lambda^ph^{p+\gamma})_+}{h^{p+1}}
\,dx\,dh\\
&\qquad +(2^{-\gamma}-1)\lambda^p
\int_{2^{-k}r}^{2^{1-k}r}h^{\gamma-1}
\int_a^{b-h}
\mathbf1_{\{|g(x+h)-g(x)|>\lambda h^{1+\gamma/p}\}}
\,dx\,dh.
\end{align*}
Summing $k$ in the above inequality, we obtain, for any $K\in\mathbb N$,
\begin{align}\label{zhida}
&\int_r^{2r}\int_a^{b-h}\frac{(|g(x+h)-g(x)|^p-\lambda^ph^{p+\gamma})_+}{h^{p+1}}
\,dx\,dh\notag\\
&\quad\leq\int_{2^{-K}r}^{2^{1-K}r}\int_a^{b-h}
\frac{(|g(x+h)-g(x)|^p-\lambda^ph^{p+\gamma})_+}{h^{p+1}}\,dx\,dh\notag\\
&\quad\quad+(2^{-\gamma}-1)\lambda^p\int_{2^{1-K}r}^{2r}h^{\gamma-1}\int_a^{b-h}
\mathbf1_{\{|g(x+h)-g(x)|>\lambda h^{1+\gamma/p}\}}\,dx\,dh.
\end{align}
Since $1+\gamma/p<0$,
we can  choose $K$ sufficiently large that
$\lambda(2^{1-K}r)^{1+\gamma/p}\geq2\|g\|_{L^\infty(I)}.$
Then, for any $h\in(2^{-K}r,2^{1-K}r)$ and almost every $x\in(a,b-h)$,
\begin{align*}
|g(x+h)-g(x)|\leq2\|g\|_{L^\infty(I)}\leq\lambda(2^{1-K}r)^{1+\frac\gamma p}
\leq\lambda h^{1+\frac\gamma p}
\end{align*}
and hence
\begin{align*}
\int_{2^{-K}r}^{2^{1-K}r}\int_a^{b-h}
\frac{(|g(x+h)-g(x)|^p-\lambda^ph^{p+\gamma})_+}{h^{p+1}}
\,dx\,dh=0.
\end{align*}
By this and \eqref{zhida}, we find that
\begin{align*}
&\int_r^{2r}\int_a^{b-h}\frac{(|g(x+h)-g(x)|^p-\lambda^ph^{p+\gamma})_+}{h^{p+1}}\,dx\,dh\\
&\quad\leq(2^{-\gamma}-1)\lambda^p\int_0^{|I|}h^{\gamma-1}\int_a^{b-h}
\mathbf1_{\{|g(x+h)-g(x)|>\lambda h^{1+\gamma/p}\}}\,dx\,dh
=\frac{2^{-\gamma}-1}{2}F_{\lambda,p,\gamma}(g;I).
\end{align*}
This proves \eqref{wangji} and hence completes the proof of Lemma \ref{lem-diffbd}.
\end{proof}

We now prove the one-dimensional case of Proposition~\ref{prop-coerc},
which is precisely the following proposition.
\begin{proposition}\label{prop-1dcoerc}
Let $p\in[1,\infty)$ and $\gamma\in(-\infty,-1]\setminus\{-p\}$.
Let $I\subset\mathbb R$ be a bounded open interval, let
$\{\lambda_j\}_{j\in\mathbb N}\subset(0,\infty)$ satisfy
$\lambda_j\to0$, and let $\{f_j\}_{j\in\mathbb N}\subset L^p(I)$ satisfy
$f_j\to f$ in $L^p(I)$ for some $f\in L^p(I)$. Then the following
statements hold.
\begin{itemize}
\item[$\mathrm{(i)}$] If $p\in(1,\infty)$, then
\begin{align*}
\varliminf_{j\to\infty}F_{\lambda_j,p,\gamma}\left(f_j;I\right)
\geq c_{p,\gamma}\int_I|f'(x)|^p\,dx,
\end{align*}
where the right-hand side is understood to be $\infty$ if
$f\notin W^{1,p}(I)$ and $c_{p,\gamma}$ is a  positive constant depending only on $p$ and $\gamma.$
\item[$\mathrm{(ii)}$] If $p=1$, then
\begin{align*}
\varliminf_{j\to\infty}F_{\lambda_j,1,\gamma}\left(f_j;I\right)
\geq c_\gamma|Df|(I),
\end{align*}
where the right-hand side is understood to be $\infty$ if $f\notin BV(I)$ and $c_{\gamma}$ is a
positive constant depending only on $\gamma.$
\end{itemize}
\end{proposition}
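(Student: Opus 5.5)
The plan splits by parameter range, because the earlier lemmas cover complementary regimes. If $p\in(1,\infty)$ and $\gamma\in(-p,-1]$, I use Lemma~\ref{lem-seqosc}. If $\gamma\in(-\infty,-p)$, I use Lemma~\ref{lem-diffbd}; this covers all of statement (ii), since for $p=1$ the admissible set is $\gamma<-1=-p$. In both regimes the goal is to convert the nonlocal lower bound into a uniform bound on difference quotients. I then appeal to the standard characterizations of $W^{1,p}(I)$ for $p>1$ and of $BV(I)$ for $p=1$. These say that $\sup_{h}\int_{I_h}|(g(x+h)-g(x))/h|^p\,dx<\infty$ over compact subintervals forces $g\in W^{1,p}$ (resp. $BV$). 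They also give $\varliminf_{h\to0}\int_{J}|(g(x+h)-g(x))/h|^p\,dx\ge\int_J|g'|^p$ (resp. $\ge|Dg|(J)$) for $J\Subset I$, after which one exhausts $I$ by such $J$.

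\emph{Case $p>1$, $\gamma\in(-p,-1]$.} Fix $h>0$ small and a shift $t\in(0,h)$. Partition $I$ into the intervals of the grid $a+t+h\mathbb Z$ intersected with $I$, so all pieces have length $h$ except possibly two end pieces. Since the integrand of $F$ is nonnegative, restricting the double integral to the diagonal blocks gives superadditivity:
\[
F_{\lambda_j,p,\gamma}(f_j;I)\ge\sum_kF_{\lambda_j,p,\gamma}(f_j;I_k).
\]
Taking $\varliminf$, I use that the liminf of a finite sum dominates the sum of liminfs, and apply Lemma~\ref{lem-seqosc} on each $I_k$. This yields
\[
\varliminf_j F_{\lambda_j,p,\gamma}(f_j;I)\ge c_p\sum_k|I_k|^{1-p}\bigl(\operatorname*{ess\,osc}_{I_k}f\bigr)^p.
\]
For interior pieces, $\operatorname*{ess\,osc}_{I_k\cup I_{k+1}}f\ge|f(x+h)-f(x)|$ for a.e. $x\in I_k$. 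Using this with adjacent pairs (at the cost of a factor $2^{-p}$ after reindexing), averaging over $t\in(0,h)$, and dropping the end pieces, I expect
\[
\varliminf_j F_{\lambda_j,p,\gamma}(f_j;I)\ge c'_p\int_{a}^{b-2h}\Bigl|\frac{f(x+h)-f(x)}{h}\Bigr|^p\,dx
\]
uniformly in $h$. Letting $h\to0$ and invoking the characterization above gives (i) in this range.

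\emph{Case $\gamma<-p$, $p\ge1$.} Lemma~\ref{lem-diffbd} requires $g\in L^\infty$, so I truncate: $T_Mf_j:=\max\{\min\{f_j,M\},-M\}$. Truncation is $1$-Lipschitz, so it can only shrink $|f_j(x)-f_j(y)|$, whence $F_{\lambda_j,p,\gamma}(T_Mf_j;I)\le F_{\lambda_j,p,\gamma}(f_j;I)$; moreover $T_Mf_j\to T_Mf$ in $L^p(I)$. For fixed $r<|I|/2$, Lemma~\ref{lem-diffbd} with $\lambda=\lambda_j\to0$ makes the error term $\frac{2^\gamma-1}{\gamma}\lambda_j^p|I|r^\gamma$ vanish. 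Fatou's lemma (along an a.e.-convergent subsequence realizing the liminf) then gives
\[
\int_r^{2r}\int_a^{b-h}\Bigl|\frac{T_Mf(x+h)-T_Mf(x)}{h}\Bigr|^p\,dx\,\frac{dh}{h}\le\frac{2^{-\gamma}-1}{2}\varliminf_jF_{\lambda_j,p,\gamma}(f_j;I).
\]
The left side is an average over $h\in(r,2r)$ with weight of total mass $\ln2$. Letting $r\to0$ and using the characterization on each $J\Subset I$ yields
\[
\ln2\int_I|(T_Mf)'|^p\,dx\quad\text{(resp. }\ln2\,|D(T_Mf)|(I)\text{)}\le\frac{2^{-\gamma}-1}{2}\varliminf_j F.
\]
Finally I let $M\to\infty$, using monotone convergence of $|(T_Mf)'|=|f'|\mathbf 1_{\{|f|<M\}}$ for $p>1$ and lower semicontinuity of total variation for $p=1$. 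This gives the claim with $c_{p,\gamma}=2\ln2/(2^{-\gamma}-1)$.

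The main obstacle I expect is the first case. The key point there is that superadditivity must be combined with Lemma~\ref{lem-seqosc} on a family of subintervals fixed independently of $j$. One then has to pass carefully from oscillations on shifted uniform grids to an honest difference-quotient integral, keeping the constant independent of $h$. The truncation and boundary bookkeeping are routine by comparison.
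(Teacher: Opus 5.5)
Your argument for $\gamma<-p$ is exactly the paper's Case~2: truncation by $T_M$, Lemma~\ref{lem-diffbd}, Fatou along an a.e.\ convergent subsequence realizing the $\varliminf$, the $dh/h$-average over $(r,2r)$, then $r\to0^+$ and $M\to\infty$. Your remark that this range already covers all of (ii) also matches the paper.

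For $\gamma\in(-p,-1]$ you take a genuinely different route. The paper uses the continuous family of overlapping windows $J_{x,h}:=(x-h,x+2h)$, each containing both $x$ and $x+h$. It applies Lemma~\ref{lem-seqosc} on $J_{x,h}$ for every $x\in I'$, which gives $|f(x+h)-f(x)|^p\lesssim h^{p-1}\varliminf_jF_{\lambda_j,p,\gamma}(f_j;J_{x,h})$ at Lebesgue points. The overlaps are paid for by Tonelli, $\int_{I'}F_{\lambda_j,p,\gamma}(f_j;J_{x,h})\,dx\le3hF_{\lambda_j,p,\gamma}(f_j;I)$, together with Fatou in $x$. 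You instead use finitely many disjoint grid cells. This makes superadditivity immediate and makes exchanging $\varliminf$ with the finite sum trivial, at the cost of pairing and shift bookkeeping. Both routes give a difference-quotient bound with a constant depending only on $p$. Your appeal to the difference-quotient characterizations is equivalent to the paper's duality estimate for $\int_I f\varphi'\,dx$.

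Two details need tightening.

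\emph{The factor $2^{-p}$.} It comes out correctly if you apply superadditivity and Lemma~\ref{lem-seqosc} directly to the disjoint blocks $I_{2k}\cup I_{2k+1}$, and separately to $I_{2k+1}\cup I_{2k+2}$ (each of length $2h$), and then average the two inequalities. This gives $\frac12\cdot2^{1-p}=2^{-p}$ and needs only the a.e.\ bound $|f(x+h)-f(x)|\le\operatorname*{ess\,osc}_{I_k\cup I_{k+1}}f$.

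If you instead meant to pass from your single-cell display via $\operatorname*{ess\,osc}_{I_k\cup I_{k+1}}f\le\operatorname*{ess\,osc}_{I_k}f+\operatorname*{ess\,osc}_{I_{k+1}}f$, that inequality fails when $f$ jumps at the common endpoint, e.g.\ for a step function. It does hold when the common endpoint is a Lebesgue point of $f$, hence for a.e.\ shift $t$, but you must say this.

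Either way, a single good shift already gives the bound on $(a+h,b-2h)$, which suffices since only $J\Subset I$ matters. Averaging in $t$ does not give uniform weight near $a$.

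\emph{The limit $M\to\infty$ for $p>1$.} The identity $|(T_Mf)'|=|f'|\mathbf1_{\{|f|<M\}}$ presupposes $f\in W^{1,1}_{\rm loc}(I)$, which is what you are proving. It is cleaner to conclude from $T_Mf\to f$ in $L^p(I)$ and $\sup_M\|(T_Mf)'\|_{L^p(I)}<\infty$ by weak compactness or lower semicontinuity. The paper effectively does this by letting $M\to\infty$ in $|\int_IT_Mf\varphi'\,dx|\lesssim L^{1/p}\|\varphi\|_{L^{p'}(I)}$.
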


\begin{proof}
Let
\begin{align*}
L:=\varliminf_{j\to\infty}F_{\lambda_j,p,\gamma}\left(f_j;I\right).
\end{align*}
If $L=\infty$, there is nothing to prove. We now assume $L<\infty$.
Passing to a subsequence and relabelling, we may assume that
$\lim_{j\to\infty}F_{\lambda_j,p,\gamma}(f_j;I)=L$ and $\lim_{j\to\infty}f_j(x)=f(x)$
for almost every $x\in I.$ Let $p'$ be the conjugate exponent of $p$.
We claim that, for any $\varphi\in C_{\rm c}^\infty(I)$,
\begin{align}\label{yijiceshi}
\left|\int_I f\varphi'\,dx\right|
\lesssim
L^{\frac{1}{p}}\|\varphi\|_{L^{p'}(I)},
\end{align}
where the implicit constant depends only on $p$ and $\gamma$.
If this claim holds, the conclusions of the proposition follow from $L^p$ duality when  $p\in(1,\infty)$
and the structure theorem for BV functions when $p=1$.
We prove \eqref{yijiceshi} by considering two cases.

\textit{Case 1: $-p<\gamma\leq-1$.} In this case, we have  $p\in(1,\infty)$.
Observe  that, if $J\subset I$ is an open interval and $t\in J$ is a
Lebesgue point of $f$, then
\begin{align}\label{kuxiaozhe}
\operatorname*{ess\,inf}_J f\leq f(t)=\lim_{\rho\to 0^+}\frac1{2\rho}
\int_{t-\rho}^{t+\rho}f(y)\,dy\leq\operatorname*{ess\,sup}_J f.
\end{align}
Let $I'\subset I$, and let $0<h<\frac12\mathrm{dist}(I',\partial I)$.
For any $x\in I'$, let $J_{x,h}:=(x-h,x+2h)\subset I.$
Using \eqref{kuxiaozhe} and applying  Lemma \ref{lem-seqosc}, we obtain,
for almost every $x\in I',$
\begin{align}\label{quan}
\left|\frac{f(x+h)-f(x)}h\right|^p
&\leq h^{-p}\left(\operatorname*{ess\,sup}_{J_{x,h}}f-
\operatorname*{ess\,inf}_{J_{x,h}}f\right)^p
\lesssim h^{-1}\varliminf_{j\to\infty}F_{\lambda_j,p,\gamma}(f_j;J_{x,h}).
\end{align}
On the other hand, by Tonelli's theorem, we find that
\begin{align*}
\int_{I'}F_{\lambda_j,p,\gamma}(f_j;J_{x,h})\,dx&=\lambda_j^p\underset{|f_j(y)-f_j(z)|>\lambda_j|y-z|^\beta}
{\int_I\int_I}|y-z|^{\gamma-1}\left(\int_{I'}\mathbf1_{J_{x,h}}(y)
\mathbf1_{J_{x,h}}(z)\,dx\right)\,dz\,dy\\
&\leq3hF_{\lambda_j,p,\gamma}(f_j;I).
\end{align*}
From this and \eqref{quan}, it follows that
\begin{align*}
\int_{I'}\left|\frac{f(x+h)-f(x)}h\right|^p\,dx&\lesssim
h^{-1}\int_{I'}\varliminf_{j\to\infty}F_{\lambda_j,p,\gamma}(f_j;J_{x,h})\,dx
\lesssim L,
\end{align*}
which further implies that, for any $\varphi\in C^\infty_{\rm c}(I)$ supported in $I'$,
\begin{align*}
\left|\int_I f\varphi'\,dx\right|^p&=\lim_{h\to 0^+}
\left|\int_{I'}\frac{f(x+h)-f(x)}h\varphi(x)\,dx\right|^p\\
&\leq\|\varphi\|_{L^{p'}(I)}^p\varlimsup_{h\to 0^+}
\int_{I'}\left|\frac{f(x+h)-f(x)}h\right|^p\,dx
\lesssim L\|\varphi\|_{L^{p'}(I)}^p.
\end{align*}
This proves \eqref{yijiceshi} in this case.

\textit{Case 2: $\gamma<-p$.}
For any given $M\in (0,\infty)$, let $T_M:\mathbb R\to\mathbb R$ be the symmetric
truncation defined by setting, for any $t\in\mathbb{R}$,
\begin{align}\label{jieding}
T_M(t):=
\begin{cases}
-M, & t\in(-\infty,-M),\\
t, & t\in[-M,M],\\
M, & t\in(M,\infty).
\end{cases}
\end{align}
Then, for any $s,t\in\mathbb{R},$
\begin{align}\label{deng}
|T_M(t)-T_M(s)|
\leq
|s-t|.
\end{align}
By \eqref{deng}, we find that, for any $j\in\mathbb N$,
\begin{align*}
F_{\lambda_j,p,\gamma}(T_Mf_j;I)&\leq F_{\lambda_j,p,\gamma}(f_j;I)
\end{align*}
and
\begin{align*}
\|T_Mf_j-T_Mf\|_{L^p(I)}\leq\|f_j-f\|_{L^p(I)}\to 0
\end{align*}
as $j\to\infty$. Let  $I:=(a,b)$ and  $r\in(0,\frac{|I|}{2})$.
Recall  that $\lim_{j\to\infty}f_j(x)=f(x)$ for almost every $x\in I.$
From this and \eqref{deng}, we infer that, for almost every $(x,h)\in I\times (0,\infty)$ with
$x,x+h\in I$,
\begin{align*}
\lim_{j\to\infty}\frac{T_Mf_j(x+h)-T_Mf_j(x)}{h}=\frac{T_Mf(x+h)-T_Mf(x)}{h}.
\end{align*}
Using this  and  applying Lemma \ref{lem-diffbd}, we obtain
\begin{align}\label{junchash}
&\int_r^{2r}\int_a^{b-h}\left|\frac{T_Mf(x+h)-T_Mf(x)}h\right|^p\,dx\,\frac{dh}h
\nonumber\\
&\quad\leq\varliminf_{j\to\infty}\int_r^{2r}\int_a^{b-h}
\left|\frac{T_Mf_j(x+h)-T_Mf_j(x)}h\right|^p\,dx\,\frac{dh}h
\nonumber\\
&\quad\leq\varliminf_{j\to\infty}\left[
\frac{2^{-\gamma}-1}{2}F_{\lambda_j,p,\gamma}(f_j;I)
+\frac{2^\gamma-1}{\gamma}\lambda_j^p|I|r^\gamma
\right]=\frac{2^{-\gamma}-1}{2}L.
\end{align}
It is obvious that, for any $\varphi\in C^\infty_{\rm c}(I)$,
\begin{align*}
\left|\int_I T_Mf\varphi'\,dx\right|^p
=
\lim_{h\to0^+}
\left|\int_a^{b-h}
\frac{T_Mf(x+h)-T_Mf(x)}h\varphi(x)\,dx\right|^p.
\end{align*}
This, combined with the fact that $\int_r^{2r}\frac{dh}h=\log2$, H\"older's inequality, and
\eqref{junchash}, implies that, for any $\varphi\in C^\infty_{\rm c}(I)$,
\begin{align*}
\left|\int_I T_Mf\varphi'\,dx\right|^p
&=\frac1{\log2}\lim_{r\to0^+}\int_r^{2r}
\left|\int_a^{b-h}
\frac{T_Mf(x+h)-T_Mf(x)}h\varphi(x)\,dx\right|^p\frac{dh}h\\
&\leq\frac{\|\varphi\|_{L^{p'}(I)}^p}{\log2}
\varlimsup_{r\to0^+}\int_r^{2r}\int_a^{b-h}
\left|\frac{T_Mf(x+h)-T_Mf(x)}h\right|^p\,dx\,\frac{dh}h
\lesssim L\|\varphi\|_{L^{p'}(I)}^p.
\end{align*}
Letting $M\to\infty$ in the estimate as above, we conclude that \eqref{yijiceshi} holds in this case.
This completes the proof of Proposition \ref{prop-1dcoerc}.
\end{proof}

Let  $\Omega$ be a measurable set, $\sigma\in\mathbb S^{N-1}$,
and $z\in\sigma^\perp:=\{y\in \mathbb{R}^N:\ y\cdot \sigma=0\}$.
Define
\begin{align*}
\Omega_{z,\sigma}:=\left\{t\in\mathbb R:z+t\sigma\in\Omega\right\}.
\end{align*}
Moreover, for any measurable function $g$ on $\Omega$ and  any $t \in \Omega_{z,\sigma}$,
define $g_{z,\sigma}(t):=g(z+t\sigma)$.
\begin{lemma}\label{lem-slicing}
Let $N\geq2$, $p\in[1,\infty)$, $\gamma\in\mathbb{R}\setminus\{0\}$, and
$\Omega\subset\mathbb R^N$ be an open set. Then, for any
measurable function $g:\Omega\to\mathbb R$ and any $\lambda\in(0,\infty)$,
\begin{align}\label{qiege}
F_{\lambda,p,\gamma}(g;\Omega)
=\frac12\int_{\mathbb S^{N-1}}\int_{\sigma^\perp}
F_{\lambda,p,\gamma}(g_{z,\sigma};\Omega_{z,\sigma})\,dz\,d\sigma.
\end{align}
\end{lemma}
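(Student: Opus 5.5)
The identity \eqref{qiege} is a polar-coordinates (Blaschke--Petkantschin type) change of variables applied to the integrand of $F_{\lambda,p,\gamma}(g;\Omega)$. Since that integrand is nonnegative and measurable, Tonelli's theorem justifies every interchange of integrals below. First extend $\mathbf 1_\Omega$ by zero, so that we may integrate over all of $\mathbb R^N\times\mathbb R^N$. We then write $y=x+h$ and
\begin{align*}
F_{\lambda,p,\gamma}(g;\Omega)=\lambda^p\int_{\mathbb R^N}\int_{\mathbb R^N}\mathbf 1_\Omega(x)\mathbf 1_\Omega(x+h)\,\mathbf 1_{\{|g(x+h)-g(x)|>\lambda|h|^{1+\gamma/p}\}}|h|^{\gamma-N}\,dh\,dx.
\end{align*}

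Next pass to polar coordinates $h=\rho\sigma$ with $\rho\in(0,\infty)$ and $\sigma\in\mathbb S^{N-1}$, so $dh=\rho^{N-1}\,d\rho\,d\sigma$. The factor $|h|^{\gamma-N}\rho^{N-1}=\rho^{\gamma-1}$ is exactly the one-dimensional kernel $|s-t|^{\gamma-1}$. For each fixed $\sigma$, decompose $x=z+t\sigma$ with $z\in\sigma^\perp$ and $t\in\mathbb R$. This is an orthogonal change of variables, so $dx=dz\,dt$. In these coordinates $x\in\Omega$ iff $t\in\Omega_{z,\sigma}$, and $x+\rho\sigma\in\Omega$ iff $t+\rho\in\Omega_{z,\sigma}$. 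Also $g(x)=g_{z,\sigma}(t)$ and $g(x+h)=g_{z,\sigma}(t+\rho)$. Hence
\begin{align*}
F_{\lambda,p,\gamma}(g;\Omega)=\int_{\mathbb S^{N-1}}\int_{\sigma^\perp}\left[\lambda^p\int_{\Omega_{z,\sigma}}\int_0^\infty\mathbf 1_{\Omega_{z,\sigma}}(t+\rho)\,\mathbf 1_{\{|g_{z,\sigma}(t+\rho)-g_{z,\sigma}(t)|>\lambda\rho^{1+\gamma/p}\}}\rho^{\gamma-1}\,d\rho\,dt\right]dz\,d\sigma.
\end{align*}

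The bracket only counts pairs $(t,s)$ with $s=t+\rho>t$. By the symmetry of the integrand in $(t,s)$, it equals one half of
\begin{align*}
\lambda^p\underset{|g_{z,\sigma}(t)-g_{z,\sigma}(s)|>\lambda|t-s|^{1+\gamma/p}}{\int_{\Omega_{z,\sigma}}\int_{\Omega_{z,\sigma}}}|t-s|^{\gamma-1}\,ds\,dt=F_{\lambda,p,\gamma}(g_{z,\sigma};\Omega_{z,\sigma}).
\end{align*}
This gives \eqref{qiege}. Alternatively, one can keep $\rho\in\mathbb R\setminus\{0\}$ and note that $\sigma$ and $-\sigma$ parametrize the same lines, which produces the same factor $\frac12$.

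The only delicate point is measure-theoretic. $\Omega_{z,\sigma}$ is an open subset of $\mathbb R$ but may fail to be an interval, and $F_{\lambda,p,\gamma}(\cdot;\Omega_{z,\sigma})$ must be read with the same formula \eqref{F-def} on that open set. Beyond that, we need joint measurability of $(\sigma,z,t,\rho)\mapsto g_{z,\sigma}(t)$, so that Tonelli's theorem applies. This follows because the map $(\sigma,z,t)\mapsto z+t\sigma$ is locally Lipschitz and, for each fixed $\sigma$, is a measure-preserving isometry onto $\mathbb R^N$. Consequently preimages of null sets are null, and the composition with the measurable $g$ is measurable; we may also replace $g$ by a Borel representative. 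We also use that $g_{z,\sigma}$ is well defined up to null sets for a.e. $z$. Once these facts are recorded, the computation above is routine.
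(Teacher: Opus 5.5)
Your proposal is correct and follows the paper's proof essentially step for step: polar coordinates in the difference variable $h=\rho\sigma$, the orthogonal slicing $x=z+t\sigma$ to rewrite the integral as one over $t\in\Omega_{z,\sigma}$ and $s\in\Omega_{z,\sigma}\cap(t,\infty)$, and then the symmetry in $(t,s)$ to produce the factor $\frac12$. Your measurability remarks (joint measurability, replacing $g$ by a Borel representative, slices defined for a.e.\ $z$) are left implicit in the paper; they are a reasonable addition.
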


\begin{proof}
Let $\beta:=1+\frac{\gamma}{p}$.
Recall that, for any nonnegative measurable function $v$ on $\Omega$
and any $\sigma\in\mathbb S^{N-1}$,
\begin{align}\label{wangzheni}
\int_\Omega v(x)\,dx=\int_{\sigma^\perp}\int_{\Omega_{z,\sigma}}
v(z+t\sigma)\,dt\,dz.
\end{align}
By this and the polar coordinate decomposition, we obtain
\begin{align}\label{xiangbei}
F_{\lambda,p,\gamma}(g;\Omega)&=\lambda^p\int_{\mathbb S^{N-1}}\int_\Omega
\int_{\{r>0:\,x+r\sigma\in\Omega\}}
\mathbf1_{\left\{|g(x+r\sigma)-g(x)|>\lambda r^\beta\right\}}
r^{\gamma-1}\,dr\,dx\,d\sigma\notag\\
&=\lambda^p\int_{\mathbb S^{N-1}}\int_{\sigma^\perp}
\int_{\Omega_{z,\sigma}}\int_{\{r>0:\,t+r\in\Omega_{z,\sigma}\}}
\mathbf1_{\{|g_{z,\sigma}(t+r)-g_{z,\sigma}(t)|>\lambda r^\beta\}}
r^{\gamma-1}\,dr\,dt\,dz\,d\sigma\notag\\
&=\lambda^p\int_{\mathbb S^{N-1}}\int_{\sigma^\perp}
\int_{\Omega_{z,\sigma}}\int_{\Omega_{z,\sigma}\cap(t,\infty)}
\mathbf1_{\left\{|g_{z,\sigma}(s)-g_{z,\sigma}(t)|
>\lambda|s-t|^\beta\right\}}|s-t|^{\gamma-1}\,ds\,dt\,dz\,d\sigma.
\end{align}
On the other hand, it is easy to verify that, for  any $\sigma\in\mathbb S^{N-1}$ and  $z\in\sigma^\perp$,
\begin{align*}
F_{\lambda,p,\gamma}(g_{z,\sigma};\Omega_{z,\sigma})=
2\lambda^p\int_{\Omega_{z,\sigma}}\int_{\Omega_{z,\sigma}\cap(t,\infty)}
\mathbf1_{\left\{|g_{z,\sigma}(s)-g_{z,\sigma}(t)|
>\lambda|s-t|^\beta\right\}}|s-t|^{\gamma-1}\,ds\,dt.
\end{align*}
This, together with \eqref{xiangbei}, implies that \eqref{qiege} holds. This
finishes the proof of Lemma \ref{lem-slicing}.
\end{proof}

Now, we prove Proposition \ref{prop-coerc} by using
Proposition \ref{prop-1dcoerc} and Lemma \ref{lem-slicing}.
\begin{proof}[Proof of Proposition \ref{prop-coerc}]
The case $N=1$ directly follows from Proposition \ref{prop-1dcoerc}.
Assume now that $N\geq2$. Let
\begin{align*}
L:=\varliminf_{j\to\infty}F_{\lambda_j,p,\gamma}\left(u_j;\Omega\right).
\end{align*}
If $L=\infty$, there is nothing to prove. We now assume $L<\infty$.
Passing to a subsequence and relabelling, we may assume that
\begin{align}\label{kongge}
\lim_{j\to\infty}F_{\lambda_j,p,\gamma}\left(u_j;\Omega\right)=L
\ \ \text{and}\ \
\sum_{j=1}^\infty\|u_j-u\|_{L^p(\Omega)}^p<\infty.
\end{align}
We claim that, for any $\varphi\in C_{\rm c}^\infty(\Omega;\mathbb R^N)$,
\begin{align}\label{duoceshi}
\left|\int_\Omega u\operatorname{div}\varphi\,dx\right|
\lesssim L^{1/p}\|\varphi\|_{L^{p'}(\Omega;\mathbb R^N)},
\end{align}
where the implicit constant depends only on $N,p,$ and $\gamma$.
If this claim holds, the conclusions of the proposition follow from
$L^p$ duality when $p\in(1,\infty)$ and the structure theorem for $BV$ functions
when $p=1$. We prove \eqref{duoceshi} by applying the one-dimensional
estimate on almost all line sections of $\Omega$ and integrating over
their positions and directions.

Fix $\varphi\in C_{\rm c}^\infty(\Omega;\mathbb R^N)$.
Note that, for any $\sigma\in\mathbb S^{N-1}$ and $i,k\in\{1,\ldots,N\}$,
\begin{align*}
\partial_\sigma(\varphi\cdot\sigma)
=\sum_{i,k=1}^N\sigma_i\sigma_k\partial_i\varphi_k
\ \ \text{and}\ \
\int_{\mathbb S^{N-1}}\sigma_i\sigma_k\,d\sigma
=\frac{|\mathbb S^{N-1}|}{N}\delta_{ik},
\end{align*}
where  $\delta_{ik}$ denotes the Kronecker delta.
Consequently,
\begin{align*}
\int_{\mathbb S^{N-1}}
\partial_\sigma(\varphi(x)\cdot\sigma)\,d\sigma
=\sum_{i,k=1}^N\partial_i\varphi_k(x)
\int_{\mathbb S^{N-1}}\sigma_i\sigma_k\,d\sigma
=\frac{|\mathbb S^{N-1}|}{N}\operatorname{div}\varphi(x).
\end{align*}
This,  together with the fact that
\begin{align*}
\frac{d}{dt}\left(\varphi(z+t\sigma)\cdot\sigma\right)
=\partial_\sigma(\varphi\cdot\sigma)(z+t\sigma),
\end{align*}
implies that
\begin{align}\label{fangcesh}
\left|\int_\Omega u\operatorname{div}\varphi\,dx\right|
&=\frac{N}{|\mathbb S^{N-1}|}\left|\int_{\mathbb S^{N-1}}\int_\Omega
u(x)\partial_\sigma(\varphi\cdot\sigma)(x)\,dx\,d\sigma\right|\notag\\
&=\frac{N}{|\mathbb S^{N-1}|}
\left|\int_{\mathbb S^{N-1}}\int_{\sigma^\perp}
\int_{\Omega_{z,\sigma}}u(z+t\sigma)
\partial_\sigma(\varphi\cdot\sigma)(z+t\sigma)
\,dt\,dz\,d\sigma\right|\notag\\
&\leq\frac{N}{|\mathbb S^{N-1}|}
\int_{\mathbb S^{N-1}}\int_{\sigma^\perp}
\left|\int_{\Omega_{z,\sigma}}u_{z,\sigma}(t)
\frac{d}{dt}\left(\varphi(z+t\sigma)\cdot\sigma\right)
\,dt\right|\,dz\,d\sigma.
\end{align}

Next, we show that, for
almost every $(\sigma, z)\in (\mathbb S^{N-1}, \sigma^{\perp}) $ and any
$\psi\in C_{\rm c}^\infty(\Omega_{z,\sigma})$,
\begin{align}\label{qieceshi}
\left|\int_{\Omega_{z,\sigma}}u_{z,\sigma}\psi'\,dt\right|
\lesssim
\left[\varliminf_{j\to\infty}
F_{\lambda_j,p,\gamma}\left((u_j)_{z,\sigma};\Omega_{z,\sigma}\right)\right]^{\frac1p}
\|\psi\|_{L^{p'}(\Omega_{z,\sigma})}.
\end{align}
Assume for the moment that  \eqref{qieceshi} holds.
By \eqref{qieceshi}, \eqref{fangcesh}, and H\"older's inequality,
we conclude that
\begin{align}\label{moment}
\left|\int_\Omega u\operatorname{div}\varphi\,dx\right|&\lesssim
\int_{\mathbb S^{N-1}}\int_{\sigma^\perp}
\left[\varliminf_{j\to\infty}
F_{\lambda_j,p,\gamma}\left((u_j)_{z,\sigma};\Omega_{z,\sigma}\right)\right]^{\frac1p}
\|\varphi(z+\cdot\,\sigma)\cdot\sigma\|_{L^{p'}(\Omega_{z,\sigma})}
\,dz\,d\sigma\notag\\
&\lesssim\left[\int_{\mathbb S^{N-1}}\int_{\sigma^\perp}
\varliminf_{j\to\infty}
F_{\lambda_j,p,\gamma}\left((u_j)_{z,\sigma};\Omega_{z,\sigma}\right)
\,dz\,d\sigma\right]^{\frac1p}\notag\\
&\quad\times\left[\int_{\mathbb S^{N-1}}\int_{\sigma^\perp}
\|\varphi(z+\cdot\,\sigma)\cdot\sigma\|_{L^{p'}(\Omega_{z,\sigma})}^{p'}
\,dz\,d\sigma\right]^{\frac1{p'}}.
\end{align}
Furthermore, from \eqref{wangzheni}, we deduce that
\begin{align}\label{houzhi}
\left(\int_{\mathbb S^{N-1}}\int_{\sigma^\perp}
\|\varphi(z+\cdot\,\sigma)\cdot\sigma\|_{L^{p'}(\Omega_{z,\sigma})}^{p'}
\,dz\,d\sigma\right)^{\frac1{p'}}&=\left(\int_{\mathbb S^{N-1}}\int_\Omega
|\varphi(x)\cdot\sigma|^{p'}\,dx\,d\sigma\right)^{\frac1{p'}}\notag\\
&\lesssim\|\varphi\|_{L^{p'}(\Omega;\mathbb R^N)}.
\end{align}
Using Fatou's lemma and \eqref{qiege}, we conclude that
\begin{align*}
\int_{\mathbb S^{N-1}}\int_{\sigma^\perp}
\varliminf_{j\to\infty}F_{\lambda_j,p,\gamma}\left((u_j)_{z,\sigma};\Omega_{z,\sigma}\right)\,dz\,d\sigma
&\leq\varliminf_{j\to\infty}\int_{\mathbb S^{N-1}}\int_{\sigma^\perp}
F_{\lambda_j,p,\gamma}\left((u_j)_{z,\sigma};\Omega_{z,\sigma}\right)\,dz\,d\sigma\nonumber\\
&=2\varliminf_{j\to\infty}F_{\lambda_j,p,\gamma}\left(u_j;\Omega\right)=2L.
\end{align*}
From this, \eqref{houzhi}, and \eqref{moment}, we infer that
\eqref{duoceshi} holds.

Now, it remains to prove \eqref{qieceshi}.
By \eqref{wangzheni} and \eqref{kongge}, we obtain
\begin{align*}
\int_{\mathbb S^{N-1}}\int_{\sigma^\perp}
\sum_{j=1}^\infty\left\|(u_j)_{z,\sigma}-u_{z,\sigma}\right\|_{L^p(\Omega_{z,\sigma})}^p
\,dz\,d\sigma=\left|\mathbb S^{N-1}\right|\sum_{j=1}^\infty
\|u_j-u\|_{L^p(\Omega)}^p<\infty.
\end{align*}
Consequently, for almost every $(\sigma,z)\in \mathbb{S}^{N-1}\times \sigma^\perp$,
$(u_j)_{z,\sigma}\to u_{z,\sigma}$ in $L^p(\Omega_{z,\sigma})$ as $j\to\infty$.
Since $\Omega$ is a bounded open set,
it follows that $\Omega_{z,\sigma}$
is a bounded open subset of $\mathbb R$. Thus, $\Omega_{z,\sigma}=\bigcup_k I_k,$
where $\{I_k\}_k$ is a sequence of disjoint bounded open intervals.
This, combined with \eqref{yijiceshi} and H\"older's inequality, implies that
\begin{align*}
\left|\int_{\Omega_{z,\sigma}}u_{z,\sigma}\psi'\,dt\right|
&\leq\sum_k\left|\int_{I_k}u_{z,\sigma}\psi'\,dt\right|\lesssim\sum_k\left[\varliminf_{j\to\infty}
F_{\lambda_j,p,\gamma}\left((u_j)_{z,\sigma};I_k\right)\right]^{\frac1p}\|\psi\|_{L^{p'}(I_k)}\\
&\lesssim\left[\sum_k\varliminf_{j\to\infty}
F_{\lambda_j,p,\gamma}\left((u_j)_{z,\sigma};I_k\right)\right]^{\frac1p}
\|\psi\|_{L^{p'}(\Omega_{z,\sigma})}\\
&\lesssim\left[\varliminf_{j\to\infty}
F_{\lambda_j,p,\gamma}\left((u_j)_{z,\sigma};\Omega_{z,\sigma}\right)\right]^{\frac1p}
\|\psi\|_{L^{p'}(\Omega_{z,\sigma})}.
\end{align*}
This proves \eqref{qieceshi}, which completes the proof of Proposition \ref{prop-coerc}.
\end{proof}

\subsection{Proof of Theorem \ref{thm-pliminf}}
\label{sec-pinf}
In this subsection, we prove Theorem \ref{thm-pliminf}.
By arguments similar to those for
\cite[Lemmas~2.2,~2.3, and~2.5]{lyyyz26}, we obtain the following three
lemmas, respectively; we omit the details here.

\begin{lemma}\label{lem-scaling}
Let  $p\in[1,\infty)$, $\gamma\in\mathbb R\setminus\{0\}$,
$U\subset\mathbb R^N$ be an open set, and $w:U\to\mathbb R$ be a measurable
function. Let $x_0\in\mathbb R^N$, $R\in\mathcal O(N)$,
$r\in(0,\infty)$, $a\in\mathbb R\setminus\{0\}$, and $b\in\mathbb R$.
Define the function $u:x_0+rRU\to\mathbb R$ by setting, for any $z\in U$,
$u(x_0+rRz):=arw(z)+b$. Then, for any $\lambda\in(0,\infty)$,
\begin{align*}
F_{\lambda,p,\gamma}(u;x_0+rRU)
=|a|^pr^NF_{\frac{\lambda r^{\gamma/p}}{|a|},p,\gamma}(w;U).
\end{align*}
Consequently,
\begin{align}\label{bisuo}
m_{p,\gamma}(aw+b)=|a|^pm_{p,\gamma}(w).
\end{align}
\end{lemma}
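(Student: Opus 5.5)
The scaling identity will come from a direct change of variables in the double integral defining $F_{\lambda,p,\gamma}$. The consequence \eqref{bisuo} will then follow by transporting sequences through the map $v\mapsto av+b$.

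For the identity, set $\beta:=1+\gamma/p$ and substitute $x=x_0+rRz$, $y=x_0+rRz'$ with $z,z'\in U$. The Jacobian of this map on $U\times U$ is $r^{2N}$. Since $R$ is orthogonal, $|x-y|=r|z-z'|$, so the kernel becomes $|x-y|^{\gamma-N}=r^{\gamma-N}|z-z'|^{\gamma-N}$. In the constraint,
\[
|u(x)-u(y)|=|a|r|w(z)-w(z')|,
\]
because the constant $b$ cancels. The constraint $|u(x)-u(y)|>\lambda|x-y|^{\beta}$ therefore reads $|a|r|w(z)-w(z')|>\lambda r^{\beta}|z-z'|^{\beta}$. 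Dividing by $|a|r$, it becomes $|w(z)-w(z')|>\mu|z-z'|^{\beta}$ with $\mu:=\lambda r^{\gamma/p}/|a|$.

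Collecting the factors gives
\[
F_{\lambda,p,\gamma}(u;x_0+rRU)=\lambda^p r^{N+\gamma}\underset{|w(z)-w(z')|>\mu|z-z'|^{\beta}}{\int_U\int_U}|z-z'|^{\gamma-N}\,dz'\,dz .
\]
Writing $\lambda^p r^{N+\gamma}=|a|^p r^N\mu^p$, this is exactly $|a|^pr^NF_{\mu,p,\gamma}(w;U)$.

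For \eqref{bisuo}, apply the identity with $U=Q_0$, $x_0=\mathbf 0$, $r=1$ and $R=\mathrm I_N$. Then for any measurable $v$ on $Q_0$,
\[
F_{\lambda,p,\gamma}(av+b;Q_0)=|a|^pF_{\lambda/|a|,p,\gamma}(v;Q_0).
\]
Take any admissible pair of sequences $\{\lambda_j\}$ and $\{v_j\}$ with $v_j\to w$ in $L^p(Q_0)$. Since $Q_0$ is bounded, $av_j+b\to aw+b$ in $L^p(Q_0)$. Moreover, $\lambda_j|a|\to\infty$ or $\to0$ exactly when $\lambda_j$ does. Hence the pair $\{|a|\lambda_j\}$, $\{av_j+b\}$ is admissible for $aw+b$, and its liminf equals $|a|^p\varliminf_{j\to\infty} F_{\lambda_j,p,\gamma}(v_j;Q_0)$. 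Taking the infimum over all such pairs gives $m_{p,\gamma}(aw+b)\le|a|^pm_{p,\gamma}(w)$.

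The reverse inequality follows by applying the same argument to the inverse map $v\mapsto (v-b)/a$, which is legitimate because $a\neq0$.

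There is no real obstacle here. The only care needed is in bookkeeping the powers of $r$: the exponent $\gamma-N$ from the kernel and $2N$ from the Jacobian combine to $N+\gamma$, and the factor $r^{\gamma/p}$ absorbed into $\mu$ exactly produces the prefactor $r^N$. One should also check that admissibility of $\lambda_j\to\infty$ or $\lambda_j\to0$ is preserved under multiplication by $|a|$, which is immediate.
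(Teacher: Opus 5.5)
Your proposal is correct and matches the argument the paper relies on. The paper omits the proof and defers to the analogous scaling lemma of its earlier work. That lemma is exactly this change of variables $x=x_0+rRz$, $y=x_0+rRz'$ (Jacobian $r^{2N}$, $|x-y|=r|z-z'|$, threshold rescaled to $\lambda r^{\gamma/p}/|a|$), followed by transporting admissible sequences through $v\mapsto av+b$ and its inverse.

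One small point: boundedness of $Q_0$ is needed only to ensure $av_j+b\in L^p(Q_0)$. It is not needed for the convergence $av_j+b\to aw+b$, since the difference is just $a(v_j-w)$.
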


\begin{lemma}\label{lem-mplsc}
Let  $p\in[1,\infty)$ and
$\gamma\in\mathbb R\setminus\{0\}$. The map
$m_{p,\gamma}:L^p(Q_0)\to[0,\infty]$ is lower semicontinuous. More precisely,
let $w\in L^p(Q_0)$, and let
$\{w_n\}_{n\in\mathbb N}\subset L^p(Q_0)$ satisfy
$w_n\to w$ in $L^p(Q_0)$. Then
\begin{align*}
\varliminf_{n\to\infty}m_{p,\gamma}(w_n)\geq m_{p,\gamma}(w).
\end{align*}
\end{lemma}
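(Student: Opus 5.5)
The statement to prove is Lemma~\ref{lem-mplsc}, the lower semicontinuity of $m_{p,\gamma}$ on $L^p(Q_0)$. The plan is a diagonal argument that uses only the definition of $m_{p,\gamma}$ as an infimum over pairs of sequences.

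First I make the reductions. Set $M:=\varliminf_{n\to\infty}m_{p,\gamma}(w_n)$. If $M=\infty$ there is nothing to prove. Otherwise I pass to a subsequence, not relabelled, so that $m_{p,\gamma}(w_n)\to M$ and $m_{p,\gamma}(w_n)<\infty$ for every $n$.

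Next I choose near-optimal sequences for each $n$. By the definition of $m_{p,\gamma}(w_n)$ there are sequences $\{\lambda^{(n)}_j\}_{j\in\mathbb N}$ and $\{u^{(n)}_j\}_{j\in\mathbb N}$ with the following properties:
\begin{itemize}
\item $\lambda^{(n)}_j\to\infty$ when $\gamma>0$, and $\lambda^{(n)}_j\to0$ when $\gamma<0$;
\item $u^{(n)}_j\to w_n$ in $L^p(Q_0)$;
\item $\varliminf_{j\to\infty}F_{\lambda^{(n)}_j,p,\gamma}(u^{(n)}_j;Q_0)<m_{p,\gamma}(w_n)+\frac1n$.
\end{itemize}
Passing to a subsequence in $j$, I may replace the $\varliminf$ by infinitely many indices $j$ at which $F_{\lambda^{(n)}_j,p,\gamma}(u^{(n)}_j;Q_0)<m_{p,\gamma}(w_n)+\frac1n$.

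Among those indices I pick a single $j_n$ satisfying three requirements:
\begin{itemize}
\item $\|u^{(n)}_{j_n}-w_n\|_{L^p(Q_0)}<\frac1n$;
\item $\lambda^{(n)}_{j_n}>n$ if $\gamma>0$, and $\lambda^{(n)}_{j_n}<\frac1n$ if $\gamma<0$;
\item $F_{\lambda^{(n)}_{j_n},p,\gamma}(u^{(n)}_{j_n};Q_0)<m_{p,\gamma}(w_n)+\frac1n$.
\end{itemize}
This is possible because the first two conditions hold for all sufficiently large $j$, while the third holds for infinitely many $j$.

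Then I form the diagonal sequences $\Lambda_n:=\lambda^{(n)}_{j_n}$ and $U_n:=u^{(n)}_{j_n}$. By the triangle inequality, $\|U_n-w\|_{L^p(Q_0)}\le\frac1n+\|w_n-w\|_{L^p(Q_0)}\to0$. The sequence $\Lambda_n$ has the required asymptotic behaviour. So $(\Lambda_n,U_n)$ is admissible in the definition of $m_{p,\gamma}(w)$, and therefore
\begin{align*}
m_{p,\gamma}(w)\le\varliminf_{n\to\infty}F_{\Lambda_n,p,\gamma}(U_n;Q_0)\le\lim_{n\to\infty}\left(m_{p,\gamma}(w_n)+\frac1n\right)=M.
\end{align*}

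The only delicate point is keeping the $\varliminf$ in the definition compatible with the simultaneous choice of $j_n$, which the subsequence reduction above handles. No property of $F$ itself is needed.
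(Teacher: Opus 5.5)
Your proposal is correct: reducing to $m_{p,\gamma}(w_n)\to M<\infty$, choosing one index $j_n$ that meets all three conditions, and checking that the diagonal pair $(\Lambda_n,U_n)$ is admissible for $m_{p,\gamma}(w)$ proves the claim. This is essentially the standard diagonal argument the paper relies on; it omits the details and refers to the analogous lemma in the authors' earlier work.
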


\begin{lemma}\label{lem-cellbds}
Let  $p\in[1,\infty)$ and
$\gamma\in\mathbb R\setminus\{0\}$. Then the following assertions hold.
\begin{itemize}
\item[$\mathrm{(i)}$] If $\gamma\in(0,\infty)$, then
\begin{align*}
\lim_{\lambda\to\infty}F_{\lambda,p,\gamma}(\ell;Q_0)=\frac{K_{N,p}}{\gamma}.
\end{align*}
If $\gamma\in(-\infty,0)$, then
\begin{align*}
\lim_{\lambda\to0^+}F_{\lambda,p,\gamma}(\ell;Q_0)=\frac{K_{N,p}}{|\gamma|}.
\end{align*}
Consequently, $0\leq C_{N,p,\gamma}^{\rm cell}\leq\frac{K_{N,p}}{|\gamma|}.$

\item[$\mathrm{(ii)}$] There exists a sequence
$\{\lambda_j\}_{j\in\mathbb N}\subset(0,\infty)$ satisfying
$\lambda_j\to\infty$ if $\gamma\in(0,\infty)$ and $\lambda_j\to0$ if $\gamma\in(-\infty,0)$.
For this sequence, there exists a sequence
$\{v_j\}_{j\in\mathbb N}\subset L^p(Q_0)$ satisfying
$v_j\to\ell$ in $L^p(Q_0)$ and
\begin{align*}
\lim_{j\to\infty}F_{\lambda_j,p,\gamma}(v_j;Q_0)
=C_{N,p,\gamma}^{\rm cell}.
\end{align*}
\end{itemize}
\end{lemma}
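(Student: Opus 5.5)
The statement to prove is Lemma~\ref{lem-cellbds}: (i) the limit of $F_{\lambda,p,\gamma}(\ell;Q_0)$ together with the resulting bound on $C^{\rm cell}_{N,p,\gamma}$, and (ii) attainment of the cell constant along a single sequence.

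For (i), the plan is to compute $F_{\lambda,p,\gamma}(\ell;Q_0)$ in polar coordinates centred at $x$. Write $y=x+r\sigma$ with $\sigma\in\mathbb S^{N-1}$. Since $|\ell(x)-\ell(y)|=r|\sigma\cdot e_N|$, the constraint becomes
\begin{align*}
|\sigma\cdot e_N|>\lambda r^{\gamma/p},
\end{align*}
and the weight is $r^{\gamma-N}\,r^{N-1}\,dr=r^{\gamma-1}\,dr$.

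Consider first the case $\gamma>0$. The constraint reads $r<(|\sigma_N|/\lambda)^{p/\gamma}=:\rho_\lambda(\sigma)$, and $\rho_\lambda(\sigma)\to0$ uniformly in $\sigma$ as $\lambda\to\infty$. For $x$ at distance more than $\sup_\sigma\rho_\lambda(\sigma)$ from $\partial Q_0$, the whole segment $\{x+r\sigma:\ 0<r<\rho_\lambda(\sigma)\}$ lies in $Q_0$. The radial integral is then
\begin{align*}
\lambda^p\int_0^{\rho_\lambda}r^{\gamma-1}\,dr=\frac{\lambda^p\rho_\lambda^\gamma}{\gamma}=\frac{|\sigma_N|^p}{\gamma}.
\end{align*}
The boundary layer has measure tending to $0$, and on it the integrand is bounded by the same quantity. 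Integrating over $\sigma$ and $x\in Q_0$ therefore gives $K_{N,p}/\gamma$.

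Next the case $\gamma<0$ with $\lambda\to0^+$. The constraint becomes $r>(\lambda/|\sigma_N|)^{p/|\gamma|}=:\rho_\lambda(\sigma)\to0$, and the radial integral is $\lambda^p\int_{\rho_\lambda}^{R(x,\sigma)}r^{\gamma-1}\,dr$, where $R(x,\sigma)$ is the exit distance from $Q_0$. This equals
\begin{align*}
\frac{\lambda^p}{|\gamma|}\Big(\rho_\lambda^{\gamma}-R^{\gamma}\Big)=\frac{|\sigma_N|^p}{|\gamma|}-\frac{\lambda^pR^\gamma}{|\gamma|}.
\end{align*}
The first term integrates to $K_{N,p}/|\gamma|$. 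The second term is where care is needed. It is nonnegative and bounded by $|\sigma_N|^p/|\gamma|$, since it is only present when $R>\rho_\lambda$. It also tends to $0$ pointwise for a.e.\ $(x,\sigma)$, because $R>0$ is fixed while $\lambda\to0$. Dominated convergence then gives the claimed limit.

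For the consequence in (i), the constant sequence $u_j=\ell$ is admissible in the definition of $m_{p,\gamma}(\ell)$, so $C^{\rm cell}_{N,p,\gamma}\le K_{N,p}/|\gamma|$; the lower bound $C^{\rm cell}_{N,p,\gamma}\ge0$ is trivial.

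For (ii), the idea is a diagonal argument. By (i), $C:=C^{\rm cell}_{N,p,\gamma}<\infty$. By the definition of the infimum, for each $k$ there are admissible sequences $\{\lambda^{(k)}_j\}_j$ and $\{u^{(k)}_j\}_j$ with
\begin{align*}
\varliminf_{j\to\infty}F_{\lambda^{(k)}_j,p,\gamma}\big(u^{(k)}_j;Q_0\big)<C+\frac1k.
\end{align*}
Along a subsequence in $j$ realizing this liminf, choose an index $j_k$ such that
\begin{align*}
F_{\lambda^{(k)}_{j_k},p,\gamma}\big(u^{(k)}_{j_k};Q_0\big)<C+\frac1k,\qquad \big\|u^{(k)}_{j_k}-\ell\big\|_{L^p(Q_0)}<\frac1k,
\end{align*}
and $\lambda^{(k)}_{j_k}>k$ (respectively $\lambda^{(k)}_{j_k}<1/k$). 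Set $\lambda_k:=\lambda^{(k)}_{j_k}$ and $v_k:=u^{(k)}_{j_k}$. Then $\lambda_k\to\infty$ (respectively $\lambda_k\to0$), $v_k\to\ell$ in $L^p(Q_0)$, and $\varlimsup_k F_{\lambda_k,p,\gamma}(v_k;Q_0)\le C$. On the other hand, $(\lambda_k,v_k)$ is itself admissible in the definition of $m_{p,\gamma}(\ell)$, so every subsequence has liminf at least $C$. Hence the limit exists and equals $C$.

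The main obstacle is the boundary term in the case $\gamma<0$: one must verify that the contribution of the exit distance vanishes. This is handled by the dominated convergence argument above, using the bound $|\sigma_N|^p/|\gamma|$ as the dominating function.
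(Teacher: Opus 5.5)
Your proof is correct and follows the same route the paper has in mind: the paper omits the details and defers to the analogous positive-$\gamma$ lemma in the authors' earlier work, i.e.\ a direct polar-coordinate computation of $F_{\lambda,p,\gamma}(\ell;Q_0)$ in which the boundary (exit-distance) contribution vanishes in the limit, followed by a standard diagonal extraction from the definition of $m_{p,\gamma}(\ell)$. One cosmetic fix: for $\gamma<0$, write the radial integral as $\frac{1}{|\gamma|}\bigl(|\sigma_N|^p-\lambda^pR(x,\sigma)^\gamma\bigr)_+$, rather than splitting it into two terms of which only the limit of the first is $K_{N,p}/|\gamma|$; this quantity lies in $[0,|\sigma_N|^p/|\gamma|]$ and converges pointwise to $|\sigma_N|^p/|\gamma|$, so dominated convergence applies to it directly.
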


For any $R\in\mathcal O(N)$, $x\in\mathbb R^N$, and $r\in(0,\infty)$, let
\begin{align}\label{eq-rotcube}
Q_r^R(x):=x+rRQ_0.
\end{align}
We now prove Theorem~\ref{thm-pliminf}.
We first use Proposition~\ref{prop-coerc} to deduce $u\in W^{1,p}(\Omega)$
from the finiteness of the lower energy limit; the remaining argument
is the same as in the case $\gamma\in(0,\infty)$
(see \cite[Theorem~4.1]{lyyyz26}).
\begin{proof}[Proof of Theorem \ref{thm-pliminf}]
Let
\begin{align*}
L:=\varliminf_{j\to\infty}F_{\lambda_j,p,\gamma}\left(u_j;\Omega\right)\in[0,\infty].
\end{align*}
If $L=\infty$, then the desired conclusion holds automatically. We now assume $L<\infty$.
By Proposition \ref{prop-coerc}, we conclude that
$u\in W^{1,p}(\Omega)$. Define the measure $d\mu(x):=|\nabla u(x)|^p\,dx$
on $\Omega.$ Without loss of generality, we may assume  $\mu(\Omega)>0.$
Let
\begin{align*}
A_0:=\left\{x\in\Omega:\nabla u(x)\neq0,\ \lim_{\rho\to0^+}
\fint_{B(x,\rho)}|\nabla u(y)-\nabla u(x)|^p\,dy=0\right\}.
\end{align*}
From the Lebesgue differentiation theorem, we deduce that $\mu(\Omega\setminus A_0)=0$.
Let $x\in A_0$ and $\xi:=\nabla u(x).$ Choose $R_x\in\mathcal O(N)$ such that $ R_xe_N=\frac{\xi}{|\xi|}$ and
choose $ r_x\in(0,\infty)$ such that, for any $r\in(0, r_x)$,  $Q_r^{R_x}(x)\subset\Omega$. For any $r\in(0, r_x)$,
$j\in\mathbb{N}$, and $z\in Q_0$,
define
\begin{align*}
b_{x,r}:=\fint_{Q_r^{R_x}(x)}u(y)\,dy
\ \ \text{and}\ \
u_{x,r}(z):=\frac{u(x+rR_xz)-b_{x,r}}{|\xi|r}.
\end{align*}
Then  $u_j\to u$ in $L^p(\Omega)$ as $j\to\infty$.
Using Lemma \ref{lem-cellbds}, we find that $C_{N,p,\gamma}^{\mathrm{cell}}<\infty$.
By  this, Lemmas \ref{lem-scaling} and \ref{lem-mplsc}, and
an argument similar to that used in the proof of \cite[(4.6)]{lyyyz26}, we conclude that
\begin{align*}
\varliminf_{j\to\infty}F_{\lambda_j,p,\gamma}\left(u_j;Q_r^{R_x}(x)\right)
\geq|\xi|^pr^Nm_{p,\gamma}(u_{x,r})\geq\frac{C_{N,p,\gamma}^{\mathrm{cell}}-\varepsilon}{1+\varepsilon}
\mu\left(Q_r^{R_x}(x)\right).
\end{align*}
Using this and a Morse type covering argument similar to that used in the proof of \cite[Theorem 4.1]{lyyyz26},
we find that \eqref{xiaji} holds. This completes the proof of Theorem \ref{thm-pliminf}.
\end{proof}

\subsection{Proof of Theorem \ref{thm-onelinf}}
\label{sec-oneinf}
In this section, we prove Theorem \ref{thm-onelinf}. We first establish the following
estimate,
which is the negative-parameter counterpart of
\cite[Proposition~4.11]{lyyyz26} and is essential in the proof of
Theorem \ref{thm-onelinf}.

\begin{proposition}\label{prop-profend}
Let $\gamma\in(-\infty,-1)$ and $q\in BV(I_0)$.  Assume that $q$ is nondecreasing.  Then
\begin{align}\label{fuduan}
 m_{1,\gamma}(W_q)\geq C_{N,1,\gamma}^{\mathrm{cell}}|Dq|(I_0),
\end{align}
where $I_0:=(-\frac{1}{2},\frac{1}{2})$ and, for any  $z\in Q_0,$ $W_q(z):=q(z_N).$
\end{proposition}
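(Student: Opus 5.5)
The plan is to prove \eqref{fuduan} by comparison with the affine profile. Since $C_{N,1,\gamma}^{\rm cell}=m_{1,\gamma}(\ell)$, the idea is to assemble many rescaled copies of near-optimal sequences for $W_q$ into a competitor for $\ell$, and then show that the energy gained from gluing is negligible. If $|Dq|(I_0)=0$ there is nothing to prove. Otherwise, by \eqref{bisuo} with $a:=1/|Dq|(I_0)$, we may normalize so that $q$ is nondecreasing with $q(\frac12^-)-q(-\frac12^+)=1$. It then suffices to show $m_{1,\gamma}(W_q)\ge C_{N,1,\gamma}^{\rm cell}$.

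\textbf{Step 1 (near-optimal sequence for $W_q$).} Fix $\varepsilon>0$ and take $\lambda_j\to0$ and $v_j\to W_q$ in $L^1(Q_0)$ with $F_{\lambda_j,1,\gamma}(v_j;Q_0)\le m_{1,\gamma}(W_q)+\varepsilon$. Using the truncation $T_M$ from \eqref{jieding}, which does not increase the energy by \eqref{deng}, and composing with $t\mapsto\min\{\max\{t,q(-\frac12^+)\},q(\frac12^-)\}$, I may assume $v_j$ takes values in $[q(-\frac12^+),q(\frac12^-)]$, an interval of length $1$.

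\textbf{Step 2 (staircase).} For $k\in\mathbb N$, tile $Q_0$ by the $k^N$ subcubes $Q^{(i)}:=x_i+\frac1kQ_0$. On $Q^{(i)}$ define
\[
u_{j,k}(x_i+\tfrac1k z):=\tfrac1k\,v_j(z)+c_i, \qquad z\in Q_0,
\]
where $c_i$ is chosen so that the ranges of consecutive layers in the $e_N$-direction match. Then $u_{j,k}\to w_k$ in $L^1$ as $j\to\infty$, where $w_k$ is a staircase-type function with $\|w_k-\ell\|_{L^\infty}\le\frac2k$. By Lemma~\ref{lem-scaling} with $r=\frac1k$ and $a=1$, each cube contributes
\[
k^{-N}F_{\lambda_jk^{-\gamma},1,\gamma}(v_j;Q_0).
\]
Since $\lambda_jk^{-\gamma}\to0$ as $j\to\infty$, I first reindex, choosing the sequence for $W_q$ along the parameters $\lambda_jk^{\gamma}$. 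The diagonal sum over cubes is then at most $m_{1,\gamma}(W_q)+\varepsilon$. Using Lemma~\ref{lem-mplsc} for $w_k\to\ell$ and a diagonal argument in $(j,k)$, I get
\[
C_{N,1,\gamma}^{\rm cell}\le m_{1,\gamma}(W_q)+\varepsilon+\varlimsup_{k}\varlimsup_{j}\mathcal I_{j,k},
\]
where $\mathcal I_{j,k}$ is the interaction energy over pairs $(x,y)$ lying in different subcubes.

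\textbf{Step 3 (interaction estimate; the main obstacle).} I must show that $\varlimsup_j\mathcal I_{j,k}\to0$ as $k\to\infty$; this is where $\gamma<-1$ enters. Since $1+\gamma<0$, the threshold $\lambda|x-y|^{1+\gamma}$ blows up at short range. Because $|u_{j,k}|\le2$ on $Q_0$, a pair with $|u_{j,k}(x)-u_{j,k}(y)|\le 2$ can exceed the threshold only when $|x-y|\ge \rho_j:=(\lambda_j/2)^{-1/(1+\gamma)}$. Since $\lambda_j\to0$ and $1+\gamma<0$, we have $\rho_j\to\infty$. Thus for large $j$ no pair in $Q_0$ contributes, and the cross terms vanish. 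If instead one only uses that cross-cube differences are at most $O(\frac1k)$ plus local jumps, the contribution is bounded by
\[
\lambda_j\iint_{|x-y|\ge\rho}|x-y|^{\gamma-N}\,dy\,dx\lesssim\lambda_j\rho^{\gamma}\to0.
\]
The delicate point is that this short-range argument must be applied on the rescaled scale of each subcube, so the correct uniform bound on the oscillation of the glued function must be tracked. If a direct bound fails near the interfaces, I would modify $v_j$ in a boundary layer of width $\eta$ of $Q_0$ so that it equals $W_q$ there, which is possible since $q$ is monotone and bounded. This cutoff costs $O(\eta)+o_j(1)$ by the same far-field estimate, and it makes the staircase continuous across the lateral faces. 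Letting $\varepsilon,\eta\to0$ then gives $m_{1,\gamma}(W_q)\ge C_{N,1,\gamma}^{\rm cell}$, which is \eqref{fuduan}.
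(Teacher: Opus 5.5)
\paragraph{There is a genuine gap in Step~3.} Your architecture matches the paper's: normalize by \eqref{bisuo}, glue $k^N$ rescaled copies of a near-optimal sequence for $W_q$ at parameter $k^{\gamma}\lambda_j$ into a competitor for a staircase $w_k\to\ell$, and conclude with Lemma~\ref{lem-mplsc}. Step~3 carries the whole difficulty, and it does not work.

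First, there is a sign error. Since $1+\gamma<0$, the exponent $-1/(1+\gamma)$ is positive, so $\rho_j=(\lambda_j/2)^{-1/(1+\gamma)}\to0$, not $\infty$. Bounded oscillation therefore only excludes ever shorter pairs. Your claim that for large $j$ no pair in $Q_0$ contributes is false: it would give $F_{\lambda_j,1,\gamma}(\ell;Q_0)\to0$, contradicting Lemma~\ref{lem-cellbds}(i).

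Second, your fallback far-field bound $\lambda_j\rho^{\gamma}\to0$ only covers pairs at a fixed distance. The cross energy lives on pairs in neighbouring subcubes at short distance across the interfaces, and you never estimate these. No oscillation bound can handle them. Fix $k$, let $S$ be the skeleton and $\rho_\lambda:=(\lambda/2)^{-1/(1+\gamma)}$. Bounding the near pairs by $\lambda\,|S^{(r)}|\int_{|h|>\rho_\lambda}|h|^{\gamma-N}\,dh\sim r\lambda^{1/(1+\gamma)}$ requires $r\ll\lambda^{-1/(1+\gamma)}$. The complementary far part $\lambda r^{\gamma}$ requires $r\gg\lambda^{-1/\gamma}$. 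Since $-1/(1+\gamma)>-1/\gamma$ for $\gamma<-1$, these two conditions are incompatible as $\lambda\to0$. In addition, $v_j$ has no pointwise control near $\partial Q_0$, and even the limit $w_k$ can have jumps accumulating at the interfaces.

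\paragraph{The missing idea.} You need a localization of the short-range energy by the total variation of a neighbourhood, uniform in $\lambda$. This is Lemma~\ref{lem-locbvneg}: $\lambda\int_E\int_{B(x,r)}\cdots\le C_{N,\gamma}|Dv|(E^{(3r)})$. This estimate, not the short-range growth of the threshold, is where $\gamma<-1$ enters; for $\gamma=-1$ the short-range energy of a jump is already infinite once $\lambda$ is below the jump height. The paper uses it in two steps.
\begin{enumerate}
\item It makes the near-optimal sequence equal to $W_q$ on a collar of width $\rho$ (Lemma~\ref{lem-nearmin}). The cost of this modification is not a far-field term. It consists of the collar energy $F_{\lambda,1,\gamma}(W_q;B_{2\rho},Q_0)$ and the interpolation term of Lemma~\ref{lem-cutoff}(ii); the latter forces a choice $\alpha_j\to1$ with $\|v_j-W_q\|_{L^1(Q_0)}/(1-\alpha_j)\to0$. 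The collar energy vanishes as $\rho\to0$ by Lemma~\ref{lem-collar}, because extending $q$ by its one-sided limits gives $|D\widetilde W_q|(\partial Q_0)=0$. In general this cost is $o(1)$, not $O(\rho)$ as you claim.
\item Cross pairs at distance $<\rho/k$ then only see $w_k$, so their energy is dominated by the cross energy of $w_k$. That tends to $0$ as $\lambda\to0$ by Lemma~\ref{lem-skeleton}. There one applies Lemma~\ref{lem-locbvneg} on the $R_\lambda$-neighbourhood of the skeleton with $R_\lambda=\lambda^{-1/(2\gamma)}$, so that $R_\lambda\to0$ and $\lambda R_\lambda^{\gamma}=\lambda^{1/2}\to0$. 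One also uses that $|Dw_k|$ does not charge the skeleton.
\end{enumerate}
Your matching of ranges between consecutive layers is exactly what gives this last property on the faces orthogonal to $e_N$; the faces parallel to $e_N$ are trivial. However, ``continuity across the faces'' cannot replace the localization estimate.
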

For any  measurable set $E\subset\mathbb R^N$ and any $r\in (0,\infty)$, let
\begin{align*}
 E^{(r)}:=\left\{x\in\mathbb R^N:\operatorname{dist}(x,E)<r\right\}.
\end{align*}
We have the following conclusion, whose proof is similar to that of \cite[Lemma~4.14]{lyyyz26}; we omit the details here.
\begin{lemma}\label{lem-locbvneg}
Let $\gamma\in(-\infty,-1)$, $v\in BV(\mathbb R^N)$, and $E\subset\mathbb R^N$ be a bounded Borel set.  Then
there exists a  positive constant $C_{N,\gamma}$, depending only on $N$ and $\gamma$, such that, for any $r,\lambda\in (0,\infty)$,
\begin{align*}
\lambda
\underset{|v(x)-v(y)|>\lambda|x-y|^{\gamma+1}}
{\int_E\int_{B(x,r)}}
|x-y|^{\gamma-N}\,dy\,dx
\leq C_{N,\gamma}|Dv|(E^{(3r)}).
\end{align*}
\end{lemma}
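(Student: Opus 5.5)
The plan is to reduce Lemma~\ref{lem-locbvneg} to a one-dimensional estimate through polar coordinates and a difference-quotient bound, mimicking \cite[Lemma~4.14]{lyyyz26}. Here $p=1$, so the exponent is $\beta:=1+\gamma<0$. By density (strict convergence of smooth approximations together with lower semicontinuity in $v$ of the left-hand side, using a.e.\ convergence and Fatou's lemma on the open superlevel condition), it suffices to treat $v\in C^\infty(\mathbb R^N)\cap BV(\mathbb R^N)$. The limiting step costs a slight enlargement of the set, which is absorbed because we only need $|Dv|(E^{(3r)})$ and can prove the bound with $E^{(2r)}$ for smooth $v$.

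First, write $y=x+h\sigma$ with $h\in(0,r)$ and $\sigma\in\mathbb S^{N-1}$, so that the left-hand side equals
\begin{align*}
\lambda\int_{\mathbb S^{N-1}}\int_0^r h^{\gamma-1}\,\mathcal L^N\left(\left\{x\in E:|v(x+h\sigma)-v(x)|>\lambda h^{\gamma+1}\right\}\right)dh\,d\sigma .
\end{align*}
For fixed $\sigma$ and $h$, Chebyshev's inequality gives
\begin{align*}
\mathcal L^N(\{\cdots\})\le \frac{1}{\lambda h^{\gamma+1}}\int_E|v(x+h\sigma)-v(x)|\,dx\le \frac{h}{\lambda h^{\gamma+1}}\int_0^1\int_E|\nabla v(x+sh\sigma)|\,dx\,ds\le \lambda^{-1}h^{-\gamma}|Dv|(E^{(r)}).
\end{align*}
This bound is good only for small $h$, however: $\int_0 h^{\gamma-1}h^{-\gamma}\,dh=\int_0 h^{-1}\,dh$ diverges logarithmically. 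We also have the trivial bound $\mathcal L^N(\{\cdots\})\le|E|$, but that is not in terms of $|Dv|$. So a better estimate is needed, and this is the main obstacle.

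To overcome the divergence, I would exploit that the threshold $\lambda h^{\gamma+1}$ blows up as $h\to0$ because $\gamma<-1$. The idea is to use the weak-type (Chebyshev) bound at level $t:=\lambda h^{\gamma+1}$ in a layer-cake fashion, in the spirit of the proof of Lemma~\ref{lem-diffbd}. Fix $\sigma$, set $g_h(x):=|v(x+h\sigma)-v(x)|$, and exchange the order of integration: for each $x$, the set of $h\in(0,r)$ where $g_h(x)>\lambda h^{\gamma+1}$ forces $h>(g_h(x)/\lambda)^{1/(\gamma+1)}$. I would bound $g_h(x)\le\int_0^h|\partial_\sigma v(x+s\sigma)|\,ds$ and use a Hardy--Littlewood maximal function $M_\sigma(x):=\sup_{h}\frac1h\int_0^h|\partial_\sigma v(x+s\sigma)|\,ds$ along the line. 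Then $g_h(x)>\lambda h^{\gamma+1}$ implies $M_\sigma(x)>\lambda h^{\gamma}$, that is, $h>(M_\sigma(x)/\lambda)^{1/\gamma}=:h_0(x)$. Hence
\begin{align*}
\lambda\int_{h_0(x)}^\infty h^{\gamma-1}\,dh=\frac{\lambda h_0(x)^{\gamma}}{|\gamma|}=\frac{M_\sigma(x)}{|\gamma|}.
\end{align*}
This yields a bound by $\int_E M_\sigma$, but $M_\sigma$ is not integrable in $L^1$, only weak-$L^1$. So the argument must be refined: I would use the finer dyadic-shell decomposition $h\in(2^{-k-1}r,2^{-k}r]$. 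On each shell, Chebyshev gives the contribution $\lambda(2^{-k}r)^{\gamma}\cdot\mathcal L^N(\{x\in E: A_k(x)>c\lambda (2^{-k}r)^{\gamma}\})$, where $A_k(x)$ is the average of $|\partial_\sigma v|$ over the segment of length $2^{-k}r$. Summing over $k$ the quantities $s_k\mathcal L^N(\{A_k>cs_k\})$ with geometric thresholds $s_k=\lambda(2^{-k}r)^\gamma$ (which increase geometrically since $\gamma<0$), the key one-dimensional fact to verify is
\begin{align*}
\sum_k s_k\mathbf 1_{\{A_k(x)>cs_k\}}\lesssim \sup_k A_k(x)\quad\text{pointwise},
\end{align*}
which sums geometrically. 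One then controls $\int_E\sup_kA_k$ after integrating in $\sigma$ as well.

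Since $\sup_k A_k$ is still only weak-$L^1$, I expect to need the genuine two-variable structure. Instead of a pointwise supremum, I would bound $\mathbf 1_{\{A_k>cs_k\}}\le A_k/(cs_k)$ only for $k$ with $s_k$ below a level adapted to $x$, and use the geometric gain for the others. Making this work, or alternatively citing the argument of \cite[Lemma~4.14]{lyyyz26} with $\gamma>0$ replaced by $\gamma<-1$ (where the roles of small and large $h$ swap), is the crux. Integration over $\sigma\in\mathbb S^{N-1}$ and the support enlargement to $E^{(3r)}$ are then routine.
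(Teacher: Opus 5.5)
There is a genuine gap. You leave the decisive summation over scales as ``the crux,'' and the routes you sketch for it cannot close it.

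\textbf{What is sound.} Everything up to the dyadic shells is fine, including the density reduction. For fixed $\sigma$, set $\rho_k:=2^{-k}r$, $s_k:=\lambda\rho_k^{\gamma}$ and $A_k(x):=\rho_k^{-1}\int_0^{\rho_k}|\partial_\sigma v(x+\tau\sigma)|\,d\tau$. Then the shell $h\in(\rho_{k+1},\rho_k]$ contributes at most $2^{|\gamma|}s_k\,\mathcal L^N(\{x\in E:A_k(x)>s_k\})$.

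\textbf{Where it fails.} Any pointwise bound of $\sum_k s_k\mathbf 1_{\{A_k(x)>s_k\}}$ by averages of $|\partial_\sigma v|$ at $x$ is useless, not merely lossy. This covers your $M_\sigma$ and $\sup_kA_k$. It also covers the split ``at a level adapted to $x$'': its coarse part returns $\sum_kA_k(x)$ and the logarithm, and its fine part is controlled by the largest active $s_k$, i.e.\ by $\sup_kA_k(x)$ again.

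A counterexample makes this concrete. Take $N=1$, $\sigma=1$, $v:=\mathbf 1_{(0,1)}$, $E:=(-r,0)$ with $r<\frac14$, and read $A_k(x)=\rho_k^{-1}|Dv|([x,x+\rho_k])$.
\begin{itemize}
\item Then $\sup_kA_k(x)>(2|x|)^{-1}$ on $E$, so $\int_E\sup_kA_k\,dx=\infty$.
\item Yet only $y>0$ with $|x-y|>\max\{|x|,\lambda^{1/|\gamma+1|}\}$ contribute, so the left-hand side of the lemma is at most $|\gamma|^{-1}(1+|\gamma+1|^{-1})$ for every $\lambda$.
\end{itemize}
Deferring to the $\gamma>0$ lemma ``with small and large $h$ swapped'' does not supply the missing step either. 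After the swap, the weight $\lambda h^{\gamma}$ is largest at the finest scales, and that is exactly the step that needs a new argument.

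\textbf{The missing idea.} Count heavy intervals on a nested grid instead of estimating pointwise. The key fact is that $T_k:=\lambda\rho_k^{\gamma+1}=\rho_ks_k$ satisfies $T_{k+1}=2^{|\gamma+1|}T_k$; this is exactly where $\gamma<-1$ enters.

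On a line $z+\mathbb R\sigma$, let $\mu:=|\partial_\sigma v(z+\tau\sigma)|\,d\tau$. If $A_k(z+t\sigma)>s_k$, then $\mu([t,t+\rho_k])>T_k$. So one of the two grid intervals $D=[j\rho_k,(j+1)\rho_k)$ meeting $[t,t+\rho_k]$ is heavy, i.e.\ $\mu(D)>T_k/2$. Let $\mathcal H_k$ be the family of heavy level-$k$ intervals meeting some $[t,t+\rho_k]$ with $t\in E_{z,\sigma}$; these all lie in $G:=(E_{z,\sigma})^{(2r)}$. It follows that $s_k$ times the length of $\{t\in E_{z,\sigma}:A_k(z+t\sigma)>s_k\}$ is at most $2T_k\,\#\mathcal H_k$.

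Now write $T_k=\mu(D)\,T_k/\mu(D)$. Let $K(t)$ be the finest heavy level at $t$, which is finite because $T_k\to\infty$. Let $D_k(t)$ be the level-$k$ interval containing $t$. Because the grids are nested, $\mu(D_k(t))\ge\mu(D_{K(t)}(t))>T_{K(t)}/2$ for all $k\le K(t)$. Hence, with the integrand read as $0$ where no level is heavy,
\begin{align*}
\sum_{k\ge0}T_k\,\#\mathcal H_k\le\int_G\sum_{k\le K(t)}\frac{2T_k}{T_{K(t)}}\,d\mu(t)\le\frac{2}{1-2^{\gamma+1}}\,\mu(G).
\end{align*}
Since $G\subset(E^{(2r)})_{z,\sigma}$, integrating over $z\in\sigma^\perp$ and $\sigma\in\mathbb S^{N-1}$ gives the bound $C_{N,\gamma}\int_{E^{(2r)}}|\nabla v|\,dx$. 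Your density step then yields $|Dv|(E^{(3r)})$.

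The constant necessarily degenerates as $\gamma\to-1$. At $\gamma=-1$, the example above has infinite left-hand side for every $\lambda<1$.
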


For any $\rho\in(0,\infty)$, let $B_\rho:=\{z\in Q_0:\operatorname{dist}(z,\partial Q_0)<\rho\}$.
By Lemma \ref{lem-locbvneg}, we have the following lemma.
\begin{lemma}\label{lem-collar}
Let $\gamma\in(-\infty,-1)$ and $q\in BV(I_0)$ be nondecreasing. Assume that  $q(-\frac{1}{2}+)=0$
and $q(\frac{1}{2}-)=1$, where $q(-\frac{1}{2}+):=
\lim_{t\to -\frac{1}{2}^+}q(t)$ and $q(\frac{1}{2}-):=\lim_{t\to\frac{1}{2}^-}q(t).$
Then
\begin{align}\label{lingbian}
\lim_{\rho\to 0^+}\varlimsup_{\lambda\to0^+}
\lambda
\underset{|W_q(x)-W_q(y)|>\lambda|x-y|^{\gamma+1}}
{\int_{B_\rho}\int_{Q_0}}
|x-y|^{\gamma-N}\,dy\,dx=0.
\end{align}
\end{lemma}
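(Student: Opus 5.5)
We aim to prove \eqref{lingbian} by extending $W_q$ to a $BV(\mathbb R^N)$ function whose total variation near $\partial Q_0$ is small, and then applying Lemma~\ref{lem-locbvneg} with a radius $r$ that is fixed independently of $\lambda$. For $p=1$ the exponent is $1+\gamma/p=1+\gamma<0$, so the threshold $\lambda|x-y|^{\gamma+1}$ grows as $|x-y|$ grows. Far-apart pairs therefore contribute little once $\lambda$ is small relative to the size of $q$. Since $0\le q\le 1$, we have $|W_q(x)-W_q(y)|\le 1$. Hence for $|x-y|\ge r$ the defining condition $|W_q(x)-W_q(y)|>\lambda|x-y|^{\gamma+1}$ forces $\lambda|x-y|^{\gamma+1}<1$, that is, $|x-y|>\lambda^{1/(\gamma+1)}\cdot(\ldots)$. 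More simply, we bound the far part by
\begin{align*}
\lambda\int_{B_\rho}\int_{\{y\in Q_0:|x-y|\ge r\}}|x-y|^{\gamma-N}\,dy\,dx\le \lambda\, r^{\gamma-N}|Q_0|^2,
\end{align*}
which tends to $0$ as $\lambda\to0^+$ for each fixed $r$. So only the pairs with $|x-y|<r$ matter.

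For the near part, we first extend $q$ to $\mathbb R$ by setting $\bar q:=0$ on $(-\infty,-\frac12]$ and $\bar q:=1$ on $[\frac12,\infty)$. By the assumptions $q(-\frac12+)=0$ and $q(\frac12-)=1$, this extension creates no jumps at $\pm\frac12$, so $\bar q$ is nondecreasing and $|D\bar q|(\mathbb R)=|Dq|(I_0)=1$. Next we fix a cutoff. Let $\eta\in C_{\rm c}^\infty(\mathbb R^{N-1})$ with $0\le\eta\le1$ and $\eta\equiv1$ on the $1$-neighbourhood of $[-\frac12,\frac12]^{N-1}$, and put $v(x):=\eta(x')\bar q(x_N)$. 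Then $v\in BV(\mathbb R^N)$, since it is bounded with compact support in $x'$ and $\bar q-\mathbf 1_{(0,\infty)}$ has compact support. To keep $v$ integrable we may further multiply by a cutoff in $x_N$ that equals $1$ on $[-2,2]$. This changes $v$ only far from $Q_0^{(3r)}$ when $r<\frac16$, so the estimates below are unaffected. Also $v=W_q$ on $Q_0$. Since $Q_0\subset\mathbb R^N$ and $v|_{Q_0}=W_q$, the near-part integral over $y\in Q_0\cap B(x,r)$ is dominated by the same integral with $v$ and $y\in B(x,r)$. Lemma~\ref{lem-locbvneg} with $E:=B_\rho$ then gives, for every $\lambda$,
\begin{align*}
\lambda\underset{|v(x)-v(y)|>\lambda|x-y|^{\gamma+1}}{\int_{B_\rho}\int_{B(x,r)}}|x-y|^{\gamma-N}\,dy\,dx\le C_{N,\gamma}|Dv|\big(B_\rho^{(3r)}\big).
\end{align*}

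The remaining step is to show that $|Dv|(B_\rho^{(3r)})$ is small when $\rho$ and $r$ are small. We choose $r:=\rho$, so that $B_\rho^{(3\rho)}$ lies in the $4\rho$-neighbourhood of $\partial Q_0$. On the region $\{|x'|_\infty<\frac12+1\}$ we have $\eta=1$, so $Dv=\mathcal L^{N-1}\otimes D\bar q$ there. Consequently,
\begin{align*}
|Dv|\big(B_\rho^{(3\rho)}\big)\le (1+8\rho)^{N-1}\Big[\,|D\bar q|\big((-\tfrac12-4\rho,-\tfrac12+4\rho)\cup(\tfrac12-4\rho,\tfrac12+4\rho)\big)+\big(1-(1-8\rho)^{N-1}\big)\,|D\bar q|(\mathbb R)\Big].
\end{align*}
Here the first term comes from the top and bottom faces, and the second from the lateral collar, which has cross-sectional $\mathcal L^{N-1}$-measure $O(\rho)$. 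As $\rho\to0$, the first term tends to $|D\bar q|(\{\pm\frac12\})=0$ because there are no jumps at $\pm\frac12$, and the second term is $O(\rho)$. Combining this with the far-part bound (which vanishes as $\lambda\to0^+$ for fixed $r=\rho$) yields \eqref{lingbian}.

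The main obstacle is precisely the no-atom claim at $\pm\frac12$, which is exactly where the hypotheses $q(-\frac12+)=0$ and $q(\frac12-)=1$ enter. A second point to check is that Lemma~\ref{lem-locbvneg} really does allow $y$ to range over all of $B(x,r)$ rather than over $Q_0$. This holds by monotonicity of the integral in the domain of $y$, since $v=W_q$ on $Q_0$.
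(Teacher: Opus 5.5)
Your proof is correct and is essentially the paper's. Both use the constant extension $\bar q$ (no atoms at $\pm\frac12$), cutoffs in $x'$ and $x_N$ to get a $BV(\mathbb R^N)$ extension of $W_q$, Lemma~\ref{lem-locbvneg} with $E:=B_\rho$ and $r:=\rho$, and a crude bound of order $\lambda$ (for fixed $\rho$) on the pairs with $|x-y|\ge\rho$. The only difference is the last step: the paper concludes from $|D\widetilde W_q|(\partial Q_0)=0$ and continuity from above of $|D\widetilde W_q|$ on the shrinking sets $\{\operatorname{dist}(x,\partial Q_0)<4\rho\}$, while you compute the collar mass explicitly.

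In that computation the lateral-collar factor should be $(1+8\rho)^{N-1}-(1-8\rho)^{N-1}$, not $(1+8\rho)^{N-1}[1-(1-8\rho)^{N-1}]$. As written, your displayed inequality can fail by a constant factor when $N\ge2$. The corrected factor is still $O(\rho)$, so your conclusion is unaffected.
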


\begin{proof}
Extend $q$ constantly outside $I_0$ by setting, for any $t\in\mathbb R$,
\begin{align*}
\widetilde q(t):=\begin{cases}
0,&\displaystyle t\in\left(-\infty,-\frac{1}{2}\right],\\
q(t),&\displaystyle t\in\left(-\frac{1}{2},\frac{1}{2}\right),\\
1,&\displaystyle t\in\left[\frac{1}{2},\infty\right).
\end{cases}
\end{align*}
Then $\widetilde{q}$ is continuous at $-\frac{1}{2}$ and $\frac{1}{2}$, which implies that
\begin{align*}
D\widetilde q\left(\left\{-\frac{1}{2}\right\}\right)
=D\widetilde q\left(\left\{\frac{1}{2}\right\}\right)=0.
\end{align*}
Choose $\theta\in C_{\rm c}^\infty(\mathbb R)$ such that $\theta\equiv 1$   on a
neighborhood of $[-\frac{1}{2},\frac{1}{2}].$ If $N=1$, let $\zeta\equiv1$ and, if $N\ge2$,
choose $\zeta\in C_{\rm c}^\infty(\mathbb R^{N-1})$ such that $\zeta\equiv 1$   on a neighborhood
of $[-\frac{1}{2},\frac{1}{2}]^{N-1}.$ For any $z:=(z',z_N)\in\mathbb{R}^N,$ define
\begin{align*}
\widetilde W_q(z',z_N):=\zeta(z')\theta(z_N)\widetilde q(z_N).
\end{align*}
Then, as is showed in \cite[(4.43)]{lyyyz26}, we find that
$\widetilde W_q\in BV(\mathbb R^N)$ and
\begin{align}\label{bianling}
\left|D\widetilde W_q\right|(\partial Q_0)=0.
\end{align}
Applying Lemma \ref{lem-locbvneg} and using polar coordinates, we
conclude that, for any $\rho\in(0,\frac{1}{20})$ and $\lambda\in(0,\infty)$,
\begin{align*}
&\lambda
\underset{|W_q(x)-W_q(y)|>\lambda|x-y|^{\gamma+1}}
{\int_{B_\rho}\int_{Q_0}}
|x-y|^{\gamma-N}\,dy\,dx\\
&\quad\leq\lambda
\underset{|\widetilde W_q(x)-\widetilde W_q(y)|>\lambda|x-y|^{\gamma+1}}
{\int_{B_\rho}\int_{B(x,\rho)}}
|x-y|^{\gamma-N}\,dy\,dx
+\lambda\int_{B_\rho}\int_{\mathbb R^N\setminus B(x,\rho)}
|x-y|^{\gamma-N}\,dy\,dx\\
&\quad\lesssim\left|D\widetilde W_q\right|\left(B_\rho^{(3\rho)}\right)
+\lambda\rho^\gamma|B_\rho|.
\end{align*}
Since
\begin{align*}
B_\rho^{(3\rho)}\subset
\left\{x\in\mathbb R^N:\operatorname{dist}(x,\partial Q_0)<4\rho\right\},
\end{align*}
letting first $\lambda\to0^+$ and then $\rho\to0^+$ and using \eqref{bianling}, we find that
\eqref{lingbian} holds. This completes the proof of Lemma \ref{lem-collar}.
\end{proof}

Let $p\in[1,\infty)$ and $\gamma\in(-\infty,0)$. For any given measurable sets $E, D\subset\mathbb R^N$,
any $\lambda\in(0,\infty)$, and any measurable function $u:D\to\mathbb R$, let
\begin{align*}
F_{\lambda,p,\gamma}(u;E,D)
:=\lambda^p
\underset{|u(x)-u(y)|>\lambda|x-y|^{1+\frac{\gamma}{p}}}
{\int_E\int_D}
|x-y|^{\gamma-N}\,dy\,dx.
\end{align*}
By an argument  similar
to that used in \cite[Lemma 3.4]{lyyyz26}, we have the following
cutoff estimate; we omit the details here.
\begin{lemma}\label{lem-cutoff}
Let $p\in[1,\infty)$, $\gamma\in(-\infty,0)$, and $\lambda\in(0,\infty)$. Then the following statements hold.
\begin{itemize}
\item[$\mathrm{(i)}$] For any given measurable sets $E, D\subset\mathbb R^N$,
\begin{align*}
F_{\lambda,p,\gamma}(\ell;E,D)\leq\frac{K_{N,p}}{|\gamma|}|E|.
\end{align*}
Consequently, $F_{\lambda,p,\gamma}(\ell;E)\leq\frac{K_{N,p}}{|\gamma|}|E|$.
\item[$\mathrm{(ii)}$] Let $U\subset\mathbb R^N$ be a bounded open set,
$u,v\in L^p(U)$, and $\eta:U\to[0,1]$ be Lipschitz.
Then, for any $\alpha\in(0,1)$,
\begin{align*}
F_{\lambda,p,\gamma}(w;U)\leq\frac{F_{\alpha\lambda,p,\gamma}(u;U)}{\alpha^p}
+\frac{F_{\alpha\lambda,p,\gamma}(v;E_\eta,U)}{\alpha^p}
+\frac{\mathcal H^{N-1}\left(\mathbb S^{N-1}\right)L_\eta^p\left\|u-v\right\|_{L^p(U)}^p}
{|\gamma|(1-\alpha)^p},
\end{align*}
where $w:=\eta u+(1-\eta)v$, $E_\eta:=\{x\in U:\eta(x)<1\}$, and
\begin{align*}
L_\eta:=\sup_{\genfrac{}{}{0pt}{}{x,y\in U}{x\neq y}}
\frac{|\eta(x)-\eta(y)|}{|x-y|}<\infty.
\end{align*}
\end{itemize}
\end{lemma}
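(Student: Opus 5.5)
The inequality in (i) is an explicit polar-coordinate computation. For (ii) I would write $w(x)-w(y)$ as a sum of three terms and split the superlevel set of $|w(x)-w(y)|$ accordingly, reducing each piece either to a functional already present or to the computation in (i). Throughout, let $\beta:=1+\frac{\gamma}{p}$ and use $\gamma<0$.

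\emph{Step 1: proof of (i).} Since the integrand is nonnegative, I enlarge $D$ to $\mathbb R^N$. For fixed $x\in E$, write $y=x+r\omega$ with $r>0$ and $\omega\in\mathbb S^{N-1}$. Then $|\ell(x)-\ell(y)|=r|\omega\cdot e_N|$, and the condition $r|\omega_N|>\lambda r^{\beta}$ becomes $r^{\gamma/p}<|\omega_N|/\lambda$. Because $\gamma<0$, this is equivalent to $r>(\lambda/|\omega_N|)^{p/|\gamma|}=:R_\omega$. Hence
\begin{align*}
\lambda^p\int_{R_\omega}^\infty r^{\gamma-1}\,dr=\frac{\lambda^pR_\omega^{\gamma}}{|\gamma|}=\frac{|\omega_N|^p}{|\gamma|}.
\end{align*}
Integrating over $\omega\in\mathbb S^{N-1}$ gives $K_{N,p}/|\gamma|$ for each $x$. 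Integrating over $x\in E$ then yields $F_{\lambda,p,\gamma}(\ell;E,D)\le \frac{K_{N,p}}{|\gamma|}|E|$. The "consequently" part is the special case $D=E$.

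\emph{Step 2: decomposition for (ii).} For $x,y\in U$ write
\begin{align*}
w(x)-w(y)=\eta(x)\bigl(u(x)-u(y)\bigr)+\bigl(1-\eta(x)\bigr)\bigl(v(x)-v(y)\bigr)+\bigl(\eta(x)-\eta(y)\bigr)\bigl(u(y)-v(y)\bigr).
\end{align*}
Suppose $|w(x)-w(y)|>\lambda|x-y|^\beta$. Then one of two things holds. Either the convex combination of the first two terms exceeds $\alpha\lambda|x-y|^\beta$ in absolute value, or the third term exceeds $(1-\alpha)\lambda|x-y|^\beta$. In the first case, convexity gives $|u(x)-u(y)|>\alpha\lambda|x-y|^\beta$ or $|v(x)-v(y)|>\alpha\lambda|x-y|^\beta$. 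The $v$-alternative can occur only when $\eta(x)<1$, that is, when $x\in E_\eta$.

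Writing $\lambda^p=\alpha^{-p}(\alpha\lambda)^p$, the first alternative contributes at most $\alpha^{-p}F_{\alpha\lambda,p,\gamma}(u;U)$. The second contributes at most $\alpha^{-p}F_{\alpha\lambda,p,\gamma}(v;E_\eta,U)$.

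\emph{Step 3: the cross term.} Using $|\eta(x)-\eta(y)|\le L_\eta|x-y|$, the third alternative forces
\begin{align*}
|x-y|^{\gamma/p}<\frac{L_\eta|u(y)-v(y)|}{(1-\alpha)\lambda}.
\end{align*}
Since the kernel is symmetric, I fix $y$ and integrate in $x$ over $\mathbb R^N$ in polar coordinates around $y$. Exactly as in Step 1, but now with a direction-independent threshold, this gives
\begin{align*}
\frac{\mathcal H^{N-1}(\mathbb S^{N-1})\,L_\eta^p|u(y)-v(y)|^p}{|\gamma|(1-\alpha)^p}.
\end{align*}
Integrating over $y\in U$ yields the last term of (ii). Summing the three contributions proves (ii).

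\emph{Main obstacle.} There is no serious obstacle. The only points needing care are two. First, the direction of the inequality $r^{\gamma/p}<c$ must be tracked using $\gamma<0$; this is what makes the $r$-integral run over $(R,\infty)$ and converge. Second, the $v$-term must be localized to $x\in E_\eta$ while $y$ ranges over all of $U$.
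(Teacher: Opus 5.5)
Your proof is correct and follows essentially the same route the paper takes. The paper omits the details of this lemma and refers to the $\gamma>0$ analogue in the authors' earlier work, but its own computation in \eqref{e5.6} (proof of Lemma~\ref{lem-oneconst}) uses exactly your splitting $w(x)-w(y)=\eta(x)[u(x)-u(y)]+[1-\eta(x)][v(x)-v(y)]+[\eta(x)-\eta(y)][u(y)-v(y)]$. There the cross term is likewise handled by the polar-coordinate integral over $\{|h|^\gamma<L_\eta^p|u(y)-v(y)|^p/((1-\alpha)\lambda)^p\}$, which converges because $\gamma<0$, and your proof of (i) is the same explicit polar-coordinate calculation.
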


The following lemma gives the boundary approximation needed in the proof
of Proposition \ref{prop-profend}.

\begin{lemma}\label{lem-nearmin}
Let $\gamma\in(-\infty,-1)$ and $q\in BV(I_0)$ be nondecreasing with
$q(-\frac12+)=0$ and $q(\frac12-)=1$. Then, for any $\varepsilon\in(0,\infty)$,
there exist $\rho\in(0,\frac1{20})$ and sequences
$\{\widehat\lambda_j\}_{j\in\mathbb N}\subset(0,\infty)$ and
$\{\widehat v_j\}_{j\in\mathbb N}\subset L^\infty(Q_0)$ such that
the following statements hold.
\begin{itemize}
\item[$\mathrm{(i)}$] For any $j\in\mathbb N$, $0\leq\widehat v_j\leq1$ and
\begin{align}\label{jinbian}
\widehat v_j=W_q\quad\text{on }\left\{z\in Q_0:
\operatorname{dist}(z,\partial Q_0)\leq\rho\right\}.
\end{align}
Moreover, $\lim_{j\to\infty}\widehat v_j=W_q$ in $L^1(Q_0)$.

\item[$\mathrm{(ii)}$] $\lim_{j\to\infty}\widehat\lambda_j=0$.

\item[$\mathrm{(iii)}$] It holds that
\begin{align}\label{jineng}
\varlimsup_{j\to\infty}F_{\widehat\lambda_j,1,\gamma}\left(\widehat v_j;Q_0\right)
\leq m_{1,\gamma}(W_q)+\varepsilon.
\end{align}
\end{itemize}
\end{lemma}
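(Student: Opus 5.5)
Assume $m_{1,\gamma}(W_q)<\infty$; otherwise take any sequence $\widehat v_j:=W_q$ and $\widehat\lambda_j\to0$, and (iii) is trivial. The plan is to start from a near-optimal sequence, truncate it to $[0,1]$, and then glue it to $W_q$ near $\partial Q_0$ using the cutoff estimate in Lemma \ref{lem-cutoff}(ii). Lemma \ref{lem-collar} will make the collar contribution negligible.

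\emph{Near-optimal sequence.} By the definition of $m_{1,\gamma}$, choose $\lambda_j\to0$ and $v_j\to W_q$ in $L^1(Q_0)$ with
\[
\lim_{j\to\infty}F_{\lambda_j,1,\gamma}(v_j;Q_0)\le m_{1,\gamma}(W_q)+\varepsilon/4.
\]
Replace $v_j$ by its truncation to $[0,1]$, namely $\min\{\max\{v_j,0\},1\}$. This map is $1$-Lipschitz, so the energy does not increase, as in \eqref{deng}. Since $0\le W_q\le1$, we still have $L^1$ convergence to $W_q$. Thus we may assume $0\le v_j\le1$.

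\emph{Gluing.} Fix $\rho\in(0,\frac1{20})$, to be chosen small. Take a Lipschitz cutoff $\eta:Q_0\to[0,1]$ with $\eta=1$ on $\{\operatorname{dist}(\cdot,\partial Q_0)>2\rho\}$, $\eta=0$ on $\{\operatorname{dist}(\cdot,\partial Q_0)\le\rho\}$, and $L_\eta\le 2/\rho$. Set
\[
\widehat v_j:=\eta v_j+(1-\eta)W_q .
\]
Then $0\le\widehat v_j\le1$, \eqref{jinbian} holds, and $\widehat v_j\to W_q$ in $L^1(Q_0)$.

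Fix $\alpha\in(0,1)$ close to $1$ and set $\widehat\lambda_j:=\lambda_j/\alpha$, so $\widehat\lambda_j\to0$. Lemma \ref{lem-cutoff}(ii) with $p=1$, $u=v_j$, $v=W_q$, $\lambda=\widehat\lambda_j$ gives
\[
F_{\widehat\lambda_j,1,\gamma}(\widehat v_j;Q_0)\le \frac{F_{\lambda_j,1,\gamma}(v_j;Q_0)}{\alpha}+\frac{F_{\lambda_j,1,\gamma}(W_q;B_{2\rho},Q_0)}{\alpha}+\frac{C\rho^{-1}\|v_j-W_q\|_{L^1(Q_0)}}{|\gamma|(1-\alpha)}.
\]
Here $E_\eta\subset B_{2\rho}$, which is why the middle term is over $B_{2\rho}$.

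\emph{Passing to the limit.} As $j\to\infty$ with $\rho$ and $\alpha$ fixed, the third term tends to $0$. The first term has limsup at most $(m_{1,\gamma}(W_q)+\varepsilon/4)/\alpha$. The middle term is exactly the quantity in \eqref{lingbian} with $2\rho$ in place of $\rho$. By Lemma \ref{lem-collar}, choose $\rho$ so small that its $\varlimsup_{\lambda\to0^+}$ is below $\varepsilon/4$. Then choose $\alpha$ so close to $1$ that
\[
\frac{m_{1,\gamma}(W_q)+\varepsilon/2}{\alpha}\le m_{1,\gamma}(W_q)+\varepsilon .
\]
This uses finiteness of $m_{1,\gamma}(W_q)$, and yields \eqref{jineng}.

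\emph{Main obstacle.} The delicate point is the middle interaction term, which pairs the near-optimal sequence with the fixed profile $W_q$ on the collar. Lemma \ref{lem-cutoff}(ii) already reduces it to a term involving only $W_q$, so the difficulty is handled by Lemma \ref{lem-collar}. One must still check that Lemma \ref{lem-cutoff}(ii) applies with exactly these rescaled parameters: the first two terms are evaluated at $\alpha\widehat\lambda_j=\lambda_j$, the same sequence used in the first term.
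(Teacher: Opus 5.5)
Your proposal is correct and follows essentially the same route as the paper: truncate a near-optimal sequence to $[0,1]$, glue it to $W_q$ on a collar via Lemma \ref{lem-cutoff}(ii), and control the collar term by Lemma \ref{lem-collar}. The only difference is how the cutoff parameter is handled. You fix $\alpha$ close to $1$, which uses $m_{1,\gamma}(W_q)<\infty$. The paper instead takes $\widehat\lambda_j=(1+\eta_j)\lambda_j$ with $\eta_j\to0$ slowly enough that $\|v_j-W_q\|_{L^1(Q_0)}/\eta_j\to0$, so that $\alpha_j\to1$ along the sequence.
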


\begin{proof}
If $m_{1,\gamma}(W_q)=\infty$, take $\rho:=\frac1{40}$,
$\widehat\lambda_j:=j^{-1}$, and $\widehat v_j:=W_q$.
We therefore assume that $m_{1,\gamma}(W_q)<\infty$. Define $P:\mathbb R\to[0,1]$ by
setting, for any $s\in\mathbb{R}$,
\begin{align*}
P(s):=\begin{cases}
0, & s\in(-\infty,0],\\
s, & s\in(0,1),\\
1, & s\in[1,\infty).
\end{cases}
\end{align*}
Then $P$ is $1$-Lipschitz and $P\circ W_q=W_q$ almost everywhere in $Q_0$.
By the definition of $m_{1,\gamma}(W_q)$, we find that there exist sequences
$\{\lambda_j\}_{j\in\mathbb N}\subset(0,\infty)$ and
$\{u_j\}_{j\in\mathbb N}\subset L^1(Q_0)$ such that
$\lambda_j\to0$, $u_j\to W_q$ in $L^1(Q_0)$, and
\begin{align}\label{sishi}
\varliminf_{j\to\infty}F_{\lambda_j,1,\gamma}\left(u_j;Q_0\right)
\leq m_{1,\gamma}(W_q)+\frac\varepsilon2.
\end{align}
Let $j\in\mathbb{N}$ and $v_j:=P\circ u_j$.
Then $v_j\in L^\infty(Q_0)$, $0\leq v_j\leq1$ almost everywhere in $Q_0$, and
\begin{align*}
\left\|v_j-W_q\right\|_{L^1(Q_0)}=\left\|P\circ u_j-P\circ W_q\right\|_{L^1(Q_0)}
\leq\left\|u_j-W_q\right\|_{L^1(Q_0)}\to0
\end{align*}
as $j\to\infty$. Moreover, from \eqref{sishi}, we deduce that
\begin{align*}
\varliminf_{j\to\infty}F_{\lambda_j,1,\gamma}\left(v_j;Q_0\right)
\leq\varliminf_{j\to\infty}F_{\lambda_j,1,\gamma}\left(u_j;Q_0\right)
\leq m_{1,\gamma}(W_q)+\frac\varepsilon2.
\end{align*}
Using this and passing  to a subsequence,
we may assume that
$\lambda_j\to0$, $v_j\to W_q$ in $L^1(Q_0)$,
$0\leq v_j\leq1$ almost everywhere in $Q_0$ for any $j\in\mathbb N$, and
\begin{align}\label{hehu}
\lim_{j\to\infty}F_{\lambda_j,1,\gamma}\left(v_j;Q_0\right)
\leq m_{1,\gamma}(W_q)+\frac\varepsilon2.
\end{align}
By Lemma~\ref{lem-collar}, we conclude that there exists
$\rho\in(0,\frac1{20})$ such that
\begin{align}\label{jinceng}
\varlimsup_{\lambda\to0^+}
F_{\lambda,1,\gamma}\left(W_q;B_{2\rho},Q_0\right)
=\varlimsup_{\lambda\to0^+}
\lambda
\underset{|W_q(x)-W_q(y)|>\lambda|x-y|^{\gamma+1}}
{\int_{B_{2\rho}}\int_{Q_0}}
|x-y|^{\gamma-N}\,dy\,dx
\leq\frac\varepsilon2.
\end{align}
Let
\begin{align*}
B_{2\rho}:=\left\{z\in Q_0:
\operatorname{dist}(z,\partial Q_0)<2\rho\right\}.
\end{align*}
Choose a Lipschitz function $\chi_\rho:Q_0\to[0,1]$ such that
$0\leq\chi_\rho\leq1$, $\chi_\rho(z)=0$ when
$\operatorname{dist}(z,\partial Q_0)\leq\rho$, $\chi_\rho(z)=1$ when
$\operatorname{dist}(z,\partial Q_0)\geq2\rho$.
For any $j\in\mathbb N$, define
\begin{align*}
\widehat v_j&:=\chi_\rho v_j+(1-\chi_\rho)W_q.
\end{align*}
Then $0\leq\widehat v_j\leq1$ and
\begin{align*}
\left\|\widehat v_j-W_q\right\|_{L^1(Q_0)}\leq\left\|v_j-W_q\right\|_{L^1(Q_0)}\to0
\end{align*}
as $j\to\infty.$ Choose a sequence $\{\eta_j\}_{j\in\mathbb{N}}\subset (0,\infty)$
such that
\begin{align}\label{qingjie}
\lim_{j\to\infty}\eta_j=0\ \ \text{and}\ \
\lim_{j\to\infty}\frac{\|v_j-W_q\|_{L^1(Q_0)}}{\eta_j}=0.
\end{align}
Define $\widehat\lambda_j:=(1+\eta_j)\lambda_j.$ Applying Lemma~\ref{lem-cutoff}(ii), we obtain
\begin{align*}
F_{\widehat\lambda_j,1,\gamma}\left(\widehat v_j;Q_0\right)
&\leq(1+\eta_j)F_{\lambda_j,1,\gamma}(v_j;Q_0)+(1+\eta_j)F_{\lambda_j,1,\gamma}\left(W_q;B_{2\rho},Q_0\right)\notag\\
&\quad+\frac{\mathcal H^{N-1}(\mathbb S^{N-1})L_{\chi_\rho}(1+\eta_j)}{|\gamma|\eta_j}\|v_j-W_q\|_{L^1(Q_0)},
\end{align*}
where $L_{\chi_\rho}<\infty$ is the Lipschitz constant of $\chi_\rho$.
Letting $j\to\infty$ in the above inequality and using \eqref{jinceng}, \eqref{qingjie},
and \eqref{hehu}, we conclude that
\begin{align*}
\varlimsup_{j\to\infty}F_{\widehat\lambda_j,1,\gamma}\left(\widehat v_j;Q_0\right)
\leq
\lim_{j\to\infty}F_{\lambda_j,1,\gamma}\left(v_j;Q_0\right)+\frac{\varepsilon}{2}
\leq m_{1,\gamma}(W_q)+\varepsilon.
\end{align*}
This shows \eqref{jineng},
which completes the proof of Lemma \ref{lem-nearmin}.
\end{proof}

\begin{lemma}\label{lem-skeleton}
Let $\gamma\in(-\infty,-1)$, $L\in\mathbb N$, and $\mathscr Q:=\{Q_1,\ldots,Q_L\}$ be a
finite family of pairwise disjoint open cubes contained in $Q_0$
and define
\begin{align*}
S_{\mathscr Q}:=\overline{Q_0\setminus\left(\bigcup_{i=1}^LQ_i\right)}.
\end{align*}
Let $ U\in BV(Q_0)\cap L^\infty(Q_0)$ and assume that $U$ admits an extension
$\widetilde U\in BV(\mathbb R^N)\cap L^\infty(\mathbb R^N)$
such that $\widetilde U=U$ almost everywhere in $Q_0$ and
\begin{align}\label{geling}
|D\widetilde U|(S_{\mathscr Q})=0.
\end{align}
For $\lambda\in(0,\infty)$, define
\begin{align*}
 F_{\lambda,1,\gamma}^{\rm cross}(U;\mathscr Q):=\lambda\sum_{\genfrac{}{}{0pt}{}{1\leq i,j\leq L}{i\neq j}}
\underset{|U(x)-U(y)|>\lambda|x-y|^{\gamma+1}}
{\int_{Q_i}\int_{Q_j}}
|x-y|^{\gamma-N}\,dy\,dx.
\end{align*}
Then
\begin{align}\label{gexiao}
\lim_{\lambda\to0^+}F_{\lambda,1,\gamma}^{\rm cross}(U;\mathscr Q)=0.
\end{align}
\end{lemma}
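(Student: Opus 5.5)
We prove \eqref{gexiao} by splitting each cross term according to whether $|x-y|$ is small or large relative to a parameter $r$. Fix $r\in(0,\infty)$. For $i\neq j$ and $x\in Q_i$, $y\in Q_j$, the part of the integral with $|x-y|\geq r$ is bounded by
\begin{align*}
\lambda\int_{Q_i}\int_{\mathbb R^N\setminus B(x,r)}|x-y|^{\gamma-N}\,dy\,dx
=\frac{\mathcal H^{N-1}(\mathbb S^{N-1})}{|\gamma|}\lambda r^{\gamma}|Q_i|,
\end{align*}
using $\gamma<0$. Summing over $i,j$ gives a total of at most $C_{N,\gamma}L\lambda r^\gamma$, which tends to $0$ as $\lambda\to0^+$ for fixed $r$. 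It therefore suffices to show that the short-range part, over pairs with $|x-y|<r$, can be made small uniformly in $\lambda$ by choosing $r$ small.

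For the short-range part we use the geometry of the cubes. If $x\in Q_i$, $y\in Q_j$ with $i\neq j$ and $|x-y|<r$, then the segment from $x$ to $y$ leaves $Q_i$, so it meets $\partial Q_i\subset S_{\mathscr Q}$. Hence $\operatorname{dist}(x,S_{\mathscr Q})<r$. Let $E_r:=\{x\in\bigcup_iQ_i:\operatorname{dist}(x,S_{\mathscr Q})<r\}$, a bounded Borel set. On $Q_0$ we have $U=\widetilde U$ a.e., so the short-range part is bounded by
\begin{align*}
\lambda\underset{|\widetilde U(x)-\widetilde U(y)|>\lambda|x-y|^{\gamma+1}}{\int_{E_r}\int_{B(x,r)}}|x-y|^{\gamma-N}\,dy\,dx
\leq C_{N,\gamma}|D\widetilde U|\left(E_r^{(3r)}\right),
\end{align*}
by Lemma~\ref{lem-locbvneg} applied with $v=\widetilde U\in BV(\mathbb R^N)$ and $E=E_r$. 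This bound is uniform in $\lambda$.

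To conclude, note that $E_r^{(3r)}\subset\{x:\operatorname{dist}(x,S_{\mathscr Q})<4r\}$. These sets decrease, as $r\to0^+$, to $S_{\mathscr Q}$, because $S_{\mathscr Q}$ is closed. Since $|D\widetilde U|$ is a finite measure, continuity from above together with \eqref{geling} gives $|D\widetilde U|(E_r^{(3r)})\to0$ as $r\to0^+$. Therefore
\begin{align*}
\varlimsup_{\lambda\to0^+}F^{\rm cross}_{\lambda,1,\gamma}(U;\mathscr Q)\leq C_{N,\gamma}L^2\,|D\widetilde U|\bigl(\{\operatorname{dist}(\cdot,S_{\mathscr Q})<4r\}\bigr)
\end{align*}
for every $r$, where the factor $L^2$ comes from summing over pairs; one could avoid it by summing over $i$ with $y$ ranging over all $j\neq i$. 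Letting $r\to0^+$ gives \eqref{gexiao}.

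The only delicate step is the geometric observation that short-range cross pairs lie within distance $r$ of $S_{\mathscr Q}$, which is what lets Lemma~\ref{lem-locbvneg} be applied on a thin neighborhood of the skeleton. The hypothesis $L^\infty$ is not needed here beyond ensuring that $\widetilde U$ is a legitimate $BV$ extension.
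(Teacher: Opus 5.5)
Your proof is correct and follows essentially the paper's argument. The paper also uses a near/far split, the observation that a short cross segment $[x,y]$ meets $S_{\mathscr Q}$, Lemma~\ref{lem-locbvneg} on a thin neighbourhood of $S_{\mathscr Q}$, and continuity from above of $|D\widetilde U|$ together with \eqref{geling}. The only difference is that the paper couples the cutoff to $\lambda$ via $R_\lambda:=\lambda^{-1/(2\gamma)}$, so that $R_\lambda\to0$ and $\lambda R_\lambda^\gamma=\lambda^{1/2}\to0$ simultaneously; you instead fix $r$, let $\lambda\to0^+$, and then let $r\to0^+$, which is equivalent.

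One small correction: the inclusion $\partial Q_i\subset S_{\mathscr Q}$ is false in general (for instance, faces of $Q_i$ lying on $\partial Q_0$ are not in $S_{\mathscr Q}$). Your conclusion $\operatorname{dist}(x,S_{\mathscr Q})<r$ still holds, because the exit point of $[x,y]$ from $Q_i$ lies in $Q_0$ by convexity and in none of the disjoint open cubes, hence in $Q_0\setminus\bigcup_kQ_k\subset S_{\mathscr Q}$.
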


\begin{proof}
Without loss of generality, we may assume that, for any $x\in Q_0,$ $|U(x)|\leq \|U\|_{L^\infty(Q_0)}$.
If $\|U\|_{L^\infty(Q_0)}=0$, then \eqref{gexiao} holds automatically.
Assume now that $\|U\|_{L^\infty(Q_0)}>0$. Let $R_\lambda\in (0,\infty)$ be determined below and
\begin{align*}
F_{\lambda,1,\gamma}^{\rm cross}(U;\mathscr Q)=I_\lambda^{\rm near}+I_\lambda^{\rm far},
\end{align*}
where the domains of integration in $I_\lambda^{\rm near}$ and
$I_\lambda^{\rm far}$ are restricted to $|x-y|<R_\lambda$ and
$|x-y|\geq R_\lambda$, respectively. Then
\begin{align}\label{hua}
I_\lambda^{\rm far}\leq\lambda\underset{|x-y|\geq R_\lambda}{\int_{Q_0}\int_{Q_0}}
|x-y|^{\gamma-N}\,dy\,dx\leq\lambda|Q_0|\int_{|z|\geq R_\lambda}|z|^{\gamma-N}\,dz
\lesssim\lambda |Q_0|R_\lambda^\gamma.
\end{align}
Let $i,j\in\{1,\ldots,L\}$ with $i\neq j$, and fix points $x\in Q_i$ and $y\in Q_j$ such that
$|x-y|<R_\lambda$.
Since $Q_i$ and $Q_j$ are disjoint, it follows that there exists a point $z\in[x,y]\cap S_{\mathscr Q}$,
which implies that
\begin{align*}
\operatorname{dist}(x,S_{\mathscr Q})\leq |x-z|\leq |x-y|<R_\lambda.
\end{align*}
Consequently, $x\in S_{\mathscr Q}^{(R_\lambda)}$ and $y\in B(x,R_\lambda)$.
Using this and applying Lemma~\ref{lem-locbvneg} with $E:=S_{\mathscr Q}^{(R_\lambda)}$
and $r:=R_\lambda$, we conclude that
\begin{align*}
I_\lambda^{\rm near}\leq\lambda
\underset{|\widetilde U(x)-\widetilde U(y)|>\lambda|x-y|^{\gamma+1}}
{\int_{S_{\mathscr Q}^{(R_\lambda)}}\int_{B(x,R_\lambda)}}
|x-y|^{\gamma-N}\,dy\,dx\lesssim\left|D\widetilde U\right|\left(S_{\mathscr Q}^{(4R_\lambda)}\right).
\end{align*}
From this and \eqref{hua}, we infer that
\begin{align}\label{kunao}
F_{\lambda,1,\gamma}^{\rm cross}(U;\mathscr Q)
\lesssim\left|D\widetilde U\right|\left(S_{\mathscr Q}^{(4R_\lambda)}\right)
+\lambda R_\lambda^\gamma|Q_0|.
\end{align}
Now, let  $R_\lambda:=\lambda^{-\frac{1}{2\gamma}}$. Then
\begin{align*}
\lim_{\lambda\to 0^+}R_\lambda=0
\quad\text{and}\quad
\lim_{\lambda\to 0^+}\lambda R_\lambda^\gamma=\lim_{\lambda\to 0^+}\lambda^{\frac12}=0.
\end{align*}
Letting $\lambda\to0^+$
in \eqref{kunao}
and using \eqref{geling}, we find that \eqref{gexiao}
holds. This completes the proof of Lemma \ref{lem-skeleton}.
\end{proof}

Now, we show Proposition \ref{prop-profend}.

\begin{proof}[Proof of Proposition \ref{prop-profend}]
Since $q$  is nondecreasing, it follows that $|Dq|(I_0)=q(\frac{1}{2}-)-q(-\frac{1}{2}+)$, where
$q(-\frac{1}{2}+):=\lim_{t\to -\frac{1}{2}^+}q(t)$ and $q(\frac{1}{2}-):=\lim_{t\to\frac{1}{2}^-}q(t).$
If $|Dq|(I_0)=0,$ then \eqref{fuduan} holds  automatically. Let $|Dq|(I_0)\in (0,\infty).$ For any $t\in I_0,$ define
\begin{align*}
\widetilde q(t):=\frac{q(t)-q(-\frac{1}{2}+)}{|Dq|(I_0)}.
\end{align*}
Then $0\leq\widetilde q\leq1$ on $I_0$, $\widetilde q(-\frac{1}{2}+)=0$, and $\widetilde q(\frac{1}{2}-)=1.$
By \eqref{bisuo}, we find that
\begin{align*}
 m_{1,\gamma}(W_q)=|Dq|(I_0)m_{1,\gamma}(W_{\widetilde q}).
\end{align*}
Thus, to prove \eqref{fuduan}, it suffices to show that
\begin{align}\label{poumu}
 m_{1,\gamma}(W_{\widetilde q})\geq C_{N,1,\gamma}^{\mathrm{cell}}.
\end{align}
Fix $m\in\mathbb N$.  For any $k\in\{0,\ldots,m\}$,  let $a_k:=-\frac12+\frac{k}{m}.$
Define the function $q_m:I_0\to\mathbb R$ as follows.  For any
$k\in\{0,\ldots,m-1\}$ and   $t\in(a_k,a_{k+1})$, let
\begin{align*}
q_m(t):=a_k+\frac{1}{m}\widetilde q\left(m[t-a_k]-\frac12\right).
\end{align*}
Moreover, for any $k\in\{1,\ldots, m-1\},$ let $q_m(a_k):=a_k.$ Then we have, for any
$k\in\left\{0,\ldots,m-1\right\}$, $q_m(a_k+)=a_k$ and $q_m(a_{k+1}-)=a_k+\frac{1}{m}=a_{k+1}$,
which implies that, for any $k\in\{1,\ldots,m-1\},$
$Dq_m\left(\left\{a_k\right\}\right)=0$ and
\begin{align*}
|Dq_m|\left((a_k,a_{k+1})\right)=q_m(a_{k+1}-)-q_m(a_k+)=\frac{1}{m}.
\end{align*}
Consequently,
\begin{align*}
|Dq_m|(I_0)=\sum_{k=0}^{m-1}|Dq_m|\left(a_k,a_{k+1}\right)=1.
\end{align*}
Furthermore, for any $k\in \{0,\ldots,m-1\}$ and $t\in(a_k,a_{k+1})$,
\begin{align*}
|q_m(t)-t|=\frac{1}{m}\left|\widetilde q\left(m[t-a_k]-\frac12\right)-m(t-a_k)
\right|\leq\frac{1}{m},
\end{align*}
where we used the facts that $0\leq\widetilde{q}\leq 1$ and $m(t-a_k)\in (0,1)$ in the  last step.
This implies that
\begin{align*}
\sup_{t\in I_0}|q_m(t)-t|\leq\frac{1}{m},
\end{align*}
and hence $W_{q_m}\to \ell$ in $L^1(Q_0)$ as $m\to\infty$. From this and Lemma \ref{lem-mplsc}, we infer that
\begin{align*}
C_{N,1,\gamma}^{\mathrm{cell}}=m_{1,\gamma}(\ell)\leq\varliminf_{m\to\infty}m_{1,\gamma}(W_{q_m}).
\end{align*}
Using this, we conclude that, to prove \eqref{poumu}, it suffices to show that, for any $m\in\mathbb{N},$
\begin{align}\label{zibijiao}
m_{1,\gamma}(W_{q_m})\leq m_{1,\gamma}(W_{\widetilde q}).
\end{align}
Let $\varepsilon\in (0,\infty)$.
Applying Lemma~\ref{lem-nearmin} with $q:=\widetilde q$, we
conclude that there exist
$\rho\in(0,\frac{1}{20})$ and sequences
$\{\widehat\lambda_j\}_{j\in\mathbb N}\subset(0,\infty)$ and
$\{\widehat v_j\}_{j\in\mathbb N}\subset L^\infty(Q_0)$
having the properties stated in Lemma~\ref{lem-nearmin}. In particular,
$\widehat\lambda_j\to0$, $\widehat v_j\to W_{\widetilde q}$ in $L^1(Q_0)$ as $j\to\infty$, and
\begin{align}\label{pouxiufu}
\varlimsup_{j\to\infty}F_{\widehat\lambda_j,1,\gamma}\left(\widehat v_j;Q_0\right)
\leq m_{1,\gamma}(W_{\widetilde q})+\varepsilon.
\end{align}

Now, we claim that there exist sequences $\{\Lambda^{(m)}_j\}_{j\in\mathbb{N}}\subset (0,\infty)$ and
$\{V_j^{(m)}\}_{j\in\mathbb{N}}\subset L^1(Q_0)$ such that $\lim_{j\to\infty}\Lambda^{(m)}_j=0$,
$\lim_{j\to\infty}V_j^{(m)}=W_{q_m}$ in $L^1(Q_0)$, and
\begin{align}\label{poushen}
m_{1,\gamma}(W_{q_m})\leq\varlimsup_{j\to\infty}F_{\Lambda_j^{(m)},1,\gamma}\left(V_j^{(m)};Q_0\right)
\leq m_{1,\gamma}(W_{\widetilde q})+\varepsilon.
\end{align}
If this claim holds, then, letting $\varepsilon\to0^+$ in \eqref{poushen},
we conclude that \eqref{zibijiao} holds, which completes the proof of the present proposition.

Next, we prove the above claim. Note that the intervals $(a_0,a_1),(a_1,a_2),\ldots,(a_{m-1},a_m)$
are pairwise disjoint and cover $I_0$ except for the partition points.
Taking their Cartesian products in the $N$ coordinate directions
gives $m^N$ pairwise disjoint open cubes that cover $Q_0$ except for
their boundaries. More precisely, let
$\mathcal A_m:=\left\{0,\ldots,m-1\right\}^N.$
For any $\alpha=(\alpha_1,\ldots,\alpha_N)\in\mathcal A_m$, define
\begin{align*}
Q_\alpha:=\prod_{i=1}^N(a_{\alpha_i},a_{\alpha_i+1}).
\end{align*}
Then the family $\mathscr Q_m:=\left\{Q_\alpha:\alpha\in\mathcal A_m\right\}$
consists of pairwise disjoint open cubes and covers $Q_0$ except for
their boundaries.  Furthermore, for any $\alpha \in\mathcal{A}_m$, the center of $Q_\alpha$ is
\begin{align*}
c_\alpha&:=\left(\frac{a_{\alpha_1}+a_{\alpha_1+1}}{2},\ldots,\frac{a_{\alpha_N}+a_{\alpha_N+1}}{2}\right)
=-\frac12(1,\ldots,1)+\frac{1}{m}\left(\alpha_1+\frac12,\ldots,\alpha_N+\frac12\right),
\end{align*}
and hence $Q_\alpha=c_\alpha+\frac{1}{m}Q_0.$
From the definitions of $W_{q_m}$ and $q_m$, it follows that, for any $\alpha\in \mathcal{A}_m$
with $k:=\alpha_N$ and for any $x\in Q_\alpha$,
\begin{align}\label{poujuti}
W_{q_m}(x)&=q_m(x_N)=a_k+\frac{1}{m}\widetilde q\left(m[x_N-a_k]-\frac12\right)
=a_k+\frac{1}{m}\widetilde{q}\left(m\left[x_N-(c_\alpha)_N\right]\right)\notag\\
&=a_k+\frac{1}{m}\widetilde{q}\left([m(x-c_\alpha)]_N\right)
=a_k+\frac{1}{m}W_{\widetilde{q}}(m(x-c_\alpha)).
\end{align}
Let $j\in\mathbb{N}$. Replacing $W_{\widetilde q}$ in \eqref{poujuti} by $\widehat{v}_j$,
let
\begin{align*}
V_j^{(m)}(x)
:=\begin{cases}
\displaystyle
a_{\alpha_N}+\frac{1}{m}
\widehat v_j\left(m(x-c_\alpha)\right),
&x\in Q_\alpha\text{ for some }\alpha\in\mathcal A_m,\\[2mm]
W_{q_m}(x),&\displaystyle x\in Q_0\setminus
\left(\bigcup_{\alpha\in\mathcal A_m}Q_\alpha\right).
 \end{cases}
\end{align*}
By a change of variables, we find that
\begin{align*}
\left\|V_j^{(m)}-W_{q_m}\right\|_{L^1(Q_0)}=\sum_{\alpha\in\mathcal A_m}
\int_{Q_\alpha}\left|V_j^{(m)}-W_{q_m}\right|\,dx=\frac{1}{m}
\left\|\widehat v_j-W_{\widetilde q}\right\|_{L^1(Q_0)}.
\end{align*}
Thus, $\lim_{j\to\infty}V_j^{(m)}=W_{q_m}$ in $L^1(Q_0)$. Define
$\Lambda_j^{(m)}:=m^\gamma\widehat\lambda_j.$
From Lemma~\ref{lem-scaling}, we infer that, for any $\alpha\in\mathcal A_m$,
\begin{align*}
F_{\Lambda_j^{(m)},1,\gamma}\left(V_j^{(m)};Q_\alpha\right)=\frac{1}{m^N}
F_{\widehat\lambda_j,1,\gamma}\left(\widehat v_j;Q_0\right).
\end{align*}
Consequently,
\begin{align}\label{baohe}
\sum_{\alpha\in\mathcal A_m}F_{\Lambda_j^{(m)},1,\gamma}\left(V_j^{(m)};Q_\alpha\right)
=F_{\widehat\lambda_j,1,\gamma}\left(\widehat v_j;Q_0\right).
\end{align}
Let
\begin{align*}
F_{\Lambda_j^{(m)},1,\gamma}^{\rm cross}\left(V_j^{(m)};\mathscr Q_m\right)
&:=\Lambda_j^{(m)}\sum_{\substack{\alpha,\beta\in\mathcal A_m\\\alpha\neq\beta}}
\underset{\left|V_j^{(m)}(x)-V_j^{(m)}(y)\right|
>\Lambda_j^{(m)}|x-y|^{\gamma+1}}
{\int_{Q_\alpha}\int_{Q_\beta}}
|x-y|^{\gamma-N}\,dy\,dx.
\end{align*}
By \eqref{baohe}, we conclude that
\begin{align*}
F_{\Lambda_j^{(m)},1,\gamma}\left(V_j^{(m)};Q_0\right)&\leq\sum_{\alpha\in\mathcal A_m}
F_{\Lambda_j^{(m)},1,\gamma}\left(V_j^{(m)};Q_\alpha\right)+ F_{\Lambda_j^{(m)},1,\gamma}^{\rm cross}
\left(V_j^{(m)};\mathscr Q_m\right)\notag\\
&=F_{\widehat\lambda_j,1,\gamma}\left(\widehat v_j;Q_0\right)+ F_{\Lambda_j^{(m)},1,\gamma}^{\rm cross}
\left(V_j^{(m)};\mathscr Q_m\right).
\end{align*}
Letting $j\to\infty$ in the above inequality and using \eqref{pouxiufu}, we obtain
\begin{align*}
m_{1,\gamma}\left(W_{q_m}\right)&\leq\varlimsup_{j\to\infty}F_{\Lambda_j^{(m)},1,
\gamma}\left(V_j^{(m)};Q_0\right)
\leq m_{1,\gamma}\left(W_{\widetilde q}\right)+\varepsilon+\varlimsup_{j\to\infty}
F_{\Lambda_j^{(m)},1,\gamma}^{\rm cross}\left(V_j^{(m)};\mathscr Q_m\right).
\end{align*}
Thus, to prove \eqref{poushen}, we just need to show that
\begin{align}\label{yous}
\lim_{j\to\infty} F_{\Lambda_j^{(m)},1,\gamma}^{\rm cross}\left(V_j^{(m)};
\mathscr Q_m\right)=0.
\end{align}
For any distinct $\alpha,\beta\in\mathcal A_m$, define
\begin{align*}
E_{\alpha,\beta,j}^{(m)}
:=\left\{(x,y)\in Q_\alpha\times Q_\beta:
|x-y|<\frac{\rho}{m},\
\frac{\left|V_j^{(m)}(x)-V_j^{(m)}(y)\right|}{|x-y|^{\gamma+1}}
>\Lambda_j^{(m)}\right\}.
\end{align*}
Then, we have
\begin{align}\label{zhoupo}
F_{\Lambda_j^{(m)},1,\gamma}^{\rm cross}
\left(V_j^{(m)};\mathscr Q_m\right)&\leq
\Lambda_j^{(m)}
\sum_{\substack{\alpha,\beta\in\mathcal A_m\\ \alpha\neq\beta}}
\iint_{E_{\alpha,\beta,j}^{(m)}}|x-y|^{\gamma-N}\,dy\,dx
+\Lambda_j^{(m)}\underset{|x-y|\geq\frac{\rho}{m}}{\int_{Q_0}\int_{Q_0}}
|x-y|^{\gamma-N}\,dy\,dx\notag\\
&\lesssim\Lambda_j^{(m)}
\sum_{\substack{\alpha,\beta\in\mathcal A_m\\\alpha\neq\beta}}
\iint_{E_{\alpha,\beta,j}^{(m)}}|x-y|^{\gamma-N}\,dy\,dx+\Lambda_j^{(m)}\left(\frac{\rho}{m}\right)^\gamma.
\end{align}
Let $(x,y)\in E_{\alpha,\beta,j}^{(m)}$.
For any $\alpha\in\mathcal A_m$, define
\begin{align*}
Q_{\alpha,\rho}:=\left\{x\in Q_\alpha:\operatorname{dist}(x,\partial Q_\alpha)>\frac{\rho}{m}\right\}.
\end{align*}
Since $\alpha\not=\beta,$ it follow that
\begin{align*}
\max\left\{\operatorname{dist}(x,\partial Q_\alpha),\operatorname{dist}(y,\partial Q_\alpha)\right\}
&\leq |x-y|<\frac{\rho}{m}.
\end{align*}
Thus, $(x,y)\in(Q_\alpha\setminus Q_{\alpha,\rho})
\times(Q_\beta\setminus Q_{\beta,\rho}).$
On the other hand, by  \eqref{jinbian}, we find  that $V_j^{(m)}=W_{q_m}$ on
$Q_\alpha\setminus Q_{\alpha,\rho}$. Consequently,
\begin{align*}
\frac{\left|W_{q_m}(x)-W_{q_m}(y)\right|}{|x-y|^{\gamma+1}}
=\frac{\left|V_j^{(m)}(x)-V_j^{(m)}(y)\right|}{|x-y|^{\gamma+1}}>\Lambda_j^{(m)}
\end{align*}
and hence
\begin{align*}
E_{\alpha,\beta,j}^{(m)}\subset\left\{(x,y)\in Q_\alpha\times Q_\beta:
\frac{\left|W_{q_m}(x)-W_{q_m}(y)\right|}{|x-y|^{\gamma+1}}>\Lambda_j^{(m)}\right\}.
\end{align*}
This further implies that
\begin{align*}
\Lambda_j^{(m)}\sum_{\substack{\alpha,\beta\in\mathcal A_m\\\alpha\neq\beta}}
\iint_{E_{\alpha,\beta,j}^{(m)}}|x-y|^{\gamma-N}\,dy\,dx
\leq F_{\Lambda_j^{(m)},1,\gamma}^{\rm cross}\left(W_{q_m};\mathscr Q_m\right),
\end{align*}
which, combined with \eqref{zhoupo}, implies that
\begin{align*}
F_{\Lambda_j^{(m)},1,\gamma}^{\rm cross}\left(V_j^{(m)};\mathscr Q_m\right)
\lesssim
F_{\Lambda_j^{(m)},1,\gamma}^{\rm cross}\left(W_{q_m};\mathscr Q_m\right)
+\Lambda_j^{(m)}\left(\frac{\rho}{m}\right)^\gamma.
\end{align*}
As is showed in the proof of \cite[Proposition 4.11]{lyyyz26},
all  the assumptions of Lemma~\ref{lem-skeleton} are satisfied. Using this and
applying Lemma~\ref{lem-skeleton}  with $U:=W_{q_{m}}$
and $\mathscr Q:=\mathscr Q_m$, we find that
\begin{align*}
\lim_{\lambda\to0^+}F_{\lambda,1,\gamma}^{\rm cross}\left(W_{q_m};\mathscr Q_m\right)=0.
\end{align*}
This shows \eqref{yous}, which completes the proof of Proposition \ref{prop-profend}.
\end{proof}

Let $\Omega\subset\mathbb R^N$ be an open set and $u\in BV(\Omega)$.
Let $x\in\Omega$, $R\in\mathcal O(N)$, and $r\in(0,\infty)$ such that
$Q_r^R(x)\subset\Omega$ and $|Du|(Q_r^R(x))\in (0,\infty)$,
where $Q_r^R(x)$ is defined in \eqref{eq-rotcube}. For any $z\in Q_0,$ let
\begin{align}\label{rescale}
u_{x,r}^R(z):=\frac{r^{N-1}}{|Du|(Q_r^R(x))}\left[u(x+rRz)-\frac{1}{|Q_r^R(x)|}\int_{Q_r^R(x)}u(y)\,dy\right].
\end{align}
By a change of variables in \eqref{rescale}, we find that
\begin{align*}
\left|Du_{x,r}^R\right|(Q_0)=\frac{r^{N-1}}{|Du|(Q_r^R(x))}\frac{|Du|(Q_r^R(x))}{r^{N-1}}=1.
\end{align*}

The following lemma is \cite[Lemma~4.4]{lyyyz26}.
\begin{lemma}\label{lem-tangent}
Let $\Omega\subset\mathbb R^N$ be a bounded open set, and
$u\in BV(\Omega)$. Then, for $|Du|$-a.e. $x\in\Omega$, there exist
an orthogonal matrix $R_x\in\mathcal O(N)$, a sequence
$\{r_k\}_{k\in\mathbb N}\subset(0,\infty)$ with $r_k\to 0$, and a
nondecreasing function $q_x\in BV(I_0)$ such that the following
statements hold.
\begin{enumerate}[{\rm(i)}]
\item For any $k\in\mathbb{N},$ it holds that
$\overline{Q_{r_k}^{R_x}(x)}\subset\Omega$,
$|Du|(Q^{R_x}_{r_k}(x))\in (0,\infty)$, and
$|Du|(\partial Q_{r_k}^{R_x}(x))=0.$

\item  $|DW_{q_x}|(Q_0)=|Dq_x|(I_0)=1$.

\item $\lim_{k\to\infty}u_{x,r_k}^{R_x}=W_{q_x}$ in $L^1(Q_0)$.
\end{enumerate}
\end{lemma}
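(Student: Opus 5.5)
The statement is \cite[Lemma~4.4]{lyyyz26}, and it does not involve the functional $F_{\lambda,p,\gamma}$ or the sign of $\gamma$. The plan is therefore to reproduce the BV blow-up argument there. It combines the polar decomposition of $Du$, the Besicovitch differentiation theorem, and compactness in $BV(Q_0)$.

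\textbf{Step 1: polar decomposition and a good point.} Write $Du=\nu_u|Du|$ with $|\nu_u|=1$ $|Du|$-a.e. By the Besicovitch differentiation theorem, for $|Du|$-a.e. $x\in\Omega$, $x$ is a Lebesgue point of $\nu_u$ with respect to $|Du|$. This holds along any family of shrinking sets comparable to balls, in particular rotated cubes $Q_r^R(x)$. At such a point, choose $R_x\in\mathcal O(N)$ with $R_xe_N=\nu_u(x)$. The map $r\mapsto|Du|(\partial Q_r^{R_x}(x))$ is nonzero for at most countably many $r$, since the boundaries are disjoint and $|Du|$ is finite. Moreover, $|Du|(Q_r^{R_x}(x))>0$ for all small $r$, because $x\in\operatorname{supp}|Du|$ for $|Du|$-a.e. $x$. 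Hence there is a sequence $r_k\to0$ satisfying (i).

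\textbf{Step 2: compactness of the rescalings.} Set $w_k:=u^{R_x}_{x,r_k}$. By construction, $|Dw_k|(Q_0)=1$ and $w_k$ has zero mean. The Poincar\'e inequality on $Q_0$ gives a bound for $w_k$ in $BV(Q_0)$. By Rellich compactness, after passing to a subsequence, $w_k\to w$ in $L^1(Q_0)$ and $Dw_k\rightharpoonup Dw$ weakly-$*$ in $Q_0$. Changing variables, $Dw_k=(\nu_u\circ(x+r_kR_x\cdot))\,R_x^T$ applied to $|Dw_k|$. The Lebesgue point property gives
$$\int_{Q_0}\left|R_x^T\nu_u(x+r_kR_xz)-e_N\right|\,d|Dw_k|(z)\to0.$$
Therefore $Dw_k-e_N|Dw_k|\to0$ in total variation on $Q_0$. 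It follows that $Dw=e_N\mu$ for a nonnegative measure $\mu$, the weak-$*$ limit of $|Dw_k|$.

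\textbf{Step 3: identification of $w$.} Since $\partial_iw=0$ for $i<N$ and $\partial_Nw\ge0$ in the distributional sense, $w(z)=q_x(z_N)$ for a nondecreasing $q_x\in BV(I_0)$. This gives (iii) with $W_{q_x}=w$.

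\textbf{Step 4: the normalization in (ii), the main obstacle.} We must show $|Dw|(Q_0)=1$. Lower semicontinuity gives only $\le1$; the issue is that mass could escape to $\partial Q_0$. I will use the fact that $Dw_k(Q_0)\cdot e_N=\int\nu_u\cdot R_xe_N\,d|Dw_k|\to1$, so the vertical component carries essentially all the mass. Then, applying the same argument on slightly smaller concentric cubes $(1-\delta)Q_0$, I will choose the blow-up so that the limit measure does not charge $\partial((1-\delta)Q_0)$. This follows the standard tangent-measure trick, using the doubling-type ratio $|Du|(Q_{(1-\delta)r})/|Du|(Q_r)\to1$ along a suitable sequence, which exists for $|Du|$-a.e. $x$ by a density argument. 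With that choice, weak-$*$ convergence transfers full mass: $|Dw|(Q_0)\ge\limsup|Dw_k|((1-\delta)Q_0)\ge1-o(1)$. Finally, $|DW_{q_x}|(Q_0)=|Dq_x|(I_0)$ because the cube has unit cross-section.

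The delicate point is choosing $r_k$ so that no mass concentrates near $\partial Q_0$. I will follow exactly the argument of \cite[Lemma~4.4]{lyyyz26} for this step.
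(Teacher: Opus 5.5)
Your Steps 1--3 are the standard blow-up and are essentially fine. Step 4, however, is where the actual content of (ii) lies, and it does not work as written. You claim that for $|Du|$-a.e.\ $x$ some sequence gives $|Du|(Q^{R_x}_{(1-\delta)r}(x))/|Du|(Q^{R_x}_r(x))\to1$. This is false for fixed $\delta$. For $u(y)=y_N$ one has $|Du|=\mathcal L^N\llcorner\Omega$, and the ratio is identically $(1-\delta)^N$. The same happens at every Lebesgue point of the absolutely continuous part of $|Du|$ where its density is positive.

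What does hold for $|Du|$-a.e.\ $x$ is only $\limsup_{r\to0}|Du|(Q^{R_x}_{tr}(x))/|Du|(Q^{R_x}_r(x))\ge t^N$ for $t\in(0,1)$. Otherwise $|Du|(B(x,r))/r^N\to0$, which happens only on a $|Du|$-null set. A sequence realizing this for $t=1-\delta$ gives a limit $w_\delta$ with $(1-\delta)^N\le|Dw_\delta|(Q_0)\le1$, and this limit depends on $\delta$. A diagonal choice $\delta_k\to0$ does not repair it. Knowing that $|Dw_k|((1-\delta_k)Q_0)$ is close to $1$ says nothing about the mass of $|Dw_k|$ in the layer $Q_0\setminus(1-\delta)Q_0$ for fixed $\delta$, since eventually $(1-\delta_k)Q_0\supset(1-\delta)Q_0$. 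So the exact normalization $|Dq_x|(I_0)=1$ is not proved, and deferring ``the delicate point'' to the cited lemma leaves precisely the nontrivial part open.

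The missing idea is to rescale after passing to the limit, rather than to prevent boundary concentration at scale $r_k$. Use the density lemma only for nondegeneracy. Choose $r_k\to0$ with $|Du|(Q^{R_x}_{r_k/2}(x))\ge(2^{-N}-\frac1k)|Du|(Q^{R_x}_{r_k}(x))$. Pass to a subsequence so that $w_k:=u^{R_x}_{x,r_k}\to w$ in $L^1(Q_0)$ and $|Dw_k|\rightharpoonup\lambda$ weakly-$*$ as measures on the compact set $\overline{Q_0}$. Then $\lambda(\overline{\tfrac12Q_0})\ge2^{-N}$, and your Step 2 gives $Dw=e_N\,\lambda\llcorner Q_0$.

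Now fix $s\in(\tfrac12,1)$ outside the countable set of values for which $\lambda(\partial(sQ_0))>0$, or $|Du|(\partial Q^{R_x}_{sr_k}(x))>0$ for some $k$. A change of variables gives $u^{R_x}_{x,sr_k}=s^{N-1}|Dw_k|(sQ_0)^{-1}[w_k(s\,\cdot)-c_k]$, where $c_k$ is the mean of $w_k(s\,\cdot)$ over $Q_0$. Moreover $|Dw_k|(sQ_0)\to\lambda(sQ_0)\ge2^{-N}>0$. Hence along $sr_k$ the rescalings converge in $L^1(Q_0)$ to $W:=s^{N-1}\lambda(sQ_0)^{-1}[w(s\,\cdot)-c]$. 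This $W$ is still a nondecreasing function of $z_N$ alone, and $|DW|(Q_0)=|Dw|(sQ_0)/\lambda(sQ_0)=1$. This yields (i)--(iii) with $sr_k$ in place of $r_k$.

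A minor point on Step 1: you need Lebesgue points of $\nu_u$ with respect to centered cubes whose orientation $R_x$ varies with $x$. These come from the Morse covering theorem for centered sets star-shaped with respect to a comparable ball. Mere comparability with balls is not enough, since $|Du|$ need not be doubling.
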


We now prove Theorem~\ref{thm-onelinf}.
We first use Proposition~\ref{prop-coerc} to deduce $u\in BV(\Omega)$
from the finiteness of the lower energy limit; the remaining argument
is the same as in the case $\gamma\in(0,\infty)$
(see \cite[Theorem~4.2]{lyyyz26}).
\begin{proof}[Proof of Theorem \ref{thm-onelinf}]
Let
\begin{align*}
L:=\varliminf_{j\to\infty}F_{\lambda_j,1,\gamma}\left(u_j;\Omega\right)\in[0,\infty].
\end{align*}
If $L=\infty$, then the desired conclusion holds automatically. We now assume $L<\infty$.
By Proposition \ref{prop-coerc}, we conclude that $u\in BV(\Omega)$.
Let $\mu:=|Du|$. If $\mu(\Omega)=0$, then \eqref{duanxia} follows from
the nonnegativity of $F_{\lambda_j,1,\gamma}$.
Hence we assume that $\mu(\Omega)>0$.
Let $A_1\subset\Omega$ be a Borel set such that $\mu(\Omega\setminus A_1)=0$
and Lemma \ref{lem-tangent} holds at every point of $A_1$.
Fix $\varepsilon\in(0,C_{N,1,\gamma}^{\mathrm{cell}}/2)$.
For each $x\in A_1$, let $R_x\in\mathcal O(N)$ and
$\{r_k\}_{k\in\mathbb N}\subset(0,\infty)$, with $r_k\to0$ as $k\to\infty$, be as in
Lemma \ref{lem-tangent}. Arguing as in the proof of \cite[ (4.60), (4.61), and (4.62)]{lyyyz26},
and using Lemmas \ref{lem-scaling} and \ref{lem-mplsc} together with
Proposition \ref{prop-profend}, we find that there exists
$k_\varepsilon(x)\in\mathbb N$ such that, for any $k\geq k_\varepsilon(x)$,
\begin{align}\label{eq-bvlocal}
\varliminf_{j\to\infty}F_{\lambda_j,1,\gamma}\left(u_j;Q_{r_k}^{R_x}(x)\right)
\geq\left(C_{N,1,\gamma}^{\mathrm{cell}}-\varepsilon\right)\mu\left(Q_{r_k}^{R_x}(x)\right),
\end{align}
where $Q_k:=Q_{r_k}^{R_x}(x).$ Let
\begin{align*}
V_\varepsilon:=\left\{Q_{r_k}^{R_x}(x):x\in A_1,\ k\geq k_\varepsilon(x)\right\},
\end{align*}
where the sequence $\{r_k\}_{k\in\mathbb N}$ is chosen at each $x$ as above.
Using \eqref{eq-bvlocal} and a Morse type covering argument similar to that
used in the proof of \cite[Theorem~4.2]{lyyyz26}, we find that
\eqref{duanxia} holds.
This completes the proof of Theorem \ref{thm-onelinf}.
\end{proof}

\section{The  $\Gamma$-limsup inequality on bounded domains}
\label{sec-limsup}
The aim of this section is to prove the following
$\Gamma$-limsup inequality on bounded domains.
\begin{theorem}\label{thm-limsup}
Let $N\in\mathbb N$, $p\in[1,\infty)$, $\gamma\in(-\infty,0)$, and
$\Omega\subset\mathbb R^N$ be a bounded open interval when $N=1$ or a
bounded Lipschitz domain when $N\geq2$.
\begin{itemize}
\item[$\mathrm{(i)}$] Let $u\in W^{1,p}(\Omega)$ and
$\{\lambda_j\}_{j\in\mathbb N}\subset(0,\infty)$ satisfy
$\lim_{j\to\infty}\lambda_j=0$. Then there exists
$\{u_j\}_{j\in\mathbb N}\subset L^p(\Omega)$ satisfying $u_j\to u$ in
$L^p(\Omega)$ and
\begin{align*}
\varlimsup_{j\to\infty}F_{\lambda_j,p,\gamma}\left(u_j;\Omega\right)
\leq C_{N,p,\gamma}^{\rm cell}\int_\Omega|\nabla u|^p\,dx,
\end{align*}
where $C_{N,p,\gamma}^{\rm cell}$ is as in \eqref{baochang}.

\item[$\mathrm{(ii)}$] Let $u\in BV(\Omega)$ and
$\{\lambda_j\}_{j\in\mathbb N}\subset(0,\infty)$ satisfy
$\lim_{j\to\infty}\lambda_j=0$. Then there exists
$\{u_j\}_{j\in\mathbb N}\subset L^1(\Omega)$ satisfying $u_j\to u$ in
$L^1(\Omega)$ and
\begin{align*}
\varlimsup_{j\to\infty}F_{\lambda_j,1,\gamma}\left(u_j;\Omega\right)
\leq C_{N,1,\gamma}^{\rm cell}|Du|(\Omega).
\end{align*}
\end{itemize}
\end{theorem}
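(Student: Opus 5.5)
We follow the standard route for $\Gamma$-limsup inequalities: first build recovery sequences for continuous piecewise affine functions, then pass to general $W^{1,p}$ and $BV$ functions by density and a diagonal argument. The diagonal step is justified because the $\Gamma$-limsup functional
\[
\Psi''(u):=\inf\Big\{\varlimsup_{j\to\infty}F_{\lambda_j,p,\gamma}(u_j;\Omega):\ u_j\to u \text{ in } L^p(\Omega)\Big\}
\]
is lower semicontinuous in $L^p(\Omega)$.

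For the density step we use that $\Omega$ is a bounded Lipschitz domain (or an interval when $N=1$). Hence continuous piecewise affine functions, on finite triangulations of a neighbourhood of $\overline\Omega$, are dense in $W^{1,p}(\Omega)$ in norm when $p>1$. When $p=1$ they are strictly dense in $BV(\Omega)$: there are $u_n\to u$ in $L^1$ with $|Du_n|(\Omega)\to|Du|(\Omega)$, obtained by smooth strict approximation followed by piecewise affine interpolation. So it suffices to prove $\Psi''(u)\le C^{\rm cell}_{N,p,\gamma}\int_\Omega|\nabla u|^p\,dx$ for $u$ continuous piecewise affine on simplices $T_1,\dots,T_M$ covering $\Omega$ up to a null set.

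\textbf{Local construction.} On each simplex, $\nabla u=\xi_i$ is constant. We cover the interior of $T_i\cap\Omega$, up to small measure, by finitely many disjoint small cubes $Q^{R_i}_r(x_k)$ oriented so that $R_ie_N=\xi_i/|\xi_i|$. On each such cube $u$ is the rescaled $\ell$ in the sense of Lemma~\ref{lem-scaling}. Lemma~\ref{lem-cellbds}(ii) supplies $v_j\to\ell$ with energy tending to $C^{\rm cell}$ along some sequence $\lambda_j$; we must then realize this along the prescribed $\{\lambda_j\}$. For that, we first show, as in \cite{lyyyz26}, that the infimum defining $m_{p,\gamma}(\ell)$ can be attained along any prescribed sequence $\lambda_j\to0$, by an interpolation and monotonicity argument in $\lambda$. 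Next we make the cell competitors agree with $\ell$ near $\partial Q_0$, exactly as in Lemma~\ref{lem-nearmin}. We use the cutoff estimate Lemma~\ref{lem-cutoff}(ii) with $\widehat\lambda_j=(1+\eta_j)\lambda_j$. The collar term is controlled by Lemma~\ref{lem-cutoff}(i), since $F_{\lambda,p,\gamma}(\ell;B_{2\rho},Q_0)\le K_{N,p}|B_{2\rho}|/|\gamma|$, which is small as $\rho\to0$; this works for all $p$ and all $\gamma<0$. Rescaling by Lemma~\ref{lem-scaling} and pasting the modified competitors into each cube, with $u$ unchanged elsewhere, gives $u_j\to u$ in $L^p$. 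The intra-cube energies then sum to at most $(C^{\rm cell}+\varepsilon)\int|\nabla u|^p$.

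\textbf{Remaining energy.} Two further contributions must be shown negligible. The first is the self-interaction of $u$ on the leftover set $G$ of small measure. Since $u$ is Lipschitz with constant $\Lambda$, $F_{\lambda,p,\gamma}(u;G,\Omega)\lesssim \Lambda^p|G|/|\gamma|$, by the same computation as Lemma~\ref{lem-cutoff}(i) applied to an affine function of slope $\Lambda$.

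The second is the cross interaction between distinct cubes and between cubes and $G$; this is the main obstacle. We split pairs by $|x-y|<R_\lambda$ versus $|x-y|\ge R_\lambda$. Far pairs contribute $\lesssim\lambda^pR_\lambda^\gamma$, which tends to $0$ if we choose $R_\lambda=\lambda^{-p/(2\gamma)}\to0$. For near pairs with $x,y$ in different cubes, both points lie in the collars, where the competitors coincide with $u$. Their contribution is therefore bounded by $F_{\lambda,p,\gamma}(u;S^{(R_\lambda)})$, with $S$ the union of the cube boundaries. This is $\lesssim \Lambda^p|S^{(R_\lambda)}|\to0$, again by the Lipschitz bound, mirroring Lemma~\ref{lem-skeleton}.

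Combining these estimates and letting $\varepsilon\to0$, then $\rho\to0$, then the fineness of the covering tend to zero, gives the bound for piecewise affine $u$. The general case follows by density and the lower semicontinuity of $\Psi''$. For $p=1$ we use strict convergence, so that $|Du_n|(\Omega)\to|Du|(\Omega)$.
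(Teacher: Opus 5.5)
Your architecture is the paper's: recovery sequences for piecewise affine $u$ built from rescaled cell competitors that agree with $\ell$ on a collar, a near/far split of the cross interactions, then density plus lower semicontinuity of the $\Gamma$-upper limit. With cubes of fixed size, your bookkeeping of the cross terms is essentially fine.

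The gap is the sentence asserting that the infimum defining $m_{p,\gamma}(\ell)$ is realized along \emph{every} prescribed sequence $\mu_j\to0$. Your fixed-size cubes need exactly this, for the parameters $\mu_{j,i}=\lambda_j r^{\gamma/p}/|\xi_i|$. But Lemma~\ref{lem-cellbds}(ii), which is what the paper imports from your cited source, supplies only \emph{one} good sequence $\{\Lambda_k\}$. Monotonicity in $\lambda$ cannot fill in the rest. The only monotonicity available is that $\lambda\mapsto\lambda^{-p}F_{\lambda,p,\gamma}(v;Q_0)$ is nonincreasing. This moves you from $\Lambda_k$ to $\lambda'\in[\Lambda_k,(1+\epsilon)\Lambda_k]$ at cost $(1+\epsilon)^p$ and no further, so it cannot bridge gaps with $\Lambda_k/\Lambda_{k+1}\to\infty$. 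Lemma~\ref{lem-scaling} cannot help either, since it cannot map $(\ell,Q_0)$ to itself with a different parameter. In fact the ``any sequence'' statement is precisely the $\Gamma$-limsup inequality for the affine function $\ell$ on $Q_0$, i.e.\ Proposition~\ref{prop-pa} with $w=\ell$. So you have outsourced the heart of the proof.

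The missing idea is to let the cube size depend on $j$. The paper picks $k_j\to\infty$ with $\lambda_j/\Lambda_{k_j}\to0$ and uses cubes of side $h_{j,i}=(|\xi_i|\Lambda_{k_j}/\lambda_j)^{p/\gamma}\to0$. Then Lemma~\ref{lem-scaling} turns $\lambda_j$ on each cube into exactly $\Lambda_{k_j}$, and no interpolation is needed. Proving your lemma would require the same device in disguise: subdividing $Q_0$ into $m^N$ subcubes with $\mu_j m^{|\gamma|/p}\in[\Lambda_k,(1+\epsilon)\Lambda_k]$.

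The price is that the collars now have width $\widehat\rho_j h_{j,i}\to0$, and your cross-term argument no longer works as stated. Keeping $R_\lambda=\lambda^{-p/(2\gamma)}$ would require $R_{\lambda_j}<\widehat\rho_j h_{j,i}$, i.e.\ $|\xi_i|\Lambda_{k_j}<\widehat\rho_j^{|\gamma|/p}\lambda_j^{1/2}$, on top of $\lambda_j\ll\Lambda_{k_j}$. A sparse sequence $\{\Lambda_k\}$ can miss that window.

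The paper instead chooses $d_j$ with $\lambda_j^{-p/\gamma}\ll d_j<\widehat\rho_j h_{j,i}$. Far pairs then cost $\lesssim\lambda_j^p d_j^\gamma\to0$. Near pairs across cubes lie where $w_j=w$, so by the Lipschitz bound they cost $\lesssim L^p|E_j|\to0$. This window is nonempty exactly because of Proposition~\ref{prop-repair}(iii), $\Lambda_j^{-p/\gamma}/\rho_j\to0$, which is built into the boundary-modified cell competitors. That compatibility between collar width and interaction range is what your proposal lacks.

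A smaller point: a Lemma~\ref{lem-nearmin}-type modification with $\widehat\lambda_j=(1+\eta_j)\lambda_j$ also moves you off the prescribed sequence. Using a fixed or slowly converging $\alpha<1$ in Lemma~\ref{lem-cutoff}(ii), as in Proposition~\ref{prop-repair}, avoids this.
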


\subsection{Recover sequences for continuous piecewise affine functions}
We begin by  recalling the definition of continuous piecewise affine functions.

\begin{definition}\label{def-pa}
Let $N\in\mathbb N$, $a\in\mathbb R^N\setminus\{\mathbf{0}\}$ and $c\in\mathbb R$.
A set of the form $\{x\in\mathbb R^N:a\cdot x\leq c\}$ is called a
\emph{closed affine half-space} in $\mathbb R^N$. A \emph{closed convex
polyhedron} $P$ in $\mathbb R^N$ is an intersection of finitely many closed
affine half-spaces with nonempty interior.
Let $U\subset\mathbb R^N$ be a bounded convex open set. A continuous function
$w:U\to\mathbb R$ is said to be \emph{continuous finite piecewise affine} if
there are finitely many closed convex polyhedra $\{P_i\}_{i=1}^s$ with
pairwise disjoint interiors, vectors $\{\xi_i\}_{i=1}^s$, and numbers
$\{b_i\}_{i=1}^s$ such that $U\subset\bigcup_{i=1}^sP_i$ and, for any
$i\in\{1,\ldots,s\}$ and any $x\in U\cap P_i$,
\begin{align*}
w(x)=\xi_i\cdot x+b_i.
\end{align*}
\end{definition}
\begin{remark}
Let $U\subset\mathbb R^N$ be a bounded convex open set and
$w:U\to\mathbb R$ be continuous finite piecewise affine. By the assumption
that $U$ is convex and Definition \ref{def-pa}, we conclude that, for any $x,y\in U$,
\begin{align}\label{kong}
|w(x)-w(y)|\leq L|x-y|,
\end{align}
where $L:=\max_{1\leq i\leq s}|\xi_i|$.
\end{remark}

We will show the following  $\Gamma$-limsup inequality for comtinuous finite piecewise
affine functions in this subsection.
\begin{proposition}\label{prop-pa}
Let $N\in\mathbb N$, $p\in[1,\infty)$, $\gamma\in(-\infty,0)$, and
$\Omega\subset\mathbb R^N$ be a bounded open set.
Assume that $U\supset\overline\Omega$ is a bounded convex open set and
$w:U\to\mathbb R$ is continuous finite piecewise affine. Let
$\{\lambda_j\}_{j\in\mathbb N}\subset(0,\infty)$ satisfy $\lambda_j\to0$ as
$j\to\infty$. Then there exists $\{w_j\}_{j\in\mathbb N}\subset L^p(\Omega)$ satisfying
$w_j\to w$ in $L^p(\Omega)$ as $j\to\infty$ and
\begin{align}\label{fenshang}
\varlimsup_{j\to\infty}F_{\lambda_j,p,\gamma}\left(w_j;\Omega\right)
\leq C_{N,p,\gamma}^{\rm cell}\int_\Omega|\nabla w|^p\,dx.
\end{align}
\end{proposition}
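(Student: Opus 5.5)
The plan is the standard cell-patching construction from \cite{lyyyz26}, adapted to $\gamma<0$. Fix $\varepsilon\in(0,1)$ and a mesh size $\delta>0$. Tile $\mathbb R^N$ by the cubes $Q_\delta(k):=\delta k+\delta Q_0$, $k\in\mathbb Z^N$, and keep the finitely many cubes meeting $\overline\Omega$; all of these lie in $U$ once $\delta$ is small. Split them into good cubes, whose closure lies in the interior of a single polyhedron $P_i$, and bad cubes, which meet some $\partial P_i$. Since $\bigcup_i\partial P_i$ is a finite union of hyperplane pieces, the bad cubes have total measure $O(\delta)$.

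On a good cube $w$ equals $\xi_i\cdot x+b_i$. When $\xi_i\ne0$, choose $R\in\mathcal O(N)$ with $Re_N=\xi_i/|\xi_i|$. Take $\{v_j\}$ and the special sequence $\{\mu_j\}$ from Lemma~\ref{lem-cellbds}(ii), so that $F_{\mu_j,p,\gamma}(v_j;Q_0)\to C^{\rm cell}_{N,p,\gamma}$. The aim is to transplant $v_j$ onto each good cube with Lemma~\ref{lem-scaling}, which gives energy $|\xi_i|^p\delta^N F_{\mu_j,p,\gamma}(v_j;Q_0)$ provided the parameter matches. Two issues arise here.

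\begin{enumerate}[(a)]
\item The prescribed sequence $\lambda_j$ is arbitrary. I first upgrade Lemma~\ref{lem-cellbds}(ii) so that it holds along every sequence tending to $0$. Given $\lambda_j\to0$, pick for each $j$ an index $n(j)$ with $\mu_{n(j)}\ge\lambda_j$ but still $\mu_{n(j)}\to0$ and $\lambda_j/\mu_{n(j)}\to$ some controllable quantity. Then rescale the cube: Lemma~\ref{lem-scaling} with a subcube of side $s_j=(\lambda_j/\mu_{n(j)})^{p/|\gamma|}$-type, tiling $Q_0$ by $s_j^{-N}$ copies (periodic homogenization, as in the proof of Proposition~\ref{prop-profend}). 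This requires the cross-interaction estimate.
\item The vectors $\xi_i$ differ from cube to cube, so the effective parameter $\lambda_j\delta^{\gamma/p}/|\xi_i|$ differs. Again one uses the scaling freedom.
\end{enumerate}

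To make the cubes interact harmlessly, replace $v_j$ by a version equal to $\ell$ near $\partial Q_0$. This is done with the cutoff Lemma~\ref{lem-cutoff}(ii), with $\eta$ a collar cutoff and $\alpha=1/(1+\eta_j)$, exactly as in Lemma~\ref{lem-nearmin}. The extra term is $F(\ell;E_\eta,Q_0)\le \frac{K_{N,p}}{|\gamma|}|E_\eta|$ by Lemma~\ref{lem-cutoff}(i), which is small for a thin collar. The $L^p$ error is absorbed by choosing $\eta_j\to0$ slowly.

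Define $w_j$ to be $w$ on the bad cubes and the modified rescaled $v_j$ on the good cubes. Because of the collar condition, $w_j=w$ in a $\rho\delta$-neighbourhood of all cube boundaries. We have $w_j\to w$ in $L^p$. Split $F_{\lambda_j,p,\gamma}(w_j;\Omega)$ into three parts:
\begin{enumerate}[(1)]
\item diagonal terms over good cubes, bounded by $(C^{\rm cell}+\varepsilon)\int|\nabla w|^p$;
\item terms with $x$ in bad cubes and $y$ arbitrary;
\item cross terms between distinct cubes.
\end{enumerate}
For (2), when $y$ is near $x$ and both points are in collar or bad regions, $w_j=w$ is Lipschitz by \eqref{kong}. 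Hence the integral is bounded, as in Lemma~\ref{lem-cutoff}(i), by $C L^p|\{\text{bad}\}|=O(\delta)$. Where $w_j\ne w$ on the $y$ side, the piece is controlled by a far-field bound. In general, pairs with $|x-y|\ge r$ contribute at most $\lambda_j^p r^{\gamma}|\Omega|$, which tends to $0$ for fixed $r$ since $\gamma<0$. For (3), pairs at distance $<\rho\delta$ lie in collars, where $w_j=w$, so the integrand is bounded by the Lipschitz estimate times measure $O(\rho)$; the remaining pairs are far-field. Letting $j\to\infty$, then $\rho\to0$, then $\delta\to0$ and $\varepsilon\to0$, and taking a diagonal sequence, gives \eqref{fenshang}.

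\textbf{Main obstacle.} The main difficulty is handling an arbitrary sequence $\lambda_j$ together with cube-dependent gradients. Here I would first prove that $m_{p,\gamma}(\ell)$ is attained as a genuine limit along every $\lambda_j\to0$. The method is the periodic tiling of $Q_0$ by scaled copies of collar-modified near-minimizers, with cross terms controlled by the Lipschitz/collar argument above. I expect this step to need the most care.
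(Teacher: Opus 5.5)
Your outline is sound, but it is organized differently from the paper's proof. You work at two scales. The outer scale is a fixed mesh $\delta$ with good and bad cubes. Inside each good $\delta$-cube you place a $j$-dependent tiling, whose purpose is to upgrade Lemma~\ref{lem-cellbds}(ii) to a collar-repaired recovery sequence for $\ell$ along every prescribed parameter sequence. You then send $\rho,\delta,\varepsilon\to0$ diagonally.

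The paper collapses this into a single scale.
\begin{itemize}
\item Proposition~\ref{prop-repair} first produces a cell sequence $(\Lambda_j,V_j)$ with $V_j=\ell$ on a collar of width $\rho_j\to0$ and with $\Lambda_j^{p/|\gamma|}/\rho_j\to0$.
\item After passing to $\widehat\Lambda_j=\Lambda_{k_j}$ with $\lambda_j/\widehat\Lambda_j\to0$, each $D_i=\Omega\cap\operatorname{int}P_i$ is tiled directly by cubes aligned with $\xi_i$, of side $h_{j,i}=(|\xi_i|\widehat\Lambda_j/\lambda_j)^{p/\gamma}\to0$. Lemma~\ref{lem-scaling} then gives rescaled parameter exactly $\widehat\Lambda_j$ on every cube, which settles your issues (a) and (b) at once.
\item Because $h_{j,i}\to0$, the uncovered part of each $D_i$ has vanishing measure. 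So there is no bad layer and no limit in $\delta$.
\item All inter-cube terms are handled by one near/far split at a scale $d_j$ with $\lambda_j^{p/|\gamma|}\ll d_j<\widehat\rho_jh_{j,i}$. This is exactly what Proposition~\ref{prop-repair}(iii) is for.
\end{itemize}
Your route repeats the Lipschitz-collar/far-field estimate at both levels and needs an extra diagonal argument. In exchange it has three advantages. It is modular. It needs no analogue of Proposition~\ref{prop-repair}(iii), because with collars of fixed relative width the far-field terms (such as $\lambda_j^p(\rho\delta)^\gamma$, or $\rho'^{\gamma}\nu_{n(j)}^p$ inside the cell) vanish automatically. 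And it treats $\xi_i=0$ trivially, whereas the paper assumes $\min_i|\xi_i|>0$ and defers the general case to a perturbation argument.

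When you write it out, fix three details.
\begin{itemize}
\item $s_j^{-1}$ need not be an integer. Either leave the uncovered part of $Q_0$ equal to $\ell$ (as the paper does with $w$), or absorb a factor $1+o(1)$ in the parameter using that $\lambda\mapsto\lambda^{-p}F_{\lambda,p,\gamma}(u)$ is nonincreasing.
\item The tiled cell function equals $\ell$ near $\partial Q_0$ only on a width $\rho's_j\to0$. To get $w_j=w$ on a $\rho\delta$-collar of each $\delta$-cube, leave a fixed collar of $Q_0$ untiled. Alternatively, apply Lemma~\ref{lem-cutoff}(ii) once more with a fixed $\alpha<1$ and then let $\alpha\to1$.
\item Since $\Omega$ is only assumed open, $|\partial\Omega|>0$ is possible. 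Counting as good the cubes that merely meet $\overline\Omega$ then yields only $C^{\rm cell}_{N,p,\gamma}\int_{\overline\Omega}|\nabla w|^p\,dx$ in the limit. Call a cube good only when its closure lies in $\Omega\cap\operatorname{int}P_i$. Your Lipschitz bound in term (2) needs only $|\text{bad}\cap\Omega|\to0$ as $\delta\to0$, which still holds.
\end{itemize}
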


To prove Proposition \ref{prop-pa}, we use Lemmas \ref{lem-cellbds} and \ref{lem-cutoff}
to establish the following boundary modification result.

\begin{proposition}\label{prop-repair}
Let $p\in[1,\infty)$ and $\gamma\in(-\infty,0)$.
For any $j\in\mathbb{N},$ let $\rho_j:=\frac{1}{100j}.$
Then there exist sequences $\{\Lambda_j\}_{j\in\mathbb N}\subset(0,\infty)$ and
$\{V_j\}_{j\in\mathbb N}\subset L^p(Q_0)$ having the following properties.
\begin{enumerate}[{\rm(i)}]
\item $\lim_{j\to\infty}\Lambda_j=0$, $\lim_{j\to\infty}V_j=\ell$ in $L^p(Q_0)$, and
\begin{align}\label{xiuzhu}
\lim_{j\to\infty}F_{\Lambda_j,p,\gamma}\left(V_j;Q_0\right)=C_{N,p,\gamma}^{\mathrm{cell}}.
\end{align}
\item For any $j\in\mathbb N$,
\begin{align}\label{xiubian}
V_j=\ell\quad\text{on}\quad
\left\{z\in Q_0:\operatorname{dist}(z,\partial Q_0)<\rho_j\right\}.
\end{align}
\item It holds that $\lim_{j\to\infty}\frac{\Lambda_j^{-\frac{p}{\gamma}}}{\rho_j}=0.$
\end{enumerate}
\end{proposition}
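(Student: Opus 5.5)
The plan is to start from the optimal sequence in Lemma~\ref{lem-cellbds}(ii), repair it near $\partial Q_0$ with the cutoff estimate Lemma~\ref{lem-cutoff}(ii), and finally extract a diagonal subsequence.

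By Lemma~\ref{lem-cellbds}(ii), there are $\lambda_k\to0$ and $v_k\to\ell$ in $L^p(Q_0)$ with $F_{\lambda_k,p,\gamma}(v_k;Q_0)\to C^{\rm cell}_{N,p,\gamma}$. For fixed $\rho\in(0,\frac1{20})$, take a Lipschitz cutoff $\chi_\rho:Q_0\to[0,1]$ with $\chi_\rho=0$ on $\{\operatorname{dist}(z,\partial Q_0)<\rho\}$, $\chi_\rho=1$ on $\{\operatorname{dist}(z,\partial Q_0)\ge2\rho\}$, and $L_{\chi_\rho}\lesssim\rho^{-1}$. Set
\begin{align*}
\widehat v_k:=\chi_\rho v_k+(1-\chi_\rho)\ell .
\end{align*}
This gives (ii) with $\rho$ in place of $\rho_j$, and $\widehat v_k\to\ell$.

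Next apply Lemma~\ref{lem-cutoff}(ii) with $u:=v_k$, $v:=\ell$, $\alpha:=(1+\eta_k)^{-1}$, and the rescaled parameter $(1+\eta_k)\lambda_k$. Here $\eta_k\to0$ is chosen so that $\|v_k-\ell\|_{L^p}^p/\eta_k^p\to0$. This yields
\begin{align*}
F_{(1+\eta_k)\lambda_k,p,\gamma}(\widehat v_k;Q_0)\le(1+\eta_k)^p\Big[F_{\lambda_k,p,\gamma}(v_k;Q_0)+F_{\lambda_k,p,\gamma}(\ell;E_{\chi_\rho},Q_0)\Big]+C\rho^{-p}\frac{(1+\eta_k)^p}{\eta_k^p}\|v_k-\ell\|^p_{L^p(Q_0)}.
\end{align*}
By Lemma~\ref{lem-cutoff}(i), the middle term is at most $\frac{K_{N,p}}{|\gamma|}|E_{\chi_\rho}|\lesssim\rho$, uniformly in $k$, because $E_{\chi_\rho}\subset\{\operatorname{dist}<2\rho\}$. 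Hence
\begin{align*}
\varlimsup_k F_{(1+\eta_k)\lambda_k,p,\gamma}(\widehat v_k;Q_0)\le C^{\rm cell}_{N,p,\gamma}+C\rho .
\end{align*}
For the matching lower bound, the definition of $C^{\rm cell}=m_{p,\gamma}(\ell)$ gives that the $\varliminf$ along any subsequence is at least $C^{\rm cell}$, since $(1+\eta_k)\lambda_k\to0$ and $\widehat v_k\to\ell$.

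For the diagonal step, for each $j$ use the construction above with $\rho:=\rho_j=\frac1{100j}$. Then choose an index $k_j$ large enough that four conditions hold:
\begin{itemize}
\item[(a)] $\|\widehat v_{k_j}-\ell\|_{L^p}<1/j$;
\item[(b)] $F_{\Lambda_j,p,\gamma}(\widehat v_{k_j};Q_0)\le C^{\rm cell}+C\rho_j+1/j$, where $\Lambda_j:=(1+\eta_{k_j})\lambda_{k_j}$;
\item[(c)] $\Lambda_j<1/j$;
\item[(d)] $\Lambda_j^{-p/\gamma}\le\rho_j/j$.
\end{itemize}
Condition (d) is possible because $-p/\gamma>0$, so $\Lambda^{-p/\gamma}\to0$ as $\Lambda\to0$. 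Set $V_j:=\widehat v_{k_j}$. Then (ii) and (iii) hold, $\Lambda_j\to0$, and $V_j\to\ell$ in $L^p$. Moreover $\varlimsup_j F_{\Lambda_j,p,\gamma}(V_j;Q_0)\le C^{\rm cell}$, while $\varliminf_j\ge C^{\rm cell}$ by definition of $m_{p,\gamma}(\ell)$; together these give \eqref{xiuzhu}.

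The main obstacle is controlling the cutoff error for fixed $\rho$ as $k\to\infty$. The factor $\rho^{-p}/\eta_k^p$ is harmless only because $\rho$ is frozen while $k\to\infty$, and the tuning of $\eta_k$ against $\|v_k-\ell\|$ handles it. Condition (iii) then comes for free from the diagonal choice, with no need to relate $\Lambda_j$ to $\rho_j$ inside the energy estimate.
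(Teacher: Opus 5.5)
Your proposal is correct and follows essentially the paper's own argument. You start from the optimal sequence of Lemma~\ref{lem-cellbds}(ii), glue it to $\ell$ in a boundary layer of width comparable to $\rho_j$ using Lemma~\ref{lem-cutoff}(i)--(ii), extract a diagonal subsequence, and get the lower bound from the definition of $m_{p,\gamma}(\ell)$. The differences are cosmetic. The paper fixes $\alpha_j\to1$ depending on the outer index $j$ and cuts off in the layer $3\rho_j\le\operatorname{dist}(z,\partial Q_0)\le4\rho_j$. You instead let $\alpha=(1+\eta_k)^{-1}$ depend on the inner index, with $\eta_k$ tuned to $\|v_k-\ell\|_{L^p(Q_0)}$, which is exactly the device the paper uses in Lemma~\ref{lem-nearmin}.
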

\begin{proof}
Let $\{\widetilde\Lambda_n\}_{n\in\mathbb N}\subset(0,\infty)$ and $\{v_n\}_{n\in\mathbb N}\subset L^p(Q_0)$
be as in Lemma \ref{lem-cellbds}(ii). Define the $1$-Lipschitz function $\theta:\mathbb R\to[0,1]$ by
setting, for any $t\in\mathbb R$,
\begin{align*}
\theta(t):=
\begin{cases}
0,&t\in(-\infty,3],\\
t-3,&t\in(3,4),\\
1,&t\in[4,\infty),
\end{cases}
\end{align*}
and, for any $z\in Q_0$, let
\begin{align*}
\eta_j(z):=\theta\left(\frac{\operatorname{dist}(z,\partial Q_0)}{\rho_j}\right)=
\begin{cases}
0,&\operatorname{dist}(z,\partial Q_0)\leq3\rho_j,\\
\displaystyle\frac{\operatorname{dist}(z,\partial Q_0)}{\rho_j}-3,
&3\rho_j<\operatorname{dist}(z,\partial Q_0)<4\rho_j,\\
1,&\operatorname{dist}(z,\partial Q_0)\geq4\rho_j.
\end{cases}
\end{align*}
Then, for any $z,w\in Q_0$,
\begin{align*}
|\eta_j(z)-\eta_j(w)|
&=\left|\theta\left(\frac{\operatorname{dist}(z,\partial Q_0)}{\rho_j}\right)
-\theta\left(\frac{\operatorname{dist}(w,\partial Q_0)}{\rho_j}\right)\right|\\
&\leq\left|\frac{\operatorname{dist}(z,\partial Q_0)}{\rho_j}
-\frac{\operatorname{dist}(w,\partial Q_0)}{\rho_j}\right|
\leq\frac{|z-w|}{\rho_j}.
\end{align*}
Thus, $\eta_j$ is a Lipschitz function on $Q_0$ with Lipschitz constant at most $\frac1{\rho_j}$.
For any $j\in\mathbb N$, let
\begin{align*}
E_j:=\left\{z\in Q_0:\eta_j(z)<1\right\}.
\end{align*}
Then $E_j$ is contained in $\{z\in Q_0:\operatorname{dist}(z,\partial Q_0)<4\rho_j\}$.
Thus, $|E_j|\leq C_N\rho_j$.

Now, for any $j,n\in\mathbb N$, let $V_{j,n}:=\eta_jv_n+(1-\eta_j)\ell$
and $\Lambda_{j,n}:=\frac{\widetilde\Lambda_n}{\alpha_j}$,
where $\{\alpha_j\}_{j\in\mathbb N}\subset(0,1)$ satisfies $\lim_{j\to\infty}\alpha_j=1$.
Applying (i) and (ii) of Lemma \ref{lem-cutoff}, we find that, for any $j,n\in\mathbb N$,
\begin{align}\label{xiuneng}
F_{\Lambda_{j,n},p,\gamma}\left(V_{j,n};Q_0\right)
&\leq\frac{F_{\alpha_j\Lambda_{j,n},p,\gamma}(v_n;Q_0)}{\alpha_j^p}
+\frac{F_{\alpha_j\Lambda_{j,n},p,\gamma}(\ell;E_j,Q_0)}{\alpha_j^p}
+\frac{\mathcal H^{N-1}\left(\mathbb S^{N-1}\right)\left\|v_n-\ell\right\|_{L^p(Q_0)}^p}
{|\gamma|(1-\alpha_j)^p\rho_j^p}\notag\\
&\leq\frac{F_{\widetilde\Lambda_n,p,\gamma}(v_n;Q_0)}{\alpha_j^p}
+\frac{C_{N,\gamma}\rho_j}{\alpha_j^p}
+\frac{\mathcal H^{N-1}\left(\mathbb S^{N-1}\right)\left\|v_n-\ell\right\|_{L^p(Q_0)}^p}
{|\gamma|(1-\alpha_j)^p\rho_j^p}.
\end{align}
From Lemma \ref{lem-cellbds}(ii), we infer that
$\lim_{n\to\infty}v_n=\ell$ in $L^p(Q_0)$ and $\lim_{n\to\infty}\widetilde{\Lambda_n}=0.$ Thus,
 for any given $j\in\mathbb N$, $\lim_{n\to\infty}V_{j,n}=\ell$ in $L^p(Q_0)$,
$\lim_{n\to\infty}\Lambda_{j,n}=0$, and
$\lim_{n\to\infty}\frac{\Lambda_{j,n}^{-\frac{p}{\gamma}}}{\rho_j}=0.$
This, combined with \eqref{xiuneng} and the
definition of $C_{N,p,\gamma}^{\mathrm{cell}},$
 implies that, for any given $j\in\mathbb N$, there exists
$n(j)\in\mathbb N$ such that
\begin{align*}
\left\|V_{j,n(j)}-\ell\right\|_{L^p(Q_0)}\leq\frac1j,\ \
\Lambda_{j,n(j)}\leq\frac1j,\ \
\frac{\Lambda_{j,n(j)}^{-\frac{p}{\gamma}}}{\rho_j}\leq\frac1j,
\end{align*}
and
\begin{align*}
F_{\Lambda_{j,n(j)},p,\gamma}\left(V_{j,n(j)};Q_0\right)
\leq\frac{C_{N,p,\gamma}^{\mathrm{cell}}+\frac1j}{\alpha_j^p}
+\frac{C_{N,\gamma}\rho_j}{\alpha_j^p}+\frac1j.
\end{align*}
Finally, for any $j\in\mathbb N$, let
$\Lambda_j:=\Lambda_{j,n(j)}$ and $V_j:=V_{j,n(j)}$.
Letting $j\to\infty$, we then conclude that
\begin{align*}
\varlimsup_{j\to\infty}F_{\Lambda_j,p,\gamma}\left(V_j;Q_0\right)
\leq C_{N,p,\gamma}^{\mathrm{cell}}.
\end{align*}
On the other hand, since $C_{N,p,\gamma}^{\mathrm{cell}}=m_{p,\gamma}(\ell)$, it follows that
\begin{align*}
C_{N,p,\gamma}^{\mathrm{cell}}
\leq\varliminf_{j\to\infty}F_{\Lambda_j,p,\gamma}\left(V_j;Q_0\right)
\end{align*}
and hence \eqref{xiuzhu} holds. The other assertions of the present
proposition follow from the construction and the choice of $n(j)$. This completes the proof
of Proposition \ref{prop-repair}.
\end{proof}

The construction of the recovery sequence follows that in \cite[Proposition~3.7]{lyyyz26},
whereas the interactions between distinct cubes require different estimates because $\gamma\in(-\infty,0)$.
Now, we give the proof of Proposition \ref{prop-pa}.

\begin{proof}[Proof of Proposition \ref{prop-pa}]
Let $\{P_i\}_{i=1}^s$, $\{\xi_i\}_{i=1}^s$, and
$\{b_i\}_{i=1}^s$ be as in Definition \ref{def-pa}.
For any $i\in\{1,\ldots,s\}$, let
$D_i:=\Omega\cap \operatorname{int}(P_i).$
Since the interiors of the polyhedra are pairwise disjoint and
$U\subset\bigcup_{i=1}^sP_i$, it follows that
\begin{align}\label{papart}
\left|\Omega\setminus\left(\bigcup_{i=1}^sD_i\right)\right|=0.
\end{align}
We show the present proposition under the additional assumption
\begin{align}\label{feiling}
 a_*:=\min_{1\leq i\leq s}|\xi_i|>0.
\end{align}
This assumption can be removed by the same perturbation and diagonal argument as in the
proof of \cite[Proposition~3.7]{lyyyz26}; we omit the detail here.
Let $\{\Lambda_j\}_{j\in\mathbb{N}}\subset (0,\infty),$
$\{V_j\}_{j\in\mathbb{N}}\subset L^p(Q_0)$, and $\{\rho_j\}_{j\in\mathbb{N}}\subset (0,\infty)$
be as in Proposition \ref{prop-repair}.
By the fact that $\lim_{j\to\infty}\lambda_j=0$,  we find that  there exists  a sequence
$\{k_j\}_{j\in\mathbb{N}}\subset \mathbb{N}$ such that
\begin{align}\label{paidx}
\lim_{j\to\infty}k_j=\infty\quad\text{and}\quad
\lim_{j\to\infty}\frac{\lambda_j}{\Lambda_{k_j}}=0.
\end{align}
For any $j\in\mathbb N$, let $\widehat\Lambda_j:=\Lambda_{k_j}$,
$\widehat V_j:=V_{k_j}$, and $\widehat\rho_j:=\rho_{k_j}$.
Since $\lim_{j\to\infty}\widehat{\rho}_j=0,$
after discarding finitely many indices if necessary,
we may assume that, for any $j\in\mathbb{N},$
$0<\widehat\rho_j<\frac{1}{2}$. Let  $i\in\{1,\ldots,s\}$ and $j\in\mathbb{N}.$ Define
\begin{align}\label{pah}
h_{j,i}:=\left(\frac{|\xi_i|\widehat\Lambda_j}{\lambda_j}\right)^{\frac{p}{\gamma}}.
\end{align}
From \eqref{paidx}, we deduce that
\begin{align}\label{pahzero}
\lim_{j\to\infty}\max_{1\leq i\leq s}h_{j,i}=0.
\end{align}
Choose  matrix $R_i\in\mathcal O(N)$ such that $R_ie_N=\frac{\xi_i}{|\xi_i|},$ where
$i\in\{1,\ldots, s\}$ and $e_N:=(0,\ldots,0,1)$. For any $j\in\mathbb{N}$,
$i\in\{1,\ldots,s\}$, and $k\in\mathbb Z^N$, define
\begin{align}\label{shui}
Q_{j,i,k}:=h_{j,i}R_i(k+Q_0)=h_{j,i}R_ik+h_{j,i}R_iQ_0.
\end{align}
Let $\mathscr Q_{j,i}$ denote the finite family of those cubes whose
closures are contained in $D_i$. Let $Q\in\mathscr Q_{j,i}$. By \eqref{shui}, we conclude that
$Q=x_Q+h_{j,i}R_iQ_0$, where $x_Q$ is the center of $Q$.
Thus, for any $x\in Q$, there exists a unique $z\in Q_0$ such that
$x=x_Q+h_{j,i}R_iz$. In what follows, for $Q\in\mathscr Q_{j,i}$ and $x\in Q$, the
corresponding point $z\in Q_0$ is always understood to be given by
$z=R_i^{-1}\frac{x-x_Q}{h_{j,i}}$ or, equivalently, $x=x_Q+h_{j,i}R_iz$.
Recall that, for any $z\in\mathbb{R}^N,$ $\ell(z):=z_N.$
From Definition \ref{def-pa} and the definition of $R_i$, we infer that, for any $x\in Q$,
\begin{align*}
w(x)&=w(x_Q)+\xi_i\cdot(x-x_Q)=w(x_Q)+h_{j,i}\xi_i\cdot R_i z
=w(x_Q)+h_{j,i}(R_i^{-1}\xi_i)\cdot z\notag\\
&=w(x_Q)+|\xi_i|h_{j,i}e_N\cdot z=w(x_Q)+|\xi_i|h_{j,i}\ell(z).
\end{align*}
Now define $w_j$ as follows.  For any $i\in\{1,\ldots,s\}$, $Q\in\mathscr Q_{j,i}$,
and $x\in Q$, let
\begin{align}\label{padef}
w_j(x)&:=w(x_Q)+|\xi_i|h_{j,i}\widehat V_j
\left(R_i^{-1}\frac{x-x_Q}{h_{j,i}}\right)=w(x_Q)+|\xi_i|h_{j,i}\widehat V_j(z),
\end{align}
and, for any $x\in\Omega\setminus\left(\bigcup_{i=1}^s\bigcup_{Q\in\mathscr Q_{j,i}}Q\right),$
let $w_j(x)=w(x)$. Let $n_{j,i}:=\#\mathscr Q_{j,i},$ which denotes the number of cubes in the finite
family $\mathscr Q_{j,i}$. Since $\mathscr Q_{j,i}$ consists of cubes of edge length $h_{j,i}$
whose closures are contained in $D_i$, it follows that, for any $i\in\{1,\ldots,s\}$,
\begin{align*}
D_i\setminus\left(\bigcup_{Q\in\mathscr Q_{j,i}}\overline{Q}\right)\subset\left\{
x\in D_i:\operatorname{dist}(x,\mathbb R^N\setminus D_i)\leq\sqrt N\,h_{j,i}\right\}.
\end{align*}
This, together with \eqref{pahzero}, implies that
\begin{align}\label{pafill}
\lim_{j\to\infty}\left|D_i\setminus\left(\bigcup_{Q\in\mathscr Q_{j,i}}Q\right)\right|=0
\end{align}
and hence
\begin{align}\label{pavol}
\lim_{j\to\infty}n_{j,i}h_{j,i}^N=|D_i|.
\end{align}
By this,  the change-of-variables formula, \eqref{padef}, \eqref{pahzero},
and the fact that $\lim_{j\to\infty}\widehat{V}_j=\ell$ in $L^p(Q_0),$ we conclude that
\begin{align*}
\|w_j-w\|_{L^p(\Omega)}^p&=\sum_{i=1}^s\sum_{Q\in\mathscr Q_{j,i}}
|\xi_i|^ph_{j,i}^{N+p}\left\|\widehat V_j-\ell\right\|_{L^p(Q_0)}^p\\
&=\sum_{i=1}^s|\xi_i|^pn_{j,i}h_{j,i}^{N+p}\left\|\widehat V_j-\ell\right\|_{L^p(Q_0)}^p\to0
\end{align*}
as $j\to\infty$. Thus,  to prove the  present proposition under the assumption \eqref{feiling},
it remains to show \eqref{fenshang}.

For any $j\in\mathbb N$, define $A_j:=\bigcup_{i=1}^s\bigcup_{Q\in\mathscr Q_{j,i}}(Q\times Q)$
and
\begin{align*}
R_j:=\lambda_j^p
\underset{|w_j(x)-w_j(y)|>\lambda_j|x-y|^{1+\frac{\gamma}{p}}}
{\iint_{(\Omega\times\Omega)\setminus A_j}}
|x-y|^{\gamma-N}\,dy\,dx.
\end{align*}
Since the cubes are pairwise disjoint, it follows that, for any $j\in\mathbb{N},$
\begin{align}\label{haq}
F_{\lambda_j,p,\gamma}\left(w_j;Q\right)=\sum_{i=1}^s\sum_{Q\in\mathscr Q_{j,i}}
F_{\lambda_j,p,\gamma}\left(w_j;Q\right)+R_j.
\end{align}
From Lemma \ref{lem-scaling} and \eqref{padef}, we deduce that,  for any  $i\in\{1,\ldots,s\}$,
$j\in\mathbb N$, and $Q\in\mathscr Q_{j,i}$,
\begin{align*}
F_{\lambda_j,p,\gamma}\left(w_j;Q\right)=|\xi_i|^ph_{j,i}^N
F_{\widehat\Lambda_j,p,\gamma}\left(\widehat V_j;Q_0\right).
\end{align*}
By this, \eqref{pavol}, and Proposition \ref{prop-repair}, we obtain
\begin{align*}
\lim_{j\to\infty}\sum_{i=1}^s\sum_{Q\in\mathscr Q_{j,i}}
F_{\lambda_j,p,\gamma}\left(w_j;Q\right)
&=\lim_{j\to\infty}F_{\widehat\Lambda_j,p,\gamma}\left(\widehat V_j;Q_0\right)
\sum_{i=1}^s|\xi_i|^pn_{j,i}h_{j,i}^N\notag\\
&=C_{N,p,\gamma}^{\rm cell}\sum_{i=1}^s|\xi_i|^p|D_i|
=C_{N,p,\gamma}^{\rm cell}\int_\Omega|\nabla w|^p\,dx.
\end{align*}
This, combined with \eqref{haq}, implies that, to prove \eqref{fenshang}, it suffices to show that
$\lim_{j\to\infty}R_j=0.$ From \eqref{pah}, Proposition \ref{prop-repair}(iii), and \eqref{feiling},
we infer that, for any $i\in\{1,\ldots,s\}$,
\begin{align*}
\lim_{j\to\infty}\frac{\lambda_j^{-\frac{p}{\gamma}}}{\widehat\rho_jh_{j,i}}
=\lim_{j\to\infty}|\xi_i|^{-\frac{p}{\gamma}}
\frac{\widehat\Lambda_j^{-\frac{p}{\gamma}}}{\widehat\rho_j}=0.
\end{align*}
Thus, we can choose $\{d_j\}_{j\in\mathbb{N}}\subset (0,\infty)$ such that,
for any $i\in\{1,\ldots,s\}$ and $j\in\mathbb{N},$
\begin{align}\label{rt}
d_j<  \widehat\rho_jh_{j,i}
\end{align}
and
\begin{align}\label{lo}
\lim_{j\to\infty}\frac{\lambda_j^{-\frac{p}{\gamma}}}{d_j}=0
\end{align}
On the other hand, by Proposition \ref{prop-repair}(iii),
we find that $\lim_{j\to\infty}\widehat{\rho_j}=0$. This, together with \eqref{rt}, implies that
$\lim_{j\to\infty}\frac{d_j}{h_{j,i}}=0.$
Let $j\in\mathbb{N}$. Define
\begin{align*}
R_j^{\rm near}:=\lambda_j^p\underset{\substack{|x-y|<d_j\\
|w_j(x)-w_j(y)|>\lambda_j|x-y|^{1+\frac{\gamma}{p}}}}
{\iint_{(\Omega\times\Omega)\setminus A_j}}|x-y|^{\gamma-N}\,dy\,dx
\end{align*}
and
\begin{align*}
R_j^{\rm far}:=\lambda_j^p\underset{\substack{|x-y|\geq d_j\\
|w_j(x)-w_j(y)|>\lambda_j|x-y|^{1+\frac{\gamma}{p}}}}
{\iint_{(\Omega\times\Omega)\setminus A_j}}
|x-y|^{\gamma-N}\,dy\,dx.
\end{align*}
Then $R_j=R_j^{\rm near}+R_j^{\rm far}$ and
\begin{align*}
R_j^{\rm far}&\leq\lambda_j^p\underset{|x-y|\geq d_j}
{\int_\Omega\int_\Omega}|x-y|^{\gamma-N}\,dy\,dx
\leq\lambda_j^p|\Omega|\int_{|h|\geq d_j}|h|^{\gamma-N}\,dh
=\frac{\mathcal H^{N-1}(\mathbb S^{N-1})}{|\gamma|}|\Omega|\lambda_j^pd_j^{\gamma}.
\end{align*}
From this and \eqref{lo}, it follows that
$\lim_{j\to\infty} R_j^{\rm far}=0.$

Next, we estimate $R_j^{\rm near}$.
For any $i\in\{1,\ldots,s\}$ and $Q\in\mathscr Q_{j,i}$, define
\begin{align*}
Q^{\rm in}:=\left\{x\in Q:\operatorname{dist}(x,\partial Q)>d_j\right\}
\end{align*}
and
\begin{align*}
E_j:=\Omega\setminus\left(\bigcup_{i=1}^s
\bigcup_{Q\in\mathscr Q_{j,i}}Q^{\rm in}\right)=\bigcap_{i=1}^{s}\bigcap_{Q\in\mathscr Q_{j,i}}
\left(\Omega\setminus Q^{\rm in}\right).
\end{align*}
Then
\begin{align}\label{jindui}
\left\{(x,y)\in(\Omega\times\Omega)\setminus A_j:
|x-y|<d_j\right\}\subset E_j\times E_j.
\end{align}
By \eqref{rt}, we conclude that, for any $x\in Q\setminus Q^{\rm in}$ with $x=x_Q+h_{j,i}R_iz$,
\begin{align*}
\operatorname{dist}(z,\partial Q_0)=\frac{\operatorname{dist}(x,\partial Q)}{h_{j,i}}
\leq\frac{d_j}{h_{j,i}}<\widehat\rho_j.
\end{align*}
This, combined with  \eqref{xiubian} and \eqref{padef}, imply that, for any $x\in Q\setminus Q^{\rm in}$,
\begin{align*}
w_j(x)=w(x_Q)+|\xi_i|h_{j,i}\widehat V_j(z)=w(x_Q)+|\xi_i|h_{j,i}\ell(z)=w(x).
\end{align*}
On the other hand, from the definitions of $w_j$ and $w$, we infer that
$w_j=w$ on $\Omega\setminus\bigcup_{i=1}^s\bigcup_{Q\in\mathscr Q_{j,i}}Q$,
which, together with the fact that
\begin{align}\label{youshang}
E_j=\left[\Omega\setminus\bigcup_{i=1}^s
\bigcup_{Q\in\mathscr Q_{j,i}}Q\right]
\cup\left[\bigcup_{i=1}^s\bigcup_{Q\in\mathscr Q_{j,i}}
\left(Q\setminus Q^{\rm in}\right)\right],
\end{align}
further implies that $w_j=w$ on $E_j$. Using this, \eqref{jindui},
and \eqref{kong}, we find that
\begin{align*}
&\left\{(x,y)\in(\Omega\times\Omega)\setminus A_j:|x-y|<d_j,\
|w_j(x)-w_j(y)|>\lambda_j|x-y|^{1+\frac{\gamma}{p}}\right\}\\
&\quad\subset\left\{(x,y)\in E_j\times E_j:
\lambda_j|x-y|^{1+\frac{\gamma}{p}}<|w(x)-w(y)|\leq L|x-y|\right\}\\
&\quad\subset\left\{(x,y)\in E_j\times E_j:
|x-y|>\left(\frac{\lambda_j}{L}\right)^{-\frac{p}{\gamma}}\right\},
\end{align*}
where we  used $\gamma\in(-\infty,0)$ in the last step. Thus,
\begin{align}\label{sad}
R_j^{\rm near}\leq\lambda_j^p
\underset{|x-y|>\left(\frac{\lambda_j}{L}\right)^{-\frac{p}{\gamma}}}
{\int_{E_j}\int_{E_j}}|x-y|^{\gamma-N}\,dy\,dx
\leq\frac{\mathcal H^{N-1}(\mathbb S^{N-1})}{|\gamma|}L^p|E_j|.
\end{align}
Using \eqref{youshang}, \eqref{pafill}, and \eqref{papart}, we obtain
\begin{align*}
|E_j|&=\left|\Omega\setminus\bigcup_{i=1}^s\bigcup_{Q\in\mathscr Q_{j,i}}Q\right|
+\sum_{i=1}^s\sum_{Q\in\mathscr Q_{j,i}}|Q\setminus Q^{\rm in}|\notag\\
&=\left|\Omega\setminus\bigcup_{i=1}^s
\bigcup_{Q\in\mathscr Q_{j,i}}Q\right|
+\sum_{i=1}^s n_{j,i}h_{j,i}^N
\left[1-\left(1-\frac{2d_j}{h_{j,i}}\right)^N\right]\to 0
\end{align*}
as $j\to\infty.$ From this and \eqref{sad}, it follows that
$\lim_{j\to\infty}  R_j^{\rm near}=0.$
This completes the proof of Proposition \ref{prop-pa}.
\end{proof}

\subsection{Proof of Theorem \ref{thm-limsup}}
The following standard density result follows from \cite[Proposition~2.8]{et99} (see also
\cite[Theorem~1]{vs14}).

\begin{lemma}\label{lem-padensity}
Let $N\in\mathbb N$, $p\in [1,\infty)$, $\Omega\subset\mathbb R^N$ be a bounded open interval
when $N=1$ or a bounded Lipschitz domain when $N\ge2$, and $u\in W^{1,p}(\Omega)$.  Then there exist a bounded
convex open set $U$ containing $\overline\Omega$ and a sequence
$\{u_m\}_{m\in\mathbb N}\subset C(U)$ of continuous finite
piecewise-affine functions such that $\lim_{m\to\infty}u_m= u$ in $W^{1,p}(\Omega).$
\end{lemma}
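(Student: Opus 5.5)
The plan is to reduce to smooth functions on a neighbourhood of $\overline\Omega$ via an extension operator, and then approximate smooth functions by their piecewise affine interpolants on fine simplicial meshes.

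First fix $U$ to be an open cube (hence bounded and convex) with $\overline\Omega\subset U$. Since $\Omega$ is a bounded open interval when $N=1$ (use even reflection across the endpoints) or a bounded Lipschitz domain when $N\ge2$ (use Stein's extension operator), there is an extension $Eu\in W^{1,p}(\mathbb R^N)$ with $Eu=u$ almost everywhere in $\Omega$. Multiply $Eu$ by a cutoff $\chi\in C_{\rm c}^\infty(U)$ with $\chi\equiv1$ near $\overline\Omega$; the product still extends $u$ and lies in $W^{1,p}$.

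Next, mollify: $v_\varepsilon:=(\chi Eu)*\varphi_\varepsilon$ belongs to $C^\infty(\mathbb R^N)$, is supported in $U$ for $\varepsilon$ small, and satisfies $v_\varepsilon\to\chi Eu$ in $W^{1,p}(\mathbb R^N)$ since $p<\infty$. In particular $\|v_\varepsilon-u\|_{W^{1,p}(\Omega)}\to0$. Therefore, by a diagonal argument, it suffices to approximate a single function $v\in C_{\rm c}^\infty(U)$ in $W^{1,p}(U)$ by continuous finite piecewise affine functions on $U$.

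For this, triangulate $\overline U$ by a standard subdivision. Cut $\overline U$ into subcubes of side $h$, and split each subcube into $N!$ simplices via the Kuhn (Freudenthal) triangulation. This gives finitely many closed simplices, which are closed convex polyhedra, with pairwise disjoint interiors, covering $U$, and they form a conforming mesh. Let $v_h$ be the nodal interpolant, that is, the function affine on each simplex and equal to $v$ at the vertices. Conformity of the mesh makes $v_h$ continuous, so $v_h$ is continuous finite piecewise affine in the sense of Definition~\ref{def-pa}.

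The key estimate is the elementary interpolation bound on each simplex $T$:
\begin{align*}
\|v-v_h\|_{L^\infty(T)}\le C_Nh^2\|D^2v\|_{L^\infty}
\quad\text{and}\quad
\|\nabla v-\nabla v_h\|_{L^\infty(T)}\le C_Nh\|D^2v\|_{L^\infty}.
\end{align*}
This holds with $C_N$ uniform because, up to translation and scaling by $h$, all Kuhn simplices are congruent to finitely many reference shapes, so shape regularity is automatic. Integrating over the bounded set $U$ gives $\|v-v_h\|_{W^{1,p}(U)}\lesssim h\|D^2v\|_{L^\infty}|U|^{1/p}\to0$ as $h\to0$. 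Taking $u_m:=v_{h_m}$ with $v:=v_{\varepsilon_m}$, for suitable $\varepsilon_m,h_m\to0$, then gives the required sequence.

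I expect the main obstacle to be the gradient interpolation bound, specifically its uniformity in $h$ (shape regularity). I would handle it by the mean value theorem along the edges of each simplex. Each edge difference quotient of $v_h$ equals a directional derivative of $v$ at some point of the edge. Since the edge directions of a Kuhn simplex form a basis with uniformly bounded inverse, this yields $|\nabla v_h-\nabla v(x)|\lesssim h\|D^2v\|_\infty$ on $T$. The $L^\infty$ bound then follows by integrating the gradient bound from a vertex.
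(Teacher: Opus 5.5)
Your proof is correct. The paper gives no argument of its own here and only quotes the density result from the literature, and your scheme (extension, cutoff and mollification, then nodal interpolation on the Kuhn triangulation with the uniform $O(h)$ gradient bound) is the standard proof behind that cited result. The only cosmetic point is that for $N=1$ a single reflection across each endpoint extends $u$ only to a larger interval rather than to $\mathbb R$. This is harmless once you take the cutoff $\chi$ supported in that interval, or you can simply extend the absolutely continuous representative of $u$ by constants.
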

The following standard strict approximation result is precisely \cite[Theorem~5.3]{eg15}.
\begin{lemma}\label{lem-strictapp}
Let $N\in\mathbb N$, $\Omega\subset \mathbb{R}^N$ be an open set, and $u\in BV(\Omega)$.
Then there  exists a sequence $\{u_m\}_{m\in\mathbb{N}}\subset W^{1,1}(\Omega)\cap C^\infty(\Omega)$ such that
$\lim_{m\to\infty}u_m=u$ in $L^1(\Omega)$ and
$$\lim_{m\to\infty}\int_{\Omega}|\nabla u_m(x)|\,dx=|Du|(\Omega).$$
\end{lemma}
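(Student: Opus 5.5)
The plan is the Meyers--Serrin type construction (as in \cite[Theorem~5.3]{eg15}): mollify $u$ locally with a partition of unity adapted to an exhaustion of $\Omega$, and compare total variations by duality. Fix $\varepsilon\in(0,\infty)$ and a standard mollifier $\eta$.

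\textbf{Step 1: exhaustion and partition of unity.} Since $|Du|$ is a finite measure, choose $m\in\mathbb N$ so that $|Du|(\Omega\setminus\Omega_1)<\varepsilon$, where
\begin{align*}
\Omega_k:=\left\{x\in\Omega:\operatorname{dist}(x,\partial\Omega)>\frac{1}{m+k}\right\}\cap B(\mathbf 0,k+m).
\end{align*}
Set $\Omega_0:=\varnothing$ and $V_k:=\Omega_{k+1}\setminus\overline{\Omega_{k-1}}$. Take a smooth partition of unity $\{\zeta_k\}_{k\in\mathbb N}$ with $\operatorname{supp}\zeta_k\subset V_k$ and $\sum_k\zeta_k\equiv1$ on $\Omega$. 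Consequently $\sum_k\nabla\zeta_k\equiv0$, and every point of $\Omega$ lies in at most three of the sets $V_k$.

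\textbf{Step 2: local mollification.} For each $k$ pick $\varepsilon_k>0$ so small that $\eta_{\varepsilon_k}*(u\zeta_k)$ is supported in $V_k$ and
\begin{align*}
\|\eta_{\varepsilon_k}*(u\zeta_k)-u\zeta_k\|_{L^1}<\varepsilon2^{-k},\qquad
\|\eta_{\varepsilon_k}*(u\nabla\zeta_k)-u\nabla\zeta_k\|_{L^1}<\varepsilon2^{-k}.
\end{align*}
Define $u_\varepsilon:=\sum_k\eta_{\varepsilon_k}*(u\zeta_k)$. The sum is locally finite, so $u_\varepsilon\in C^\infty(\Omega)$, and $\|u_\varepsilon-u\|_{L^1(\Omega)}<\varepsilon$.

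\textbf{Step 3: upper bound on the variation (the main step).} Fix $\varphi\in C^1_{\rm c}(\Omega;\mathbb R^N)$ with $|\varphi|\le1$. Using the symmetry of $\eta$ and $\sum_k\nabla\zeta_k=0$, write
\begin{align*}
\int_\Omega u_\varepsilon\operatorname{div}\varphi\,dx
=\sum_k\int_\Omega u\operatorname{div}\left(\zeta_k(\eta_{\varepsilon_k}*\varphi)\right)dx
-\sum_k\int_\Omega\varphi\cdot\left[\eta_{\varepsilon_k}*(u\nabla\zeta_k)-u\nabla\zeta_k\right]dx.
\end{align*}
\begin{itemize}
\item The second sum is bounded in absolute value by $\varepsilon$, by the choice of $\varepsilon_k$ in Step 2.
\item In the first sum, each test field $\zeta_k(\eta_{\varepsilon_k}*\varphi)$ has modulus at most $1$ and is supported in $V_k$. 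The $k=1$ term is therefore at most $|Du|(\Omega)$.
\item The terms with $k\ge2$ are at most $\sum_{k\ge2}|Du|(V_k)\le3|Du|(\Omega\setminus\Omega_1)<3\varepsilon$, by the bounded overlap of the $V_k$.
\end{itemize}
Taking the supremum over $\varphi$ gives
\begin{align*}
\int_\Omega|\nabla u_\varepsilon|\,dx\le|Du|(\Omega)+4\varepsilon.
\end{align*}
In particular $u_\varepsilon\in W^{1,1}(\Omega)\cap C^\infty(\Omega)$.

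\textbf{Step 4: lower bound and conclusion.} The lower bound comes from lower semicontinuity of the total variation under $L^1$ convergence:
\begin{align*}
\varliminf_{\varepsilon\to0}\int_\Omega|\nabla u_\varepsilon|\,dx\ge|Du|(\Omega).
\end{align*}
Taking $\varepsilon=1/m'$ for $m'\in\mathbb N$, the functions $u_{1/m'}$ converge to $u$ in $L^1(\Omega)$ by Step 2, and by Steps 3 and 4 their total variations converge to $|Du|(\Omega)$. This is the required sequence.

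The main obstacle is the identity in Step 3. It relies on the cancellation $\sum_k\nabla\zeta_k=0$ to turn the commutator terms into small errors, and on the bounded overlap of the $V_k$ to confine the remaining mass to $\Omega\setminus\Omega_1$.
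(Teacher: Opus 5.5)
Your proposal is correct, and it is exactly the standard Meyers--Serrin-type argument of \cite[Theorem~5.3]{eg15}, which the paper invokes without reproducing a proof. Its ingredients match that proof one for one: local mollification via a partition of unity, the cancellation $\sum_k\nabla\zeta_k=0$ that reduces the commutator terms to an error of size $\varepsilon$, the bounded-overlap estimate $\sum_{k\ge2}|Du|(V_k)\le3|Du|(\Omega\setminus\Omega_1)$, and lower semicontinuity of the total variation for the reverse inequality.
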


As in the proofs of \cite[Theorems~3.1 and~3.2]{lyyyz26}, Theorem \ref{thm-limsup} follows
from Proposition \ref{prop-pa} by the density and diagonal arguments. For the convenience of the reader,
we provide the details here.

\begin{proof}[Proof of Theorem \ref{thm-limsup}]
We first show (i). Let  $u\in W^{1,p}(\Omega)$ and let
$\{u_m\}_{m\in\mathbb{N}}$ be as in Lemma \ref{lem-padensity}.
By Proposition \ref{prop-pa}, we  conclude that, for any given
$m\in\mathbb{N}$, there exists a sequence $\{u_j^{(m)}\}_{j\in\mathbb{N}}\subset L^p(\Omega)$ such that
$u_j^{(m)}\to u_m$ in $L^p(\Omega)$ as $j\to\infty$ and
\begin{align*}
\varlimsup_{j\to\infty} F_{\lambda_j,p,\gamma}\left(u_j^{(m)};\Omega\right)\leq
C_{N,p,\gamma}^{\mathrm{cell}}\int_\Omega|\nabla u_m|^p\,dx.
\end{align*}
Thus, for any given $m\in\mathbb{N},$ there  exists  $J_m\in\mathbb{N}$ such that, for any $j\geq J_m$,
$\|u_j^{(m)}-u_m\|_{L^p(\Omega)}\leq\frac1m$ and
\begin{align*}
F_{\lambda_j,p,\gamma}\left(u_j^{(m)};\Omega\right)\leq C_{N,p,\gamma}^{\mathrm{cell}}
\int_\Omega\left|\nabla u_m\right|^p\,dx+\frac1m.
\end{align*}
Without loss of generality, we may assume that $\{J_m\}_{m\in\mathbb{N}}$ is strictly increasing.
Let   $j\geq J_1$, and let $m(j)$ be the unique integer such that $J_{m(j)}\leq j<J_{m(j)+1}.$
Define $u_j:=u_j^{(m(j))}.$ Since $\{J_m\}_{m\in\mathbb{N}}$ is strictly increasing,
it follows that  $\lim_{j\to\infty}m(j)=\infty.$ Then we have
\begin{align*}
\|u_j-u\|_{L^p(\Omega)}\leq
\left\|u_j^{(m(j))}-u_{m(j)}\right\|_{L^p(\Omega)}
+\left\|u_{m(j)}-u\right\|_{L^p(\Omega)}\leq\frac1{m(j)}+\left\|u_{m(j)}-u\right\|_{L^p(\Omega)}
\end{align*}
and
\begin{align*}
F_{\lambda_j,p,\gamma}\left(u_j;\Omega\right)\leq C_{N,p,\gamma}^{\mathrm{cell}}
\int_\Omega\left|\nabla u_{m(j)}\right|^p\,dx+\frac1{m(j)}.
\end{align*}
Letting $j\to\infty$ in the above estimates, we find that $\lim_{j\to\infty}u_j=u$ in $L^p(\Omega)$ and the
desired inequality holds, which proves (i).

We now prove (ii). Let $u\in BV(\Omega)$, and let $\{u_m\}_{m\in\mathbb N}$ be as in
Lemma \ref{lem-strictapp}. Then $u_m\to u$ in $L^1(\Omega)$ as $m\to\infty$ and
\begin{align*}
\lim_{m\to\infty}\int_\Omega|\nabla u_m|\,dx=|Du|(\Omega).
\end{align*}
Applying (i) with $p=1$ to each $u_m$ and the given sequence
$\{\lambda_j\}_{j\in\mathbb N}$, we conclude that, for any given
$m\in\mathbb N$, there exists a sequence
$\{u_j^{(m)}\}_{j\in\mathbb N}\subset L^1(\Omega)$ such that
$u_j^{(m)}\to u_m$ in $L^1(\Omega)$ and
\begin{align*}
\varlimsup_{j\to\infty}F_{\lambda_j,1,\gamma}\left(u_j^{(m)};\Omega\right)
\leq C_{N,1,\gamma}^{\mathrm{cell}}\int_\Omega|\nabla u_m|\,dx.
\end{align*}
Thus, for any given $m\in\mathbb N$, there exists $J_m\in\mathbb N$
such that, for any $j\geq J_m$,
$\|u_j^{(m)}-u_m\|_{L^1(\Omega)}\leq\frac1m$ and
\begin{align*}
F_{\lambda_j,1,\gamma}\left(u_j^{(m)};\Omega\right)
\leq C_{N,1,\gamma}^{\mathrm{cell}}\int_\Omega|\nabla u_m|\,dx+\frac1m.
\end{align*}
Without loss of generality, we may assume that
$\{J_m\}_{m\in\mathbb N}$ is strictly increasing.
For $j\geq J_1$, let $m(j)$ be the unique integer such that
$J_{m(j)}\leq j<J_{m(j)+1}$ and define $u_j:=u_j^{(m(j))}$.
Since $\{J_m\}_{m\in\mathbb N}$ is strictly increasing, it follows that
$\lim_{j\to\infty}m(j)=\infty$. Then, for any $j\geq J_1$,
\begin{align*}
\|u_j-u\|_{L^1(\Omega)}&\leq\left\|u_j^{(m(j))}-u_{m(j)}\right\|_{L^1(\Omega)}
+\left\|u_{m(j)}-u\right\|_{L^1(\Omega)}\\
&\leq\frac1{m(j)}+\left\|u_{m(j)}-u\right\|_{L^1(\Omega)}
\end{align*}
and
\begin{align*}
F_{\lambda_j,1,\gamma}\left(u_j;\Omega\right)\leq C_{N,1,\gamma}^{\mathrm{cell}}
\int_\Omega|\nabla u_{m(j)}|\,dx+\frac1{m(j)}.
\end{align*}
Letting $j\to\infty$ in the above estimates and using Lemma \ref{lem-strictapp},
we conclude that $u_j\to u$ in $L^1(\Omega)$ as $j\to\infty$ and
\begin{align*}
\varlimsup_{j\to\infty}F_{\lambda_j,1,\gamma}\left(u_j;\Omega\right)
\leq C_{N,1,\gamma}^{\mathrm{cell}}|Du|(\Omega).
\end{align*}
This proves (ii) and completes the proof of Theorem \ref{thm-limsup}.
\end{proof}

\section{Proof of Theorem \ref{thm-domfull}}\label{sec-domain}

In this section, we prove Theorem \ref{thm-domfull}. We need the  following
lemma, which  is a part of \cite[Theorem~2]{bn20na} and \cite[Theorem~1]{bn18}.
\begin{lemma}\label{lem-critsmth}
Let $N\in\mathbb{N}$, let $p\in[1,\infty)$, and let $U\subset\mathbb R^N$ be a bounded open interval when $N=1$ or a bounded smooth domain when $N\geq2$.
Then there exists a positive constant $\kappa_{N,p}$, independent of $U$,
such that the family $F_{\lambda,p,-p}(\cdot;U)$ converges, in the sense of
$\Gamma$-convergence in $L^p(U)$, as $\lambda\to0^+$, to the functional defined by setting, for any $u\in L^p(U)$,
\begin{align*}
u\longmapsto
\begin{cases}
\kappa_{N,p}\displaystyle\int_U|\nabla u|^p\,dx,
&p\in(1,\infty)\ \hbox{and}\ u\in W^{1,p}(U),\\[2mm]
\kappa_{N,1}|Du|(U),
&p=1\ \hbox{and}\ u\in BV(U),\\[1mm]
\infty,&\hbox{otherwise}.
\end{cases}
\end{align*}
\end{lemma}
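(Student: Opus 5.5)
The plan is to identify $F_{\lambda,p,-p}(\cdot;U)$ with the nonlocal functional studied by Brezis and Nguyen and then read off the $\Gamma$-convergence from \cite[Theorem~2]{bn20na} and \cite[Theorem~1]{bn18}, rather than to reprove it. When $\gamma=-p$ we have $1+\frac{\gamma}{p}=0$ and $\gamma-N=-N-p$. Hence, for any measurable $u$ on $U$,
\begin{align*}
F_{\lambda,p,-p}(u;U)=\underset{|u(x)-u(y)|>\lambda}{\int_U\int_U}\frac{\lambda^p}{|x-y|^{N+p}}\,dy\,dx,
\end{align*}
which is exactly the functional $I_\delta(u)$ of Nguyen and Brezis--Nguyen with $\delta:=\lambda$. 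Thus $\lambda\to0^+$ corresponds to $\delta\to0^+$. I will first check that the normalization matches: there is no extra factor $\frac12$, and the set is $\{|u(x)-u(y)|>\delta\}$ rather than $\geq\delta$. Any discrepancy of this kind only rescales the constant, or is harmless because $F_{\lambda}$ is monotone in $\lambda$ and one can replace $\lambda$ by $(1\pm\eta)\lambda$ with $\eta\to0$.

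Next I will quote the two halves of the cited results. For $p\in(1,\infty)$ and a bounded smooth domain $U$ (any bounded open interval is smooth when $N=1$), \cite{bn18} and \cite{bn20na} give a constant $\kappa_{N,p}\in(0,\frac{K_{N,p}}{p})$. This constant is defined through the whole space, or equivalently a cell problem, so it does not depend on $U$. With this constant, $I_\delta$ $\Gamma$-converges to $\kappa_{N,p}\int_U|\nabla u|^p\,dx$ on $W^{1,p}(U)$ and to $\infty$ elsewhere. For $p=1$ they give the same conclusion with $\kappa_{N,1}|Du|(U)$ on $BV(U)$, which includes the statement that a bounded $\varliminf$ forces $u\in BV(U)$.

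The step that needs care is the topology. As I recall, the cited liminf inequality is proved under $L^1(U)$ convergence of the approximating sequence. On a bounded domain, $L^p$ convergence implies $L^1$ convergence, so that version gives our liminf under $L^p$ convergence a fortiori. For the limsup, I will verify that the recovery sequence they construct (by mollification plus the cell construction, or for $W^{1,p}$ data even the constant sequence along suitable subsequences) converges in $L^p(U)$ and not merely in $L^1(U)$. If it is stated only in $L^1$, I would truncate the recovery sequence at levels $\pm M$: by the contraction inequality \eqref{deng}, truncation does not increase $F_{\lambda,p,-p}$, and it yields $L^p$ convergence by dominated convergence, followed by a diagonal argument in $M$. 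I also have to confirm that the cited constant is the same for every admissible $U$. This is the main point to check against the references, since that independence is what later lets one compare $\kappa_{N,p}$ with $C^{\rm cell}_{N,p,-p}$.
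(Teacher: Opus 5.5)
Your proposal takes the same route as the paper, which gives no argument beyond quoting \cite[Theorem~2]{bn20na} and \cite[Theorem~1]{bn18}. Those results apply once one observes that $F_{\lambda,p,-p}(u;U)=\lambda^p\iint_{\{(x,y)\in U\times U:\,|u(x)-u(y)|>\lambda\}}|x-y|^{-N-p}\,dy\,dx$ is the Brezis--Nguyen functional, whose $\Gamma$-limit constant is defined by a cell problem and so does not depend on $U$. Drop the aside that a constant sequence could serve as a recovery sequence: for, say, $u\in C^1(\overline U)$ one has $F_{\lambda,p,-p}(u;U)\to\frac{K_{N,p}}{p}\int_U|\nabla u|^p\,dx$ (compare Lemma~\ref{lem-cellbds}(i)), which is strictly larger than $\kappa_{N,p}\int_U|\nabla u|^p\,dx$ whenever $\nabla u\not\equiv0$, because $\kappa_{N,p}<K_{N,p}/p$ as you yourself note; nothing else in your argument depends on that remark.
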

By Lemma \ref{lem-critsmth} and Theorem \ref{thm-limsup}, we have the
following result.
\begin{proposition}\label{prop-critdom}
Let $N\in\mathbb{N}$, $p\in[1,\infty)$, and $\Omega\subset\mathbb R^N$ be a
bounded open interval when $N=1$ or a bounded Lipschitz domain when $N\geq2$.
Then the family $F_{\lambda,p,-p}(\cdot;\Omega)$ converges, in the sense of
$\Gamma$-convergence in $L^p(\Omega)$, to $\Psi_{p,-p}^{\Omega}$ as
$\lambda\to0^+$.
\end{proposition}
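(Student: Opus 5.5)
The plan is to first identify the constant, then transfer the smooth-domain $\Gamma$-convergence to Lipschitz domains using the limsup theorem already proved. Lemma~\ref{lem-critsmth} holds on any bounded open interval, or on any bounded smooth domain, with a constant $\kappa_{N,p}$ independent of $U$. So the first step is to show $\kappa_{N,p}=C_{N,p,-p}^{\rm cell}$. I would apply Lemma~\ref{lem-critsmth} on a ball $B$ (or an interval when $N=1$) containing $\overline{Q_0}$, and use the monotonicity of $F_{\lambda,p,\gamma}(\cdot;U)$ in $U$ (the integrand is nonnegative).

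For $C^{\rm cell}\ge\kappa$: take sequences $\lambda_j\to0$, $u_j\to\ell$ in $L^p(Q_0)$ almost attaining $m_{p,-p}(\ell)$. I would not extend $u_j$ to $B$. Instead, apply the $\Gamma$-liminf of Lemma~\ref{lem-critsmth} on a smooth domain $U\subset Q_0$ with $|Q_0\setminus U|$ small, e.g.\ a smoothed slightly smaller cube. This gives
\[
\varliminf_j F_{\lambda_j,p,-p}(u_j;Q_0)\ge\varliminf_j F_{\lambda_j,p,-p}(u_j;U)\ge\kappa_{N,p}|U|.
\]
Letting $|U|\to1$ yields $C^{\rm cell}_{N,p,-p}\ge\kappa_{N,p}$, with the case $p=1$ handled identically via $|D\ell|(U)=|U|$.

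For $C^{\rm cell}\le\kappa$: take a recovery sequence $v_j$ for $\ell$ on the ball $B\supset\overline{Q_0}$ from Lemma~\ref{lem-critsmth}. Restricting to $Q_0$ gives
\[
m_{p,-p}(\ell)\le\varlimsup_j F_{\lambda_j,p,-p}(v_j;Q_0)\le\kappa_{N,p}|B|.
\]
This is not sharp, so I would instead localize. Use Theorem~\ref{thm-limsup} on $Q_0$, a Lipschitz domain, which already gives a recovery sequence with constant $C^{\rm cell}$; that is the wrong direction. Better is to choose a smooth convex $U\supset\overline{Q_0}$ with $|U\setminus Q_0|<\varepsilon$. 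A recovery sequence on $U$ restricted to $Q_0$ then gives $C^{\rm cell}\le\kappa(1+\varepsilon)$, and letting $\varepsilon\to0$ yields equality.

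With $\kappa_{N,p}=C^{\rm cell}_{N,p,-p}$ established, the limsup inequality on a Lipschitz $\Omega$ is exactly Theorem~\ref{thm-limsup}, which was stated for all $\gamma<0$, including $\gamma=-p$.

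For the liminf on Lipschitz $\Omega$, I would argue as follows. First show $u\in W^{1,p}(\Omega)$ or $BV(\Omega)$ when $L:=\varliminf_j F_{\lambda_j,p,-p}(u_j;\Omega)<\infty$. Apply Lemma~\ref{lem-critsmth} on smooth subdomains $U\Subset\Omega$ (finite unions of small balls suffice, or smooth approximants from inside). This yields $\kappa\int_U|\nabla u|^p\le L$ uniformly in $U$, so $u$ is in the space on $\Omega$. Exhausting $\Omega$ from inside by smooth open sets $U_k$ with $|\nabla u|^p(\Omega\setminus U_k)\to0$, or $|Du|(\Omega\setminus U_k)\to0$ when $p=1$, gives the bound $L\ge\kappa\int_{U_k}|\nabla u|^p$. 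Letting $k\to\infty$ gives the full liminf.

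The main obstacle I expect is the existence of smooth inner approximants of an arbitrary open set. I would construct them by taking a smooth function approximating the distance to $\partial\Omega$ and applying Sard's theorem to a regular sublevel set. For $p=1$, one must also ensure that a sublevel set covers all but a small $|Du|$-portion; monotone exhaustion handles this by inner regularity of $|Du|$.
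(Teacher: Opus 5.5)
Your proposal is correct and follows essentially the same route as the paper. You identify $C^{\rm cell}_{N,p,-p}=\kappa_{N,p}$ by applying Lemma~\ref{lem-critsmth} on smooth domains inside and outside $Q_0$ whose measures tend to $|Q_0|$. You obtain the $\Gamma$-liminf on $\Omega$ from an inner exhaustion of $\Omega$ by smooth domains; the paper simply cites Daners for such exhaustions, whereas you build them from a smoothed distance function and Sard's theorem. You then take the $\Gamma$-limsup directly from Theorem~\ref{thm-limsup} with $\gamma=-p$.

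The only thing to tidy is your aside that finite unions of small balls suffice. Overlapping balls do not give a smooth domain, so either use disjoint balls together with the superadditivity of $F_{\lambda,p,\gamma}$ over disjoint open sets, or stick with the smooth exhaustion you describe.
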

\begin{proof}
We first identify the cell constant. Choose an increasing sequence
$\{U_m\}_{m\in\mathbb{N}}$ of bounded smooth domains in $Q_0$ such that $\bigcup_{m\in\mathbb N}U_m=Q_0$ (see, for instance, \cite[Proposition~8.2.1]{d08}). Let $\{\lambda_j\}_{j\in\mathbb N}\subset(0,\infty)$ satisfy $\lambda_j\to0$ as $j\to\infty$. Let
$\{v_j\}_{j\in\mathbb N}\subset L^p(Q_0)$ satisfy
$v_j\to\ell$ in $L^p(Q_0)$ as $j\in\to\infty$. By the
$\Gamma$-liminf inequality in Lemma \ref{lem-critsmth}, we
obtain, for any $m\in\mathbb{N}$,
\begin{align*}
\varliminf_{j\to\infty}F_{\lambda_j,p,-p}\left(v_j;Q_0\right)
&\geq\varliminf_{j\to\infty}F_{\lambda_j,p,-p}\left(v_j;U_m\right)
\geq\kappa_{N,p}|U_m|.
\end{align*}
Letting $m\to\infty$ in the above inequality and then taking the infimum over
all admissible sequences in \eqref{baochang}, we find that
$C_{N,p,-p}^{\rm cell}\geq\kappa_{N,p}.$

To prove the reverse inequality, choose a sequence
$\{V_m\}_{m\in\mathbb{N}}$ of bounded smooth domains such
that $Q_0\subset V_m$ for any $m\in\mathbb{N}$ and $|V_m\setminus Q_0|\to0$
as $m\to\infty$ (see, for instance, \cite[Theorem~8.3.1]{d08}).
For any $m\in\mathbb N$, choose a sequence
$\{\lambda_j^{(m)}\}_{j\in\mathbb N}\subset(0,\infty)$ satisfying
$\lambda_j^{(m)}\to0$ as $j\to\infty$. From the $\Gamma$-limsup inequality in Lemma \ref{lem-critsmth}, it follows that there exists a sequence
$\{w_j^{(m)}\}_{j\in\mathbb N}\subset L^p(V_m)$ satisfying
$w_j^{(m)}\to\ell$ in $L^p(V_m)$ as $j\to\infty$ and
\begin{align*}
\varlimsup_{j\to\infty}F_{\lambda_j^{(m)},p,-p}\left(w_j^{(m)};V_m\right)
\leq\kappa_{N,p}|V_m|.
\end{align*}
Hence, by \eqref{baochang}, we conclude that
\begin{align}\label{reverse}
C_{N,p,-p}^{\rm cell}\leq\varliminf_{j\to\infty}
F_{\lambda_j^{(m)},p,-p}\left(w_j^{(m)};Q_0\right)
\leq\varlimsup_{j\to\infty} F_{\lambda_j^{(m)},p,-p}\left(w_j^{(m)};V_m\right)
\leq\kappa_{N,p}|V_m|.
\end{align}
Letting $m\to\infty$ in \eqref{reverse} yields the reverse inequality $C_{N,p,-p}^{\rm cell}\leq\kappa_{N,p}.$
Therefore,
\begin{align}\label{yanggu}
C_{N,p,-p}^{\rm cell}=\kappa_{N,p}.
\end{align}

Next, we prove the $\Gamma$-liminf inequality on $\Omega$.
Let $\{\lambda_j\}_{j\in\mathbb N}\subset(0,\infty)$ satisfy
$\lambda_j\to0$ as $j\to\infty$. Let $u\in L^p(\Omega)$, and let
$\{u_j\}_{j\in\mathbb N}\subset L^p(\Omega)$ satisfy
$u_j\to u$ in $L^p(\Omega)$ as $j\to\infty$.
Define
\begin{align*}
L:=\varliminf_{j\to\infty}F_{\lambda_j,p,-p}\left(u_j;\Omega\right).
\end{align*}
There is nothing to prove when $L=\infty$, so we assume that $L<\infty$.
Choose an increasing sequence $\{\Omega_m\}_{m\in\mathbb{N}}$ of bounded smooth domains in $\Omega$ such that $\bigcup_{m\in\mathbb N}\Omega_m=\Omega$
(see, for instance, \cite[Proposition~8.2.1]{d08}).
By Lemma \ref{lem-critsmth}, we find that, for any $m\in\mathbb{N},$
\begin{align}\label{reverse1}
L\geq
\begin{cases}
\kappa_{N,p}\displaystyle\int_{\Omega_m}|\nabla u|^p\,dx,&p\in(1,\infty),\\[2mm]
\kappa_{N,1}|Du|(\Omega_m),&p=1.
\end{cases}
\end{align}
Letting $m\to\infty$ in \eqref{reverse1} and using \eqref{yanggu},
we conclude that
\begin{align*}
\varliminf_{j\to\infty}F_{\lambda_j,p,-p}\left(u_j;\Omega\right)\geq\Psi_{p,-p}^{\Omega}(u),
\end{align*}
which shows the $\Gamma$-liminf inequality. On the  other hand, from
Theorem \ref{thm-limsup}, we infer that the $\Gamma$-limsup inequality
holds. This completes the proof of Proposition \ref{prop-critdom}.
\end{proof}

To prove Theorem \ref{thm-domfull}, we need the following two lemmas,
which are precisely \cite[Theorem~29]{n25} and \cite[Theorem 1.1]{lyyyz26},
respectively.
\begin{lemma}\label{lem-zeroreg}
Let $N\in\mathbb{N}$, $p\in[1,\infty)$, $\gamma\in(-1,0)$, and $U\subset\mathbb R^N$ be a measurable set. Then the family
$F_{\lambda,p,\gamma}(\cdot;U)$ converges, in the sense of
$\Gamma$-convergence in $L^p(U)$, to the zero functional as $\lambda\to0^+$.
\end{lemma}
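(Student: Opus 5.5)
The statement is a direct citation of \cite[Theorem~29]{n25}, but a self-contained argument is short. The plan is to verify the two inequalities of $\Gamma$-convergence separately. The $\Gamma$-liminf inequality is trivial, since $F_{\lambda,p,\gamma}(\cdot;U)\geq0$. So the whole task is the $\Gamma$-limsup inequality: for every $u\in L^p(U)$ and every $\lambda_j\to0$, produce $u_j\to u$ in $L^p(U)$ with $F_{\lambda_j,p,\gamma}(u_j;U)\to0$. I would first treat a dense class of functions for which $F_{\lambda,p,\gamma}(v;U)\leq C_v\lambda^p$, and then finish with a diagonal argument.

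\emph{Dense class.} Let $v:=\sum_{i=1}^M c_i\mathbf 1_{Q_i\cap U}$, where $Q_1,\ldots,Q_M$ are pairwise disjoint dyadic cubes. Pairs $(x,y)$ lying in the same $Q_i\cap U$, or both outside $\bigcup_iQ_i$, give $v(x)=v(y)$ and contribute nothing. Every other pair has at least one point in some $Q_i$ and the other point in $\mathbb R^N\setminus Q_i$. Dropping the threshold condition therefore gives
\begin{align*}
F_{\lambda,p,\gamma}(v;U)\leq 2\lambda^p\sum_{i=1}^M\int_{Q_i}\int_{\mathbb R^N\setminus Q_i}|x-y|^{\gamma-N}\,dy\,dx .
\end{align*}
The key point is that each double integral is finite because $\gamma\in(-1,0)$. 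For $x\in Q_i$ with $t:=\operatorname{dist}(x,\partial Q_i)$,
\begin{align*}
\int_{\mathbb R^N\setminus Q_i}|x-y|^{\gamma-N}\,dy\leq\int_{|h|>t}|h|^{\gamma-N}\,dh\lesssim t^{\gamma},
\end{align*}
and $\int_{Q_i}\operatorname{dist}(x,\partial Q_i)^{\gamma}\,dx<\infty$ precisely because $\gamma>-1$. Equivalently, each cube has finite fractional perimeter of order $-\gamma\in(0,1)$. Hence $F_{\lambda,p,\gamma}(v;U)\leq C_v\lambda^p\to0$ as $\lambda\to0^+$, so $v$ is its own recovery sequence.

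\emph{Density and diagonalization.} Next I show that such $v$ are dense in $L^p(U)$. Extend $u$ by zero, truncate to a large cube, and take dyadic averages at scale $2^{-m}$. Restricting to $U$ gives functions $v_m$ of the above form with $v_m\to u$ in $L^p(U)$; this works for any measurable $U$, bounded or not. For each $m$ the constant $C_{v_m}$ is finite. Choose $m(j)\to\infty$ slowly enough that $C_{v_{m(j)}}\lambda_j^p\to0$, and set $u_j:=v_{m(j)}$. Then $u_j\to u$ in $L^p(U)$ and $F_{\lambda_j,p,\gamma}(u_j;U)\to0$, which is the $\Gamma$-limsup inequality.

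The only genuinely delicate step is the finiteness of the cross-interaction integral. It is exactly where $\gamma>-1$ enters: for $\gamma\leq-1$ the integral $\int_{Q_i}\operatorname{dist}(x,\partial Q_i)^{\gamma}\,dx$ diverges, consistent with the nontrivial limits in Theorem~\ref{thm-negdom}. The remaining steps are routine.
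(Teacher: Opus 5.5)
Your proof is correct, and it takes a different route from the paper, which gives no argument for this lemma and simply imports it from \cite[Theorem~29]{n25}. Your argument is self-contained and elementary. Only pairs with $v(x)\neq v(y)$ contribute, so $F_{\lambda,p,\gamma}(v;U)\leq\lambda^p\iint_{\{v(x)\neq v(y)\}}|x-y|^{\gamma-N}\,dy\,dx$. Hence every finite dyadic step function is its own recovery sequence, because each cube has finite fractional perimeter of order $-\gamma\in(0,1)$. Density of such step functions in $L^p(U)$ and a diagonal choice of $m(j)$ then finish the proof.

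The citation is shorter and defers to whatever finer information \cite{n25} provides. Your route buys transparency in three ways. First, it isolates exactly where $\gamma>-1$ is used, namely the integrability of $\operatorname{dist}(x,\partial Q_i)^{\gamma}$ over $Q_i$. Second, it shows why recovery sequences must exploit jumps rather than slopes: by Lemma~\ref{lem-cellbds}(i), $F_{\lambda,p,\gamma}(\ell;Q_0)\to K_{N,p}/|\gamma|>0$ even for $\gamma\in(-1,0)$, so the constant sequence fails for affine functions, whereas it succeeds for step functions. Third, it needs no regularity of $U$, which matches the generality of the statement.

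Two details deserve to be written out. Take the dyadic cubes half-open so that they genuinely partition $\mathbb R^N$. Also send the truncation cube and the dyadic scale to their limits together. With $\widetilde u$ the zero extension of $u$ and $E_m$ the dyadic averaging operator at scale $2^{-m}$, one has $\|E_m(\widetilde u\mathbf 1_{[-2^m,2^m)^N})-\widetilde u\|_{L^p(\mathbb R^N)}\leq\|\widetilde u\mathbf 1_{\mathbb R^N\setminus[-2^m,2^m)^N}\|_{L^p(\mathbb R^N)}+\|E_m\widetilde u-\widetilde u\|_{L^p(\mathbb R^N)}\to0$. This step uses $p<\infty$ and the $L^p$-contractivity of $E_m$.
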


\begin{lemma}\label{lem-posdom}
Let $N\in\mathbb{N}$, $p\in[1,\infty)$, $\gamma\in(0,\infty)$, and
$\Omega\subset\mathbb R^N$ be a bounded open interval when $N=1$ or a
bounded Lipschitz domain when $N\geq2$. Then the family
$F_{\lambda,p,\gamma}(\cdot;\Omega)$ converges, in the sense of
$\Gamma$-convergence in $L^p(\Omega)$, to $\Psi_{p,\gamma}^{\Omega}$ as
$\lambda\to\infty$.
\end{lemma}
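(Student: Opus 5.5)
Lemma~\ref{lem-posdom} is exactly \cite[Theorem~1.1]{lyyyz26}, so in the paper it suffices to cite it. To prove it directly, I would run the same three-step scheme as for Theorem~\ref{thm-negdom}, with $\lambda\to\infty$ in place of $\lambda\to0^+$ and with the roles of short and long distances reversed. The reason is that for $\gamma>0$ the threshold $\lambda|x-y|^{1+\gamma/p}$ is \emph{small} at short range. So the energy concentrates at scale $|x-y|\sim\lambda^{-p/\gamma}\to0$, and long-range pairs carry negligible weight, since $\lambda^p|x-y|^{\gamma-N}$ is integrable near the origin only after truncation.

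\textbf{Step 1 (coercivity and liminf).} I would first prove the analogue of Proposition~\ref{prop-coerc}: if $\varliminf_j F_{\lambda_j,p,\gamma}(u_j;\Omega)<\infty$ with $u_j\to u$ in $L^p$, then $u\in W^{1,p}(\Omega)$ for $p>1$, or $u\in BV(\Omega)$ for $p=1$, with a bound by a constant $c_{N,p,\gamma}$. The route is as before: slice by Lemma~\ref{lem-slicing}, and on each line compare with the BSVY weak-type estimates of \cite{bsvy24}, or with a dyadic averaged-difference-quotient bound in the spirit of Lemma~\ref{lem-diffbd}. That bound splits $|t|^p\le\lambda^ph^{p+\gamma}+(\cdot)_+$ and now sums dyadic scales upward, since $h^{p+\gamma}$ grows. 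Duality then gives the regularity.

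With regularity in hand, I would blow up at Lebesgue points of $\nabla u$ for $p>1$, or at $|Du|$-a.e.\ point via Lemma~\ref{lem-tangent} for $p=1$. Lemmas~\ref{lem-scaling} and~\ref{lem-mplsc} give the local bound $\ge(C^{\rm cell}_{N,p,\gamma}-\varepsilon)\mu(Q)$ on small rotated cubes. For $p=1$ one also needs the profile estimate $m_{1,\gamma}(W_q)\ge C^{\rm cell}|Dq|(I_0)$, proved exactly as Proposition~\ref{prop-profend}: build $q_m$ by self-similar gluing of near-minimizers with fixed boundary collars, and let the cross interactions vanish. A Morse-type covering then sums the local bounds.

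\textbf{Step 2 (limsup).} For a continuous piecewise affine $w$, I would tile each affine region by rotated cubes of side $h_{j,i}=(|\xi_i|\widehat\Lambda_j/\lambda_j)^{p/\gamma}$. In each cube I would insert a rescaled cell near-minimizer that equals $\ell$ near the cube boundary (the analogue of Proposition~\ref{prop-repair}). The remainder $R_j$ is split at a cutoff $d_j$:
\begin{itemize}
\item for pairs with $|x-y|<d_j$, both points lie where $w_j=w$ is Lipschitz, and the condition $L|x-y|>\lambda_j|x-y|^{1+\gamma/p}$ forces $|x-y|<(L/\lambda_j)^{p/\gamma}$, so the contribution is $\lesssim|E_j|\to0$;
\item for pairs with $|x-y|\ge d_j$, I would use the boundedness of $w_j$ and a crude $L^p$ (Chebyshev-type) bound instead of $\lambda^pd^\gamma$, which no longer decays.
\end{itemize}
Density (Lemmas~\ref{lem-padensity} and~\ref{lem-strictapp}) and a diagonal argument then finish the limsup.

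\textbf{Main obstacle.} The hardest step is controlling the long-range cross-cube interactions for $\gamma>0$. There $\lambda^p\int_{|h|>d}|h|^{\gamma-N}$ diverges on bounded sets, so the far-field bound used in Proposition~\ref{prop-pa} fails. I would try first to bound the far pairs with the $L^\infty$/Lipschitz control of $w_j$: $|w_j(x)-w_j(y)|\lesssim|x-y|+h_{j,i}$. This makes the superlevel condition empty once $\lambda|x-y|^{1+\gamma/p}$ exceeds $C(|x-y|+h)$, which happens for $|x-y|\gtrsim\lambda^{-p/\gamma}$. The far region is then effectively of size comparable to $\lambda^{-p/\gamma}$, where the scaling matches the cell energy, and it should contribute only $o(1)$ times the collar volume.
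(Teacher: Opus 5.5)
Your first sentence is exactly the paper's proof. Lemma~\ref{lem-posdom} is quoted from \cite[Theorem~1.1]{lyyyz26} with no further argument, so as a proof within this paper your proposal is correct. The self-contained sketch you add, however, has two genuine gaps.

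\textbf{Step~1: the coercivity mechanism does not transfer to $\gamma>0$.}
\begin{itemize}
\item The splitting $g(x+h)-g(x)=[g(x+h/2)-g(x)]+[g(x+h)-g(x+h/2)]$ behind Lemma~\ref{lem-diffbd} only pushes the positive part from scale $h$ down to scale $h/2$. A difference at scale $h$ is not controlled by differences at scale $2h$, so there is no ``upward'' summation.
\item The weak-type integrand enters with the coefficient $2^{-\gamma}-1$, which is negative for $\gamma>0$. Run for $\gamma>0$, the same algebra therefore bounds $F_{\lambda,p,\gamma}$ from \emph{above} by positive parts at other scales. That is a BSVY-type upper bound, not coercivity.
\item The estimates of \cite{bsvy24} do not fill this in. They concern a fixed function as $\lambda$ varies, whereas the $\Gamma$-liminf needs coercivity along a sequence $u_j\to u$, each paired with a single $\lambda_j$.
\end{itemize}
So the key ingredient of Step~1, which is what \cite{lyyyz26} provides, is missing from your sketch.

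\textbf{Step~2: the far-field threshold in your ``main obstacle'' paragraph is wrong.} From $|w_j(x)-w_j(y)|\le L|x-y|+Ch$ with $h\sim h_{j,i}$, the superlevel set is empty only for $|x-y|\gtrsim(h/\lambda_j)^{p/(p+\gamma)}$. But
$(h/\lambda_j)^{p/(p+\gamma)}\lambda_j^{p/\gamma}=(|\xi_i|\widehat\Lambda_j)^{p^2/(\gamma(p+\gamma))}\to\infty$.
Hence pairs with $\lambda_j^{-p/\gamma}\ll|x-y|\lesssim(h/\lambda_j)^{p/(p+\gamma)}$ are not controlled by your argument, and this range may extend beyond the collars where $w_j=w$.

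The Chebyshev route you mention in passing does close the gap, if set up as follows.
\begin{itemize}
\item Take $d_j:=\frac12\widehat\rho_j\min_ih_{j,i}$.
\item The $\gamma>0$ analogue of Proposition~\ref{prop-repair} (with $\Lambda_j\to\infty$) has the same property (iii). This gives $\lambda_j^{-p/\gamma}/d_j\to0$.
\item Hence, for $|x-y|\ge d_j$, the term $L|x-y|$ is absorbed into $\frac12\lambda_j|x-y|^{1+\gamma/p}$. An exceedance then forces $|w_j-w|>\frac14\lambda_j|x-y|^{1+\gamma/p}$ at $x$ or at $y$.
\item Chebyshev's inequality then gives
\[
R_j^{\rm far}\lesssim\lambda_j^p\int_{d_j}^\infty r^{\gamma-1}\frac{\|w_j-w\|_{L^p(\Omega)}^p}{\lambda_j^pr^{p+\gamma}}\,dr=\frac{\|w_j-w\|_{L^p(\Omega)}^p}{p\,d_j^p}\lesssim\frac{\|\widehat V_j-\ell\|_{L^p(Q_0)}^p}{\widehat\rho_j^{\,p}}.
\]
\item This tends to $0$ because the proof of Proposition~\ref{prop-repair} already forces $\|V_j-\ell\|_{L^p(Q_0)}=o(\rho_j)$. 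Indeed, the cutoff error $\|v_n-\ell\|_{L^p(Q_0)}^p/((1-\alpha_j)\rho_j)^p$ is made small there.
\end{itemize}
Near pairs with $|x-y|<d_j$ are handled as in your first bullet.

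Relatedly, your heuristic about the kernel is backwards for $\gamma>0$: $|x-y|^{\gamma-N}$ is locally integrable, and its mass at long range is of order $\lambda^p$. Long-range pairs are negligible only because their superlevel sets are empty or small.
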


\begin{proof}[Proof of Theorem \ref{thm-domfull}]
Theorem \ref{thm-domfull} follows from Theorem \ref{thm-negdom}, Proposition \ref{prop-critdom}, and  Lemmas \ref{lem-zeroreg} and \ref{lem-posdom} directly.
\end{proof}

\section{From bounded domains to the  whole space}
\label{sec-whole}
In this section, we prove Theorem \ref{thm-whlfull}. We begin with the following result.
\begin{proposition}\label{prop-whlzero}
Let $N\in\mathbb{N}$, $p\in[1,\infty)$, and $\gamma\in(-1,0)$. Then the family
$F_{\lambda,p,\gamma}(\cdot;\mathbb R^N)$ converges, in the sense of
$\Gamma$-convergence in $L^1_{\rm loc}(\mathbb R^N)$, to the zero
functional as $\lambda\to0^+$.
\end{proposition}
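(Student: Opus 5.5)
The $\Gamma$-liminf inequality is trivial here, because $F_{\lambda,p,\gamma}\ge0$. The whole task is the $\Gamma$-limsup inequality. Given $u\in L^1_{\rm loc}(\mathbb R^N)$ and $\lambda_j\to0$, I will construct $u_j\to u$ in $L^1_{\rm loc}(\mathbb R^N)$ with $F_{\lambda_j,p,\gamma}(u_j;\mathbb R^N)\to0$. The plan is a reduction to bounded sets via Lemma~\ref{lem-zeroreg}, combined with a diagonal argument. The nonlocal interactions between far-apart regions are controlled by the exponent structure: $\gamma\in(-1,0)$ makes $|x-y|^{\gamma-N}$ integrable at infinity, while $\gamma+N>N-1\ge0$ keeps it integrable near the diagonal in the relevant sense.

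\emph{Step 1: reduction to nice functions.} Since $\Gamma$-limsup functionals are lower semicontinuous, it suffices to treat a dense class in $L^1_{\rm loc}$. I take $u\in C_{\rm c}^\infty(\mathbb R^N)$, and then conclude by a diagonal argument in the metric of $L^1_{\rm loc}$. For such $u$, with Lipschitz constant $L$ and support in $B(0,R_0)$, I would first try to show directly that $F_{\lambda,p,\gamma}(u;\mathbb R^N)\to0$ as $\lambda\to0^+$, which would allow the constant recovery sequence $u_j:=u$.

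\emph{Step 2: splitting the energy.} The set where $|u(x)-u(y)|>\lambda|x-y|^{1+\gamma/p}$ requires both
\[
L|x-y|>\lambda|x-y|^{1+\gamma/p}\quad\text{and}\quad 2\|u\|_\infty>\lambda|x-y|^{1+\gamma/p}.
\]
The first forces $|x-y|>(\lambda/L)^{-p/\gamma}=:r_\lambda$, and $r_\lambda\to0$ because $-p/\gamma>0$. Moreover, at least one of $x,y$ lies in $B(0,R_0)$. There are two cases, according to the sign of $1+\gamma/p$.

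If $1+\gamma/p>0$, the second condition forces $|x-y|<(2\|u\|_\infty/\lambda)^{p/(p+\gamma)}=:s_\lambda\to\infty$. The energy is then bounded by
\[
2\lambda^p|B(0,R_0)|\int_{r_\lambda<|h|<s_\lambda}|h|^{\gamma-N}\,dh\lesssim\lambda^p r_\lambda^\gamma=\lambda^p(\lambda/L)^{-p}=L^p.
\]
This only gives boundedness, not smallness, so the constant sequence is not enough and I switch strategy. Instead, I split the energy into a near part $|x-y|<\delta$ and a far part $|x-y|\ge\delta$. The far part is at most
\[
C\lambda^p|B(0,R_0)|\delta^\gamma\to0 .
\]
For the near part, I cover $B(0,R_0+1)$ by finitely many disjoint cubes and use the recovery sequences from Lemma~\ref{lem-zeroreg} on a slightly enlarged bounded set $U:=B(0,R_0+2)$. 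Concretely, Lemma~\ref{lem-zeroreg} provides $v_j\to u$ in $L^p(U)$ with $F_{\lambda_j,p,\gamma}(v_j;U)\to0$. I then set $u_j:=v_j$ on $U$ and $u_j:=0$ outside.

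\emph{Step 3: the main obstacle.} The hard part is the cross term between $U$ and $\mathbb R^N\setminus U$, especially the region near $\partial U$. To handle it, I first modify $v_j$ near $\partial U$ by a cutoff with $\eta\equiv1$ on $B(0,R_0+1)$, using the construction of Lemma~\ref{lem-cutoff}(ii) with $v=0$ (which is exactly $u$ there). This makes $u_j=0=u$ on the collar $B(0,R_0+2)\setminus B(0,R_0+1)$ and outside it, at a cost of $\alpha^{-p}$ and an error term $\lesssim\|v_j-u\|_{L^p}^p/(1-\alpha)^p\to0$. Once this is done, any pair $(x,y)$ with $x\in B(0,R_0+1)$ and $y\notin U$ satisfies $|x-y|\ge1$, so these pairs contribute at most $C\lambda_j^p|U|\to0$. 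Pairs with both points outside $B(0,R_0+1)$ contribute nothing, since $u_j$ vanishes there. Finally, letting $\alpha\to1$ by a diagonal choice yields $\varlimsup_{j\to\infty}F_{\lambda_j,p,\gamma}(u_j;\mathbb R^N)=0$.
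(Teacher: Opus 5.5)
Your outline works, but it takes a longer route than the paper, and one step needs repair. The paper never has to control interactions with the exterior. Lemma~\ref{lem-zeroreg} is stated for an arbitrary measurable set $U$, so the paper applies it directly with $U=\mathbb R^N$ to $w_M:=\mathbf 1_{Q_M}T_M(u)\in L^p(\mathbb R^N)$. This gives $v_j^{(M)}\to w_M$ in $L^p(\mathbb R^N)$ with $F_{\lambda_j,p,\gamma}(v_j^{(M)};\mathbb R^N)\to0$, and a diagonal argument in $M$ finishes (H\"older's inequality passes from $L^p(\mathbb R^N)$ to $L^1(Q_R)$). You instead use the lemma only on a ball and glue to $0$ outside by a cutoff plus a far-field bound, which is essentially the mechanism of Lemma~\ref{lem-oneconst}. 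That is more work, but it needs only the bounded-domain version of the zero $\Gamma$-limit. Given the lemma as stated, your ``main obstacle'' in Step~3 is avoidable. The abandoned constant-sequence attempt and the unused cube covering in Step~2 can simply be dropped.

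\textbf{The step to repair.} Lemma~\ref{lem-cutoff}(ii) applied with $v=0$ gives the error term $\frac{\mathcal H^{N-1}(\mathbb S^{N-1})L_\eta^p}{|\gamma|(1-\alpha)^p}\|v_j\|_{L^p(U)}^p$. This does not tend to $0$ unless $u=0$, so it is not the error $\|v_j-u\|_{L^p(U)}^p$ you claim. The fix is as follows.
\begin{enumerate}[(a)]
\item Choose $\eta\equiv1$ on $B(0,R_0+\frac12)$ and $\eta\equiv0$ off $B(0,R_0+1)$. A Lipschitz $\eta\equiv1$ on $B(0,R_0+1)$ cannot vanish on the whole collar outside it, as you wanted.
\item With this $\eta$ we have $(1-\eta)u\equiv0$, so $\eta v_j=\eta v_j+(1-\eta)u$. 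You may therefore apply the lemma with $v=u$, which gives the error $\|v_j-u\|_{L^p(U)}^p$.
\item This choice produces an extra term $\alpha^{-p}F_{\alpha\lambda_j,p,\gamma}(u;E_\eta,U)$ that you must also bound. It is $O(\lambda_j^p)$, because $u=0$ on $E_\eta$ and $\operatorname{dist}(E_\eta,\operatorname{supp}u)\geq\frac12$.
\item The first term is $\alpha^{-p}F_{\alpha\lambda_j,p,\gamma}(v_j;U)$. Hence $v_j$ must be the recovery sequence from Lemma~\ref{lem-zeroreg} for $\{\alpha\lambda_j\}$, not for $\{\lambda_j\}$.
\item Since the target is $0$, you can fix $\alpha=\frac12$; no diagonal passage $\alpha\to1$ is needed.
\end{enumerate}
With these changes, the cross pairs between $B(0,R_0+1)$ and $\mathbb R^N\setminus U$ contribute $O(\lambda_j^p)$. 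Pairs outside $B(0,R_0+1)$ contribute nothing, and your argument is complete.
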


\begin{proof}
Since the $\Gamma$-liminf inequality holds automatically,
we just need to prove the $\Gamma$-limsup inequality.
Let $\{\lambda_j\}_{j\in\mathbb N}\subset(0,\infty)$ satisfy
$\lambda_j\to0$ as $j\to\infty$. Let $u\in L^1_{\rm loc}(\mathbb R^N)$.
For any $M\in (0,\infty)$, let $Q_M:=(-M,M)^N$ and
$T_M$ be the truncation operator defined in \eqref{jieding}.
Define $w_M:=\mathbf1_{Q_M}T_M(u).$
Then, for any given $R\in (0,\infty)$ and any $M\in[R,\infty)$,
\begin{align*}
\|w_M-u\|_{L^1(Q_R)}=\|T_M(u)-u\|_{L^1(Q_R)}
\leq\int_{Q_R\cap\left\{|u|>M\right\}}|u|\,dx\to0
\end{align*}
as $M\to\infty$. Thus, $w_M\to u$ in $L^1_{\rm loc}(\mathbb R^N)$
as $M\to\infty$. Using Lemma \ref{lem-zeroreg} with $U:=\mathbb{R}^N$, we
conclude that, for  any given $M\in (0,\infty)$,
there  exists  a sequence
$\{v_j^{(M)}\}_{j\in\mathbb{N}}\subset L^p(\mathbb{R}^N)$ such that
\begin{align*}
v_j^{(M)}\to w_M\quad\text{in }L^p(\mathbb R^N)
\ \ \text{and}\ \
F_{\lambda_j,p,\gamma}\left(v_j^{(M)};\mathbb R^N\right)\to0
\end{align*}
as $j\to \infty.$
Choosing a strictly increasing sequence $\{J_M\}_{M\in\mathbb{N}}$ of
integers such that, for any given $M\in\mathbb{N}$ and any $j\in\mathbb{N}$ with
$j\geq J_M$,
\begin{align}\label{yuanzi}
\left\|v_j^{(M)}-w_M\right\|_{L^p(\mathbb R^N)}\leq\frac1M
\ \ \text{and}\ \
F_{\lambda_j,p,\gamma}\left(v_j^{(M)};\mathbb R^N\right)\leq\frac1M.
\end{align}
For any $j\in\mathbb{N}$ with  $J_M\leq j<J_{M+1}$,
define $u_j:=v_j^{(M)}$. By \eqref{yuanzi} and H\"older's inequality,
we find that
\begin{align*}
\lim_{j\to\infty}F_{\lambda_j,p,\gamma}\left(u_{j};\mathbb R^N\right)=0
\end{align*}
and, for any given $R\in (0,\infty)$ and any
$j\in\mathbb{N}$ with  $J_M\leq j<J_{M+1}$ for some $M\in\mathbb{N}\cap [R,\infty)$,
\begin{align*}
\|u_j-u\|_{L^1(Q_R)}
&\leq |Q_R|^{1-\frac{1}{p}}\left\|v_j^{(M)}-w_M\right\|_{L^p(\mathbb R^N)}
+\|w_M-u\|_{L^1(Q_R)}\\
&\leq\frac{|Q_R|^{1-\frac{1}{p}}}M+\|w_M-u\|_{L^1(Q_R)}\to0
\end{align*}
as $j\to\infty$. This further implies that $u_j\to u$ in $L^1_{\mathrm{loc}}(\mathbb{R}^N)$,
which completes the proof of Proposition \ref{prop-whlzero}.
\end{proof}

The remaining two subsections, Subsections \ref{sec-nondeg} and
\ref{sec-endpoint}, are devoted, respectively, to the range
$\gamma\in(-\infty,-1]\cup(0,\infty)$ with $(N,p,\gamma)\neq(1,1,-1)$
and the exceptional endpoint $(N,p,\gamma)=(1,1,-1)$.
Theorem \ref{thm-whlfull} follows directly from Proposition
\ref{prop-whlzero} and Theorems \ref{thm-whlndeg} and \ref{thm-whlendpt}.

\subsection{The nondegenerate ranges}\label{sec-nondeg}
The aim of this subsection is the following result.

\begin{theorem}\label{thm-whlndeg}
Let $N\in\mathbb{N}$, $p\in[1,\infty)$, and $\gamma\in(-\infty,-1]\cup(0,\infty)$. Assume that $(N,p,\gamma)\neq(1,1,-1)$. Then
$F_{\lambda,p,\gamma}(\cdot;\mathbb R^N)$ converges, in the sense of
$\Gamma$-convergence in $L^1_{\rm loc}(\mathbb R^N)$, to
$\Psi_{p,\gamma}^{\rm hom}$ as $\lambda\to\infty$ when $\gamma\in(0,\infty)$ and
as $\lambda\to0^+$ when $\gamma\in(-\infty,-1]$.
\end{theorem}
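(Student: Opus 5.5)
We split the argument into the $\Gamma$-liminf and the $\Gamma$-limsup inequality in $L^1_{\rm loc}(\mathbb R^N)$, both reduced to the bounded-domain statement of Theorem~\ref{thm-domfull} on an exhausting family of cubes $Q_R:=(-R,R)^N$. Throughout, $\lambda_j\to\infty$ if $\gamma>0$ and $\lambda_j\to0$ if $\gamma\le-1$.

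\emph{Liminf.} Let $u_j\to u$ in $L^1_{\rm loc}(\mathbb R^N)$ with $L:=\varliminf_j F_{\lambda_j,p,\gamma}(u_j;\mathbb R^N)<\infty$; pass to a subsequence realizing $L$. By monotonicity in the domain, $F_{\lambda_j,p,\gamma}(u_j;Q_R)\le F_{\lambda_j,p,\gamma}(u_j;\mathbb R^N)$. The obstacle is that Theorem~\ref{thm-domfull} requires $L^p(Q_R)$ convergence, while we only have $L^1$ convergence. We handle this by truncation: by \eqref{deng}, $F_{\lambda_j,p,\gamma}(T_Mu_j;Q_R)\le F_{\lambda_j,p,\gamma}(u_j;Q_R)$, and $T_Mu_j\to T_Mu$ in $L^1(Q_R)$ with uniform bound $M$, hence also in $L^p(Q_R)$. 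Theorem~\ref{thm-domfull} then gives $L\ge C^{\rm cell}_{N,p,\gamma}\int_{Q_R}|\nabla T_Mu|^p$ (respectively $|D T_Mu|(Q_R)$). Letting $M\to\infty$, by lower semicontinuity of the total variation in $L^1_{\rm loc}$ (or weak compactness of $\nabla T_Mu$ in $L^p$ for $p>1$), we obtain $u\in W^{1,p}(Q_R)$ or $BV(Q_R)$ with the same bound. Letting $R\to\infty$ yields $u\in\dot W^{1,p}(\mathbb R^N)$ or $\dot{BV}(\mathbb R^N)$ and $L\ge\Psi^{\rm hom}_{p,\gamma}(u)$. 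Note that $C^{\rm cell}>0$ here: this is the Remark after Proposition~\ref{prop-coerc} for $\gamma\le-1$, $\gamma\neq-p$; it follows from \eqref{yanggu} for $\gamma=-p$; and it is taken from \cite{lyyyz26} for $\gamma>0$.

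\emph{Limsup.} This is the harder step, since $u$ need not be integrable and the nonlocal interactions with the exterior must be controlled. Fix $u$ with $\Psi^{\rm hom}_{p,\gamma}(u)<\infty$. We first reduce to nice $u$ by density and a diagonal argument (lower semicontinuity of $\Gamma$-limsup). Concretely, we approximate $u$ in $L^1_{\rm loc}$ by functions $u_R:=c_R+\varphi_R(u-c_R)$, where $\varphi_R$ is a cutoff equal to $1$ on $Q_R$ with $|\nabla\varphi_R|\lesssim 1/R$ supported in an annulus and $c_R$ is the average of $u$ on that annulus. By Poincaré (Sobolev–Poincaré on annuli, with a scale-invariant constant) the extra energy $\int|u-c_R||\nabla\varphi_R|$ (or its $p$-th power version) tends to $0$. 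This yields compactly varying functions, that is, functions equal to a constant outside a large cube, with energy converging to $\Psi^{\rm hom}(u)$, after a strict/mollified approximation for $p=1$. For such $v$, equal to a constant $c$ outside $Q_{R}$, we take a recovery sequence $v_j$ on the bounded Lipschitz domain $Q_{2R}$ from Theorem~\ref{thm-domfull}. We modify it via Lemma~\ref{lem-cutoff}(ii) (with $\lambda$ rescaled by $\alpha\to1$) so that $v_j=c$ near $\partial Q_{2R}$, while $v_j\to v$ in $L^p(Q_{2R})$; then we extend $v_j$ by $c$.

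It remains to bound the cross term between $Q_{2R}$ and its complement, where $v_j$ differs from $c$ only inside $Q_{3R/2}$. For $\gamma>0$ and $\lambda\to\infty$, the cross term is controlled by $\lambda^p\int_{Q_{3R/2}}\int_{|x-y|>R/2}\mathbf 1_{\{|v_j(x)-c|>\lambda|x-y|^{1+\gamma/p}\}}|x-y|^{\gamma-N}$. Computing the $y$-integral gives $\lesssim \int|v_j-c|^p\,dx\cdot \lambda^{p}\lambda^{-p}$-type bounds; that is, after integrating in $r=|x-y|$ it is $\lesssim\int_{Q_{3R/2}}|v_j-c|^p\,dx\cdot R^{-p}$, which is not small. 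To overcome this we take $R$ large relative to the support, so that this term is $\lesssim R^{-p}\|v-c\|_p^p\to0$ through the diagonal. For $\gamma\le-1$, we bound the cross term by $\lambda^p|Q_{3R/2}|\int_{|h|>R/2}|h|^{\gamma-N}dh\lesssim\lambda^pR^{N+\gamma}\to0$ as $\lambda\to0$. The exceptional case $(1,1,-1)$ is excluded because there the interaction with infinity, and the jump $u_+-u_-$, does not vanish; correspondingly the reduction step uses $c_R$ coming from \emph{both} ends. In $N\ge2$ or $p>1$, the averages over annuli make $u$ effectively constant at infinity, and this is where we expect the main technical care to lie.
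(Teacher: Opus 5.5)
\textbf{Overall.} Your $\Gamma$-liminf argument is essentially the paper's (Proposition~\ref{prop-whllower}). You truncate, apply Theorem~\ref{thm-domfull} on $Q_R$, use $C^{\rm cell}_{N,p,\gamma}>0$ to get bounds uniform in $M$, let $M\to\infty$ by weak compactness or lower semicontinuity, and finally let $R\to\infty$.

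For the $\Gamma$-limsup with $N\ge2$ or $p>1$, your route differs from the paper's but is workable:
\begin{itemize}
\item You replace the density result behind Lemma~\ref{lem-sobloc}(i)--(ii) by an explicit annular cutoff combined with Poincar\'e's inequality.
\item For $\gamma>0$ you control the exterior interaction by a Chebyshev bound and by enlarging the working cube. The paper instead truncates to $|W_j|\le M$, so that all pairs with $|x-y|\ge a/4$ drop out once $\lambda_j(a/4)^{1+\gamma/p}\ge 2M$.
\item Lemma~\ref{lem-cutoff} is stated here only for $\gamma<0$. For $\gamma>0$ you need its analogue, or the direct computation done in Lemma~\ref{lem-oneconst}.
\item When $N=1$, the ``annulus'' $(-2R,-R)\cup(R,2R)$ is disconnected, so Poincar\'e does not apply as stated. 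For $p>1$ your claim survives because $|u(R)-u(-R)|=|\int_{-R}^R u'|=o(R^{1-1/p})$, but this has to be argued.
\end{itemize}

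\textbf{The gap: $N=1$, $p=1$, $\gamma\in(-\infty,-1)\cup(0,\infty)$.} This case is covered by the theorem. Here $u\in\dot{BV}(\mathbb R)$ may have $u_+\neq u_-$, while the claimed limit is $C^{\rm cell}_{1,1,\gamma}|Du|(\mathbb R)$.

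Your reduction to functions equal to a \emph{single} constant outside a bounded set cannot reach this energy:
\begin{itemize}
\item For your $u_R$, we have $|u-c_R|\to|u_+-u_-|/2$ uniformly on $R<|x|<2R$. Since the cutoff drops from $1$ to $0$ on each side, the extra term $\int|u-c_R||\varphi_R'|\,dx$ is at least $|u_+-u_-|-o(1)$, not $o(1)$.
\item More fundamentally, every sequence $v_m\to u$ in $L^1_{\rm loc}(\mathbb R)$ with each $v_m$ constant (the same constant on both sides) outside a compact set satisfies $\varliminf_m|Dv_m|(\mathbb R)\ge|Du|(\mathbb R)+|u_+-u_-|$. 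This follows from the inversion $s\mapsto T_M(v_m(1/s))$ used in the proof of Proposition~\ref{prop-closinf}.
\end{itemize}
So your construction only gives the upper bound $C^{\rm cell}_{1,1,\gamma}(|Du|(\mathbb R)+|u_+-u_-|)$, which is the $(1,1,-1)$ answer, not the claimed one.

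\textbf{What is missing.} You need to approximate $u$ by bounded $v_m$ equal to $c_m^-$ on $(-\infty,-m]$ and to $c_m^+$ on $[m,\infty)$, with $|Dv_m|(\mathbb R)\to|Du|(\mathbb R)$ (Lemma~\ref{lem-sobloc}(iii)). You then have to control the additional interaction between the two half-lines, as in Lemma~\ref{lem-twoconst}:
\begin{itemize}
\item For $\gamma<-1$ this interaction is at most $2\lambda_j\int_{-\infty}^{-S}\int_S^\infty(y-x)^{\gamma-1}\,dy\,dx=\frac{2(2S)^{\gamma+1}}{\gamma(\gamma+1)}\lambda_j\to0$.
\item For $\gamma>0$ it vanishes for large $j$, because the functions are bounded and $|x-y|\ge 2S$. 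Your Chebyshev bound would fail here, since $\int_{-\infty}^{-S}\int_S^\infty(y-x)^{-2}\,dy\,dx=\infty$.
\end{itemize}
The divergence of this double integral at $\gamma=-1$ is exactly why $(1,1,-1)$ is exceptional. It is not a feature of the reduction step, contrary to what your last sentences suggest.
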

We first show the following  $\Gamma$-liminf inequality.
\begin{proposition}\label{prop-whllower}
Let $N\in\mathbb{N}$, $p\in[1,\infty)$, and
$\gamma\in(-\infty,-1]\cup(0,\infty)$.
Let $\{\lambda_j\}_{j\in\mathbb N}\subset(0,\infty)$ satisfy
$\lambda_j\to\infty$ when $\gamma\in(0,\infty)$,  $\lambda_j\to0$ when
$\gamma\in(-\infty,-1]$ as $j\to\infty$. Let $u\in L^1_{\rm loc}(\mathbb R^N)$, and let $\{u_j\}_{j\in\mathbb N}\subset L^1_{\rm loc}(\mathbb R^N)$ satisfy
$u_j\to u$ in $L^1_{\rm loc}(\mathbb R^N)$ as $j\to\infty$. Then
\begin{align}\label{zhengxia}
\varliminf_{j\to\infty}F_{\lambda_j,p,\gamma}\left(u_{j};\mathbb R^N\right)\geq
\begin{cases}
C_{N,p,\gamma}^{\rm cell}\displaystyle\int_{\mathbb R^N}|\nabla u|^p\,dx,
&p\in(1,\infty),\ u\in\dot W^{1,p}(\mathbb R^N),\\[2mm]
C_{N,1,\gamma}^{\rm cell}|Du|(\mathbb R^N),
&p=1,\ u\in\dot{BV}(\mathbb R^N),\\[1mm]
\infty,&\text{otherwise}.
\end{cases}
\end{align}
\end{proposition}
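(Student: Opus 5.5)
The plan is to reduce to the bounded-domain $\Gamma$-liminf inequalities by restricting to large cubes. Let $L:=\varliminf_{j\to\infty}F_{\lambda_j,p,\gamma}(u_j;\mathbb R^N)$. We may assume $L<\infty$; passing to a subsequence, the $\varliminf$ is a limit. The integrand in $F$ is nonnegative, so for every $R\in(0,\infty)$, with $Q_R:=(-R,R)^N$,
\begin{align*}
F_{\lambda_j,p,\gamma}(u_j;Q_R)\leq F_{\lambda_j,p,\gamma}(u_j;\mathbb R^N).
\end{align*}
On $Q_R$, which is a bounded Lipschitz domain (an interval if $N=1$), we have $u_j\to u$ in $L^1(Q_R)$.

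The first obstacle is that Theorem~\ref{thm-domfull} requires convergence in $L^p(Q_R)$, not merely $L^1$. I would remove this by truncation. Using $T_M$ from \eqref{jieding} and \eqref{deng}, we get
\begin{align*}
F_{\lambda_j,p,\gamma}(T_Mu_j;Q_R)\leq F_{\lambda_j,p,\gamma}(u_j;Q_R).
\end{align*}
Moreover $T_Mu_j\to T_Mu$ in $L^1(Q_R)$, and since these functions are bounded by $M$, also in $L^p(Q_R)$. The $\Gamma$-liminf part of Theorem~\ref{thm-domfull}(i)/(ii) (Theorems~\ref{thm-pliminf} and~\ref{thm-onelinf}, Proposition~\ref{prop-critdom}, and Lemma~\ref{lem-posdom}) then gives
\begin{align*}
L\geq C_{N,p,\gamma}^{\rm cell}\int_{Q_R}|\nabla T_Mu|^p\,dx \quad (p>1),\qquad L\geq C_{N,1,\gamma}^{\rm cell}|D(T_Mu)|(Q_R)\quad (p=1).
\end{align*}
In particular $T_Mu\in W^{1,p}(Q_R)$ (respectively $BV(Q_R)$).

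Next I would let $M\to\infty$, using $C^{\rm cell}_{N,p,\gamma}>0$. This positivity comes from the Remark after Proposition~\ref{prop-coerc} for negative $\gamma$; for $\gamma>0$ it is \cite{lyyyz26}, and for $\gamma=-p$ it is \eqref{yanggu}. Hence $\nabla T_Mu$ is bounded in $L^p(Q_R)$ uniformly in $M$. For $p>1$, we have $\nabla T_Mu=\mathbf 1_{\{|u|<M\}}\nabla u$ once $u\in W^{1,1}_{\rm loc}$. To justify that $u\in W^{1,1}_{\rm loc}$, I would use $T_Mu\to u$ in $L^1(Q_R)$ and weak compactness of $\nabla T_Mu$ in $L^p$. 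This shows that $u\in W^{1,p}(Q_R)$ locally, with $\int_{Q_R}|\nabla u|^p\le\varliminf_M\int_{Q_R}|\nabla T_Mu|^p$ by weak lower semicontinuity. For $p=1$, lower semicontinuity of total variation under $L^1$ convergence gives $|Du|(Q_R)\le\varliminf_M|D(T_Mu)|(Q_R)$. In both cases we obtain
\begin{align*}
L\geq C^{\rm cell}_{N,p,\gamma}\int_{Q_R}|\nabla u|^p\,dx \quad\text{or}\quad L\geq C^{\rm cell}_{N,1,\gamma}|Du|(Q_R),
\end{align*}
with $u\in W^{1,p}_{\rm loc}$ or $BV_{\rm loc}$.

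Finally, let $R\to\infty$. Monotone convergence (respectively, continuity of the measure $|Du|$ from below on the increasing cubes) yields $|\nabla u|\in L^p(\mathbb R^N)$, that is, $u\in\dot W^{1,p}(\mathbb R^N)$, or $Du\in\mathcal M(\mathbb R^N;\mathbb R^N)$, that is, $u\in\dot{BV}(\mathbb R^N)$, together with \eqref{zhengxia}. If instead $u$ fails to lie in these spaces, the same chain forces $L=\infty$, which covers the ``otherwise'' case.

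The main obstacle is the truncation step, that is, passing from $L^1_{\rm loc}$ to $L^p$ convergence without losing energy. The monotonicity of $F$ under 1-Lipschitz maps handles this cleanly. Note that the endpoint $(1,1,-1)$ is not excluded here: the boundary term appearing there is extra energy, which only matters for the limsup inequality.
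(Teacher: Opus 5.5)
Your proof is correct and is essentially the paper's argument: restrict to $Q_R$, truncate by $T_M$ to upgrade $L^1$ to $L^p$ convergence, invoke the bounded-domain $\Gamma$-liminf, then let $M\to\infty$ (using $C^{\rm cell}_{N,p,\gamma}>0$) and $R\to\infty$. The paper differs only in minor ways: it skips truncation when $p=1$, and for $p>1$ it bounds the full $W^{1,p}(Q_R)$-norm of $T_Mu$ via Poincar\'e before using reflexivity. One correction to your closing remark: at $(N,p,\gamma)=(1,1,-1)$ the term $|u_+-u_-|$ also enters the sharp liminf (Proposition~\ref{prop-closinf}), so the bound proved here is just not sharp there, which is harmless for this statement.
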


\begin{proof}
Let
\begin{align*}
L:=\varliminf_{j\to\infty}
F_{\lambda_j,p,\gamma}\left(u_{j};\mathbb R^N\right).
\end{align*}
If $L=\infty$, there is nothing to prove. We now assume $L<\infty$. Passing to a subsequence and relabelling if necessary, we may assume that
\begin{align*}
\lim_{j\to\infty}F_{\lambda_{j},p,\gamma}\left(u_{j};\mathbb R^N\right)= L.
\end{align*}
Fix $R\in (0,\infty)$ and  $Q_R:=(-R,R)^N$.

We first consider the case $p=1.$ In this case,  by the fact that
$\lim_{j\to\infty}u_j=u$ in $L^1(Q_R)$ and Theorem \ref{thm-domfull}, we  find that $u\in BV(Q_R)$ and $C_{N,1,\gamma}^{\rm cell}|Du|(Q_R)\leq L.$
Letting $R\to\infty$, we prove \eqref{zhengxia} in this case.

Now, let $p\in (1,\infty)$.
Let $M\in (0,\infty)$ and $T_M$ be  as  in
\eqref{jieding}.
Since $u_j\to u$ in $L^1_{\mathrm{loc}}(\mathbb{R}^N)$ as $j\to\infty$, it follows that
\begin{align*}
\left\|T_M(u_j)-T_M(u)\right\|_{L^p(Q_R)}^p
\leq(2M)^{p-1}\left\|T_M(u_j)-T_M(u)\right\|_{L^1(Q_R)}\to0
\end{align*}
as $j\to\infty$. By \eqref{deng},  we conclude that, for any $j\in\mathbb{N},$
\begin{align*}
F_{\lambda_j,p,\gamma}(T_M(u_j);Q_R)\leq F_{\lambda_j,p,\gamma}(u_j;Q_R)
\leq F_{\lambda_j,p,\gamma}(u_j;\mathbb R^N).
\end{align*}
This, together with Theorem \ref{thm-domfull}, implies that
$T_M(u)\in W^{1,p}(Q_R)$ and
\begin{align}\label{jiexia}
C_{N,p,\gamma}^{\rm cell}\int_{Q_R}|\nabla T_M(u)|^p\,dx\leq L.
\end{align}
From this and Poincar\'e's inequality, we infer that
\begin{align*}
\|T_M(u)\|_{L^p(Q_R)}&\leq
\left\|T_M(u)-\frac1{|Q_R|}\int_{Q_R}T_M(u)\,dx\right\|_{L^p(Q_R)}
 +|Q_R|^{\frac{1}{p}}\left|\frac1{|Q_R|}\int_{Q_R}T_M(u)\,dx\right|\notag\\
&\lesssim R\|\nabla T_M(u)\|_{L^p(Q_R)}
 +|Q_R|^{\frac{1}{p}-1}\|u\|_{L^1(Q_R)}\lesssim R\left(\frac{L}{C_{N,p,\gamma}^{\rm cell}}\right)^{\frac{1}{p}}
 +|Q_R|^{\frac{1}{p}-1}\|u\|_{L^1(Q_R)}.
\end{align*}
Thus,
\begin{align*}
\sup_{M\in(0,\infty)}\left\|T_M(u)\right\|_{W^{1,p}(Q_R)}<\infty.
\end{align*}
Using this and the fact that
the space $W^{1,p}(Q_R)$ is reflexive, we conclude that  there
exist a sequence $\{M_k\}_{k\in\mathbb N}\subset(0,\infty)$ and a function
$v\in W^{1,p}(Q_R)$ such that
$M_k\to\infty$ and $T_{M_k}(u)\to v$ weakly in $W^{1,p}(Q_R)$ as $k\to\infty$.
On the other hand,
\begin{align*}
\left\|T_{M_k}(u)-u\right\|_{L^1(Q_R)}\leq\int_{Q_R\cap\left\{|u|>M_k\right\}}|u|\,dx
\to0
\end{align*}
as $k\to\infty$. Thus, $u=v$ and $T_{M_k}(u)\to u$ weakly in $W^{1,p}(Q_R).$
By the weak lower semicontinuity of $L^p$-norm and \eqref{jiexia},
we conclude that
\begin{align}\label{5.3x}
C_{N,p,\gamma}^{\rm cell}\int_{Q_R}|\nabla u|^p\,dx
&\leq C_{N,p,\gamma}^{\rm cell}\varliminf_{k\to\infty}
\int_{Q_R}|\nabla T_{M_k}(u)|^p\,dx\leq L.
\end{align}
Letting $R\to\infty$ in \eqref{5.3x}, we prove \eqref{zhengxia} in this case.
This completes the proof of Proposition \ref{prop-whllower}.
\end{proof}

We now turn to the $\Gamma$-limsup inequality. The following lemma
uses the bounded-domain result to construct recovery sequences on
$\mathbb R^N$ for bounded functions that equal the same constant
near the boundary of a cube and outside it.
\begin{lemma}\label{lem-oneconst}
Let $N\in\mathbb{N}$, $p\in[1,\infty)$, and $\gamma\in(-\infty,-1]\cup(0,\infty)$. Fix $R\in (0,\infty)$ and $a\in(0,R)$.  Define
\begin{align*}
Q:=(-R,R)^N\ \ \text{and}\ \
\mathcal C_a(Q):=\left\{x\in Q:\operatorname{dist}(x,\partial Q)<a\right\}.
\end{align*}
Suppose that $w\in L^\infty(\mathbb R^N)$ satisfies $w|_Q\in W^{1,p}(Q)$
if $p\in(1,\infty)$ and $w|_Q\in BV(Q)$ if $p=1$. Assume that
$w\equiv c$ on $Q^\complement\cup\mathcal C_a(Q)$ for some $c\in\mathbb{R}.$
Then, for any positive sequence $\{\lambda_j\}$ with $\lambda_j\to\infty$
if $\gamma\in(0,\infty)$ and $\lambda_j\to0$ if $\gamma\in(-\infty,0)$ as $j\to\infty$, there exists a sequence
$\{W_j\}_{j\in\mathbb N}\subset L^\infty(\mathbb R^N)$ such that
$W_j\to w$ in $L^1_{\mathrm{loc}}(\mathbb{R}^N)$ as $j\to\infty$ and
\begin{align}\label{tonghui}
\varlimsup_{j\to\infty}F_{\lambda_j,p,\gamma}\left(W_j;\mathbb R^N\right)
\leq\Psi_{p,\gamma}^{Q}(w|_Q).
\end{align}
\end{lemma}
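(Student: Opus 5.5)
The plan is to take a bounded-domain recovery sequence on $Q$ from Theorem~\ref{thm-domfull}, force it to equal $c$ near $\partial Q$ by a cutoff, and extend it by $c$ outside $Q$. The only new contribution then comes from pairs $(x,y)$ with $x\in Q$ and $y\notin Q$, and that contribution will be shown to vanish.

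\textbf{Step 1 (bounded-domain recovery sequence).} By Theorem~\ref{thm-domfull}, applied on the cube $Q$ with the given $\{\lambda_j\}$, there is a sequence $\{u_j\}$ with $u_j\to w|_Q$ in $L^p(Q)$ and
\begin{align*}
\varlimsup_{j\to\infty}F_{\lambda_j,p,\gamma}(u_j;Q)\le\Psi^Q_{p,\gamma}(w|_Q).
\end{align*}
Replacing $u_j$ by $T_M u_j$ with $M:=\|w\|_{L^\infty}$ keeps convergence (since $T_M w=w$) and does not increase the energy, by \eqref{deng}. So we may assume $\|u_j\|_{L^\infty}\le M$.

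\textbf{Step 2 (cutoff to $c$ near $\partial Q$).} Fix a Lipschitz $\eta:Q\to[0,1]$ with $\eta=0$ on $\mathcal C_{a/3}(Q)$ and $\eta=1$ off $\mathcal C_{2a/3}(Q)$. Set $\widetilde u_j:=\eta u_j+(1-\eta)w$; since $w=c$ on $\mathcal C_a(Q)$, this equals $c$ on $\mathcal C_{a/3}(Q)$. Apply Lemma~\ref{lem-cutoff}(ii) with $v=w$ and parameters $\alpha_j\to1$ chosen so that $\|u_j-w\|_{L^p(Q)}^p/(1-\alpha_j)^p\to0$, and rescale $\lambda$ as in the proof of Lemma~\ref{lem-nearmin}. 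The middle term is $F_{\alpha_j\lambda_j,p,\gamma}(w;E_\eta,Q)$, where $E_\eta\subset\mathcal C_{2a/3}(Q)$ and $w=c$ there.
\begin{itemize}
\item For $\gamma<0$: pairs with $|x-y|<a/3$ have $w(x)=w(y)$ and contribute nothing. The remaining pairs contribute at most $\lambda^p|Q|\int_{|h|\ge a/3}|h|^{\gamma-N}\,dh\to0$.
\item For $\gamma>0$: the pairs with $|w(x)-w(y)|>0$ satisfy $|x-y|\ge a/3$, while the threshold condition gives $2M>\lambda(a/3)^{1+\gamma/p}$, which fails for large $\lambda$. So this term is eventually $0$.
\end{itemize}
The Lipschitz term vanishes by the choice of $\alpha_j$. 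For $\gamma>0$ the analogue of Lemma~\ref{lem-cutoff} is the one from \cite{lyyyz26}, and the $\lambda$-rescaling goes in the opposite direction. Set $W_j:=\widetilde u_j$ on $Q$ (with $\lambda_j$ adjusted via a diagonal reparametrization) and $W_j:=c$ on $Q^\complement$. Then $W_j\to w$ in $L^1_{\rm loc}$.

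\textbf{Step 3 (cross terms; the main obstacle).} We have
\begin{align*}
F_{\lambda,p,\gamma}(W_j;\mathbb R^N)=F_{\lambda,p,\gamma}(W_j;Q)+2F_{\lambda,p,\gamma}(W_j;Q,Q^\complement),
\end{align*}
because $W_j$ is constant on $Q^\complement$. In the cross term only $x\in Q\setminus\mathcal C_{a/3}(Q)$ matter, and for such $x$ and $y\in Q^\complement$ we have $|x-y|\ge a/3$ and $|W_j(x)-W_j(y)|\le 2M$.
\begin{itemize}
\item For $\gamma<0$: the cross term is bounded by $\lambda^p|Q|\int_{|h|\ge a/3}|h|^{\gamma-N}\,dh\lesssim\lambda^p a^\gamma|Q|\to0$.
\item For $\gamma>0$: the kernel is not integrable at infinity, but the constraint forces $\lambda|x-y|^{1+\gamma/p}<2M$. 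Since $|x-y|\ge a/3$, this is impossible once $\lambda$ is large, so the cross term is eventually $0$.
\end{itemize}
Combining Steps 1–3 with the $\limsup$ bound gives \eqref{tonghui}. The delicate point is making the $\lambda$-rescaling in the cutoff lemma compatible with the prescribed sequence $\{\lambda_j\}$. I would handle it as in Lemma~\ref{lem-nearmin}: first take the recovery sequence for the rescaled parameters $\lambda_j/(1+\eta_j)$ (or $\alpha_j\lambda_j$), which still tend to the correct limit, so that after the cutoff the energy is evaluated exactly at $\lambda_j$.
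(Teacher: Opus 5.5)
Your construction is the same as the paper's. You truncate a bounded-domain recovery sequence from Theorem~\ref{thm-domfull}, interpolate to $w$ near $\partial Q$ with a Lipschitz cutoff using the convex-combination splitting of Lemma~\ref{lem-cutoff}(ii), and extend by $c$. You then dispose of all pairs at distance $\gtrsim a$, using $\lambda_j^p a^\gamma\to0$ when $\gamma<0$ and $2M<\lambda_j(a/4)^{1+\gamma/p}$ when $\gamma>0$. The paper splits pairs by distance into $D_1,D_2,D_3$ instead of into $Q\times Q$ and $Q\times Q^\complement$, but the estimates are the same.

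The one step that fails as written is your resolution of the ``delicate point.'' In Lemma~\ref{lem-nearmin} the output parameters $\widehat\lambda_j=(1+\eta_j)\lambda_j$ could be chosen freely, because $m_{1,\gamma}$ is an infimum over all parameter sequences. Here $\{\lambda_j\}$ is prescribed. To land exactly at $\lambda_j$, you need $u_j$ to be a recovery sequence for $\{\alpha_j\lambda_j\}$. But $\alpha_j$ has to be chosen after $u_j$, so that $\|u_j-w\|_{L^p(Q)}^p/(1-\alpha_j)^p\to0$. If you fix $\alpha_j$ first, nothing controls how fast the recovery sequence for $\{\alpha_j\lambda_j\}$ converges. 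So the recipe ``take the recovery sequence for $\lambda_j/(1+\eta_j)$'' is circular, and the ``diagonal reparametrization'' you mention is exactly what has to be made explicit.

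The paper breaks the circularity by fixing $\alpha\in(0,1)$. It takes a recovery sequence for $\{\alpha\lambda_j\}$; for fixed $\alpha$ the Lipschitz term $\frac{\mathcal H^{N-1}(\mathbb S^{N-1})L_\eta^p}{|\gamma|(1-\alpha)^p}\|u_j-w\|_{L^p(Q)}^p$ tends to $0$. This gives $\varlimsup_{j\to\infty}F_{\lambda_j,p,\gamma}(W_j^\alpha;\mathbb R^N)\le\alpha^{-p}\Psi^Q_{p,\gamma}(w|_Q)$, and the paper then diagonalizes over $\alpha_n\to1$.

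Alternatively, you can keep your order of choices: first the recovery sequence $u_j$ for $\{\lambda_j\}$, then $\alpha_j$. Your extended cut-off function $\widetilde W_j$ is then controlled at level $\lambda_j/\alpha_j$, and you rescale using
$F_{\lambda,p,\gamma}(tW;\mathbb R^N)=t^pF_{\lambda/t,p,\gamma}(W;\mathbb R^N)$ with $t=\alpha_j$. This gives $F_{\lambda_j,p,\gamma}(\alpha_j\widetilde W_j;\mathbb R^N)\le F_{\lambda_j,p,\gamma}(u_j;Q)+o(1)$, and $\alpha_j\widetilde W_j\to w$ in $L^1_{\rm loc}(\mathbb R^N)$ still holds.

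Finally, nothing reverses for $\gamma>0$. The splitting at level $\alpha\lambda_j$ with $\alpha<1$, including the computation of the Lipschitz term, works verbatim for both signs of $\gamma$. The paper uses it in exactly that form.
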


\begin{proof}
Fix $M\in(\|w\|_{L^\infty(\mathbb R^N)},\infty)$ and $\alpha\in(0,1)$.
Let $T_M$ be the truncation operator as in \eqref{jieding}.
By Theorem \ref{thm-domfull} and \eqref{deng}, we conclude that there
exists a sequence $\{v_j\}_{j\in\mathbb N}\subset L^p(Q)$
such that $v_j\to w$ in $L^p(Q)$ as $j\to\infty$ and
\begin{align}\label{yunei}
\varlimsup_{j\to\infty}F_{\alpha\lambda_j,p,\gamma}\left(T_M v_j;Q\right)
\leq\varlimsup_{j\to\infty}F_{\alpha\lambda_j,p,\gamma}\left(v_j;Q\right)
\leq\Psi_{p,\gamma}^{Q}(w|_Q).
\end{align}
Since $M>\|w\|_{L^\infty(\mathbb R^N)}$, it follows that
\begin{align*}
\left\|T_M v_j-w\right\|_{L^p(Q)}=\left\|T_M v_j-T_M w\right\|_{L^p(Q)}\leq\left\|v_j-w\right\|_{L^p(Q)}\to0
\end{align*}
as $j\to\infty$. Thus, $T_Mv_j\to w$ in $L^p(Q)$ as $j\to\infty$.
Choose a Lipschitz function $\eta:Q\to[0,1]$ such that
$\eta\equiv 0$ on $\mathcal C_{\frac{a}{4}}(Q),$
$\eta\equiv 1$ on $Q\setminus\mathcal C_{\frac{a}{2}}(Q)$,
and $L_\eta\leq\frac{4}{a},$ where $L_\eta$ is its Lipschitz constant.
For any given $j\in\mathbb N$, define $W_j^\alpha$ by setting
\begin{align}\label{pinjie}
W_j^\alpha(x):=
\begin{cases}
\eta(x)T_M v_j(x)+[1-\eta(x)]w(x),&x\in Q,\\
w(x),&x\in Q^\complement.
\end{cases}
\end{align}
Then $W_j^\alpha$ is bounded by $M$ and equals $w$ on
$Q^\complement\cup\mathcal C_{\frac{a}{4}}(Q)$. Moreover,
\begin{align}\label{pinlian}
\left\|W_j^\alpha-w\right\|_{L^p(Q)}\leq\left\|T_M v_j-w\right\|_{L^p(Q)}\to0
\end{align}
as $j\to\infty$. This further implies that $W_j^\alpha\to w$ in $L^1_{\mathrm{loc}}(\mathbb R^N)$ as $j\to\infty$.

Now, we show that
\begin{align}\label{tonggu}
\varlimsup_{j\to\infty}F_{\lambda_j,p,\gamma}\left(W_j^\alpha;\mathbb R^N\right)
\leq\alpha^{-p}\Psi_{p,\gamma}^{Q}(w|_Q).
\end{align}
Fix $j\in\mathbb{N}.$ Let
\begin{align*}
D_1:=\left\{(x,y)\in Q\times Q:0<|x-y|<\frac{a}{4}\right\},
\end{align*}
\begin{align*}
D_2:=\left\{(x,y)\in(\mathbb R^N\times\mathbb R^N)
\setminus(Q\times Q):0<|x-y|<\frac{a}{4}\right\},
\end{align*}
and
\begin{align*}
D_3:=\left\{(x,y)\in\mathbb R^N\times\mathbb R^N:|x-y|\geq \frac{a}{4}\right\}.
\end{align*}
Then
\begin{align*}
\left\{(x,y)\in\mathbb R^N\times\mathbb R^N:x\ne y\right\}
=D_1\cup D_2\cup D_3
\end{align*}
and hence
\begin{align}\label{fenjie}
F_{\lambda_j,p,\gamma}\left(W_j^\alpha;\mathbb R^N\right)
=\sum_{k=1}^3\lambda_j^p
\underset{|W_j^\alpha(x)-W_j^\alpha(y)|>
\lambda_j|x-y|^{1+\frac{\gamma}{p}}}{\iint_{D_k}}
|x-y|^{\gamma-N}\,dy\,dx.
\end{align}
Let $(x,y)\in D_1$. From \eqref{pinjie}, we infer that
\begin{align*}
W_j^\alpha(x)-W_j^\alpha(y)
&=\eta(x)\left[T_M v_j(x)-T_M v_j(y)\right]
+[1-\eta(x)]\left[w(x)-w(y)\right]\\
&\quad+\left[\eta(x)-\eta(y)\right]\left[T_M v_j(y)-w(y)\right].
\end{align*}
If $1-\eta(x)\ne0$, then $x\in\mathcal C_{\frac{a}{2}}(Q)$ and
$y\in\mathcal C_{\frac{3a}{4}}(Q)$. By the assumption on $w$, we obtain
$w(x)=w(y)=c$. Consequently,
\begin{align*}
[1-\eta(x)]\left[w(x)-w(y)\right]=0.
\end{align*}
From this, we deduce that
\begin{align*}
|W_j^\alpha(x)-W_j^\alpha(y)|
\leq\left|T_M v_j(x)-T_M v_j(y)\right|+L_\eta|x-y|\,\left|T_M v_j(y)-w(y)\right|,
\end{align*}
which further implies that
\begin{align*}
&\left\{(x,y)\in D_1:\
|W_j^\alpha(x)-W_j^\alpha(y)|>\lambda_j|x-y|^{1+\frac{\gamma}{p}}\right\}\\
&\quad\subset\left\{(x,y)\in Q\times Q:
|T_M v_j(x)-T_M v_j(y)|>\alpha\lambda_j|x-y|^{1+\frac{\gamma}{p}}\right\}\\
&\qquad\cup\left\{(x,y)\in Q\times Q:0<|x-y|^\gamma<
\frac{L_\eta^p|T_M v_j(y)-w(y)|^p}{(1-\alpha)^p\lambda_j^p}\right\}.
\end{align*}
Consequently,
\begin{align}\label{e5.6}
&\lambda_j^p\underset{|W_j^\alpha(x)-W_j^\alpha(y)|>
\lambda_j|x-y|^{1+\frac{\gamma}{p}}}
{\iint_{D_1}}|x-y|^{\gamma-N}\,dy\,dx\notag\\
&\quad\leq\alpha^{-p}F_{\alpha\lambda_j,p,\gamma}\left(T_M v_j;Q\right)
+\lambda_j^p\int_Q\int_{\left\{0<|h|^\gamma<
\frac{L_\eta^p|T_M v_j(y)-w(y)|^p}{(1-\alpha)^p\lambda_j^p}\right\}}
|h|^{\gamma-N}\,dh\,dy\notag\\
&\quad=\alpha^{-p}F_{\alpha\lambda_j,p,\gamma}\left(T_M v_j;Q\right)
+\frac{\mathcal H^{N-1}\left(\mathbb S^{N-1}\right)L_\eta^p}
{|\gamma|(1-\alpha)^p}\left\|T_M v_j-w\right\|_{L^p(Q)}^p.
\end{align}
Letting $j\to\infty$ in \eqref{e5.6} and using \eqref{yunei}, we conclude that
\begin{align}\label{shangu}
&\varlimsup_{j\to\infty}\lambda_j^p\underset{|W_j^\alpha(x)-W_j^\alpha(y)|>
\lambda_j|x-y|^{1+\frac{\gamma}{p}}}
{\iint_{D_1}}
|x-y|^{\gamma-N}\,dy\,dx\notag\\
&\quad\leq\alpha^{-p}
\varlimsup_{j\to\infty}F_{\alpha\lambda_j,p,\gamma}\left(T_M v_j;Q\right)
\leq\alpha^{-p}\Psi_{p,\gamma}^{Q}(w|_Q).
\end{align}
Let  $(x,y)\in D_2$. By symmetry, we can assume that $x\in Q^\complement$.
Since $|x-y|<\frac{a}{4}$, it follows that
$y\in Q^\complement\cup\mathcal C_{\frac{a}{4}}(Q).$
This, together with the definition of $W_j^\alpha$ and the assumption on $w$, implies that
\begin{align*}
W_j^\alpha(x)=W_j^\alpha(y)=c.
\end{align*}
Thus, for any $j\in\mathbb N$,
\begin{align}\label{kualing}
\lambda_j^p\underset{|W_j^\alpha(x)-W_j^\alpha(y)|>
\lambda_j|x-y|^{1+\frac{\gamma}{p}}}{\iint_{D_2}}
|x-y|^{\gamma-N}\,dy\,dx=0.
\end{align}
Let $(x,y)\in D_3$. If $\gamma\in(0,\infty)$, then
$\lim_{j\to\infty}\lambda_j(\frac a4)^{1+\frac{\gamma}{p}}=\infty.$
From this, we infer that, for all sufficiently large $j$,
\begin{align*}
|W_j^\alpha(x)-W_j^\alpha(y)|
\leq2M\leq\lambda_j\left(\frac a4\right)^{1+\frac{\gamma}{p}}
\leq\lambda_j|x-y|^{1+\frac{\gamma}{p}},
\end{align*}
which implies that, for all sufficiently large $j$,
\begin{align}\label{shenme}
\lambda_j^p\underset{|W_j^\alpha(x)-W_j^\alpha(y)|>
\lambda_j|x-y|^{1+\frac{\gamma}{p}}}{\iint_{D_3}}
|x-y|^{\gamma-N}\,dy\,dx=0.
\end{align}
If $\gamma\in(-\infty,0)$, then $\lim_{j\to\infty}\lambda_j=0$.
Note that, for any $(x,y)\in Q^\complement\times Q^\complement$,
\begin{align*}
W_j^\alpha(x)=W_j^\alpha(y)=w(x)=w(y)=c.
\end{align*}
Thus,
\begin{align}\label{butong}
&\left\{(x,y)\in D_3:|W_j^\alpha(x)-W_j^\alpha(y)|>
\lambda_j|x-y|^{1+\frac{\gamma}{p}}\right\}\notag\\
&\quad\subset D_3\setminus(Q^\complement\times Q^\complement)\notag\\
&\quad\subset\left\{(x,y)\in Q\times\mathbb R^N:|x-y|\geq \frac{a}{4}\right\}
\cup\left\{(x,y)\in\mathbb R^N\times Q:|x-y|\geq \frac{a}{4}\right\}.
\end{align}
Using this, we have
\begin{align*}
&\lambda_j^p\underset{|W_j^\alpha(x)-W_j^\alpha(y)|>
\lambda_j|x-y|^{1+\frac{\gamma}{p}}}{\iint_{D_3}}
|x-y|^{\gamma-N}\,dy\,dx\\
&\quad\leq2\lambda_j^p\int_Q\int_{|x-y|\geq \frac{a}{4}}
|x-y|^{\gamma-N}\,dy\,dx=2\lambda_j^p\int_Q\int_{|h|\geq \frac{a}{4}}|h|^{\gamma-N}\,dh\,dx\\
&\quad=2\lambda_j^p|Q|\frac{\mathcal H^{N-1}\left(\mathbb S^{N-1}\right)}{|\gamma|}
\left(\frac a4\right)^\gamma\to0
\end{align*}
as $j\to\infty$. By this and \eqref{shenme}, we conclude that
\begin{align*}
\lim_{j\to\infty}\lambda_j^p\underset{|W_j^\alpha(x)-W_j^\alpha(y)|>
\lambda_j|x-y|^{1+\frac{\gamma}{p}}}{\iint_{D_3}}
|x-y|^{\gamma-N}\,dy\,dx=0,
\end{align*}
which, combined with \eqref{shangu}, \eqref{kualing} and \eqref{fenjie},
implies that \eqref{tonggu} holds.

Finally,  we remove $\alpha$. Choose $\{\alpha_n\}_{n\in\mathbb{N}}\subset (0,1)$ such that $\lim_{n\to\infty}\alpha_n=1.$ From \eqref{pinlian} and \eqref{tonggu},
we deduce that, for any given $n\in\mathbb{N},$ there exists a
integer $J_n$ such that, for any $j\geq J_n$,
\begin{align}\label{kun1}
\left\|W_j^{\alpha_n}-w\right\|_{L^p(Q)}<\frac1n
\end{align}
and
\begin{align}\label{kun2}
F_{\lambda_j,p,\gamma}\left(W_j^{\alpha_n};\mathbb R^N\right)
\leq\alpha_n^{-p}\Psi_{p,\gamma}^{Q}(w|_Q)+\frac1n.
\end{align}
Without loss of generality, we may assume that $\{J_{n}\}_{n\in\mathbb{N}}$ is increasing. Define
\begin{align*}
W_j:=\begin{cases}
w,&j<J_1,\\
W_j^{\alpha_n},&J_n\leq j<J_{n+1}, n\in\mathbb{N}.
\end{cases}
\end{align*}
Then $|W_j|\leq M$ and $W_j=w$ almost everywhere on
$Q^\complement\cup\mathcal C_{\frac{a}{4}}(Q)$.
By this, \eqref{kun1} and \eqref{kun2}, we find that,
for any $j\in\mathbb{N}$ with $J_n\leq j<J_{n+1}$ for some $n\in\mathbb{N}$,
\begin{align*}
\|W_j-w\|_{L^1(\mathbb R^N)}\leq |Q|^{1-\frac{1}{p}}
\left\|W_j^{\alpha_n}-w\right\|_{L^p(Q)}<\frac{|Q|^{1-\frac{1}{p}}}n
\end{align*}
and
\begin{align*}
F_{\lambda_j,p,\gamma}\left(W_j;\mathbb R^N\right)
\leq\left(1-\frac1n\right)^{-p}\Psi_{p,\gamma}^{Q}(w|_Q)+\frac1n,
\end{align*}
which further implies that
$W_j\to w$ in $L^1_{\rm loc}(\mathbb{R}^N)$ as $j\to\infty$ and
\eqref{tonghui} holds. This completes the proof of Lemma \ref{lem-oneconst}.
\end{proof}

The following lemma is a  one-dimensional variant of Lemma
\ref{lem-oneconst}.
\begin{lemma}\label{lem-twoconst}
Let $p, R, a, Q,$ and $\mathcal{C}_a(Q)$ be as in Lemma \ref{lem-oneconst}
with $N=1$, and assume that
$\gamma\in(-\infty,-1)\cup(0,\infty)$.
Suppose that $w\in L^\infty(\mathbb R)$ satisfying $w|_Q\in W^{1,p}(Q)$
if $p\in(1,\infty)$ and $w|_Q\in BV(Q)$ if $p=1$. Assume that
there exist $c_-,c_+\in\mathbb{R}$ such that
$w\equiv c_-$ on $(-\infty,-R+a)$ and $w\equiv c_+$ on $(R-a,\infty)$.
Then, for any positive sequence $\{\lambda_j\}$ with $\lambda_j\to\infty$
if $\gamma\in(0,\infty)$ and $\lambda_j\to0$ if $\gamma\in(-\infty,0)$ as
$j\to\infty$, there exists a sequence $\{W_j\}_{j\in\mathbb N}\subset L^\infty(\mathbb R)$ such that
$W_j\to w$ in $L^1_{\mathrm{loc}}(\mathbb{R})$ as $j\to\infty$ and
\begin{align*}
\varlimsup_{j\to\infty}F_{\lambda_j,p,\gamma}(W_j;\mathbb R)
\leq\Psi_{p,\gamma}^{Q}(w|_Q).
\end{align*}
\end{lemma}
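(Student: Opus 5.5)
The plan is to repeat the proof of Lemma~\ref{lem-oneconst} almost verbatim. The only change is in the treatment of pairs $(x,y)$ lying on opposite sides of $Q$, where $w$ now takes the two different constants $c_-$ and $c_+$.

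First, fix $M>\|w\|_{L^\infty(\mathbb R)}$ and $\alpha\in(0,1)$. By Theorem~\ref{thm-domfull} (which covers $\gamma\in(-\infty,-1)\cup(0,\infty)$ with $N=1$), take a recovery sequence $v_j\to w$ in $L^p(Q)$ for the parameters $\alpha\lambda_j$, and truncate it to $T_Mv_j$ using \eqref{deng}. Choose the same Lipschitz cutoff $\eta$, vanishing on $\mathcal C_{a/4}(Q)$, equal to $1$ off $\mathcal C_{a/2}(Q)$, with $L_\eta\le 4/a$. Define $W_j^\alpha$ by \eqref{pinjie}. Then $W_j^\alpha$ is bounded by $M$, equals $c_-$ on $(-\infty,-R+a/4)$ and $c_+$ on $(R-a/4,\infty)$, and converges to $w$ in $L^1_{\rm loc}(\mathbb R)$.

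Next, split $F_{\lambda_j,p,\gamma}(W_j^\alpha;\mathbb R)$ over $D_1,D_2,D_3$ as in \eqref{fenjie}.
\begin{enumerate}[{\rm(a)}]
\item The $D_1$ estimate \eqref{shangu} goes through unchanged. Indeed, if $1-\eta(x)\neq0$ and $|x-y|<a/4$ with $x,y\in Q$, then $x,y$ lie in the same boundary collar component, since $a<R$ forces the two components $(-R,-R+a)$ and $(R-a,R)$ to be at distance $2R-2a>0$; shrinking $a$ if needed makes this at least $a/4$. Hence $w(x)=w(y)$.
\item On $D_2$, a point $x\in Q^\complement$ together with a point $y$ satisfying $|x-y|<a/4$ lies entirely on one side of $Q$. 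So $W_j^\alpha(x)=W_j^\alpha(y)$, which gives \eqref{kualing}.
\item For $\gamma>0$, the $D_3$ term vanishes for large $j$ exactly as in \eqref{shenme}.
\end{enumerate}

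For $\gamma<-1$, the $D_3$ contribution from pairs with at least one point in $Q$ is bounded, as before, by $C\lambda_j^p|Q|(a/4)^\gamma\to0$. The genuinely new term comes from pairs $x<-R$, $y>R$ (and the symmetric pairs), where $|W_j^\alpha(x)-W_j^\alpha(y)|=|c_+-c_-|$. I expect this step to be the main obstacle. It is bounded by
\begin{align*}
2\lambda_j^p\int_{-\infty}^{-R}\int_{R}^{\infty}|x-y|^{\gamma-1}\,dy\,dx
=2\lambda_j^p\int_{2R}^\infty (t-2R)\,t^{\gamma-1}\,dt
\le\frac{2\lambda_j^p(2R)^{\gamma+1}}{|\gamma|\,|\gamma+1|},
\end{align*}
which is finite precisely because $\gamma<-1$ and tends to $0$ as $\lambda_j\to0$. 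This is exactly where $\gamma=-1$ must be excluded: for $\gamma=-1$ the integral diverges logarithmically, and one can no longer bound it crudely by the kernel. This divergence is the source of the extra term $|u_+-u_-|$ at the endpoint. Combining the three estimates yields \eqref{tonggu} in this setting.

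Finally, remove $\alpha$ by the same diagonal argument with $\alpha_n\to1$ that closes the proof of Lemma~\ref{lem-oneconst}. This produces $W_j\to w$ in $L^1_{\rm loc}(\mathbb R)$ with $\varlimsup_j F_{\lambda_j,p,\gamma}(W_j;\mathbb R)\le\Psi^Q_{p,\gamma}(w|_Q)$.
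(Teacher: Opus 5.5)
Your proposal is correct and follows the paper's proof essentially verbatim: the construction, the $D_1$ and $D_2$ estimates, and the $\alpha\to1$ diagonalization are inherited from Lemma~\ref{lem-oneconst}, and the only new ingredient is your bound $2\lambda_j^p\int_{-\infty}^{-R}\int_R^\infty (y-x)^{\gamma-1}\,dy\,dx=\frac{2(2R)^{\gamma+1}}{\gamma(\gamma+1)}\lambda_j^p\to0$ for the cross pairs when $\gamma<-1$. (For $\gamma>0$ you rightly use \eqref{shenme} instead, since that integral diverges there; also, shrinking $a$ in step (a) is unnecessary, because $x\in\mathcal C_{a/2}(Q)$ and $|x-y|<a/4$ already keep $y$ within $3a/4$ of the same endpoint of $Q$.)
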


\begin{proof}
The proof is similar to that of Lemma \ref{lem-oneconst}, except for the estimate of \eqref{butong}.
By the assumption of $w$, we find that, for any $(x,y)\in(-\infty,-R]\times(-\infty,-R]$,
\begin{align*}
W_j^\alpha(x)=W_j^\alpha(y)=w(x)=w(y)=c_-,
\end{align*}
and, for any $(x,y)\in[R,\infty)\times[R,\infty)$,
\begin{align*}
W_j^\alpha(x)=W_j^\alpha(y)=w(x)=w(y)=c_+.
\end{align*}
Thus,
\begin{align*}
&\left\{(x,y)\in D_3:\left|W_j^\alpha(x)-W_j^\alpha(y)\right|>
\lambda_j|x-y|^{1+\frac{\gamma}{p}}\right\}\\
&\quad\subset\left\{(x,y)\in Q\times\mathbb R:|x-y|\geq \frac{a}{4}\right\}
\cup\left\{(x,y)\in Q^\complement\times Q:|x-y|\geq \frac{a}{4}\right\}\\
&\qquad\cup\left((-\infty,-R]\times[R,\infty)\right)
\cup\left([R,\infty)\times(-\infty,-R]\right).
\end{align*}
Using this, we further obtain
\begin{align*}
&\lambda_j^p\underset{|W_j^\alpha(x)-W_j^\alpha(y)|>
\lambda_j|x-y|^{1+\frac{\gamma}{p}}}{\iint_{D_3}}
|x-y|^{\gamma-1}\,dy\,dx\\
&\quad\leq2\lambda_j^p\int_Q\int_{|x-y|\geq \frac{a}{4}}
|x-y|^{\gamma-1}\,dy\,dx
+2\lambda_j^p\int_{-\infty}^{-R}\int_R^\infty
(y-x)^{\gamma-1}\,dy\,dx\\
&\quad=\frac{4|Q|}{|\gamma|}\left(\frac a4\right)^\gamma\lambda_j^p
+\frac{2(2R)^{\gamma+1}}{\gamma(\gamma+1)}\lambda_j^p\to0,
\end{align*}
as $j\to\infty$, where, in the last step, we used
the range $\gamma\in (-\infty,-1)\cup(0,\infty).$
This completes the proof of \Cref{lem-twoconst}.
\end{proof}

The following density lemma is well known; in the absence of a suitable
reference, we give a proof for the sake of completeness.
\begin{lemma}\label{lem-sobloc}
\begin{enumerate}[{\rm(i)}]
\item
Let $N\in\mathbb{N}$, $p\in(1,\infty)$ and $u\in\dot W^{1,p}(\mathbb R^N)$. Then there exist
$\{\phi_m\}_{m\in\mathbb{N}}\subset C_{\rm c}^\infty(\mathbb R^N)$ and
$\{c_m\}_{m\in\mathbb{N}}\subset\mathbb R$ such that
\begin{align}\label{tiding}
v_m\to u\quad\text{in }L^1_{\rm loc}(\mathbb R^N)
\ \ \text{and}\ \ \nabla v_m\to\nabla u\quad\text{in }L^p(\mathbb R^N),
\end{align}
with $v_m:=c_m+\phi_m$ for any $m\in\mathbb N$, as $m\to\infty.$

\item Let $N\geq2$ and $u\in\dot{BV}(\mathbb R^N)$. Then there exist
$\{\phi_m\}_{m\in\mathbb{N}}\subset C_{\rm c}^\infty(\mathbb R^N)$ and
$\{c_m\}_{m\in\mathbb{N}}\subset\mathbb R$ such that
\begin{align}\label{bianding}
v_m\to u\quad\text{in }L^1_{\rm loc}(\mathbb R^N)
\ \ \text{and}\ \
|Dv_m|(\mathbb R^N)\to|Du|(\mathbb R^N),
\end{align}
with $v_m:=c_m+\phi_m$ for any $m\in\mathbb N$, as $m\to\infty$.

\item Let $u\in\dot{BV}(\mathbb R)$. Then there exist bounded functions
$\{v_m\}_{m\in\mathbb{N}}\subset\dot{BV}(\mathbb R)$ such that
\begin{align*}
v_m\to u\quad\text{in }L^1_{\rm loc}(\mathbb R)\ \ \text{and}\ \
|Dv_m|(\mathbb R)\to|Du|(\mathbb R)
\end{align*}
as $m\to\infty$.
Moreover, there exist constants
$\{c_m^-\}_{m\in\mathbb{N}},\{c_m^+\}_{m\in\mathbb{N}}\subset\mathbb R$ such that, for any $m\in\mathbb{N},$
$v_m\equiv c_m^{-}$ on $(-\infty,-m]$ and
$v_m\equiv c_m^{+}$ on $[m,\infty)$.
\end{enumerate}
\end{lemma}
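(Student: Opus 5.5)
We treat the three parts separately; each rests on a cutoff at large scale followed by mollification.

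For (i), fix $u\in\dot W^{1,p}(\mathbb R^N)$ with $p\in(1,\infty)$. We first reduce to bounded $u$. The truncations $T_M(u)$ from \eqref{jieding} converge to $u$ in $L^1_{\rm loc}$, and $\nabla T_M(u)=\mathbf 1_{\{|u|<M\}}\nabla u\to\nabla u$ in $L^p$ by dominated convergence. A diagonal argument then lets us assume $u\in L^\infty$. Next take $\chi_R(x):=\chi(x/R)$ with $\chi\in C_{\rm c}^\infty$, $\chi\equiv1$ on $B(\mathbf 0,1)$ and $\operatorname{supp}\chi\subset B(\mathbf 0,2)$. Let $c_R:=\fint_{B(\mathbf 0,2R)\setminus B(\mathbf 0,R)}u$ and set
\begin{align*}
w_R:=c_R+\chi_R(u-c_R).
\end{align*}
Then $w_R=u$ on $B(\mathbf 0,R)$, and
\begin{align*}
\nabla w_R-\nabla u=(\chi_R-1)\nabla u+R^{-1}(\nabla\chi)(\cdot/R)\,(u-c_R).
\end{align*}
The first term tends to $0$ in $L^p$. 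For the second, the Poincar\'e inequality on the annulus (scale invariant) gives
\begin{align*}
R^{-1}\|u-c_R\|_{L^p(B(\mathbf 0,2R)\setminus B(\mathbf 0,R))}\lesssim\|\nabla u\|_{L^p(B(\mathbf 0,2R)\setminus B(\mathbf 0,R))}\to0.
\end{align*}
When $N=1$ the "annulus" is two intervals, so we use separate constants on the left and right pieces. This would produce two constants $c^\pm$, so for $p>1$ the gluing needs extra care: we take the cutoff region $[R,R^2]$ with a logarithmic cutoff, or simply a linear interpolation over a long interval $[R,LR]$ between $c^-$ and $c^+$. Its gradient in $L^p$ is of order $|c^+-c^-|(LR)^{1/p-1}$. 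Since $u$ is bounded, $|c^+-c^-|\le2M$, and this tends to $0$ as $L\to\infty$ because $p>1$. Finally, mollify $\phi_R:=\chi_R(u-c_R)\in W^{1,p}$, which has compact support, to obtain $\phi_m\in C_{\rm c}^\infty$. A diagonal choice over $R$ and the mollification parameter then gives \eqref{tiding}.

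For (ii), with $N\ge2$, we run the same scheme using $|Dv|$ in place of $\|\nabla v\|_{L^p}$. Truncation gives $|DT_M u|(\mathbb R^N)\le|Du|(\mathbb R^N)$, and lower semicontinuity gives convergence of the total variations along $L^1_{\rm loc}$ convergence. The annulus is connected, so the Poincar\'e inequality for BV on annuli applies:
\begin{align*}
R^{-1}\|u-c_R\|_{L^1(B(\mathbf 0,2R)\setminus B(\mathbf 0,R))}\lesssim|Du|(B(\mathbf 0,2R)\setminus B(\mathbf 0,R))\to0.
\end{align*}
Hence $|Dw_R|(\mathbb R^N)\le|Du|(B(\mathbf 0,2R))+o(1)$. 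We then apply strict approximation by mollification to $\phi_R$ (compact support, so mollifying preserves the total variation in the limit), and combine the upper bound with lower semicontinuity to get \eqref{bianding}.

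For (iii), we use a good representative $\widetilde u$; it is bounded since $|\widetilde u(y)-\widetilde u(x)|\le|Du|(\mathbb R)$. Set $v_m:=\widetilde u(-m)$ on $(-\infty,-m]$, $v_m:=\widetilde u$ on $(-m,m)$, and $v_m:=\widetilde u(m)$ on $[m,\infty)$. Then $v_m=u$ on $(-m,m)$, so $v_m\to u$ in $L^1_{\rm loc}$, and $|Dv_m|(\mathbb R)\le|Du|(\mathbb R)$ because no jumps are created at $\pm m$ (the representative is continuous there up to a choice of the values $\widetilde u(\pm m)$). Lower semicontinuity then gives the convergence of total variations, and we take $c_m^\pm:=\widetilde u(\pm m)$.

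The main obstacle is the cutoff error term in (i) and (ii), namely the annular Poincar\'e estimate. This requires connectedness of the annulus ($N\ge2$), together with the separate handling of the two ends when $N=1$ and $p>1$ described above.
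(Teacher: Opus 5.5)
Your argument is correct, but for (i) and (ii) it takes a genuinely different route from the paper.

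The paper does not cut off by hand. For (i) it cites the Haj\l asz--Ka\l amajski density theorem \cite[Theorem~4]{hk95} to obtain $\phi_m\in C_{\rm c}^\infty(\mathbb R^N)$ with $\nabla\phi_m\to\nabla u$ in $L^p(\mathbb R^N)$. It then recovers $L^1_{\rm loc}$ convergence separately, by normalizing $c_m:=(u-\phi_m)_{B(\mathbf 0,1)}$ and applying Poincar\'e's inequality on $B(\mathbf 0,1)$ and $B(\mathbf 0,R)$. For (ii) it first mollifies $u$ and then applies the same theorem with $p=1$, which is where $N\ge2$ is used.

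You instead essentially reprove the needed case of that density theorem. The large-scale cutoff $w_R=c_R+\chi_R(u-c_R)$, together with the scale-invariant annular Poincar\'e inequality, controls the gradient (or total variation) error. Because $w_R=u$ on $B(\mathbf 0,R)$, the $L^1_{\rm loc}$ convergence is automatic and the constants need no normalization.

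The paper's proof is shorter. Yours is self-contained and shows exactly where the hypotheses enter: connectedness of the annulus for $N\ge2$, and, for $N=1$, the interpolation cost $|c^+-c^-|\,(LR)^{1/p-1}$, which vanishes only when $p>1$. This is also why (iii) has to allow different constants at $\pm\infty$.

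For $N=1$ you should state the construction cleanly. Cut off to $c^\pm$ on $R\le|x|\le2R$, then on one side interpolate linearly from $c^+$ to $c^-$ over a long interval, so that the result is one constant plus a compactly supported $W^{1,p}$ function. The ``logarithmic cutoff'' is not needed, and the truncation is only a convenience.

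For (iii) your construction is the paper's except for the constants. The paper takes $c_m^-:=u(-m+)$ and $c_m^+:=u(m-)$, which gives exactly $Dv_m=Du\llcorner(-m,m)$. With the values $\widetilde u(\pm m)$ of a good representative, your claim that no jumps are created at $\pm m$ is false when $\pm m$ is an atom of $Du$: then $v_m$ jumps at $m$ by $\widetilde u(m)-u(m-)$. The bound you need still holds. Since $\widetilde u(\pm m)$ lies between the one-sided limits, the created jumps are at most $|Du|(\{\pm m\})$, so $|Dv_m|(\mathbb R)\le|Du|([-m,m])\le|Du|(\mathbb R)$. Lower semicontinuity then finishes the argument as you say.
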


\begin{proof}
We first show (i). By \cite[Theorem~4]{hk95}, we find that there exist
$\{\phi_m\}_{m\in\mathbb{N}}\subset C_{\rm c}^\infty(\mathbb R^N)$ such that
\begin{align*}
\|\nabla\phi_m-\nabla u\|_{L^p(\mathbb R^N)}\to0
\end{align*}
as $m\to\infty$. Let $R\in (1,\infty)$ and $B_R:=B(\mathbf{0},R)$.
For  any $m\in\mathbb{N},$ define $c_m:=(u-\phi_m)_{B_1}$ and $v_m:=c_m+\phi_m$.
Then, from Poincar\'e's inequality,  we deduce that
\begin{align}\label{ji}
\|v_m-u-(v_m-u)_{B_R}\|_{L^p(B_R)}\leq C_R\|\nabla v_m-\nabla u\|_{L^p(B_R)}\to 0
\end{align}
as $m\to\infty,$
where $C_R$ is a positive constant depending only on $R.$
Applying Poincar\'e's inequality again and  using the assumption that
$(v_m-u)_{B_1}=0$, we obtain, for any $m\in\mathbb{N},$
\begin{align}\label{wen}
\left|(v_m-u)_{B_R}\right|
&=\left|\frac1{|B_1|}\int_{B_1}\left((v_m-u)_{B_R}-v_m(x)+u(x)\right)\,dx\right|\notag\\
&\leq C_R\left\|v_m-u-(v_m-u)_{B_R}\right\|_{L^p(B_R)}
\leq C_R\left\|\nabla v_m-\nabla u\right\|_{L^p(B_R)}\to 0
\end{align}
as $m\to\infty.$ By this and \eqref{ji}, we conclude that \eqref{tiding} holds.
This completes the proof of  (i).

Next, we prove (ii). Let $\rho\in C_{\rm c}^\infty(B_1)$ be nonnegative with
$\int_{\mathbb R^N}\rho\,dx=1$.
For any $m\in\mathbb{N}$, let $u_m:=\rho_{\frac{1}{m}}*u$.
Then, for any $m\in\mathbb{N},$
$u_m\in C^\infty(\mathbb R^N)\cap\dot W^{1,1}(\mathbb R^N)$,
\begin{align}\label{ki}
\|\nabla u_m\|_{L^1(\mathbb R^N)}
\leq|Du|(\mathbb R^N),
\end{align}
and  $u_m\to u$ in $L^1_{\rm loc}(\mathbb R^N)$ as $m\to\infty$.\
From the assumption that $N\geq2$ and  \cite[Theorem~4]{hk95}, we infer that
there exist $\{\phi_m\}_{m\in\mathbb{N}}\subset C_{\rm c}^\infty(\mathbb R^N)$ such that, for any $m\in\mathbb N$,
\begin{align}\label{fuk}
\|\nabla\phi_m-\nabla u_m\|_{L^1(\mathbb R^N)}<\frac1m.
\end{align}
For any $m\in\mathbb{N}$, define $c_m:=(u_m-\phi_m)_{B_1}$
and $v_m:=c_m+\phi_m$.
By an argument similar to that used in \eqref{wen} and \eqref{ji}, we find that,
for any $m\in\mathbb{N},$
\begin{align*}
\left\|v_m-u_m-(v_m-u_m)_{B_R}\right\|_{L^1(B_R)}\lesssim
\left\|\nabla v_m-\nabla u_m\right\|_{L^1(B_R)}\leq \frac{1}{m}
\end{align*}
and
\begin{align*}
\left|(v_m-u_m)_{B_R}\right|\lesssim \frac{1}{m}.
\end{align*}
This, combined with the fact that
$u_m\to u$ in $L^1_{\rm loc}(\mathbb R^N)$ as $m\to\infty$,
implies that  $v_m\to u$ in $L^1_{\mathrm{loc}}(\mathbb{R}^N)$ as $m\to\infty.$
Moreover, using \eqref{fuk} and \eqref{ki}, we obtain, for any $m\in\mathbb{N},$
\begin{align*}
|Dv_m|(\mathbb R^N)=\|\nabla\phi_m\|_{L^1(\mathbb R^N)}
\leq\|\nabla u_m\|_{L^1(\mathbb R^N)}+\frac1m\leq |Du|(\mathbb R^N)+\frac1m.
\end{align*}
Thus,
\begin{align*}
|Du|(B_R)\leq\varliminf_{m\to\infty}|Dv_m|(B_R)
\leq\varliminf_{m\to\infty}|Dv_m|(\mathbb R^N)\leq\varlimsup_{m\to\infty}|Dv_m|(\mathbb R^N)\leq |Du|(\mathbb R^N).
\end{align*}
Letting $R\to\infty$ in the above inequality, we conclude that
\eqref{bianding} holds. This completes the proof of (ii).

Finally, we show (iii). From \cite[Theorem~3.28]{afp00}, it follows that
$u$ have one-side limits at every point, which are denoted by $u(x-)$ and $u(x+)$. Moreover, for any given $a,b\in\mathbb{R}$ with $a<b$ and almost every $x\in (a,b),$
\begin{align}\label{qiutian}
u(x)=u(a+)+Du((a,x])
\end{align}
and
\begin{align}\label{tianqiu}
u(b-)-u(a+)=Du((a,b)).
\end{align}
For any $m\in\mathbb{N}$, let $c_m^-:=u(-m+)$ and $c_m^+:=u(m-)$.
For any given $m\in\mathbb N$ and any $x\in\mathbb R$, define
\begin{align*}
v_m(x):=
\begin{cases}
c_m^-,&x\in(-\infty,-m],\\
u(x),& x\in(-m,m),\\
c_m^+,&x\in[m,\infty).
\end{cases}
\end{align*}
By \eqref{qiutian} and \eqref{tianqiu}, we find that, for any $m\in\mathbb N$,
\begin{align*}
\sup_{x\in\mathbb R}|v_m(x)|\leq |u(-m+)|+|Du|((-m,m))<\infty.
\end{align*}
Thus, for any given $m\in\mathbb N$, $v_m\in L^1_{\rm loc}(\mathbb R)$
is bounded. Moreover, it is obviously that
$v_m\to u$ in $L^1_{\rm loc}(\mathbb R)$ as $m\to\infty.$
Note that, for any $m\in\mathbb{N}$ and $\varphi\in C^\infty_{\mathrm{c}}(\mathbb{R}),$
\begin{align*}
-\int_{\mathbb R}v_m(x)\varphi'(x)\,dx&=-c_m^-\int_{-\infty}^{-m}\varphi'(x)\,dx
-\int_{-m}^{m}u(x)\varphi'(x)\,dx
-c_m^+\int_m^\infty\varphi'(x)\,dx\\
&=-c_m^-\varphi(-m)
+\int_{(-m,m)}\varphi\,dDu
+u(-m+)\varphi(-m)-u(m-)\varphi(m)
+c_m^+\varphi(m)\\
&=\int_{(-m,m)}\varphi\,dDu,
\end{align*}
which implies that, for any given $m\in\mathbb{N},$
$Dv_m=Du\mathopen{\scalebox{1.6}{$\llcorner$}}(-m,m)$
and hence $|Dv_m|=|Du|\mathopen{\scalebox{1.6}{$\llcorner$}}(-m,m)$.
This further yields
\begin{align*}
\lim_{m\to\infty}|Dv_m|(\mathbb R)=\lim_{m\to\infty}|Du|((-m,m))=|Du|(\mathbb R).
\end{align*}
This completes the proof of (iii) and hence Lemma \ref{lem-sobloc}.
\end{proof}

Now, we prove Theorem \ref{thm-whlndeg} by using Proposition \ref{prop-whllower}
and Lemma \ref{lem-sobloc}.
\begin{proof}[Proof of Theorem \ref{thm-whlndeg}]
Fix a sequence $\{\lambda_j\}_{j\in\mathbb{N}}\subset(0,\infty)$ such that
$\lambda_j\to\infty$ if $\gamma\in(0,\infty)$ and $\lambda_j\to0$ if $\gamma\in(-\infty,-1]$ as $j\to\infty$. By Proposition \ref{prop-whllower},
we obtain the desired $\Gamma$-liminf  inequality.
It remains to show that, for any $u\in L^1_{\rm loc}(\mathbb R^N)$,
there exists a sequence $\{u_j\}_{j\in\mathbb{N}}$ such that
$u_j\to u$ in $L^1_{\rm loc}(\mathbb R^N)$ as $j\to\infty$
and
\begin{align*}
\varlimsup_{j\to\infty}F_{\lambda_j,p,\gamma}\left(u_j;\mathbb R^N\right)
\leq\Psi_{p,\gamma}^{\rm hom}(u).
\end{align*}
If $\Psi_{p,\gamma}^{\rm hom}(u)=\infty$, then there is nothing to prove.
We now assume $\Psi_{p,\gamma}^{\rm hom}(u)<\infty$.
From Lemma \ref{lem-sobloc}, it follows that there exists a sequence
$\{w_m\}_{m\in\mathbb{N}}\subset L^\infty(\mathbb R^N)$ such that
\begin{align}\label{nengjin}
w_m\to u\quad\text{in }L^1_{\rm loc}(\mathbb R^N)
\ \ \text{and}\ \
\Psi_{p,\gamma}^{\rm hom}(w_m)\to\Psi_{p,\gamma}^{\rm hom}(u)
\end{align}
as $m\to\infty$. For any given $m\in\mathbb{N},$ choosing $r_{m}$ sufficiently
large and applying Lemmas \ref{lem-oneconst} and \ref{lem-twoconst} with
$Q_{m}:=(-r_m,r_m)^N$, we conclude that there exists a sequence
$\{W_j^{(m)}\}_{j\in\mathbb{N}}\subset L^\infty(\mathbb R^N)$ such that
\begin{align}\label{whole-fixed-local}
W_j^{(m)}\to w_m\quad\text{in }L^1_{\rm loc}(\mathbb R^N)
\end{align}
as $j\to\infty$ and
\begin{align}\label{whole-fixed-upper}
\varlimsup_{j\to\infty}F_{\lambda_j,p,\gamma}\left(W_j^{(m)};\mathbb R^N\right)
\leq \Psi_{p,\gamma}^{Q_m}\left(w_m|_{Q_m}\right)=\Psi_{p,\gamma}^{\rm hom}(w_m).
\end{align}
By \eqref{whole-fixed-local} and \eqref{whole-fixed-upper},
we find that, for any given $m\in\mathbb{N}$, there exists an integer $J_m$
such that, for any $j\geq J_m$,
\begin{align}\label{whole-diagonal-local}
\left\|W_j^{(m)}-w_m\right\|_{L^1((-m,m)^N)}<\frac1m
\end{align}
and
\begin{align}\label{whole-diagonal-upper}
F_{\lambda_j,p,\gamma}\left(W_j^{(m)};\mathbb R^N\right)
\leq\Psi_{p,\gamma}^{\rm hom}(w_m)+\frac1m.
\end{align}
Without loss of generality, we may assume that $\{J_m\}_{m\in\mathbb{N}}$ is strictly increasing.
For any $j\in\mathbb N$, define
\begin{align*}
u_j:=
\begin{cases}
w_1,&j<J_1,\\
W_j^{(m)},&J_m\leq j<J_{m+1},\ m\in\mathbb{N}.
\end{cases}
\end{align*}
From \eqref{whole-diagonal-local} and \eqref{whole-diagonal-upper},
we infer that, for any fixed $R\in (0,\infty)$ and any $j\in\mathbb{N}$ with
$J_m\leq j<J_{m+1}$ for some $m\in[R,\infty)\cap\mathbb N$,
\begin{align*}
\|u_j-u\|_{L^1((-R,R)^N)}&\leq\left\|W_j^{(m)}-w_m\right\|_{L^1((-R,R)^N)}
+\left\|w_m-u\right\|_{L^1((-R,R)^N)}\\
&<\frac1m+\left\|w_m-u\right\|_{L^1((-R,R)^N)}
\end{align*}
and
\begin{align}\label{e5.30}
F_{\lambda_j,p,\gamma}\left(u_j;\mathbb R^N\right)\leq\Psi_{p,\gamma}^{\rm hom}(w_m)+\frac1m.
\end{align}
Letting $j\to\infty$ in \eqref{e5.30} and using \eqref{nengjin}, we conclude that
$u_j\to u$ in $L^1_{\rm loc}(\mathbb R^N)$ and
\begin{align*}
\varlimsup_{j\to\infty}F_{\lambda_j,p,\gamma}\left(u_j;\mathbb R^N\right)
\leq\Psi_{p,\gamma}^{\rm hom}(u).
\end{align*}
This completes the proof of Theorem \ref{thm-whlndeg}.
\end{proof}

\subsection{The exceptional endpoint: $(N,p,\gamma)=(1,1,-1)$}
\label{sec-endpoint}
In this subsection, we deal with the  case $(N,p,\gamma)=(1,1,-1)$.
 Recall that, for any $u\in \dot{BV}(\mathbb{R})$,
\begin{align*}
\Psi_{1,-1}^{\rm hom}(u):= C_{1,1,-1}^{\rm cell}
\left[|Du|(\mathbb R)+|u_+-u_-|\right].
\end{align*}
The following result is the main result of this section,  which follows directly
from Propositions \ref{prop-closinf} and \ref{prop-clossup} below.
\begin{theorem}\label{thm-whlendpt}
The family $F_{\lambda,1,-1}(\cdot;\mathbb R)$ converges, in the sense of
$\Gamma$-convergence in $L^1_{\rm loc}(\mathbb R)$, to $\Psi_{1,-1}^{\rm hom}$ as $\lambda\to0^+$.
\end{theorem}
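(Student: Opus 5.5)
The approach I would take rests on one structural observation: at $(N,p,\gamma)=(1,1,-1)$ the functional
\[
F_{\lambda,1,-1}(u;\mathbb R)=\lambda\iint_{\{|u(x)-u(y)|>\lambda\}}|x-y|^{-2}\,dy\,dx
\]
is invariant under Möbius changes of variables. The measure $|x-y|^{-2}\,dx\,dy$ is invariant under $\phi(t):=-1/t$, since $|\phi(s)-\phi(t)|^{-2}|\phi'(s)||\phi'(t)|=|s-t|^{-2}$. The constraint $|u(x)-u(y)|>\lambda$ is unchanged by composition. Hence $F_{\lambda,1,-1}(u;\mathbb R)=F_{\lambda,1,-1}(u\circ\phi;\mathbb R)$; this is the same scaling as Lemma~\ref{lem-scaling} but for inversion.

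Under $\phi$ the point at infinity moves to $0$. A function $u\in\dot{BV}(\mathbb R)$ with limits $u_\pm$ becomes $v:=u\circ\phi$, whose one-sided values at $0$ are $v(0^-)=u_+$ and $v(0^+)=u_-$. Thus $v$ has an extra jump of size $|u_+-u_-|$ at $0$, and $|Dv|(\mathbb R)=|Du|(\mathbb R)+|u_+-u_-|$. This explains the additional term and reduces both inequalities to the bounded-domain results at $\gamma=-1=-p$, namely Proposition~\ref{prop-critdom} and Lemma~\ref{lem-oneconst} (whose range includes $\gamma=-1$).

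\textbf{$\Gamma$-liminf (Proposition~\ref{prop-closinf}).} Let $u_j\to u$ in $L^1_{\rm loc}(\mathbb R)$ with $L:=\lim_j F_{\lambda_j,1,-1}(u_j;\mathbb R)<\infty$. Proposition~\ref{prop-whllower} already gives $u\in\dot{BV}(\mathbb R)$, so $u_\pm$ exist. After a translation I may assume $0$ is a continuity point of $u$. Set $v_j:=u_j\circ\phi$ and $v:=u\circ\phi$, so that $F_{\lambda_j,1,-1}(v_j;\mathbb R)=F_{\lambda_j,1,-1}(u_j;\mathbb R)$.

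On each interval $J_a:=(-1/a,1/a)$ I know $v_j\to v$ in $L^1$ only away from $t=0$, i.e.\ on $J_a\setminus(-\varepsilon,\varepsilon)$. This lack of control near $t=0$ (the original infinity) is the step I expect to be the main obstacle. I would handle it by compactness. After truncating with $T_M$ (harmless by \eqref{deng}), the uniform energy bound on $J_a$ together with the oscillation bound of Lemma~\ref{lem-critosc} on subintervals, or the Bourgain--Nguyen compactness underlying Lemma~\ref{lem-critsmth}, should give a subsequence with $T_Mv_j\to\widetilde v$ in $L^1(J_a)$. Since $\widetilde v=T_Mv$ off every $(-\varepsilon,\varepsilon)$, in fact $\widetilde v=T_Mv$.

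Proposition~\ref{prop-critdom} on $J_a$ then yields
\[
L\ge C^{\rm cell}_{1,1,-1}\,|D(T_Mv)|(J_a).
\]
Letting $a\to0$ and then $M\to\infty$ gives $L\ge C^{\rm cell}_{1,1,-1}\,|Dv|(\mathbb R)$, which is the desired bound. If compactness turns out to be delicate, the fallback is to use only that $v_j$ is close to $u_+$ on $(-\delta,-\varepsilon)$ and to $u_-$ on $(\varepsilon,\delta)$, and to bound the energy on $(-\delta,\delta)$ from below via Lemma~\ref{lem-mplsc}-type lower semicontinuity applied to the jump profile.

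\textbf{$\Gamma$-limsup (Proposition~\ref{prop-clossup}).} By Lemma~\ref{lem-sobloc}(iii) and a diagonal argument as in the proof of Theorem~\ref{thm-whlndeg}, it suffices to treat bounded $u$ with $u\equiv c_-$ on $(-\infty,-m]$ and $u\equiv c_+$ on $[m,\infty)$. Using strict approximation inside $(-m,m)$, I would further reduce to $u$ constant $\equiv c_0$ on a neighbourhood $(-\delta,\delta)$ of $0$.

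Then $w:=u\circ\phi$ is bounded and equals $c_0$ on $\{|t|>1/\delta\}$. It is piecewise constant ($c_+$ and $c_-$) near $0$, and $|Dw|(\mathbb R)=|Du|(\mathbb R)+|c_+-c_-|$. Lemma~\ref{lem-oneconst} with $N=1$, $\gamma=-1$, $Q=(-R,R)$ for some $R>1/\delta$ and constant $c_0$ gives $W_j\to w$ in $L^1_{\rm loc}$ with
\[
\varlimsup_j F_{\lambda_j,1,-1}(W_j;\mathbb R)\le C^{\rm cell}_{1,1,-1}\,|Dw|(\mathbb R).
\]
The construction in Lemma~\ref{lem-oneconst} keeps $W_j=w$ outside $Q$. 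So $u_j:=W_j\circ\phi$ equals $u$ near $x=0$ and converges to $u$ in $L^1$ on compact sets away from $0$; hence $u_j\to u$ in $L^1_{\rm loc}(\mathbb R)$. By Möbius invariance, $F_{\lambda_j,1,-1}(u_j;\mathbb R)=F_{\lambda_j,1,-1}(W_j;\mathbb R)$, which gives the required upper bound, and the final diagonal argument concludes.
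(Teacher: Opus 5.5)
Your argument is correct, but it is organized differently from the paper's.

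\textbf{$\Gamma$-liminf.} In Proposition~\ref{prop-closinf} the paper does not invert the whole line. It uses $F_{\lambda_j,1,-1}(u_j;\mathbb R)\ge F_{\lambda_j,1,-1}(u_j;(-R,R))+F_{\lambda_j,1,-1}(u_j;E_R)$ with $E_R:=\mathbb R\setminus[-R,R]$. It gets $C^{\rm cell}_{1,1,-1}|Du|((-R,R))$ from Theorem~\ref{thm-domfull}(ii) directly. It then applies $s\mapsto 1/s$ only to the exterior piece, keeping just the atom $|Dv|(\{0\})=|u_+-u_-|$.

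You instead obtain both terms from one application of Proposition~\ref{prop-critdom} on $J_a$. This costs you the translation that makes $0$ a non-atom of $Du$; otherwise that jump is sent to infinity and lost. You handled this correctly.

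\textbf{$\Gamma$-limsup.} In Proposition~\ref{prop-clossup} the paper uses no inversion. It sets $w_m:=u$ on $(-m,m)$ and $w_m:=u(-m+)$ elsewhere, which relocates the jump at infinity to the point $m$. Then $|Dw_m|(\mathbb R)=|Du|((-m,m))+|u(-m+)-u(m-)|$, and Lemma~\ref{lem-oneconst} applies at once.

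Your route is longer in two respects:
\begin{itemize}
\item It needs the extra reduction to $u$ constant near $0$. The simplest way is to replace $u$ on $(-\delta,\delta)$ by its value at a continuity point; strict approximation is not the right tool here.
\item The $L^1_{\rm loc}$ convergence of $W_j\circ\phi$ near $x=0$ relies on features of the \emph{proof} of Lemma~\ref{lem-oneconst}, namely $W_j=w$ off $Q$ and $|W_j|\le M$. These are not in its statement.
\end{itemize}
What your approach buys is a conceptual explanation. At $(N,p,\gamma)=(1,1,-1)$ the functional is invariant under $t\mapsto -1/t$, so infinity is an ordinary point and $|u_+-u_-|$ is just the jump there.

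\textbf{The step you flag as the main obstacle is not one.} The compactness detour should be dropped. After truncation, $|T_Mv_j-T_Mv|\le 2M$, so $\|T_Mv_j-T_Mv\|_{L^1((-\varepsilon,\varepsilon))}\le 4M\varepsilon$. Together with the convergence on $J_a\setminus(-\varepsilon,\varepsilon)$, this gives $T_Mv_j\to T_Mv$ in $L^1(J_a)$ directly. This is exactly how the paper proves \eqref{fanlian}.

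So you need neither Bourgain--Nguyen compactness (which the paper never states) nor your fallback argument. Moreover, $u\in\dot{BV}(\mathbb R)$ is bounded, so you can fix $M>\|u\|_{L^\infty(\mathbb R)}$. Then $T_Mv=v$ and no limit $M\to\infty$ is required.
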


\begin{proposition}\label{prop-closinf}
Let $\{\lambda_j\}_{j\in\mathbb N}\subset(0,\infty)$ satisfy
$\lambda_j\to0$ as $j\to\infty$, $u\in L^1_{\rm loc}(\mathbb R)$, and
$\{u_j\}_{j\in\mathbb N}\subset L^1_{\rm loc}(\mathbb R)$ satisfy
$u_j\to u$ in $L^1_{\rm loc}(\mathbb R)$ as $j\to\infty$.
If
\begin{align*}
\varliminf_{j\to\infty}F_{\lambda_j,1,-1}(u_j;\mathbb R)<\infty,
\end{align*}
then $u\in\dot{BV}(\mathbb R)$ and
\begin{align}\label{bixia}
\varliminf_{j\to\infty}F_{\lambda_j,1,-1}\left(u_j;\mathbb R\right)\geq C_{1,1,-1}^{\rm cell}\left[|Du|(\mathbb R)+|u_+-u_-|\right].
\end{align}
\end{proposition}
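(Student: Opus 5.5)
The plan is to split the energy into a local part, controlled by the bounded-domain result, and a part escaping to infinity. The escaping part will be captured through the exact scale invariance of $F_{\lambda,1,-1}$ in dimension one.

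\emph{Step 1: $u\in\dot{BV}(\mathbb R)$ and the local bound.} Let $L:=\varliminf_{j\to\infty}F_{\lambda_j,1,-1}(u_j;\mathbb R)<\infty$, and pass to a subsequence realizing $L$ as a limit. For each $R>0$, monotonicity in the domain and Theorem~\ref{thm-domfull}(ii) applied on $(-R,R)$ give
\begin{align*}
C_{1,1,-1}^{\rm cell}\,|Du|((-R,R))\le\varliminf_{j\to\infty}F_{\lambda_j,1,-1}(u_j;(-R,R))\le L.
\end{align*}
Letting $R\to\infty$ yields $u\in\dot{BV}(\mathbb R)$ with $C_{1,1,-1}^{\rm cell}|Du|(\mathbb R)\le L$. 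In particular $u_\pm$ exist. This settles the case $u_+=u_-$, so from now on I assume $u_+\neq u_-$.

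\emph{Step 2: scale invariance.} Lemma~\ref{lem-scaling} with $N=p=1$, $\gamma=-1$, $a=1/r$ shows that $F_{\lambda,1,-1}(v;(-T,T))=F_{\lambda,1,-1}(v(2T\,\cdot);I_0)$. The parameter $\lambda$ is unchanged: the factor $|a|^pr^N=1$ and $\lambda r^{\gamma/p}/|a|=\lambda$. Now fix $\varepsilon>0$ and $R$ so large that $|Du|(\mathbb R\setminus(-R,R))<\varepsilon$. Then $u(2T\cdot)$ converges in $L^1(I_0)$ to the step function $S:=u_-\mathbf 1_{(-1/2,0)}+u_+\mathbf 1_{(0,1/2)}$ as $T\to\infty$. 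By a diagonal choice $T_j\to\infty$, slowly enough relative to $u_j\to u$ in $L^1_{\rm loc}$, the rescaled functions $w_j:=u_j(2T_j\cdot)$ also converge to $S$ in $L^1(I_0)$.

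\emph{Step 3: separating the two contributions.} The energy is superadditive over disjoint sets: $F(v;A\cup B)\ge F(v;A)+F(v;B)$, because cross terms are nonnegative. I apply this to $A=(-R,R)$ and $B_j=(-T_j,T_j)\setminus[-R,R]$. The set $A$ contributes at least $C^{\rm cell}|Du|((-R,R))$ by Step~1. The set $B_j$ rescales to $I_0\setminus[-\delta_j,\delta_j]$ with $\delta_j:=R/(2T_j)\to0$.

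The key claim is that
\begin{align*}
\varliminf_{j\to\infty}F_{\lambda_j,1,-1}\bigl(w_j;I_0\setminus[-\delta_j,\delta_j]\bigr)\ge C^{\rm cell}_{1,1,-1}|u_+-u_-|-o_\varepsilon(1).
\end{align*}
Granting it, I sum the two contributions, use $|Du|((-R,R))\ge|Du|(\mathbb R)-\varepsilon$, and let $\varepsilon\to0$; this gives \eqref{bixia}.

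\emph{Step 4: the main obstacle.} The main obstacle is the claim in Step~3: a liminf on an interval with a vanishing hole, where the jump of $S$ sits exactly at the hole. Theorem~\ref{thm-domfull} applies only on intervals, so the hole must be filled. My first attempt is to fill $[-\delta_j,\delta_j]$ in the original variables by the values $u_j$ already has there, since $u_j$ is defined on all of $\mathbb R$. That is, compare with the full interval $(-T_j,T_j)$, whose energy by scale invariance is $F(w_j;I_0)\ge C^{\rm cell}|u_+-u_-|-o(1)$. The missing pieces are then $F(u_j;(-R,R))$ together with the cross interactions between $(-R,R)$ and $B_j$.

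The cross interactions are the hard part. I would attempt to show that, once $R$ is chosen with small tail variation, one can replace $u_j$ near $\pm R$ by constants using the cutoff estimate of Lemma~\ref{lem-cutoff}(ii). This would produce a modified sequence that is locally constant in a collar around $\pm R$, so that the near cross term vanishes as in Lemma~\ref{lem-oneconst}. The far cross term would then be bounded by $\lambda_j\iint_{|x-y|\ge a}|x-y|^{-2}\,dy\,dx$ over a bounded set times $\mathbb R$, which tends to $0$. The delicate point, and the step I expect may fail and need rethinking, is that $F$ controls only the superlevel set where the difference exceeds the threshold, not a full interaction. The replacement must therefore be made with $\alpha\lambda_j$ in place of $\lambda_j$, accepting a factor $\alpha^{-1}$, and then letting $\alpha\to1$. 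I also need to check that the jump $|u_+-u_-|$ is not double counted with the variation on $(-R,R)$; this is what the choice of $R$ with small tail variation is meant to handle.
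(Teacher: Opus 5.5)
Your Step~1 and the superadditive split are fine and match the paper's opening. The gap is the key claim of Step~3: it is false in general, and filling the hole cannot repair it.

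\textbf{Why the claim fails.} After dilation, the jump of $S$ at $0$ is the image of the transition of $u$ from $u_-$ to $u_+$ inside $(-R,R)$. It therefore sits inside the hole, and it is the same energy already counted by $C^{\rm cell}_{1,1,-1}|Du|((-R,R))$. The extra $C^{\rm cell}_{1,1,-1}|u_+-u_-|$ lives at scales that $L^1_{\rm loc}$ convergence does not see, and these can lie beyond every admissible $T_j$.

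Here is a concrete counterexample. Let $u:=0$ on $(-\infty,-1]$, $u(x):=(x+1)/2$ on $(-1,1)$, and $u:=1$ on $[1,\infty)$. Pick $m_j\to\infty$ with $\lambda_j\log m_j\to0$ (e.g.\ $m_j:=\exp(\lambda_j^{-1/2})$). Define $u_j:=u$ on $(-\infty,m_j]$, $u_j(x):=2-x/m_j$ on $(m_j,2m_j)$, and $u_j:=0$ on $[2m_j,\infty)$.
\begin{itemize}
\item $u_j\to u$ in $L^1_{\rm loc}(\mathbb R)$, $F_{\lambda_j,1,-1}(u_j;\mathbb R)$ stays bounded, and $R=1$ has zero tail variation.
\item For $T_j\le m_j$ you get $w_j=u(2T_j\,\cdot)\to S$ in $L^1(I_0)$.
\item Yet $u_j=0$ on $(-T_j,-1)$ and $u_j=1$ on $(1,T_j)$, so
\[
F_{\lambda_j,1,-1}\bigl(u_j;(-T_j,T_j)\setminus[-1,1]\bigr)
\le2\lambda_j\int_{-T_j}^{-1}\int_1^{T_j}\frac{dy\,dx}{(y-x)^2}\le2\lambda_j\log T_j\to0 .
\]
\end{itemize}
In fact $w_j\to S$ forces $T_j\le(1+o(1))m_j$, and the same conclusion holds for all such $T_j$. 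The missing energy sits on the return ramp $(m_j,2m_j)$, which your window never contains. In general it can be spread over many scales, so no single dilation captures it.

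Filling the hole does not help either. The filler must still realize the transition $u_-\to u_+$, so its own energy plus cross terms already absorbs the bound $F(w_j;I_0)\gtrsim C^{\rm cell}_{1,1,-1}|u_+-u_-|$. You would then need an upper bound on the energy in $(-R,R)$, which you do not have. Your double-counting worry is exactly the fatal point; choosing $R$ with small tail variation does not remove it.

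\textbf{The missing idea.} $F_{\lambda,1,-1}$ on $\mathbb R$ is invariant not only under dilations but also under the inversion $s=1/x$, $t=1/y$. Indeed $\frac{ds\,dt}{(s-t)^2}=\frac{dx\,dy}{(x-y)^2}$, and the threshold $\lambda|x-y|^{\gamma+1}=\lambda$ is constant. The paper proceeds as follows.
\begin{itemize}
\item Keep the whole exterior $E_R:=(-\infty,-R)\cup(R,\infty)$ in the superadditive split.
\item Fix $M>\|u\|_{L^\infty(\mathbb R)}$ and set $v_j(s):=T_M(u_j(1/s))$, $v(s):=u(1/s)$ on $0<|s|<1/R$. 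The truncation gives $v_j\to v$ in $L^1((-1/R,1/R))$, since the region $|s|<1/\delta$ costs at most $4M/\delta$.
\item Invariance plus the $1$-Lipschitz truncation gives $F_{\lambda_j,1,-1}(v_j;(-1/R,1/R))\le F_{\lambda_j,1,-1}(u_j;E_R)$.
\item The inversion glues $-\infty$ and $+\infty$ into the single interior point $s=0$, where $v(0\pm)=u_\pm$. It also sends all large scales, such as $m_j$ above, into a neighborhood of $0$.
\item Theorem~\ref{thm-domfull}(ii) on $(-1/R,1/R)$ then yields $\varliminf_{j}F_{\lambda_j,1,-1}(u_j;E_R)\ge C^{\rm cell}_{1,1,-1}|Dv|(\{0\})=C^{\rm cell}_{1,1,-1}|u_+-u_-|$.
\end{itemize}
Adding your Step~1 bound on $(-R,R)$ and letting $R\to\infty$ finishes the proof. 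Dilations cannot substitute for this, because they keep $-\infty$ and $+\infty$ at opposite ends of the window.
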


\begin{proof}
By Proposition \ref{prop-whllower}, we find that $u\in\dot{BV}(\mathbb R)$.
Passing to a subsequence, we may assume that
\begin{align*}
L:=\lim_{j\to\infty}F_{\lambda_j,1,-1}\left(u_j;\mathbb R\right)
=\varliminf_{j\to\infty}F_{\lambda_j,1,-1}\left(u_j;\mathbb R\right)<\infty.
\end{align*}
Fix $M\in(\|u\|_{L^\infty(\mathbb R)},\infty)$. Let $R\in (0,\infty)$ and
$E_R:=(-\infty,-R)\cup(R,\infty).$
Then, for any $j\in\mathbb{N},$
\begin{align*}
F_{\lambda_j,1,-1}\left(u_j;\mathbb R\right)
\geq F_{\lambda_j,1,-1}\left(u_j;(-R,R)\right)
+F_{\lambda_j,1,-1}\left(u_j;E_R\right).
\end{align*}
Using  Theorem \ref{thm-domfull}(ii), we have
\begin{align*}
\varliminf_{j\to\infty}F_{\lambda_j,1,-1}\left(u_j;(-R,R)\right)
\geq C_{1,1,-1}^{\rm cell}|Du|((-R,R)).
\end{align*}
Thus, to show \eqref{bixia}, it remains to prove that
\begin{align}\label{waixia}
\varliminf_{j\to\infty}F_{\lambda_j,1,-1}\left(u_j;E_R\right)
\geq C_{1,1,-1}^{\rm cell}|u_+-u_-|.
\end{align}
Let $T_M$  be as in \eqref{jieding}. For any $0<|s|<1/R$, let
\begin{align*}
v_j(s):=T_M(u_j(1/s))\quad\text{and}\quad v(s):=u(1/s),
\end{align*}
and let $v_j(0)=v(0)=0$. Then, for any $\delta\in (R,\infty)$,
\begin{align*}
\int_{1/\delta<|s|<1/R}|v_j(s)-v(s)|\,ds&=\int_{R<|x|<\delta}|T_M(u_j(x))-T_M(u(x))|\,\frac{dx}{x^2}\\
&\leq\frac1{R^2}\int_{-\delta}^{\delta}|u_j(x)-u(x)|\,dx
\end{align*}
and
\begin{align*}
\int_{|s|<1/\delta}|v_j(s)-v(s)|\,ds\leq\frac{4M}{\delta}.
\end{align*}
Thus, for any $\delta\in (R,\infty)$,
\begin{align}\label{e5.33}
\left\|v_j-v\right\|_{L^1((-1/R,1/R))}\leq\frac1{R^2}
\left\|u_j-u\right\|_{L^1((-\delta,\delta))}+\frac{4M}{\delta}.
\end{align}
Letting $j,\delta\to\infty$ in \eqref{e5.33}, we conclude that
\begin{align}\label{fanlian}
v_j\to v\quad\text{in }L^1((-1/R,1/R)).
\end{align}
From the definition of $F_{\lambda,1,-1}$, we deduce that, for any $j\in\mathbb{N},$
\begin{align}\label{e5.34}
&F_{\lambda_j,1,-1}\left(v_j;(-1/R,1/R)\right)\notag\\
&\quad=\lambda_j\underset{|T_M(u_j(1/s))-T_M(u_j(1/t))|>\lambda_j}
{\int_{0<|s|<1/R}\int_{0<|t|<1/R}}\frac{dt\,ds}{|s-t|^2}=\lambda_j
\underset{|T_M(u_j(x))-T_M(u_j(y))|>\lambda_j}
{\int_{E_R}\int_{E_R}}\frac{dy\,dx}{|x-y|^2}\notag\\
&\quad=F_{\lambda_j,1,-1}\left(T_M(u_j);E_R\right)\leq
F_{\lambda_j,1,-1}\left(u_j;E_R\right)\leq
F_{\lambda_j,1,-1}\left(u_j;\mathbb R\right).
\end{align}
Letting $j\to\infty$ in \eqref{e5.34} and using Theorem \ref{thm-domfull}(ii)
and \eqref{fanlian}, we then conclude that $v\in BV((-1/R,1/R))$ and
\begin{align}\label{chaoji}
C_{1,1,-1}^{\rm cell}|Dv|((-1/R,1/R))
&\leq\varliminf_{j\to\infty}F_{\lambda_j,1,-1}\left(v_j;(-1/R,1/R)\right)\notag\\
&\leq\varliminf_{j\to\infty}F_{\lambda_j,1,-1}\left(u_j;E_R\right)\leq L<\infty.
\end{align}
On the other hand, we find that
\begin{align*}
v(0-)=\lim_{s\to0-}u(1/s)=u_-
\ \ \text{and}\ \ v(0+)=\lim_{s\to0+}u(1/s)=u_+,
\end{align*}
which, together with \cite[Theorem~3.28]{afp00}, implies that
\begin{align*}
|Dv|((-1/R,1/R))\geq |Dv|(\left\{0\right\})=|Dv(\left\{0\right\})|=|u_+-u_-|.
\end{align*}
From  this and \eqref{chaoji}, we infer that \eqref{waixia} holds, which
completes the proof of Proposition \ref{prop-closinf}.
\end{proof}

\begin{proposition}\label{prop-clossup}
Let $u\in\dot{BV}(\mathbb R)$ and $\{\lambda_j\}_{j\in\mathbb N}\subset(0,\infty)$ satisfy $\lambda_j\to0$ as $j\to\infty$.
Then there exists a sequence $\{u_j\}_{j\in\mathbb N}\subset L^1_{\rm loc}(\mathbb R)$ such that $u_j\to u$ in $L^1_{\rm loc}(\mathbb R)$ as $j\to\infty$ and
\begin{align}\label{bishang}
\varlimsup_{j\to\infty}F_{\lambda_j,1,-1}\left(u_j;\mathbb R\right)
\leq C_{1,1,-1}^{\rm cell}\left[|Du|(\mathbb R)+|u_+-u_-|\right].
\end{align}
\end{proposition}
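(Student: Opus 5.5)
The plan is to exploit the Möbius invariance of the endpoint functional $F_{\lambda,1,-1}(\cdot;\mathbb R)$. This turns the point at infinity into an ordinary interior point, so that the jump $u_+-u_-$ at infinity becomes an honest jump of a BV function on a bounded interval. We then apply Lemma~\ref{lem-oneconst} with $N=1$, $p=1$, $\gamma=-1$, which is allowed since $-1\in(-\infty,-1]$. We will check that, for $\gamma=-1$ and $N=1$, both the weight $|x-y|^{\gamma-N}=|x-y|^{-2}$ and the threshold $\lambda|x-y|^{1+\gamma}=\lambda$ are invariant under a Möbius map $T(x)=-1/(x-a)$. Indeed, $|T(x)-T(y)|=|x-y|/(|x-a||y-a|)$ and $T'(x)=(x-a)^{-2}$, so
\begin{align*}
\frac{dT(x)\,dT(y)}{|T(x)-T(y)|^2}=\frac{dx\,dy}{|x-y|^2}.
\end{align*}
Consequently $F_{\lambda,1,-1}(W\circ T;\mathbb R)=F_{\lambda,1,-1}(W;\mathbb R)$ for every measurable $W$, since $\{a\}$ and $\{0\}$ are null sets. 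This is the same computation already used in \eqref{e5.34} for $a=0$.

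\textbf{Step 1 (reduction).} By Lemma~\ref{lem-sobloc}(iii) we get bounded $v_m\to u$ in $L^1_{\rm loc}(\mathbb R)$ with $|Dv_m|(\mathbb R)\to|Du|(\mathbb R)$. Moreover $v_m\equiv c_m^-=u(-m+)$ on $(-\infty,-m]$ and $v_m\equiv c_m^+=u(m-)$ on $[m,\infty)$. Since $c_m^\pm\to u_\pm$, we have $\Psi^{\rm hom}_{1,-1}(v_m)\to\Psi^{\rm hom}_{1,-1}(u)$. A diagonal argument identical to the one in the proof of Theorem~\ref{thm-whlndeg} then reduces \eqref{bishang} to the case of a single bounded $w\in\dot{BV}(\mathbb R)$ with $w\equiv c_-$ on $(-\infty,-m]$ and $w\equiv c_+$ on $[m,\infty)$.

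\textbf{Step 2 (transplant).} Take $a:=3m$ and $T(x):=-1/(x-a)$, and set $\widetilde w:=w\circ T^{-1}$. We read off its behavior from the behavior of $w$:
\begin{itemize}
\item $x$ close to $3m$ corresponds to $|s|$ large, where $w=c_+$; hence $\widetilde w\equiv c_+$ outside a compact interval $[-\frac1m+\cdots]$, say outside $(-R+\alpha,R-\alpha)$ for some $R$ and some $\alpha\in(0,R)$.
\item $x\to\pm\infty$ corresponds to $s\to0^\mp$, so $\widetilde w(0-)=c_+$ and $\widetilde w(0+)=c_-$.
\item $T^{-1}$ is a monotone diffeomorphism on each side of $s=0$.
\end{itemize}
It follows that $\widetilde w|_Q\in BV(Q)$ with $Q:=(-R,R)$ and
\begin{align*}
|D\widetilde w|(Q)=|Dw|(\mathbb R)+|c_+-c_-|.
\end{align*}
Here we use that the total variation is invariant under monotone reparametrization, and that there is an atom of size $|c_+-c_-|$ at $0$ (cf. \cite[Theorem~3.28]{afp00}). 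Lemma~\ref{lem-oneconst} now gives $\widetilde W_j\in L^\infty(\mathbb R)$ with $\widetilde W_j\to\widetilde w$ in $L^1_{\rm loc}$ and
\begin{align*}
\varlimsup_{j}F_{\lambda_j,1,-1}(\widetilde W_j;\mathbb R)\le C^{\rm cell}_{1,1,-1}\,|D\widetilde w|(Q).
\end{align*}
Moreover, by its construction, $\widetilde W_j=\widetilde w$ outside $Q$.

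\textbf{Step 3 (pull back).} Define $W_j:=\widetilde W_j\circ T$. By the invariance above, $F_{\lambda_j,1,-1}(W_j;\mathbb R)=F_{\lambda_j,1,-1}(\widetilde W_j;\mathbb R)$, which yields the upper bound $C^{\rm cell}_{1,1,-1}(|Dw|(\mathbb R)+|c_+-c_-|)$. For $L^1_{\rm loc}$ convergence we fix a compact set $K\subset\mathbb R$.
\begin{itemize}
\item Near $x=a$ we have $|s|\ge R$, where $W_j=w$ exactly.
\item On the remainder of $K$, the change of variables has Jacobian $(x-a)^{-2}$, which is bounded above and below, and the image is a compact set avoiding no problem points. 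So $\|W_j-w\|_{L^1(K)}\lesssim\|\widetilde W_j-\widetilde w\|_{L^1(Q)}\to0$.
\end{itemize}

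The main obstacle I expect is in Step 2. One must verify carefully that $\widetilde w$ meets all hypotheses of Lemma~\ref{lem-oneconst}, namely that it equals the single constant $c_+$ both outside $Q$ and on a collar of $\partial Q$. One must also confirm that $\Psi^Q_{1,-1}(\widetilde w|_Q)$ counts the jump at $0$ exactly once, with no extra variation created at $s\to\pm\infty$. This is exactly why $a$ must be placed in the region where $w$ is constant (here $a=3m\in[m,\infty)$).
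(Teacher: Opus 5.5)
Your proof is correct, but it takes a different route from the paper's.

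\textbf{The paper's route.} The paper never uses M\"obius invariance for the upper bound. It sets $w_m:=u$ on $(-m,m)$ and $w_m:=u(-m+)$ off $(-m,m)$. Then $w_m$ equals a single constant outside a compact set and carries an artificial jump of size $|u(m-)-u(-m+)|$ at $x=m$. Hence $|Dw_m|(\mathbb R)=|Du|((-m,m))+|u(m-)-u(-m+)|\to|Du|(\mathbb R)+|u_+-u_-|$. At the same time $w_m\to u$ in $L^1_{\rm loc}(\mathbb R)$, because the artificial jump escapes to infinity. Lemma~\ref{lem-oneconst} on a large interval $(-r_m,r_m)$ and the diagonal argument then finish.

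\textbf{How your route compares.} With $a=3m$, your $\widetilde w$ equals $c_+$ on $(-\infty,0)\cup[\frac1{2m},\infty)$, equals $c_-$ on $(0,\frac1{4m}]$, and reproduces $w|_{(-m,m)}$ on $(\frac1{4m},\frac1{2m})$. So the hypotheses of Lemma~\ref{lem-oneconst} hold, for instance with $Q=(-1,1)$ and collar width $\frac12$. Up to a monotone change of variable, $\widetilde w$ is just a cyclic rotation of the paper's $w_m$: both consist of the variation of $u$ on $(-m,m)$ followed by a single jump closing the loop.

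The difference is that you keep the two end values $c_m^\pm$ in the approximant and transport the closing jump to $s=0$. This requires three extra ingredients:
\begin{itemize}
\item the exact invariance identity for $F_{\lambda,1,-1}(\cdot;\mathbb R)$ under $T$;
\item the invariance of total variation under monotone reparametrization;
\item the pull-back $L^1_{\rm loc}$ check near the pole $x=3m$. There you need $\widetilde W_j=\widetilde w$ off $Q$, a fact visible only in the proof of Lemma~\ref{lem-oneconst}, not in its statement. On the rest of a compact set the Jacobian $s^{-2}$ is bounded, as you say.
\end{itemize}

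\textbf{What each buys.} The paper's route is shorter and purely elementary. Yours is more conceptual. It exhibits $|Du|(\mathbb R)+|u_+-u_-|$ as the total variation of $u$ on $\mathbb R\cup\{\infty\}$, and explains the extra term at the endpoint $(N,p,\gamma)=(1,1,-1)$ through the M\"obius invariance of the functional. It also mirrors the inversion $s=1/x$ that the paper uses for the lower bound in Proposition~\ref{prop-closinf} (see \eqref{e5.34}).
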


\begin{proof}
For any given $m\in\mathbb{N}$ and any $x\in\mathbb R$, define
\begin{align*}
w_m(x):=
\begin{cases}
u(x),&x\in(-m,m),\\
u(-m+),&x\notin(-m,m).
\end{cases}
\end{align*}
Obviously, $w_m\to u$ in $L^1_{\rm loc}(\mathbb R)$ as $m\to\infty$. Furthermore,
by \eqref{qiutian}, we find that
\begin{align*}
\|w_m\|_{L^\infty(\mathbb R)}\leq |u(-m+)|+|Du|((-m,m))<\infty.
\end{align*}
Note that, for any $m\in\mathbb{N}$ and $\varphi\in C_{\rm c}^\infty(\mathbb R)$,
\begin{align*}
&-\int_{\mathbb R}w_m(x)\varphi'(x)\,dx\\
&\quad=-u(-m+)\int_{-\infty}^{-m}\varphi'(x)\,dx
-\int_{-m}^{m}u(x)\varphi'(x)\,dx
-u(-m+)\int_m^\infty\varphi'(x)\,dx\\
&\quad=-u(-m+)\varphi(-m)+\int_{(-m,m)}\varphi\,dDu
+u(-m+)\varphi(-m)-u(m-)\varphi(m)+u(-m+)\varphi(m)\\
&\quad=\int_{(-m,m)}\varphi\,dDu+\left[u(-m+)-u(m-)\right]\varphi(m).
\end{align*}
This shows that, for any $m\in\mathbb{N},$
\begin{align*}
Dw_m=Du\mathopen{\scalebox{1.6}{$\llcorner$}}(-m,m)+
\left[u(-m+)-u(m-)\right]\delta_m
\end{align*}
and hence
\begin{align*}
|Dw_m|(\mathbb R)=|Du|((-m,m))+|u(-m+)-u(m-)|,
\end{align*}
where $\delta_m$ is the unit Dirac mass at $m$.
Letting $m\to\infty,$ we have
\begin{align*}
\lim_{m\to\infty}|Dw_m|(\mathbb R)
=|Du|(\mathbb R)+|u_+-u_-|.
\end{align*}
For any given $m\in\mathbb{N},$
choosing $r_{m}$ sufficiently  large and applying
Lemma \ref{lem-oneconst} with
$p:=1$, $\gamma:=-1,$ and
$Q_{m}:=(-r_m,r_m)^N$, we conclude that
there exists  a sequence
$\{W_j^{(m)}\}_{j\in\mathbb{N}}\subset L^\infty(\mathbb R^N)$ such that
$W_j^{(m)}\to w_m$ in $L^1_{\rm loc}(\mathbb R)$ as $j\to\infty$ and
\begin{align*}
\varlimsup_{j\to\infty}F_{\lambda_j,1,-1}\left(W_j^{(m)};\mathbb R\right)
\leq\Psi_{1,-1}^{Q_m}(w_m|_{Q_m})=C_{1,1,-1}^{\rm cell}|Dw_m|(\mathbb R).
\end{align*}
Using this and a diagonal argument used in the proof of Theorem \ref{thm-whlndeg}, we find that there exists a sequence
$\{u_j\}_{j\in\mathbb N}\subset L^1_{\rm loc}(\mathbb R)$ such that $u_j\to u$ in $L^1_{\rm loc}(\mathbb R)$ as $j\to\infty$ and \eqref{bishang} holds.
This completes the proof of Proposition \ref{prop-clossup}.
\end{proof}

\smallskip
\noindent\textbf{Acknowledgements}\quad
The authors acknowledge the use of AI tools during the exploratory stage of this project.
All mathematical arguments and proofs in the final manuscript were checked
and written by the authors.

\smallskip
\noindent\textbf{Data Availability}\quad
This article has no associated data.

\section*{Declarations}

\noindent\textbf{Conflict of interest}\quad
On behalf of all authors, the corresponding author
states that there is no conflict of interest.

\bigskip

\noindent
Xiaosheng Lin

\smallskip

\noindent
School of Mathematical Sciences, Jimei University,
Xiamen 361005, The People's Republic of China

\smallskip

\noindent {\it E-mail}: \texttt{xslin@jmu.edu.cn}

\bigskip

\noindent Dachun Yang, Wen Yuan, and Yangyang Zhang

\smallskip

\noindent Laboratory of Mathematics and Complex Systems
(Ministry of Education of China),
School of Mathematical Sciences, Institute for Advanced Study,
Beijing Normal University,
Beijing 100875, The People's Republic of China

\smallskip

\noindent{\it E-mails:} \texttt{dcyang@bnu.edu.cn} (D. Yang)

\noindent\phantom{{\it E-mails:}} \texttt{wenyuan@bnu.edu.cn} (W. Yuan)

\noindent\phantom{{\it E-mails:}} \texttt{yangyzhang@bnu.edu.cn} (Y. Zhang)

\bigskip

\noindent Sibei Yang

\medskip

\noindent School of Mathematics and Statistics, Lanzhou University, Lanzhou 730000, The People's Republic of China

\smallskip

\noindent{\it E-mail:} \texttt{yangsb@lzu.edu.cn}

\end{document}